\newcounter{rem}
\newenvironment{remark}{\addtocounter{rem}{1}
\begin{enumerate} \item[] {\bf Remark~\therem . }}
{\end{enumerate}\bigskip}
\newtheorem{theorem-introduction}{Theorem}
\newtheorem{corollary-introduction}{Corollary}
\newtheorem{definition}[subsubsection]{Definition}
\newtheorem{lemma}[subsubsection]{Lemma}
\newtheorem{corollary}[subsubsection]{Corollary}
\newtheorem{theorem}[subsubsection]{Theorem}
\newtheorem{proposition}[subsubsection]{Proposition}
\theoremstyle{definition}
\newtheorem{example}[subsubsection]{Example}
\def\Chern{\ensuremath{\mathrm{Ch}}}
\def\codim{\ensuremath{\mathrm codim}}
\def\abupt{\ensuremath{\Rightarrow}}
\def\ker{\ensuremath{\mathrm{Ker}}}
\def\Ker{\ensuremath{\mathrm{Ker}}}
\def\Gr{\mathrm Gr}
\def\tr{{\mathrm tr}}
\def\Tr{{\mathrm Tr}}
\def\FF{{\mathbb  F}}
\def\PP{{\mathbb  P}}
\def\NN{{\mathbb  N}}
\def\ZZ{{\mathbb  Z}}
\def\CC{{\mathbb  C}}
\def\RR{{\mathbb  R}}
\def\QQ{{\mathbb  Q}}
\def\HH{{\mathbb  H}}
\def\VN{ \text{M}}
\def\Vn{ \text{M}}
\def\Dom{{\mathrm{Dom}}}
\def\Gal{{\mathrm{Gal}}}
\def\Im{{\mathrm{Im}}}
\def\Ran{{\mathrm{Ran}}}
\def\Hom{{\mathrm{Hom}}}
\def\p{{\ensuremath{ p_{*(2)} } }}
\def\Id{\ensuremath{{\rm Id}}}
\def\U{\ensuremath{{\mathcal{U}}}}
\def\C{\ensuremath{{\mathcal{C}}}}
\def\UC{{\mathrm UC}}
\def\M{\ensuremath{{\mathcal{M}}}}
\def\O{\ensuremath{{\mathcal{O}}}}
\def\F{\ensuremath{{\mathcal{F}}}}
\def\H{\ensuremath{{\mathcal{H}}}}
\def\B{\ensuremath{{\mathcal{B}}}}
\newcommand{\Adh}[1]{\ensuremath{\overline{#1}}}
\newcommand{\dlog}[2]{\ensuremath{\Omega^{#1}_{X}(\log #2)}}
\def\Ch{\ensuremath{{\rm Ch}}}
\def\Todd{\ensuremath{{\rm Todd}}}
\def\Gl{{\mathrm Gl}}
\def\dbar{\ensuremath{{\overline{\partial}}}}
\def\C{\ensuremath{{\mathcal{C}}}}
\def\Rank{{\rm Rank}}
\def\Tr{{\rm Tr}}
\newcommand{\Trace}[1]{\ensuremath{{\rm \ Tr}#1}}
\def\End{{\rm End}}
\def\Vol{{\rm Vol }}
\def\tbullet{{\mbox{\tiny{$\bullet$}}}}
\def\Higgs{{\mathrm{Higgs}}}
\title[]{Application of Cheeger-Gromov theory to $l^{2}-$Cohomology of harmonic Higgs bundles over covering of finite volume complete manifolds}
\author{Pascal Dingoyan}
\address{Institut Mathématique de Jussieu\\
4, place Jussieu\\
75005 Paris.}
\email{pascal.dingoyan@imj-prg.fr}
\author{Georg Schumacher}
\address{Fachbereich Mathematik und Informatik, Philipps-Universität Marburg, Lahnberge, Hans-Meerwein-Stra{\ss}e, D-35032 Marburg, Germany}
\email{schumac@mathematik.uni-marburg.de}
\begin{document}
\bibliographystyle{plain}
\begin{abstract} We review and apply Cheeger-Gromov theory on $l^{2}-$cohomology of infinite coverings of complete manifold with bounded curvature and finite volume. Applications focus on $l^{2}-$cohomology of (pullback of) harmonic Higgs bundles on coverings of Zariski open sets of Kähler manifolds. The $l^{2}-$Hodge to DeRham spectral sequence of these Higgs bundles is seen to degenerate at $E_{2}$.
\end{abstract}
\maketitle

\section{introduction}
\subsection{}
In a series of articles, J.Cheeger and M.Gromov (\cite{CheGro85-bounds},\cite{CheGro85-characteristic}....) extend Atiyah's theory of $l^{2}-$Betti numbers of coverings of compact manifolds to coverings of complete manifolds of finite volume and bounded curvature. Let $p:(X,g)\to (Y,g_{0})$ with $g=p^{*}(g_{0})$  be a Galois covering of a Riemannian manifold of finite volume $(Y,g_{0})$ such that $(X,g)$ is of bounded curvature and positive injectivity radius. Let $\Gamma$ be the Galois group of $p$. Then
\begin{theorem}[Cheeger-Gromov]\label{cheeger-gromov}
\begin{itemize}
\item[i)] The unbounded operator
$$
d+d^{*}:\oplus_{i} L^{2}(X,g,\Lambda^{i}T^{*}(X))\to \oplus_{i} L^{2}(X,g,\Lambda^{i}T^{*}(X))
$$
is $\Gamma-$Fredholm, and the Murray-Von Neumann $\Gamma-$dimensions of the harmonic spaces
$
\H^{i}_{(2)}(X,g)=\Ker(d+d^{*})_{|L^{2}(X,\Lambda^{i}T^{*}(X))}
$
are finite.
\item[ii)] The $l^{2}-$Euler characteristic $
    \chi_{(2)}(X)=\sum_{i}(-1)^{i}\dim_{\Gamma}\H^{i}_{(2)}(X)
    $
    is expressible in term of the $\hat A-$genus on $Y$ (Gauss-Bonnet integral):
\begin{align*}
\chi_{(2)}(X)=\int_{Y}\hat A(Y,g_{0})\,.
\end{align*}
\item[iii)] The $l^{2}-$Betti numbers $b^{i}_{(2)}(X,g)=\dim_{\Gamma}\H^{i}_{(2)}(X,g)$ are homotopical invariants.
    \end{itemize}
\end{theorem}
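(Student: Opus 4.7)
The plan is to handle the three assertions in order, relying on bounded geometry to transfer heat-kernel techniques from the compact setting, and on a comparison with simplicial $l^{2}$-cohomology for the homotopy invariance. For (i), I would exploit that under bounded curvature and positive injectivity radius one has uniform on-diagonal bounds on the Schwartz kernel of $e^{-t\Delta_{i}}$: $\sup_{x\in X}\tr\,e^{-t\Delta_{i}}(x,x)\le C(t)<\infty$ for every fixed $t>0$. Since the orthogonal projection $P_{i}$ onto $\H^{i}_{(2)}(X,g)$ satisfies $0\le P_{i}\le e^{-t\Delta_{i}}$ in the operator order, its Schwartz kernel inherits the same pointwise bound. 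Choosing a fundamental domain $D\subset X$ for the $\Gamma$-action, of finite $g$-volume equal to $\Vol(Y,g_{0})$, I get
\[
\dim_{\Gamma}\H^{i}_{(2)}(X,g)=\Tr_{\Gamma}(P_{i})=\int_{D}\tr\,P_{i}(x,x)\,dx\le C(t)\,\Vol(Y,g_{0})<\infty.
\]
The $\Gamma$-Fredholm property of $d+d^{*}$ then follows by producing a $\Gamma$-equivariant parametrix modulo $\Gamma$-trace-class operators, built from functional calculus applied to $d+d^{*}$ with a spectral cut-off near zero.

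For (ii), I would use the $\Gamma$-McKean--Singer identity: for any $t>0$,
\[
\chi_{(2)}(X)=\sum_{i}(-1)^{i}\Tr_{\Gamma}(e^{-t\Delta_{i}})=\int_{D}\mathrm{str}\bigl(e^{-t\Delta}\bigr)(x,x)\,dx,
\]
the $t$-independence following from the $\Gamma$-supertrace identity $\Tr_{\Gamma}([d,d^{*}e^{-t\Delta}])=0$, which is legitimate once the trace class is controlled using the estimate from (i). Letting $t\to 0^{+}$ and invoking Patodi's pointwise local index theorem, $\mathrm{str}(e^{-t\Delta})(x,x)\to \hat A(g)(x)$ uniformly in $x\in X$, the uniformity provided by bounded geometry. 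Exchanging limit and integral, and descending the $\Gamma$-invariant form $\hat A(g)$ to $\hat A(g_{0})$ on $Y$, yields $\chi_{(2)}(X)=\int_{Y}\hat A(Y,g_{0})$.

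For (iii), the idea is to compare the analytic $l^{2}$-cohomology with a combinatorial one. I would build a $\Gamma$-equivariant triangulation of $X$ whose simplices have uniformly bounded local geometry, using the exponential map and the injectivity-radius bound, then invoke a de Rham--type theorem (generalising Dodziuk) identifying de Rham $l^{2}$-cohomology with the simplicial $l^{2}$-cohomology of this triangulation. Since simplicial $l^{2}$-Betti numbers depend only on the $\Gamma$-equivariant simple-homotopy type of the underlying complex, the homotopy invariance of $b^{i}_{(2)}(X,g)$ follows.

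The main obstacle I anticipate is in (ii): the small-time heat-kernel expansion is classically a pointwise statement, whereas here the fundamental domain $D$ has finite volume but is non-compact, so one must upgrade the local index theorem to a uniform version over all of $D$. The bounded-curvature and positive-injectivity-radius hypotheses are precisely what forces the error terms in the Getzler rescaling to be uniform in $x$, and it is this step---not the heat-kernel bound of (i) nor the combinatorial comparison of (iii)---that requires the full strength of the Cheeger--Gromov geometric assumptions.
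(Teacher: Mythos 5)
Your part (i) is essentially the paper's argument (and is fine): the paper obtains the uniform on-diagonal bound by Donnelly--Li's domination of the Dirac/Hodge heat kernel by the scalar heat kernel via the Weitzenb\"ock formula, combined with Li--Yau/Cheeger--Gromov--Taylor estimates, and then concludes from $1_{[0,\epsilon]}(\Delta)\leq e^{t\epsilon}e^{-t\Delta}$; you assert the bound for $e^{-t\Delta_i}$ directly, but the route is the same. The first genuine gap is in (ii): your closing claim that bounded curvature and positive injectivity radius are ``precisely what forces the error terms in the Getzler rescaling to be uniform'' is not correct. The coefficients and remainders of the small-time expansion involve covariant derivatives of the curvature of arbitrarily high order, so uniformity requires $C^\infty$-bounded geometry, which is strictly stronger than the hypotheses of the theorem. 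The paper handles this exactly here: it invokes the Cheeger--Gromov/Shi smoothing theorem (Ricci flow) to replace $g$ by a uniformly equivalent $\Gamma$-invariant metric of bounded geometry, observes that the $\Gamma$-dimensions of the reduced cohomology are unchanged under this change of metric, and only then runs the supertrace/asymptotic-expansion argument (using Roe's uniform functional calculus to justify the supercommutator step and the integrability of the expansion over a fundamental domain). Without this reduction, your exchange of $t\to 0$ limit and integral over the fundamental domain is unjustified under the stated hypotheses.

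The second, deeper gap is in (iii). A $\Gamma$-equivariant uniform triangulation of $X$ descends to an \emph{infinite} triangulation of the non-compact, finite-volume $Y$; even granting a Dodziuk-type identification of $L^2$ de Rham cohomology with the $l^2$ simplicial cohomology of such a triangulation, what you get is an invariant of the uniform (bounded) homotopy type of the triangulated manifold, not of its homotopy type: a homotopy equivalence between finite-volume manifolds need not be uniformly controlled, and $l^2$-Betti numbers of infinite complexes are not homotopy invariants in general. The content of (iii) is precisely that the analytic numbers coincide with those of a \emph{finite} complex (a compact core $Y_0\hookrightarrow Y$). The paper's proof therefore proceeds differently: it constructs a controlled exhaustion of $Y$ by compact submanifolds with boundary of uniformly bounded geometry (Dafermos exhaustion function, uniform collars), proves a convergence theorem $b_{i(2)}(p^{-1}(Y_k),E)\to b_{i(2)}(X,E)$ using uniformly bounded Bergmann kernels together with $\mathrm{Vol}(Y\setminus Y_k)\to 0$, controls the exterior and boundary contributions via a Mayer--Vietoris sequence of $\Gamma$-Fredholm Hilbert complexes, and only then applies the de Rham/Dodziuk--Schick comparison on the compact pieces with boundary, where simplicial $l^2$-cohomology of a finite complex is indeed a homotopy invariant. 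Your outline omits this reduction to compact pieces, which is where the finite-volume hypothesis actually enters, so as written the homotopy invariance does not follow.
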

We recall that the $\Gamma-$equivariant operator $A: L^{2}(X,E,h)\to L^{2}(X,F,h')$ is $\Gamma-$Fredholm if the Schwartz kernels of $1_{[0,\epsilon]}(AA^{*})$ and $1_{[0,\epsilon]}(A^{*}A)$ are integrable on a fundamental domain of $p$, for some $\epsilon>0$.

\medskip

In this article, generalisations of these statements are discussed for bundles of bounded curvature  and applied to study the $l^{2}-$cohomology groups of holomorphic bundles or harmonic Higgs bundles (cf.\ \ref{definition of harmonic higgs bundle}) over coverings of Zariski open sets of compact Kähler manifolds.
\medskip

Our primary interest is related to property $(i)$. Over a Kähler manifold, the $\Gamma-$Fredholm property of the Dirac operator $\dbar+\dbar^{*}$ made the statement of a Galois $\partial\dbar-$lemma possible:
\begin{theorem}\label{ddbar introduction}
Let $p:(X,\omega)\to (Y,\omega_{0})$, $\omega=p^{*}(\omega_{0})$, be a Galois covering of a complete Kähler manifold $(Y,\omega_{0})$ of finite volume and bounded curvature. Assume that the injectivity radius of $(X,\omega)$ is positive.
%
Then for any square integrable $d-$closed $(p,q)-$form $\alpha$ which is orthogonal to the space of square integrable harmonic forms, there exists an injective element $r$ of the von Neumann algebra of $M(\Gamma)$, and there exists  a square integrable $(p-1,q-1)-$form $\beta$ such that
\begin{align} 
\partial\dbar \beta=r.\alpha\,.
\end{align}
\end{theorem}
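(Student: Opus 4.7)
The plan is to combine the $L^{2}$-Hodge decomposition guaranteed by Theorem~\ref{cheeger-gromov}(i) with the K\"ahler identities and the Borel functional calculus of $\Delta_{d}$. Since $(X,\omega)$ has bounded curvature and positive injectivity radius, $d+d^{*}$ is $\Gamma$-Fredholm; the K\"ahler identities $\Delta_{d}=2\Delta_{\bar\partial}=2\Delta_{\partial}$ together with $\partial\bar\partial^{*}+\bar\partial^{*}\partial=0$ and $\bar\partial\partial^{*}+\partial^{*}\bar\partial=0$ hold on smooth compactly supported forms and, by cutoff arguments in the spirit of \cite{CheGro85-bounds}, extend to the functional calculus of the self-adjoint extension of $\Delta_{d}$. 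In particular one obtains the strong $L^{2}$-Hodge decomposition
\[
L^{2}_{(p,q)}(X,\omega)=\H^{p,q}_{(2)}\oplus\overline{\bar\partial L^{2}_{(p,q-1)}}\oplus\overline{\bar\partial^{*}L^{2}_{(p,q+1)}},
\]
and every bounded Borel function of $\Delta_{d}$ commutes with $\partial$, $\bar\partial$, $\partial^{*}$, $\bar\partial^{*}$ on the appropriate domains.

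Setting $T:=\partial\bar\partial$, so that $T^{*}=\bar\partial^{*}\partial^{*}$, I would first establish the key operator identity on the class of closed pure-type forms. Using $\bar\partial\bar\partial^{*}=\Delta_{\bar\partial}-\bar\partial^{*}\bar\partial$, the identity $\bar\partial\partial^{*}=-\partial^{*}\bar\partial$, and the hypotheses $\partial\alpha=\bar\partial\alpha=0$ (both forced by $d\alpha=0$ and the pure bidegree $(p,q)$), a direct computation yields
\[
TT^{*}\alpha=\Delta_{\bar\partial}^{2}\alpha=\tfrac{1}{4}\Delta_{d}^{2}\alpha.
\]
Applied to $\eta:=g(\Delta_{d})\alpha$ (still $d$-closed of pure type), the same identity gives $TT^{*}\eta=\tfrac{1}{4}g(\Delta_{d})\Delta_{d}^{2}\alpha$ for every bounded Borel $g$. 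I would then choose
\[
g(\lambda):=\frac{4}{(1+\lambda)^{2}},\qquad \beta:=T^{*}g(\Delta_{d})\alpha.
\]
The spectral theorem gives $\|\beta\|^{2}=\tfrac{1}{4}\int\lambda^{2}g(\lambda)^{2}\,d\mu_{\alpha}(\lambda)\le \tfrac{1}{4}\|\alpha\|^{2}$ since $\lambda g(\lambda)$ is bounded, so $\beta\in L^{2}_{(p-1,q-1)}(X,\omega)$. Commuting $g(\Delta_{d})$ with $T$ one finds the key identity
\[
\partial\bar\partial\beta=\frac{\Delta_{d}^{2}}{(1+\Delta_{d})^{2}}\,\alpha=:r_{0}\alpha.
\]

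The bounded operator $r_{0}=f(\Delta_{d})$ with $f(\lambda)=\lambda^{2}/(1+\lambda)^{2}$ belongs to $M(\Gamma)$ as a Borel function of $\Delta_{d}$, but its kernel is exactly $\H^{p,q}_{(2)}$. To produce an \emph{injective} $r$ I would take
\[
r:=r_{0}+P_{\H},
\]
where $P_{\H}=\mathbf{1}_{\{0\}}(\Delta_{d})$ is the spectral projector onto the harmonic part; this is still a bounded Borel function of $\Delta_{d}$, hence in $M(\Gamma)$, has trivial kernel, and satisfies $r\alpha=r_{0}\alpha=\partial\bar\partial\beta$ because $\alpha\perp \H^{p,q}_{(2)}$. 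The main anticipated obstacle is technical: justifying that the K\"ahler and commutation identities, classically valid on smooth compactly supported forms, pass to the self-adjoint extensions entering the functional calculus, and that $g(\Delta_{d})\alpha$ lies in the iterated domains needed to apply $T$ and $T^{*}$. Here bounded curvature together with positive injectivity radius provide the cutoff functions and elliptic regularity needed to control the approximation, precisely as in the original Cheeger--Gromov framework.
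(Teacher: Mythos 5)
There is a genuine gap, and it is located exactly where the theorem gets its content: the nature of the element $r$. Your $r_{0}=\Delta_{d}^{2}(1+\Delta_{d})^{-2}$ (and likewise $r=r_{0}+P_{\H}$) is a bounded Borel function of the Laplacian. Such an operator is $\Gamma$-equivariant, i.e.\ it lies in the commutant algebra $\B\bigl(L^{2}(F,\cdot)\bigr)\bar\otimes\VN(\Gamma)$ of $\Gamma$-equivariant bounded operators, but it is \emph{not} an element of the group von Neumann algebra $M(\Gamma)=\VN(\Gamma)$ acting through the $l^{2}(\Gamma)$-factor, which is what the statement demands ("an injective element $r$ of the von Neumann algebra of $M(\Gamma)$"). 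This distinction is not cosmetic: the whole purpose of having $r\in\VN(\Gamma)$ is that it can later be inverted in the ring $\U(\Gamma)$ of affiliated operators and absorbed in the tensor product $\otimes_{\VN(\Gamma)}\U(\Gamma)$ (the identity $x\otimes 1_{\U(\Gamma)}=r.x\otimes r^{-1}$ in Section~\ref{tensor product with affiliated operators}), which is how the Hodge-to-DeRham degeneration results are derived. A function of $\Delta_{d}$ does not act through the $\VN(\Gamma)$-module structure and cannot be cancelled this way. Note also that with your choice of $r$ the $\Gamma$-Fredholm property is never actually used: the identity $\partial\dbar\,\bigl(\bar\partial^{*}\partial^{*}(1+\Delta)^{-2}\alpha\bigr)=\Delta^{2}(1+\Delta)^{-2}\alpha$ holds on any complete K\"ahler manifold, finite volume or not, which is a sign that the statement you have proved is strictly weaker than the theorem.

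The paper's proof (Corollaries~\ref{elliptic lifting} and~\ref{Galois ddbar lemma}) fills precisely this gap by invoking \cite[Lemma 2.15]{Din2013}: because $\Delta(1+\Delta)^{-1}$ is $\Gamma$-Fredholm, any $\alpha$ in the closure of its range can be moved into the actual range by an injective element $r\in\VN(\Gamma)$ with dense range, so that $r.\alpha=\Delta y$ with $y$ square integrable. This is where bounded curvature, finite volume and positive injectivity radius enter, via the $\Gamma$-Fredholm theorem of Section~\ref{proof using bochner method}. The remainder of your argument — using purity, $d$-closedness, the anticommutation relations and the K\"ahler identity $\dbar^{*}=-i[\Lambda,\partial]$ to turn a Laplacian preimage into a $\partial\dbar$-primitive — is essentially parallel to the paper's second half (the paper shows $\partial y=\dbar y=0$ and then writes $r.\alpha=\dbar\dbar^{*}y=-i\partial\dbar\Lambda y$), and your domain concerns there are handled as you suggest, by completeness and the Andreotti--Vesentini type results quoted in the paper. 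So the analytic half of your plan is sound, but the construction of $r$ must be replaced by the $\Gamma$-Fredholm lifting argument to obtain $r$ in $M(\Gamma)$ rather than a spectral function of $\Delta_{d}$.
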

On a compact Kähler manifold, the $\partial \dbar-$lemma is fundamental for the  development of Hodge theory (\cite{Voi,DelGriMorSul}), and Mixed Hodge theory of Deligne (\cite{Deldeux,Deltrois}). Applications in the context of $l^{2}-$Hodge theory were given in \cite{Din2013,Din2018}.

Property $(i)$ will be generalised to a pullback by $p:X\to Y$ of any Dirac bundle $(S,D,h)\to Y$ of bounded curvature or to a pullback of a harmonic Higgs bundle $(E,\dbar,\theta,h)\to Y$ with bounded Higgs field. For the latter class, an analogue of the Galois $\partial\dbar-$lemma is stated in section \ref{ddbar lemma for higgs bundles}. As in the compact case (see \cite{Zuc,Sim92}), we have an isomorphism between DeRham and Dolbeault $l^{2}-$cohomology up to a twist by $\U(\Gamma)$, the ring of operators affiliated to $\VN(\Gamma)$  (see \cite{MurVon,Luc} for the definition, and Section~\ref{tensor product with affiliated operators} for a motivation).
In the sequel we will denote a Riemannian bundle and its pullback under $p$  by the same letter. 

\begin{theorem} With the hypotheses of Theorem \ref{ddbar introduction}, let $(E,\dbar,\theta,h)\to Y$ be a harmonic Higgs bundle with bounded Higgs field  and flat connection $\nabla$  (\ref{definition of harmonic higgs bundle}). The Hodge to DeRham spectral sequence
\begin{align}
(E_{1}^{p,q},d_{1})=(H^{p,q}_{\dbar(2)}(X,E),\theta)\otimes_{\VN(\Gamma)} \U(\Gamma)\abupt \, H^{p+q}_{\nabla(2)}(X,E)\otimes_{\VN(\Gamma)}\U(\Gamma)
\end{align}
degenerates at $E_{2}$.
\end{theorem}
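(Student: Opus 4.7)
The strategy is to deduce $E_{2}$-degeneration from the Galois $\partial\dbar$-lemma for harmonic Higgs bundles of section~\ref{ddbar lemma for higgs bundles}, by running the classical argument of Deligne for bicomplexes satisfying a $d'd''$-lemma inside the $\U(\Gamma)$-tensored category, where injective affiliated operators become invertible. I begin with the $L^{2}$ bicomplex $(A^{p,q}_{(2)}(X,E),\dbar,\theta)$; the flat connection splits as $\nabla=D'+D''$ with $D''=\dbar+\theta$ and $D'=\partial_{E}+\theta^{*}$. Simpson's K\"ahler identities are pointwise and hence pull back from $(Y,\omega_{0})$ to $(X,\omega)$, yielding $\Delta_{\nabla}=2\Delta_{D''}=2\Delta_{D'}$ together with the anticommutations $\{\theta,\dbar^{*}\}=0$ and $\{\dbar,\theta^{*}\}=0$ which force $\Delta_{D''}$ to preserve the bigrading. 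The generalisation of Cheeger-Gromov to harmonic Higgs bundles with bounded Higgs field developed earlier in the paper makes $\nabla+\nabla^{*}$, $D''+(D'')^{*}$ and $\dbar+\dbar^{*}$ into $\Gamma$-Fredholm operators on the relevant $L^{2}$ form bundles, supplying closed-range Hodge decompositions compatible with the bigrading.

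The key lemma I would extract from section~\ref{ddbar lemma for higgs bundles} is a pure $\theta\dbar$-lemma: if $\omega\in L^{2}A^{p,q}(X,E)$ is $\dbar$-closed, $\theta$-closed and $\dbar$-exact, then there exist an injective $r\in\VN(\Gamma)$ and $\eta$ with $r\cdot\omega=\theta\dbar\eta$; equivalently, $\omega=\theta\dbar\eta'$ in the $\U(\Gamma)$-tensored bicomplex. The reduction from the $D'D''$-version to this $(\dbar,\theta)$-version is allowed by the bigrading preservation of $\Delta_{D''}$ and the compatibility of the Hodge projection for $D''$ with projection onto pure bidegree.

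Granted this lemma, $E_{2}$-degeneration is formal in the abelian category of $\U(\Gamma)$-modules. For $[\alpha]\in E_{r}^{p,q}$ with $r\geq 2$, we have $\dbar\alpha=0$ and $\theta\alpha=\dbar\beta$ for some $\beta$. The form $\theta\alpha$ is $\theta$-closed (since $\theta^{2}=0$), $\dbar$-closed (since $\dbar\theta\alpha=-\theta\dbar\alpha=0$) and $\dbar$-exact by assumption, so the $\theta\dbar$-lemma yields $\theta\alpha=\theta\dbar\eta$ for some $\eta$. Setting $\alpha'=\alpha-\dbar\eta$ preserves the class of $\alpha$ in $H^{p,q}_{\dbar(2)}(X,E)\otimes_{\VN(\Gamma)}\U(\Gamma)$ and satisfies $\dbar\alpha'=0$ and $\theta\alpha'=\theta\alpha-\theta\dbar\eta=0$ exactly. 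In Deligne's inductive construction of $d_{r}[\alpha]$ one may then take all of the auxiliary chain $\alpha'_{1},\alpha'_{2},\ldots$ equal to zero, whence $d_{r}[\alpha]=0$ for every $r\geq 2$.

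The main obstacle I anticipate is the second step: extracting the clean $\theta\dbar$-lemma from the Galois $\partial\dbar$-lemma for harmonic Higgs bundles. One must track how the affiliated correction $r\in\VN(\Gamma)$ interacts with the bidegree projections, and verify that the orthogonal splittings supplied by the $\Gamma$-Fredholm property of $\Delta_{D''}$ are compatible with the $\dbar$- and $\theta$-Hodge decompositions separately. Once this bookkeeping is in place, the abstract spectral sequence argument above is purely formal.
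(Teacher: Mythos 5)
Your plan hinges on a two-types lemma for the pair $(\theta,\dbar)$, and that is exactly where it breaks down. The Hodge-theoretic input you invoke to justify it is not available for a general harmonic bundle: the Hilbert-space anticommutators $\{\theta,\dbar^{\star}\}$ and $\{\dbar,\theta^{\star}\}$ do \emph{not} vanish, and $\Delta''_{\Higgs}$ does \emph{not} preserve the bigrading. The genuine K\"ahler identities recorded in the paper pair $\theta$ with $\partial_{h}$ and $\dbar$ with $\theta^{*}_{h}$, i.e.\ they relate $D'_{\Higgs}$ to $D''_{\Higgs}$, not $\dbar$ to $\theta$ inside $D''_{\Higgs}$. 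A rank-one example already refutes your claim: on an elliptic curve with a non-flat K\"ahler metric, $E=\O$ with the flat bundle metric and $\theta=dz$ is a harmonic Higgs bundle, and one computes that $\{\dbar,\theta^{\star}\}$ sends the $(1,0)$-form $b\,dz$ to $b\,\partial_{\bar z}\langle dz,dz\rangle\, d\bar z\neq 0$. In particular $\Delta_{\dbar}$ and $\Delta_{\theta}$ do not share harmonic spaces, $\theta$ of a $\Delta_{\dbar}$-harmonic form need not be harmonic (this is why the paper's statement involves $\pi\theta$ rather than $\theta$), and the mechanism behind the principle of two types of Section~\ref{ddbar lemma for higgs bundles} --- which rests on $\Delta=2\Delta''_{\Higgs}=2\Delta'_{\Higgs}$ and $(D'_{\Higgs})^{\star}=i[\Lambda,D''_{\Higgs}]$ --- has no analogue for $(\theta,\dbar)$.

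Moreover your key lemma is false as stated: take $\theta=0$ (a unitary flat bundle); any nonzero $\dbar$-exact $(p,q)$-form is $\dbar$- and $\theta$-closed, yet $r\cdot\omega=\theta\dbar\eta=0$ is impossible for injective $r$. In your application $\omega=\theta\alpha$ is additionally $\theta$-exact, so what you actually need is the strictness statement $\theta(\Ker\dbar)\cap\Im(\dbar)\subseteq\Im(\theta\dbar)$ modulo torsion; this is essentially a refined form of the degeneration you are trying to prove, and it cannot be obtained by ``tracking $r$ and the bidegree projections'', precisely because of the failure of the identities above. The paper argues differently: the two-types principle is proved only for $(D'_{\Higgs},D''_{\Higgs})$, it yields, as in Simpson, the chain of quasi-isomorphisms through $(\Ker(D'_{\Higgs})\otimes\U(\Gamma),D''_{\Higgs})$ (formality), and $E_{2}$-degeneration together with the identification of $\Gr_{F}$ with $\Ker(\pi\theta)/\Ran(\pi\theta)$ on harmonic spaces is then extracted from that package by a filtration/dimension argument over $\U(\Gamma)$, using that all the $\Gamma$-dimensions involved are finite by $\Gamma$-Fredholmness. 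If you want a direct argument, route it through $\Ker(D'_{\Higgs})$ or the harmonic spaces of the total Laplacian, not through a $(\theta,\dbar)$-lemma.
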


If  $Y$ is a compact manifold, the spectral sequence  $$H^{i}(Y,\p\H^{j}(\theta))\abupt \HH^{i+j}(Y,\p(E\otimes \Omega^{.},\theta))$$ (where the functor $\p$ is defined in (\ref{l2 direct image})) proves vanishing and non-vanishing theorems in the range related to the dimension of the homology sheaves $\H^{j}(\theta)$ of $\theta$. 
 One deduces the following theorem, which implies the generic vanishing theorem of Green-Lazarsfeld \cite{GreLaz}:
\begin{theorem}
Let $p:X\to Y$ be a Galois covering of a compact Kähler manifold. Let $E\to Y$ be a flat unitary bundle and let $\theta$ be a holomorphic $1-$form on $Y$ such that $p^{*}(\alpha)$ is exact. Then
$$
H^{i}_{\dbar (2)}(X,E\otimes \Omega^{j})\otimes_{\VN(\Gamma)}\U(\Gamma)=0 \quad\text{for}\quad|i+j-n|>\dim Z(\alpha).
$$
\end{theorem}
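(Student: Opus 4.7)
The plan is to combine the $E_{2}$-degenerate Hodge-to-de-Rham spectral sequence (the previous theorem) with the Koszul structure of the Higgs field $\theta=\alpha\cdot\id_{E}$ on the compact base $Y$.

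Since $Y$ is compact K\"ahler, the holomorphic 1-form $\alpha$ is automatically bounded, so the harmonic Higgs bundle $(E,\dbar,\theta,h)$ with scalar Higgs field $\theta=\alpha\cdot\id_{E}$ satisfies the hypotheses of the previous theorem. The $E_{2}$-degeneration identifies
\begin{align*}
\bigoplus_{p+q=k}E_{2}^{p,q}(\theta)\ \cong\ H^{k}_{\nabla(2)}(X,E)\otimes_{\VN(\Gamma)}\U(\Gamma),
\end{align*}
where $E_{1}^{p,q}=H^{q}_{\dbar(2)}(X,E\otimes\Omega^{p})\otimes_{\VN(\Gamma)}\U(\Gamma)$ are the Dolbeault groups targeted by the theorem and $E_{2}^{p,q}$ is the cohomology of $\theta\wedge$ at position $p$. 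I then invoke the hypercohomology spectral sequence on $Y$ recalled just above the statement,
\begin{align*}
E_{2}^{a,b}=H^{a}(Y,\p\H^{b}(\theta))\ \abupt\ \HH^{a+b}\bigl(Y,\p(E\otimes\Omega^{\bullet},\theta)\bigr)\cong H^{a+b}_{\nabla(2)}(X,E)\otimes_{\VN(\Gamma)}\U(\Gamma).
\end{align*}
Because $\theta=\alpha\cdot\id$ is a Koszul-type differential, the cohomology sheaves $\H^{b}(\theta)$ of $(E\otimes\Omega^{\bullet},\theta)$ are coherent sheaves on $Y$ supported in $Z(\alpha)$. Hence $\p\H^{b}(\theta)$ has support of dimension at most $d:=\dim Z(\alpha)$, giving $H^{a}(Y,\p\H^{b}(\theta))=0$ for $a>d$. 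Combined with $\H^{b}(\theta)=0$ for $b>n$, this forces $\HH^{k}=0$ for $k>n+d$. The symmetric vanishing $\HH^{k}=0$ for $k<n-d$ follows by applying the same argument to the dual flat unitary bundle $E^{*}$ (which also satisfies the hypothesis) together with $l^{2}$-Serre duality $H^{i}_{\dbar(2)}(X,E\otimes\Omega^{j})\cong H^{n-i}_{\dbar(2)}(X,E^{*}\otimes\Omega^{n-j})^{*}$, valid because $X$ has bounded geometry and $E$ is unitary.

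The final step is to pass from vanishing of the $l^{2}$-de Rham abutment $\HH^{k}$ to vanishing of each individual Dolbeault piece $E_{1}^{p,q}$. The $E_{2}$-degeneration only yields $\bigoplus_{p+q=k}E_{2}^{p,q}=\HH^{k}$, and $E_{2}^{p,q}$ is merely a subquotient of $E_{1}^{p,q}$. The extra input should come from a parallel Koszul analysis using the conjugate Higgs field $\bar\theta=\bar\alpha\cdot\id$ on the double complex $(E\otimes\Omega^{\bullet,\bullet},\dbar,\theta,\bar\theta)$: the bi-support of $\H^{\bullet}(\theta)$ and $\H^{\bullet}(\bar\theta)$ in $Z(\alpha)$, combined with strictness of the Hodge filtration forced by harmonicity, refines the bigrading and propagates the vanishing down to each bi-graded Dolbeault summand.

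\noindent\textbf{Main obstacle.} The crux is precisely this descent from vanishing of the total de Rham group $\HH^{k}$ to vanishing of each $l^{2}$-Dolbeault bi-piece $E_{1}^{p,q}$. Carrying out the bigrading refinement rigorously in the $\VN(\Gamma)$-$\U(\Gamma)$ framework, where $\U(\Gamma)$ is not a field and Murray-von Neumann dimensions replace integer ranks, requires verifying strictness and exactness of the two commuting filtrations (Hodge and Koszul) in the category of modules over affiliated operators, so that the usual abelian-category arguments extend to this von Neumann setting.
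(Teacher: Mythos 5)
There is a genuine gap, and it sits exactly where you flag your ``main obstacle.'' Your first half reproduces the paper's route: the $E_{2}$-degenerate Hodge-to-de Rham spectral sequence, the second spectral sequence $H^{a}(Y,\p\H^{b}(\theta))\abupt \HH^{a+b}$ on the compact base, the support estimate for the Koszul homology sheaves $\H^{b}(\theta)$ along $Z(\alpha)$, and the resulting vanishing of the abutment for $|k-n|>\dim Z(\alpha)$ (the paper gets the lower half of the range from $\H^{b}(\theta)_{x}=0$ for $b<\codim Z(\alpha)_{x}$ rather than from Serre duality, but that is a harmless variant). This only yields that the complex $\bigl(H^{i}_{\dbar(2)}(X,E\otimes\Omega^{\bullet})\otimes_{\VN(\Gamma)}\U(\Gamma),\,\theta\bigr)$ is \emph{exact} in the stated range, i.e.\ vanishing of $E_{2}^{p,q}$, not of $E_{1}^{p,q}$.

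The missing idea is that you never use the hypothesis that $p^{*}(\alpha)$ is exact, and this hypothesis is precisely what closes the gap in the paper. Writing $p^{*}(\alpha)=du$ with $u$ of linear growth and cutting off, $\theta\cdot\beta=\lim_{\epsilon\to 0}d\bigl[\chi(\epsilon\, d(0,\cdot))\,u\,\beta\bigr]$ in $L^{2}$ for every harmonic form $\beta$, so $\theta\cdot\beta$ lies in the closure of the range of $d$ and the induced map $\pi\theta$ on the harmonic spaces (the $d_{1}$-differential of the first spectral sequence) vanishes; this is the Jost--Zuo observation quoted in the paper. Hence $E_{1}=E_{2}$, and the exactness just established in the range $|i+j-n|>\dim Z(\alpha)$ is the vanishing of each individual group $H^{i}_{\dbar(2)}(X,E\otimes\Omega^{j})\otimes_{\VN(\Gamma)}\U(\Gamma)$. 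Your proposed substitute -- a conjugate Koszul analysis with $\bar\theta$ plus ``strictness of the Hodge filtration'' in the $\U(\Gamma)$-module category -- is not carried out and is not needed; without some argument forcing the $E_{1}$-differentials to vanish (which fails for general $\alpha$, as the paper's non-compact counterexample on punctured curves illustrates for a related statement), the descent from the abutment to the bigraded pieces cannot work.
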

The case of a punctured curve implies that a similar statement for coverings of Zariski open subsets of Kähler manifolds is false.

\medskip

We next discuss quickly the point $(ii)$. The heat equation proof  of $(ii)$ needs control on the remainder terms in the asymptotic expansion of the heat kernel $e^{-t\Delta}$ as $t$ goes to zero. Therefore, we add the ad hoc hypothesis that the pullback of the Dirac bundle is of bounded geometry in the sense of Definition~\ref{manifold of bounded geometry} 
and state an index theorem (see Theorem~\ref{index theorem}).

The characteristic integrals are interpreted in the context of a Zariski open set $Y=\bar Y\setminus D$ where $D$ is a normal crossings divisor in a compact Kähler manifold $Y$.
It is well known (\cite{CorGri,KobRyo,TiaYau,Zuc}) that there exists a complete Kähler metric $\omega_{\bar Y,D}$ of finite volume on $Y$, whose covariant derivatives of any order of its Riemannian tensor are bounded. Moreover for   suitable coverings $p:X\to Y$, the pull back metric $\omega=p^{*}(\omega_{\bar Y,D})$ will have a positive injectivity radius. We call such a covering a Poincaré covering.
\begin{theorem}Let $\bar E\to \bar Y$ be a holomorphic vector bundle on the compact Kähler manifold $\bar Y$. Assume $E:=\bar E_{|Y=\bar Y\setminus D}\to Y$ is equipped with a Hermitian metric $h$, which is good in the sense of Mumford (\cite{Mum}, cf.\ Sec.~\ref{Interpretation of the characteristic integrals}). Assume furthermore that the pullback bundle is of bounded geometry.
Let  $p:(X,\omega)\to (Y,\omega_{\bar Y,D})$ be a Poincaré Galois covering. Let $T(\bar Y)(-\log(D))$ be the logarithmic holomorphic tangent bundle with respect to $D$ (which is the dual of the logarithmic holomorphic cotangent bundle). Then
\begin{align*}
\chi_{(2)}(X,E,h,\dbar) &:=\sum_{i}(-1)^{i}\dim_{\Gamma}\H^{(0,i)}_{\dbar(2)}(X,E,h)\\
                                          &=\int_{\bar Y}\Todd[\,T(\bar Y)(-\log(D))\,]\Ch(\bar E)\,.
\end{align*}
On deduces the $l^{2}-$Euler characteristic of $p:(X,\omega)\to Y$ is equal to the logarithmic Euler characteristic of $(\bar Y,D)$:
 \begin{align*}
\chi_{(2)}(X,\CC,d) =\sum_{j}(-1)^{j}\chi_{(2)}(X,\Omega^{j}(X),\dbar)
                                          =\int_{\bar Y} c_{n}(T(\bar{Y})(-\log(D)))=\chi(\bar Y\setminus D)\,.
\end{align*}
\end{theorem}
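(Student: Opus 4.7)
The plan is to deduce both statements from the $l^{2}-$index theorem (Theorem~\ref{index theorem}) applied to the pullback of the Dolbeault complex with coefficients in $E$, and then translate the resulting Chern--Weil integral on the open manifold $Y$ into a cohomological integral on the compactification $\bar Y$ by means of Mumford's theory of good metrics.

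First I would apply Theorem~\ref{index theorem} to the $\Gamma$-equivariant Dirac-type operator $\dbar+\dbar^{*}$ acting on $L^{2}(X,E\otimes \Lambda^{0,\bullet})$. Since $E$ is of bounded geometry on $X$ and the Poincar� metric $\omega_{\bar Y,D}$ has bounded covariant derivatives of the Riemann tensor of every order, the asymptotic expansion of the heat kernel along the diagonal is uniformly controlled on a fundamental domain of $p$. The index theorem then identifies the $l^{2}-$Euler characteristic with a Chern--Weil integral:
\[
\chi_{(2)}(X,E,h,\dbar) \;=\; \int_{Y} \Todd(TY,\omega_{\bar Y,D})\,\Ch(E,h).
\]

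The main obstacle lies in converting this integral over the open manifold into a topological integral on $\bar Y$. By Mumford~\cite{Mum}, the Poincar� metric on $TY$ is a good singular metric on its canonical extension, which is precisely $T(\bar Y)(-\log D)$; by hypothesis, $h$ is good on $\bar E$. Consequently the Chern--Weil forms $\Todd(TY,\omega_{\bar Y,D})$ and $\Ch(E,h)$ are $L^{1}_{\mathrm{loc}}$ on $\bar Y$, extend across $D$ as $d$-closed currents, and represent the rational cohomology classes $\Todd(T(\bar Y)(-\log D))$ and $\Ch(\bar E)$ respectively. Because their wedge product is $L^{1}$ on $Y$ and $D$ has Lebesgue measure zero, the two integrations agree and one obtains
\[
\chi_{(2)}(X,E,h,\dbar) \;=\; \int_{\bar Y} \Todd\bigl(T(\bar Y)(-\log D)\bigr)\,\Ch(\bar E),
\]
which gives the first formula. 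Checking carefully that the interchange of the integrals and the current pairing is legitimate, using Mumford's decay estimates for the Chern forms near $D$, is the technical heart of the argument.

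For the corollary, I would apply the first formula to each bundle $\Omega^{j}(X)=p^{*}\Omega^{j}(Y)$, whose natural compactification is $\Omega^{j}_{\bar Y}(\log D)$ equipped with the good Hermitian metric induced by $\omega_{\bar Y,D}$ (bounded geometry of this pullback follows from that of the Poincar� metric). This yields
\[
\chi_{(2)}(X,\Omega^{j}(X),\dbar) \;=\; \int_{\bar Y}\Todd\bigl(T(\bar Y)(-\log D)\bigr)\,\Ch\bigl(\Omega^{j}_{\bar Y}(\log D)\bigr).
\]
The classical identity $\sum_{j}(-1)^{j}\Ch(\Lambda^{j}V^{*})\cdot \Todd(V)=c_{\Rank V}(V)$ (verified via the splitting principle) applied to $V=T(\bar Y)(-\log D)$ collapses the alternating sum to $\int_{\bar Y}c_{n}(T(\bar Y)(-\log D))$, which in turn equals the topological Euler characteristic $\chi(\bar Y\setminus D)$ by the logarithmic Gauss--Bonnet formula (additivity of $\chi_{top}$ along the stratification by the smooth strata of the normal crossings divisor $D$). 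Finally, the identity $\chi_{(2)}(X,\CC,d)=\sum_{j}(-1)^{j}\chi_{(2)}(X,\Omega^{j}(X),\dbar)$ follows from the $\VN(\Gamma)$-equivariant Fr�licher spectral sequence for $L^{2}$ forms, whose $E_{1}$ page is the $L^{2}$-Dolbeault cohomology and whose $E_{\infty}$ page computes the $L^{2}$-DeRham cohomology: each $E_{r}^{p,q}$ has finite Murray--von Neumann dimension, the differentials are $\Gamma$-equivariant, and additivity of $\dim_{\Gamma}$ along exact sequences preserves the alternating sum from page to page.
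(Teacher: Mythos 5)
Your overall strategy coincides with the paper's: apply the $l^{2}-$index theorem (Theorem~\ref{index theorem}) to $\dbar+\dbar^{*}$ on the pullback bundle to get $\chi_{(2)}(X,E,h,\dbar)=\int_{Y}\Todd(TY,\omega_{\bar Y,D})\Ch(E,h)$, then use Mumford's goodness to pass to $\bar Y$, and finally use the splitting-principle identity $\sum_{j}(-1)^{j}\Ch(\Lambda^{j}V^{*})\Todd(V)=c_{n}(V)$ for $V=T\bar Y(-\log D)$. You differ from the paper in two places. For the open-to-compact conversion you invoke Mumford's theorem that the good Chern--Weil forms define closed currents representing the classes of the extensions and then pair the $L^{1}$ product directly; the paper instead writes explicit transgression formulas $\Todd(\omega_{\bar Y,D})=\Todd(h_{\log D})+dT$ and $\Ch(E,h)=\Ch(\bar E,\bar h)+dT'$ with $T,T'$ bounded in the Poincar\'e metric and kills the exact terms by Gaffney's theorem on the complete manifold (Cor.~\ref{characteristic logarithmic}). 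Your version is correct but, as you note, the legitimacy of identifying $\int_{Y}\alpha\wedge\beta$ with the cup-product pairing of the current classes is exactly the content of those transgression estimates, so nothing is saved. For the last equality you quote a logarithmic Gauss--Bonnet formula $\int_{\bar Y}c_{n}(T\bar Y(-\log D))=\chi(\bar Y\setminus D)$, whereas the paper obtains it from Hirzebruch--Riemann--Roch together with Deligne's identification $H^{i}(\bar Y\setminus D,\CC)=\HH^{i}(\bar Y,(\Omega^{\tbullet}_{\bar Y}(\log D),d))$; both are standard and equivalent.

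The one step you should repair is the identity $\chi_{(2)}(X,\CC,d)=\sum_{j}(-1)^{j}\chi_{(2)}(X,\Omega^{j}_{X},\dbar)$. Appealing to a ``$\VN(\Gamma)$-equivariant Fr\"olicher spectral sequence for $L^{2}$ forms'' is not something available off the shelf on a non-compact manifold: the relevant cohomologies here are reduced (harmonic-space) cohomologies, the differentials need not have closed range, and the paper's spectral-sequence machinery (Sec.~\ref{ddbar lemma for higgs bundles}) is set up for the Higgs differential after tensoring with $\U(\Gamma)$, not for the $\partial$-induced $d_{1}$ of a Fr\"olicher sequence; so as stated this step is not justified. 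It is also unnecessary: since $p^{*}(\omega_{\bar Y,D})$ is a complete K\"ahler metric, square-integrable $\Delta$-harmonic $i$-forms decompose by bidegree, $\H^{i}_{d(2)}(X)=\oplus_{k+l=i}\H^{k,l}_{\dbar(2)}(X)$, which gives the desired identity of $\Gamma$-dimensions immediately; this is exactly how the paper argues.
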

For a certain restricted class of harmonic bundles, the following fact can be shown:
\begin{theorem} Let $p:X\to Y$ be a Poincaré covering as above. Let $(E,\dbar,\theta,h)\to Y$ be a tame nilpotent harmonic bundle then
\begin{align*}\chi_{(2)}(X,(E,h)\otimes \Omega^{p}_{X},\dbar)=r\int_{\bar Y}\Todd (T\bar Y(-\log(D)))\Chern(\Omega^{p}_{\bar Y}(\log(D)))\,.\end{align*}
\end{theorem}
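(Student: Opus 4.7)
The strategy is to reduce the statement to the preceding index theorem for the Dolbeault cohomology of a good Hermitian bundle, applied to the twisted bundle $(E,h)\otimes \Omega^{p}_{X}$, and then to use the vanishing of the Chern character of the canonical prolongation of a tame nilpotent harmonic bundle.

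First I would verify that the bundle $(E\otimes \Omega^{p}_{X},\, h\otimes h_{\omega})$, where $h_{\omega}$ is the metric on $\Omega^{p}_{X}$ induced from $\omega_{\bar Y,D}$, satisfies the hypotheses of the preceding theorem. For the cotangent factor this is classical: the Poincar\'e-type metric $\omega_{\bar Y,D}$ endows $\Omega^{1}_{\bar Y}(\log D)$ with a good singular Hermitian metric in the sense of Mumford, hence the same holds for $\Omega^{p}_{\bar Y}(\log D)$, and its pullback to $X$ is of bounded geometry by the very definition of a Poincar\'e covering. For a tame nilpotent harmonic bundle $(E,\dbar,\theta,h)$, the work of Simpson and Mochizuki provides a canonical holomorphic prolongation $\overline{E}$ on $\bar Y$ on which $h$ is good in Mumford's sense; nilpotency of the residues of $\theta$ trivialises the parabolic filtration so that $\overline{E}$ is an ordinary holomorphic bundle, and tameness together with the bounded geometry of the base yields bounded geometry for the pullback. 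Goodness and bounded geometry are stable under tensor products, so the twisted bundle fits the hypotheses of the previous theorem with prolongation $\overline{E}\otimes \Omega^{p}_{\bar Y}(\log D)$.

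Applying that theorem yields
\begin{align*}
\chi_{(2)}(X,(E,h)\otimes \Omega^{p}_{X},\dbar) = \int_{\bar Y}\Todd(T\bar Y(-\log D))\cdot \Chern\bigl(\overline{E}\otimes \Omega^{p}_{\bar Y}(\log D)\bigr)\,,
\end{align*}
and by multiplicativity of the Chern character the integrand factors as $\Chern(\overline{E})\cdot \Chern(\Omega^{p}_{\bar Y}(\log D))$.

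The main obstacle, and the step where the harmonic hypothesis is genuinely used, is the identity $\Chern(\overline{E})=r$ with $r=\rank(E)$, i.e.\ the vanishing in positive degrees of the Chern character of the canonical prolongation of a tame nilpotent harmonic bundle. This is a theorem of Mochizuki extending Simpson's vanishing of Chern classes of flat bundles on projective manifolds to the quasi-projective tame nilpotent setting: the Chern-Weil forms built from $h$ are $L^{1}$ on $Y$ by tameness, the Higgs self-duality $\theta\wedge\theta=0$ combined with the harmonicity of $h$ furnishes pointwise identities that kill their cohomology classes, and nilpotency of the residues ensures that the boundary contributions along $D$ vanish. Substituting $\Chern(\overline{E})=r$ into the formula above delivers the claimed identity. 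Without the nilpotency hypothesis one would be forced to replace $\Chern(\Omega^{p}_{\bar Y}(\log D))$ by a parabolic Chern character twisted by non-trivial weights, so nilpotency is precisely what produces the clean multiplicative formula.
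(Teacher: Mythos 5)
Your reduction hinges on two claims about the compactification of $E$: that the harmonic metric $h$ is good in Mumford's sense on a canonical prolongation $\bar E$ (because ``nilpotency of the residues trivialises the parabolic filtration''), and that $\Chern(\bar E)=r$. Neither is justified, and both fail in general under the stated hypotheses. Nilpotency of the residues of $\theta$ does not force trivial parabolic structure: a flat unitary line bundle with local monodromy $e^{2\pi i\alpha}$, $\alpha\notin\ZZ$, has $\theta=0$ (so it is tame and nilpotent in the sense of the paper), yet every prolongation carries a non-zero parabolic weight; in a local frame of the prolongation the metric behaves like $|z|^{2\alpha}$, so $(\det h)^{-1}$ violates Mumford's growth condition and $h$ is good on no holomorphic extension, and the ordinary Chern character of the prolongation is not $r$ (only the parabolic Chern character vanishes). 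So the identity you would obtain from the good-metric corollary applied to $\bar E\otimes\Omega^{p}_{\bar Y}(\log D)$ is not the claimed formula in these cases, even though the theorem itself remains true. The appeal to ``a theorem of Mochizuki'' killing $\Chern(\bar E)$ in positive degrees therefore cannot carry the proof as stated.

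The paper's argument never compactifies $E$ and is much more elementary at the crucial point: harmonicity gives $\Theta_{h}=-[\theta,\theta^{*}_{h}]$, and since $\theta\wedge\theta=0$ and $\theta^{*}_{h}\wedge\theta^{*}_{h}=0$ one has $(-1)^{k}\Trace(\Theta_{h}^{k})=\Trace((\theta\theta^{*}_{h})^{k})+\Trace((\theta^{*}_{h}\theta)^{k})$ with $\Trace((\theta\theta^{*}_{h})^{k})=\Trace([(\theta\theta^{*}_{h})^{k-1}\theta,\theta^{*}_{h}])=0$, so $\Chern(E,h)=r$ \emph{pointwise as a form on $Y$}, irrespective of any parabolic data at $D$. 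The tame nilpotent hypothesis is used only to bound the Higgs field in the Poincar\'e metric, which makes the pullback of bounded geometry so that the $l^{2}$-index theorem applies over $Y$; multiplicativity then leaves $r\int_{Y}\Todd(TY,\omega)\Chern(\Omega^{p}_{Y},\omega)$, and only this factor is converted into $\int_{\bar Y}\Todd(T\bar Y(-\log D))\Chern(\Omega^{p}_{\bar Y}(\log D))$ using goodness of the Poincar\'e metric on the logarithmic bundles, a bounded transgression term, and Gaffney's theorem, exactly as in Corollary~\ref{characteristic logarithmic}. Note that the mechanism you gesture at inside your citation (``pointwise identities that kill'' the Chern forms) is the right one; applied directly on $Y$ it renders the prolongation, the goodness of $h$, and the vanishing of boundary contributions unnecessary.
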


We refer to e.g. \cite{BalBruCar},\cite{Zuc},\cite{Lot} for comparisons between the characterictic integrals with the index of various Dirac operators on $Y$.
\medskip

The invariance by homotopy of $l^{2}-$Betti numbers ((iii) above) is a deep property in the context of the Cheeger-Gromov theory. It is a result of Dodziuk \cite{Dod} in the compact case. It was generalized to $l^{2}-$torsion by Gromov and Shubin in \cite{GroShu}.
We will take some care to expose its proof in the following case:
\begin{theorem}\label{homotopy invariance introduction}
Let  $p:(X,g)\to (Y,g_{0})$, $g=p^{*}(g_{0})$, be a Galois covering of a Riemannian manifold $(Y,g_{0})$ of finite volume such that $g$ is of bounded curvature of positive injectivity radius. Let $(E,h,\nabla_{h}, \nabla)\to (Y,g_{0})$ be a Riemannian bundle of bounded curvature equipped with a flat connection $\nabla$. Assume that it satisfies property $AC^{0}$ (cf.\ Definition~\ref{property A})\footnote{It is satisfies in the Kähler case and should be true in general.}. Let $\underline{E}=\Ker(\nabla)$ be the locally constant sheaf that it defines. Let $ Y_{0}\subset Y$ be a compact submanifold with smooth boundary, $\dim  Y_{0}=\dim Y$, such that $  Y_{0}\to Y$ is a homotopy equivalence.

Let $K(Y_{0})$ be a simplicial structure on $Y_{0}$, and  let $p^{-1}[K(Y_{0}), \underline{E}]$ be the pullback simplicial complex with coefficients in the pull-back system. Then:
\begin{align}\label{equality introduction}
b^{i}_{(2)}(X,E,\nabla, h)=b_{(2)}^{i}(p^{-1}[K(Y_{0}), \underline{E}])\,.
\end{align}
%
Moreover the $l^{2}-$Betti numbers are homotopy invariant, and are computable in terms of $l^{2}-$sim\-pli\-cial cohomology.
\end{theorem}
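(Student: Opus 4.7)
The plan is to produce a $\Gamma$-equivariant weak equivalence of Hilbert complexes between the analytic $l^2$-de Rham complex $\Omega^{\tbullet}_{(2)}(X, E, \nabla)$ and the pullback $l^2$-simplicial complex $C^{\tbullet}_{(2)}(p^{-1}[K(Y_{0}), \underline{E}])$. Once this is in hand, equation \ref{equality introduction} is automatic, since a weak equivalence of Hilbert $\Gamma$-complexes preserves $\Gamma$-dimensions of cohomology. Homotopy invariance then reduces to the elementary combinatorial homotopy invariance of pullback simplicial $l^2$-cohomology, which goes back to Dodziuk \cite{Dod}: two homotopy-equivalent compact submanifolds $Y_{0}, Y_{0}' \subset Y$ give $\Gamma$-equivariantly chain homotopy equivalent pullback complexes, hence equal $\Gamma$-dimensions.

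The construction splits in two pieces. First, the compact core step: choose a $\Gamma$-invariant, smoothly bounded neighborhood $U$ of $p^{-1}(Y_{0})$ that deformation retracts $\Gamma$-equivariantly onto $p^{-1}(Y_{0})$. On this compact core one imports the $\Gamma$-equivariant version of Dodziuk's de Rham theorem: the Whitney embedding
\[
C^{\tbullet}_{(2)}(p^{-1}[K(Y_{0}), \underline{E}]) \longrightarrow \Omega^{\tbullet}_{(2)}(p^{-1}(U), E)
\]
and integration over simplices are mutually quasi-inverse bounded $\Gamma$-chain maps, with uniform cone homotopies. Boundedness and continuity are ensured by the bounded geometry of $E$ and by the bounded geometry inherited by $p^{-1}(K(Y_{0}))$ from the compactness of $K(Y_{0})$.

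Second, the ends step: I must show that the restriction
\[
\Omega^{\tbullet}_{(2)}(X, E) \longrightarrow \Omega^{\tbullet}_{(2)}(p^{-1}(U), E)
\]
is a weak isomorphism of Hilbert $\Gamma$-complexes. The natural approach is to use the flat connection $\nabla$ and a smooth deformation retraction $Y \to Y_{0}$ to build a $\Gamma$-equivariant chain homotopy $h$ on $p^{-1}(Y \setminus U)$ with $d_{\nabla} h + h d_{\nabla} = \mathrm{id}$, bounded in the $l^2$ operator norm; this is exactly the content one needs to extract from property $AC^{0}$ on the ends. Combined with the $\Gamma$-Fredholmness of $d_{\nabla} + d_{\nabla}^{*}$ (Cheeger-Gromov, as generalized in the present paper to bounded-curvature bundles), this forces the kernel and cokernel of restriction to have zero $\Gamma$-dimension, hence weak invertibility at the level of cohomology.

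The hard part will be the ends step. Although $Y \setminus Y_{0}$ is topologically trivial, its preimage in $X$ can carry a geometry as complicated as the cusps of a finite-covolume locally symmetric space, and positive injectivity radius alone does not prevent $l^{2}$-harmonic forms from concentrating near infinity. Extracting an $l^{2}$-bounded $\Gamma$-equivariant parametrix for $d_{\nabla}$ on the ends — which is essentially the content of property $AC^{0}$ — is the real work of the proof, and is precisely why the theorem is only established here under the ad hoc hypothesis that the authors nevertheless expect to hold in general.
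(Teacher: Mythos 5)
Your compact-core step coincides with the paper's first ingredient (the $\Gamma$-equivariant Dodziuk--Schick de Rham theorem for coverings of compact manifolds with boundary), and the core-plus-ends skeleton with Mayer--Vietoris matches the paper's outline. The gap is in your ends step, in two respects. First, property $AC^{0}$ is not what you take it to be: it is an approximation hypothesis (existence of a metric $h'$ comparable to $h$ for which $(E,h')$ has bounded geometry, compatible with the symmetries), whose role is to make the bounded-geometry machinery available -- uniform Sobolev embeddings, the $\Gamma$-trace property of $W^{s}\hookrightarrow L^{2}$, elliptic boundary regularity, uniform bounds on Bergmann kernels. It says nothing about an $l^{2}$-bounded parametrix or chain homotopy for $d_{\nabla}$ on the ends. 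Second, the chain homotopy you propose does not exist in general: an identity $d_{\nabla}h+h\,d_{\nabla}=\mathrm{id}$ with $h$ bounded on $L^{2}$ over $p^{-1}(Y\setminus U)$ would force the reduced $l^{2}$-cohomology of that fixed end to vanish identically, which is strictly stronger than what is true (for a fixed compact core the end cohomology is merely of finite, possibly nonzero, $\Gamma$-dimension). Moreover the natural candidate for $h$, integration along a deformation retraction of the end onto $\partial U$, is not $L^{2}$-bounded on a finite-volume end: points are moved an unbounded distance and the pullback of forms along the retraction flow admits no uniform bound, so no choice of retraction produces the bounded $\Gamma$-equivariant homotopy your argument requires.

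The paper's actual mechanism at the ends is asymptotic rather than exact: after establishing the Mayer--Vietoris sequence of $\Gamma$-Fredholm Hilbert complexes (minimal extension on $X\setminus\bar U$, maximal on $X$ and on $\bar U$) and a monotonicity lemma, it proves the Cheeger--Gromov convergence theorem along a special exhaustion $Z_{k}$ built from Dafermos's exhaustion function, so that the manifolds $p^{-1}(\bar Z_{k})$ and the ends have geometry bounded uniformly in $k$ (uniform collar lemma) while $\mathrm{Vol}(Y\setminus Z_{k})\to 0$; uniform bounds on the Bergmann kernels then give $\dim_{\Gamma}\underline{H}^{i}_{(2)}(X\setminus p^{-1}(Z_{k}),E)\to 0$. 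Combining this with the de Rham theorem on each $p^{-1}(\bar Z_{k})$ and with the combinatorial (simplicial/sheaf-theoretic) homotopy invariance identifying $b^{i}_{(2)}(p^{-1}(Z_{k}),E)$ with $b^{i}_{(2)}(p^{-1}[K(Y_{0}),\underline{E}])$ yields (\ref{equality introduction}) in the limit; no bounded homotopy on a fixed end is ever used. To repair your outline, replace the ends step by this convergence argument -- uniform exhaustion, volume of the ends tending to zero, uniform Bergmann kernel bounds -- which is precisely where bounded curvature, positive injectivity radius, finite volume and $AC^{0}$ enter.
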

Homotopy invariance of the $l^{2}-$Betti numbers implies that the above theorem holds for any compact submanifolds $Y_{0}$.
On one hand, the $l^{2}-$Betti numbers satisfy Poincaré-Hodge duality, as they are metric invariants, on the other hand, they vanish in degree not less than the homotopical dimension of $Y$. Hence:
\begin{corollary} Moreover, assume that $Y$ has the homotopy type of a finite $CW-$complex of dimension $k$, then $b^{i}_{(2)}(X,E,\nabla, h)=b_{(2)}^{i}(p^{-1}[K(Y_{0}), \underline{E}])$ is non-vanishing only in the range  $\dim_{\RR}Y-k\leq i\leq k$.  In particular:

\item[i)] If $2k<\dim Y$, then any Galois covering of positive injectivity radius is $l^{2}-$acyclic.
\item[ii)] Assume $Y$ is a Stein manifold of dimension $n$, and let $(E,\dbar,\theta,h)\to Y$ be a harmonic Higgs bundle, with bounded Higgs field, then the homology of
$$ \ldots\overset{\theta\otimes 1}{\to}H^{p-1,q}_{\dbar(2)}(X,p^{*}(E))\otimes \U(\Gamma) \overset{\theta\otimes 1}{\to}H^{p,q}_{\dbar(2)}(X,p^{*}(E))\otimes \U(\Gamma) \overset{\theta\otimes 1}{\to}\ldots
$$
is vanishing if $p+q\not=n$.
\end{corollary}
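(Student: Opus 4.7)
The plan is to combine the homotopy invariance of Theorem~\ref{homotopy invariance introduction} with $l^{2}$-Poincar\'e--Hodge duality to locate the support of the $b^{i}_{(2)}$, and then to run the resulting vanishing through the degeneration of the Hodge-to-DeRham spectral sequence to obtain (ii).

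First I would use Theorem~\ref{homotopy invariance introduction} to replace the analytic Betti numbers $b^{i}_{(2)}(X,E,\nabla,h)$ by the combinatorial $l^{2}$-Betti numbers of the pulled-back simplicial complex $p^{-1}[K(Y_{0}),\underline{E}]$, where $Y_{0}\subset Y$ is a compact codimension-zero submanifold that is a homotopy equivalence. The hypothesis that $Y$ has the homotopy type of a finite CW-complex of dimension $k$ allows one to choose $K(Y_{0})$ concentrated in degrees $\leq k$, so that the pull-back chain complex vanishes above degree $k$ and $b^{i}_{(2)}=0$ for $i>k$. The $l^{2}$-Poincar\'e--Hodge duality available on the oriented complete Riemannian manifold $X$ (through the $\Gamma$-equivariant twisted Hodge $\star$-operator, and the bundle-coefficient extension of Theorem~\ref{cheeger-gromov} already set up earlier in the paper) provides the matching vanishing $b^{i}_{(2)}=0$ for $i<d-k$, with $d=\dim_{\RR}Y$. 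The non-vanishing $l^{2}$-Betti numbers are therefore confined to the window $d-k\leq i\leq k$, and statement (i) is the immediate observation that this window is empty as soon as $2k<d$.

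For (ii), I would invoke the classical Andreotti--Frankel/Milnor theorem that a Stein manifold of complex dimension $n$ has the homotopy type of a CW-complex of real dimension at most $n$. Applying the preceding step with $k=n$ and $d=\dim_{\RR}Y=2n$ forces $b^{i}_{(2)}(X,E,\nabla,h)=0$ for every $i\neq n$. Since tensoring with the affiliated ring $\U(\Gamma)$ is exact, collapses the reduced-versus-unreduced $l^{2}$ discrepancy and preserves dimension (L\"uck), the $\U(\Gamma)$-modules $H^{i}_{\nabla(2)}(X,E)\otimes_{\VN(\Gamma)}\U(\Gamma)$ vanish for $i\neq n$. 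Now invoke the main theorem on degeneration of the Hodge-to-DeRham spectral sequence at $E_{2}$: each $E_{2}^{p,q}=E_{\infty}^{p,q}$ is a subquotient of the vanishing abutment $H^{p+q}_{\nabla(2)}(X,E)\otimes\U(\Gamma)$ and must therefore be zero whenever $p+q\neq n$. Since $E_{2}^{p,q}$ is by construction the cohomology at position $p$ of the complex $(H^{\bullet,q}_{\dbar(2)}(X,p^{*}E)\otimes\U(\Gamma),\theta\otimes 1)$, the assertion (ii) follows.

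The technically delicate point, and the one I would expect to absorb the bulk of the effort, is justifying $l^{2}$-Poincar\'e--Hodge duality in the present non-compact, possibly non-unitary setting: one has to verify that the twisted Hodge $\star$-operator, composed with the metric identification of $E$ with $\overline{E}^{*}$, descends to a $\Gamma$-equivariant Hilbert-module isomorphism of the harmonic spaces in complementary degrees, with no boundary contributions thanks to the finite volume and bounded curvature hypotheses. This ultimately rests on the $E$-twisted $\Gamma$-Fredholmness of $d+d^{*}$ coming from the bundle-valued extension of Theorem~\ref{cheeger-gromov}. Once duality is granted, the remaining chain of implications (comparison with simplicial $l^{2}$-cohomology, passage to $\U(\Gamma)$-coefficients, and the spectral-sequence bookkeeping) is essentially formal.
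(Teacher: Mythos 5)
Your proposal follows the paper's own route: homotopy invariance (Theorem~\ref{homotopy invariance introduction}) to reduce to simplicial $l^{2}$-cohomology vanishing above the homotopical dimension $k$, Poincar\'e--Hodge duality via the (metric-twisted) star operator to get vanishing below $\dim_{\RR}Y-k$, the Andreotti--Frankel bound for Stein manifolds, and then the $E_{2}$-degeneration of the Hodge-to-DeRham spectral sequence over $\U(\Gamma)$ to transfer the concentration in degree $n$ to the $\theta$-complex on Dolbeault $l^{2}$-cohomology. This is essentially the argument given in the paper, including the point you flag about duality, which the paper handles through Schick's duality for harmonic spaces and the $\sharp$-operator remark.
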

Hence duality allows to deduce a vanishing theorem below the homotopical dimension (the local system $\p(\underline{E})$ is defined in (\ref{l2 direct image})):
\begin{corollary}
Let $\bar Y$ be a compact Kähler manifold, $D$ a normal crossings divisor such that $Y=\bar Y\setminus D$ is Stein. Let $p:X\to \bar Y\setminus D$ be a Poincaré covering.
Let $(E,\nabla)\to \bar Y\setminus D$ be a semi-simple unipotent flat bundle. Then $H^{i}(Y,\p(\underline{E}))\otimes_{\VN(\Gamma)} \U(\Gamma)=0$ if $i\not=\dim_{\CC}Y$.
\end{corollary}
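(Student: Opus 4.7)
The plan is to reduce the statement to the vanishing of $l^{2}$-Betti numbers of $X$ outside the middle degree, via the identification of $H^{i}(Y,\p(\underline{E}))$ (modulo $\U(\Gamma)$) with reduced $l^{2}$-cohomology upstairs.

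First, since $Y=\bar Y\setminus D$ is Stein of complex dimension $n$, the Andreotti--Frankel theorem provides a finite CW-structure on $Y$ of real dimension $k=n$. Hence $\dim_{\RR}Y-k=n=k$, so the homotopy range in the preceding corollary collapses to the single degree $i=n$. To apply that corollary to $(E,\nabla)$, I would first promote it to a harmonic bundle: by Corlette--Simpson, a semi-simple flat bundle admits a harmonic metric $h$, and the unipotency assumption ensures tameness in the sense of Simpson. Combined with the fact that $p:(X,\omega)\to(Y,\omega_{\bar Y,D})$ is Poincar\'e, one checks that $(E,h,\nabla_{h},\nabla)$ has bounded curvature and satisfies property $AC^{0}$ (as is indicated in the footnote to Theorem \ref{homotopy invariance introduction} for the K\"ahler case). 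Applying the corollary to Theorem \ref{homotopy invariance introduction} then yields
\begin{align*}
b^{i}_{(2)}(X,E,\nabla,h)=0\quad\text{for all }i\neq n.
\end{align*}

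Second, I would interpret the sheaf cohomology $H^{i}(Y,\p(\underline{E}))$ in terms of the $l^{2}$-direct image functor $\p$ defined in (\ref{l2 direct image}). By construction $\p(\underline{E})$ is a sheaf of $\VN(\Gamma)$-modules on $Y$ whose sections are local $l^{2}$-sections of the pulled-back local system, and its hypercohomology computes the DeRham $l^{2}$-cohomology of $X$ with values in $E$ up to a convergence issue. Tensoring by $\U(\Gamma)$ trivialises this issue: since $\U(\Gamma)$ is flat over $\VN(\Gamma)$ and turns closure-of-image into image, one obtains a natural isomorphism
\begin{align*}
H^{i}(Y,\p(\underline{E}))\otimes_{\VN(\Gamma)}\U(\Gamma)\;\cong\; H^{i}_{\nabla(2)}(X,E,h)\otimes_{\VN(\Gamma)}\U(\Gamma).
\end{align*}

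Combining the two steps immediately gives the vanishing for $i\neq n$. The main obstacle is the second step: producing the clean identification above. One has to carry out the \v{C}ech-to-derived-functor spectral sequence for $\p(\underline{E})$ on $Y$ and check that after the $\U(\Gamma)$-twist it degenerates (or collapses sufficiently) to yield a termwise isomorphism with DeRham $l^{2}$-cohomology. The fact that $Y_{0}\subset Y$ is a compact deformation retract, together with the simplicial comparison provided by Theorem \ref{homotopy invariance introduction}, should let us realise both sides as the $l^{2}$-cohomology of the same simplicial complex $p^{-1}[K(Y_{0}),\underline{E}]$, again after tensoring with $\U(\Gamma)$, which closes the argument.
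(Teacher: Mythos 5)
Your argument follows the paper's own route: the vanishing outside degree $n$ is obtained exactly as in the paper from the duality-plus-homotopy-dimension corollary, after using semi-simplicity and unipotency to produce a tame harmonic metric with nilpotent residues, hence a bounded Higgs field and a pullback of bounded geometry on the Poincar\'e covering so that property $AC^{0}$ holds; and the identification $H^{i}(Y,\p(\underline{E}))\otimes_{\VN(\Gamma)}\U(\Gamma)\simeq \H^{i}_{\nabla(2)}(X,E)\otimes_{\VN(\Gamma)}\U(\Gamma)$ that you flag as the main obstacle is precisely the theorem the paper proves immediately before this corollary, by realising both sides through the lifted simplicial complex $p^{-1}[K(Y_{0}),\underline{E}]$ (homotopy invariance, convergence, and Mayer--Vietoris injectivity), which is the same simplicial comparison you sketch. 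So the proposal is correct and essentially coincides with the paper's proof.
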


\medskip
Concluding remarks:
A large part of this article is expository, the only new point concerns the degeneracy of the Hodge to DeRham spectral sequence for the $l^{2}-$cohomology of Higgs bundles on covering spaces of open manifolds.
The exposition given by Cheeger and Gromov uses numerous results spread in four different papers (\cite{CheGro85-bounds},\cite{CheGro85-characteristic},\cite{CheGro-group}, \cite{CheGro91-chopping}). Some parts of this material were discussed by other authors, in particular by Lück, Lück-Lott, and Schick (\cite{LotLuc,Luc,LucSch,SchDissertation}). However, we tried to organise the material in a single linear short paper with emphasises on the main implications and subtleties.

These points understood, we tried to apply the Cheeger-Gromow theory to harmonic Higgs bundles on covering of Zariski open sets in Kähler manifolds. It has a drawback in this context. Assuming the pullback metric of the covering $p:X\to Y$ is of positive injectivity radius and bounded curvature (i.e.\ $p$ is of Poincaré type) seems to be either a strong constraint or not easy to check. However elementary examples show that the previous results are not valid for arbitrary Galois coverings of Zariski open sets.

In a forthcoming article, the first named author will develop another model which allows the study of any Galois covering $p:X\to Y$ of a Zariski open subset of a compact Kähler manifold.

The first named author addresses his thanks for the highly profitable working days  and  nice hospitality at the Marburg Institute. Many thanks also go to
the team Géometrie of Institut Elie Cartan, to the teams Analyse complexe et Géometrie and Topologie et Géometrie algébrique of Institut mathématiques de Jussieu
for the numerous discussions and explanations on the subject.
The second named author expresses his thanks for the kind hospitality and perfect working conditions at Institut mathématiques de Jussieu.

\section{preliminaries}
\subsection{Manifolds of bounded geometry}\label{manifold of bounded geometry}
\begin{definition}[\cite{Kor,ShuNantes,SchNachrichten}]
\begin{itemize}
\item[1)] A Riemannian manifold $(X,g)$ is of bounded geometry if
\begin{itemize}
\item[a)]The injectivity radius $r_{inj}(X)$ is strictly positive,
\item[b)] all covariant derivative of its Riemannian tensor are bounded, $||\nabla^{k}R||_{\infty}\leq C_{k}$, $k\in \NN$.
This last condition is equivalent to: Let $r\in ]0,r_{inj}(X)[$ and $y: U_{x,r}\to \RR^{n}$ and $y': U_{x',r}\to \RR^{n}$ be two domains of canonical coordinates. Then $y'\circ y^{-1}: y(U_{x,r}\cap U_{x',r})\to \RR^{n}$ satisfies \begin{align*}\forall\alpha\in \NN,\, \exists C_{\alpha,r}\, \text{s.t. } \,\forall x,x'\in X, \,\,|\partial^{\alpha}_{y}(y'\circ y^{-1})|\leq C_{\alpha,r}\,.
\end{align*}
\end{itemize}
\item[2)]  A Riemannian manifold with boundary $(\bar X,g)$ is of bounded geometry if
\begin{itemize}
\item[a)] $(\partial X,g_{|\partial X})$ is of bounded geometry,
\item[b)] there exists $r_{c}>0$ such that $e:\partial X\times [0,r_{c}[\to \bar X\quad\quad (x,t)\mapsto \exp_{x}(t\nu_{x})$ is a bilipschitz diffeomorphism, here $\nu_{x}$ is the unit inward normal vector at $x\in \partial X$,
\item[c)] the injectivity radius $i_{x}>r_{inj}>0$ for all $x\in X\setminus e(\partial X\times [0,r_{c}[)$ and $(b)$ above is true for $x\in X:=\bar X\setminus \partial X$,
\item[d)] the second fundamental form $l$ of $\partial X$ has all its covariant derivatives bounded: $\forall k\in \NN,\exists C'_{k}$ such that $||\nabla_{\partial X}^{k} l||_{\infty}\leq C'_{k}$.
\end{itemize}
\end{itemize}
\end{definition}

Let $r\leq \min{(r_{inj}(X),r_{inj}(\partial X))}$. Let $x\in \partial X$, choose an orthonormal frame in $T_{x}(\partial X)$,  let $\kappa_{x}: B_{\RR^{n-1}}(0,r)\times [0,r[\to X$ be given by $(v,t)\mapsto \exp_{\exp_{x}(v)}(t\nu_{\exp(v)})$. If $x\in X$, choose an orthonormal frame in $T_{x}(X)$, let $\kappa_{x}: B_{\RR^{n}}(0,r)\to X$ be $v\mapsto \exp_{x}(v)$. Let us denote by $U_{x,r}$ the image of $\kappa_{x}$.
Then
\begin{theorem}(loc.cit.)\label{uniform partition of unity}
For any $r>0$, there exists a covering $U(r)=\{U_{x_{i},r},\, x_{i}\in \bar X, i\in \ZZ\}$ of $\bar X$ such that $i\geq 0\Rightarrow x_{i}\in \partial X$, $i<0\Rightarrow x_{i}\in X\setminus e(\partial X\times \frac{2r}{3})$, $U(\frac{r}{2})$ is a covering of $\bar X$, there exists $(\theta_{i})_{i\in \ZZ}$ a subordinate uniform partition of unity for $\bar X$:
\begin{align}
\forall k\in \NN,\exists A_{k}\geq 0 &\text{  s.t.   }\quad (\forall i\in \ZZ)\quad ||\nabla_{g}^{k}\theta_{i}||_{\infty}\leq A_{k}\,,\label{boundness}\\
 \forall s>0,\, \exists M_{s}\in \NN & \text{ s.t. }\forall x\in \bar X, \quad  \sharp \{i\in \ZZ\text{ with }supp(\theta_{i})\cap B(x,s)\not=\emptyset\}\leq M_{s}\,.\label{multiplicity}
\end{align}
\end{theorem}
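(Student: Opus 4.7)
The plan is to build the covering greedily and then normalize coordinate-wise bump functions into a partition of unity. First, I would choose the centers $x_i$ by separate greedy procedures on the boundary and in the interior. Using bounded geometry of $(\partial X, g_{|\partial X})$, extract a maximal $r/2$-separated subset $\{x_i\}_{i \geq 0} \subset \partial X$; by maximality the Fermi neighborhoods $U_{x_i, r/2}$ then cover a collar of $\partial X$ of definite width, and the bilipschitz property of $e$ (hypothesis 2b) together with the bound on the second fundamental form (2d) ensures this collar contains $e(\partial X \times [0, 2r/3))$. Next, extract a maximal $r/2$-separated subset $\{x_i\}_{i < 0} \subset X \setminus e(\partial X \times [0, 2r/3))$; the interior injectivity radius bound (2c) ensures that the corresponding geodesic balls of radius $r/2$ cover what remains. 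Taking the union gives $U(r/2) \subset U(r)$ covering $\bar X$.

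Second, I would prove the finite multiplicity \eqref{multiplicity} by a standard volume argument. If $\supp(\theta_i) \cap B(x,s) \neq \emptyset$ then $d(x_i, x) < s + r$, so by $r/2$-separation the pairwise disjoint balls $B(x_i, r/4)$ all lie in $B(x, s+r+r/4)$. Curvature bounds give a uniform lower bound on $\Vol B(x_i, r/4)$ via exponential-map Jacobian estimates, and a uniform upper bound on $\Vol B(x, s+r+r/4)$ via a Bishop--Gromov type comparison, so the ratio yields the constant $M_s$. The boundary case is handled by running the same argument in $(\partial X, g_{|\partial X})$ and transferring through the bilipschitz collar.

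Third, I would fix once and for all a single smooth bump $\chi \in C^\infty_c(\RR^n)$ (with a half-space analog for boundary charts) equal to $1$ on $B(0, r/2)$ and supported in $B(0, r)$. Define $\phi_i = \chi \circ \kappa_{x_i}^{-1}$ on $U_{x_i, r}$, extended by zero. By condition 1b, and by 2d near the boundary, the metric coefficients and their derivatives are uniformly bounded in canonical coordinates, so all intrinsic covariant derivatives $\nabla^k_g \phi_i$ are uniformly bounded independently of $i$. Let $\Phi = \sum_j \phi_j$; by finite multiplicity this is a locally finite smooth function, bounded above by $M_r$ and bounded below by $1$ since $U(r/2)$ covers $\bar X$ and $\phi_j \equiv 1$ on $U_{x_j, r/2}$. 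Setting $\theta_i = \phi_i / \Phi$ and applying the product rule yields \eqref{boundness}.

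The main obstacle I anticipate is the coordination across the boundary collar: verifying that bump functions built in Fermi coordinates for boundary centers and in exponential coordinates for interior centers fit together with uniform derivative bounds on their overlaps. The conditions 2b (bilipschitz collar), 2d (bounds on covariant derivatives of the second fundamental form), and 1b applied in the collar give precisely the control needed on the change of coordinates between the two systems, so that $\phi_i/\Phi$ inherits the required uniform bounds regardless of which of the two coordinate families the center $x_i$ belongs to.
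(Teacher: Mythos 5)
The paper itself does not prove this statement; it is quoted (loc.\ cit.) from the references on bounded geometry (Shubin, Schick, Kordyukov), and your overall strategy is exactly the standard argument found there: greedy maximal separated nets of centers on $\partial X$ and in the interior, a volume-comparison argument for the uniform multiplicity bound, and a fixed bump function in canonical/Fermi coordinates normalized by the locally finite sum, with the uniform derivative bounds coming from conditions 1b), 2b), 2d). The multiplicity step and the normalization step of your sketch are fine.

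There is, however, a genuine flaw in the covering step, i.e.\ precisely in the claim that $U(\tfrac r2)$ already covers $\bar X$. You assert that the union of the half-radius boundary charts $U_{x_i,r/2}$ contains the collar $e(\partial X\times[0,\tfrac{2r}{3}))$. This cannot be: by definition $U_{x_i,r/2}=\kappa_{x_i}\bigl(B_{\RR^{n-1}}(0,\tfrac r2)\times[0,\tfrac r2[\bigr)$ consists only of points of normal depth $t<\tfrac r2<\tfrac{2r}{3}$, so no point $e(y,t)$ with $t\in[\tfrac r2,\tfrac{2r}{3})$ lies in any boundary half-chart. Such points must therefore be caught by the interior half-balls; but your interior centers form a maximal $\tfrac r2$-separated subset of $X\setminus e(\partial X\times[0,\tfrac{2r}{3}))$, and maximality only guarantees that a point at depth $t\in[\tfrac r2,\tfrac{2r}{3})$ is within $\tfrac r2+(\tfrac{2r}{3}-t)\le \tfrac{2r}{3}$ of some center, not within $\tfrac r2$. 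So as written you only obtain that the sets at radius $\tfrac{2r}{3}$ (hence $U(r)$) cover, and the stated property of $U(\tfrac r2)$ is not established; moreover with $\Phi=\sum_j\phi_j$ no longer bounded below by $1$ in that band, the normalization $\theta_i=\phi_i/\Phi$ loses its uniform bounds there. The repair is routine: take finer nets (say a maximal $\tfrac r4$- or $\tfrac r6$-separated interior set, and boundary separation strictly less than $\tfrac r2$ to avoid the borderline case $|v|=\tfrac r2$); then $\tfrac r4+\tfrac r6<\tfrac r2$ closes the transition band, and the rest of your argument goes through unchanged.
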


\begin{definition}(loc.cit.) Let $(\bar X,g)$ be a manifold of bounded geometry.
\item[i)] A bundle $E\to X$ is of bounded geometry if its has trivializations $t_{x_{i}}: E_{|U_{x_{i},r}}\to U_{x_{i},r}\times F$ on a covering $\{U_{x_{i},r}\}_{i\in I}$ by coordinates charts as above such that the transition functions $g_{x_{i},x_{i'}}=t_{x_{i'}}\circ t^{-1}_{x_{i}}$ satisfy
\begin{align*}\forall\alpha\in \NN,   \,   \exists C_{\alpha,r}    \,    \text{s.t. } \forall i,i'\in I,   \,\,   |\partial^{\alpha}_{y}g_{x_{i},_{i'}}|\leq C_{\alpha,r}\,.\end{align*}
\item[ii)] A hermitian bundle $(E,h)\to (\bar X,g)$ is of bounded geometry, if $E\to X$ is of bounded geometry, and for any $k\in \NN$, the component of the derivatives, up to order $k$ of the matrices of $h$, in the above trivializations, are uniformly bounded with respect to $i\in I$.
\end{definition}
Hence any tensor bundle on a manifold of bounded geometry is of bounded geometry.
\begin{proposition}[{see \cite[Prop.\ 2.5]{Roe}, \cite[Appendix A]{AtiBotPat}}]
Let $(E,h)\to (\bar X,g)$ be a hermitian vector bundle on the manifold of bounded geometry $(X,g)$. Then $(E,h)$ is of bounded geometry, iff all the covariant derivatives of the curvature tensor of $h$ are bounded.
\end{proposition}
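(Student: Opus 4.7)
The plan is to prove both implications, using synchronous (radial parallel) frames as the bridge between the metric geometry of $(E,h)$ and its curvature.

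For the easy direction (bounded geometry $\Rightarrow$ bounded covariant derivatives of curvature), I would simply work in the trivializations $t_{x_i}$ given by the hypothesis. In such a trivialization, the connection $1$-form $\omega$ of $\nabla_h$ (the Chern/metric connection) is a universal algebraic expression in $h$, $h^{-1}$ and $\partial h$; the curvature is $R=d\omega+\omega\wedge\omega$; and a covariant derivative $\nabla^k R$ expands as a polynomial in $h$, $h^{-1}$, and the coordinate derivatives of the entries of $h$ up to order $k+2$, together with Christoffel symbols of $g$ and their derivatives. The bounded geometry of $(X,g)$ controls the Christoffel symbols and the change of canonical charts, while the bounded geometry of $(E,h)$ controls $h$ and its derivatives in the given trivializations; after the transition formulas, one obtains a uniform bound on $|\nabla^k R|$ in the orthonormal frame associated to $(g,h)$.

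For the hard direction (bounded derivatives of curvature $\Rightarrow$ bounded geometry of $(E,h)$), the plan is to construct a new family of trivializations, the \emph{synchronous frames} $\tilde t_{x_i}$, as follows. Fix $r\in\,]0,r_{\mathrm{inj}}(X)[$ and, at each center $x_i$ of the covering provided by Theorem~\ref{uniform partition of unity}, pick an $h$-orthonormal frame of $E_{x_i}$; extend it to $U_{x_i,r}$ by $\nabla_h$-parallel transport along the radial geodesics of $g$ based at $x_i$ (the Poincar\'e gauge). In this frame the metric matrix is the identity (parallel transport is isometric for the Chern connection), and the connection $1$-form $\tilde\omega$ satisfies the radial gauge condition $\iota_Y \tilde\omega=0$ where $Y=\sum y^\alpha\partial_{y^\alpha}$ in normal coordinates. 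Cartan's structural identity then gives the ODE $L_Y\tilde\omega=\iota_Y R$, and integration along radii combined with the bound $|R|\leq C_0$ shows $|\tilde\omega|\leq C_0 r$. Differentiating the ODE and using the assumed bounds $|\nabla^k R|\leq C_k$ together with the bounded geometry of $(X,g)$ (which bounds the difference between $\nabla^k$ and the coordinate derivatives) yields, by induction on $k$, uniform bounds $|\partial_y^\alpha \tilde\omega|\leq C_\alpha$ on each $U_{x_i,r}$ independently of $i$.

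It remains to estimate the transition functions $\tilde g_{x_i,x_{i'}}=\tilde t_{x_{i'}}\circ \tilde t_{x_i}^{-1}$ on $U_{x_i,r}\cap U_{x_{i'},r}$. These are unitary matrix-valued functions because both synchronous frames are orthonormal, and they satisfy the matricial ODE
\begin{equation*}
d\tilde g_{x_i,x_{i'}}=\tilde\omega^{(i')}\,\tilde g_{x_i,x_{i'}}-\tilde g_{x_i,x_{i'}}\,\tilde\omega^{(i)},
\end{equation*}
with initial value a fixed unitary matrix. The uniform boundedness of $\tilde\omega^{(i)}$ and $\tilde\omega^{(i')}$ in all derivatives, together with a standard Gr\"onwall/parabolic bootstrap applied to this ODE, yields uniform estimates on $|\partial_y^\alpha \tilde g_{x_i,x_{i'}}|$ for every multi-index $\alpha$. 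The definition of bounded geometry for $(E,h)$ is then satisfied in these synchronous trivializations, and the metric matrix in the frames is the identity, hence trivially has bounded derivatives.

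The main obstacle is the inductive estimate on $\tilde\omega$ in synchronous coordinates: one must control simultaneously the commutator terms coming from $\nabla^k R$ versus $\partial^k R$ (i.e.\ translate curvature bounds from the Levi-Civita-covariant sense to the coordinate sense) and the small polynomial growth coming from repeated radial integration. This is exactly the content of \cite[Prop.\ 2.5]{Roe} and \cite[Appendix A]{AtiBotPat}; the argument works because bounded geometry of $(X,g)$ already provides uniform Taylor estimates on the change between geodesic normal coordinates and the Riemannian covariant calculus.
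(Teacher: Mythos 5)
Your proposal is correct and follows essentially the same route as the paper, which gives no proof of its own but simply defers to \cite[Prop.~2.5]{Roe} and \cite[Appendix~A]{AtiBotPat}, whose argument is exactly the one you sketch. The synchronous (radial-gauge) frames, the identity $L_Y\tilde\omega=\iota_Y R$ integrated along radii, the inductive transfer of covariant-derivative bounds to coordinate bounds via the bounded geometry of $(X,g)$, and the Gr\"onwall estimate on the unitary transition functions are precisely the standard proof, so there is nothing substantive to add.
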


The existence of  a uniform partition of unity implies a Sobolev embedding property in terms of a metric connection $\nabla_{h}$ of a hermitian bundle $(E,h)$ of bounded geometry: If $s\geq\frac{n}{2}$, then
\begin{align}\label{sobolev embedding}
m_{s,k}: H^{s+k}(\bar X,E)\to \UC^{k}(\bar X,E)=\{a\in C^{k}(\bar X,E): \forall 0\leq i\leq k, \,||\nabla^{i}_{h}\alpha||_{\infty}<+\infty\}
\end{align}
is defined and bounded (see \cite{SchNachrichten}, \cite{Roe}, \cite{ShuNantes} or \cite[Chap.\ 4 Cor.\ 1.4 or Prop.\ 4.3]{Tay}).

\subsection{Approximation by metrics of bounded geometry}\label{approximation by bounded geometry}
Concerning the existence of manifolds of bounded geometry, we quote the following important approximation theorem. (see \cite[Thm.\ 2.5]{CheGro85-characteristic}). We use the formulation given by Shi \cite{Shi} for the uniqueness of the evolution equations (proved in \cite{CheZhu}) implies the preservation of the symmetries of the initial metric:
\begin{theorem}\label{smoothing of metric}
Let $(M,g)$ be a complete non compact Riemannian manifold with bounded Riemannian curvature tensor $|| R_{ijkl}||_{\infty,M}\leq k_{0}$. Then there exists $T>0$ such that the evolution equation $\partial_{t} g_{ij}=-2R_{ij}$ with initial data $g$, admits a unique solution $g_{t}$ on $M\times [0,T]$ such that $||\nabla_{g(t)}^{m} R_{ijkl}(t)||_{\infty,(M, g(t))}\leq c(m)t^{-m}$, $m\in\NN$, and $  e^{-2n\sqrt c_{0}t} g\leq  g(t)\leq e^{+2n\sqrt c_{0}t} g$, where  $c_{0}=||R(t)||_{\infty, M\times [0,T]}$ is a finite constant.
\end{theorem}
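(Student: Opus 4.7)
The plan is to follow Shi's original argument, extending short-time existence of the Ricci flow to non-compact manifolds of bounded curvature, and then to establish the derivative bounds via Bernstein-type maximum principle estimates. First, since the Ricci flow $\partial_{t}g_{ij}=-2R_{ij}$ is only weakly parabolic (its symbol degenerates along directions generated by $\mathrm{Diff}(M)$), I would apply the DeTurck trick: fix $\tilde g = g$ as a background and replace the equation by the strictly parabolic modified system involving the DeTurck vector field $W^{k}=g^{ij}(\Gamma^{k}_{ij}-\tilde\Gamma^{k}_{ij})$. A solution of the genuine Ricci flow is then recovered by composing the DeTurck solution with the flow of $-W$.

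\medskip

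For existence on non-compact $M$, I would exhaust $M$ by a sequence of relatively compact open sets $\Omega_{k}\nearrow M$ with smooth boundary and solve, on each $\Omega_{k}\times[0,T_{k}]$, the Dirichlet initial-boundary value problem for the Ricci-DeTurck equation. The decisive step is to obtain a uniform lower bound $T=T(n,k_{0})>0$ for the time of existence together with a uniform curvature bound, independent of $k$. A scalar maximum principle applied to $|R|^{2}$, which satisfies a reaction-diffusion inequality of the form
\begin{align*}
(\partial_{t}-\Delta)|R|^{2}\leq C(n)\,|R|^{3}-2|\nabla R|^{2}\,,
\end{align*}
produces $|R(t)|\leq 2k_{0}$ on $[0,T]$. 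Arzel\`a-Ascoli applied on each precompact subset then extracts a subsequential limit, which is a smooth solution of the Ricci flow on $M\times[0,T]$.

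\medskip

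The higher-order estimate $\|\nabla^{m}R\|_{\infty,g(t)}\leq c(m)\,t^{-m}$ is proved by induction on $m$ using the maximum principle applied to a carefully chosen scalar
\begin{align*}
F_{m}(x,t)=t^{m}|\nabla^{m}R|^{2}+\sum_{j<m}A_{j,m}\,t^{j}|\nabla^{j}R|^{2}\,,
\end{align*}
where the constants $A_{j,m}$ are fixed successively to absorb the bad cross terms of the form $|\nabla^{j}R||\nabla^{m-j}R||\nabla^{m}R|$ arising from the commutator formulae for $(\partial_{t}-\Delta)|\nabla^{m}R|^{2}$. The key cancellation is that the Bochner-type term $-2t^{m}|\nabla^{m+1}R|^{2}$ beats the polynomial-in-curvature cross terms once the lower-order derivatives are already controlled, giving a uniform bound $F_{m}\leq c(m)$. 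The metric equivalence $e^{-2n\sqrt{c_{0}}\,t}g\leq g(t)\leq e^{+2n\sqrt{c_{0}}\,t}g$ is then immediate from $|\partial_{t}g_{ij}|=2|R_{ij}|$ together with the bound relating $|Ric|$ to the full curvature norm and Gronwall's lemma integrated in $t$.

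\medskip

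The main obstacle is the uniqueness statement for non-compact $M$: standard energy methods require integration by parts at infinity that is not available, and, as is well known, uniqueness of the Ricci flow fails without a growth restriction on the solution. I would invoke the theorem of Chen-Zhu cited in the paper, which establishes uniqueness within the class of complete solutions of bounded curvature by running a DeTurck-type energy estimate against a delicate, slowly-growing cut-off. Apart from being essential for the statement itself, uniqueness is precisely what guarantees preservation of symmetry: if $g$ is invariant under a discrete group of isometries -- in particular, if it is the pullback of a metric from $Y$ via a Galois covering $p:X\to Y$ -- then so is $g(t)$, which is the property that makes this smoothing result usable elsewhere in the paper for producing $\Gamma$-invariant metrics of bounded geometry.
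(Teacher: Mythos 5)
Your proposal follows exactly the route the paper relies on: the paper gives no proof of this theorem but quotes it from Shi (existence, curvature and derivative bounds via the Ricci--DeTurck equation on an exhaustion plus Bernstein-type estimates, with the metric equivalence from formula (9) of Shi's paper) and from Chen--Zhu (uniqueness in the class of complete bounded-curvature solutions), which is precisely the argument you sketch, including the observation that uniqueness is what forces $g(t)$ to inherit the isometries of $g$. So your outline is correct and essentially identical in approach to the paper's treatment.
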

We refer to formula (9) in \cite{Shi} for the latter bounds. Uniqueness implies that if $g$ is invariant by a group of isometries, then $g(t)$ is also invariant. Hence, let  $p:(X,g)\to (Y,g_{0}) $ be a Riemannian covering map with $g$ of bounded curvature and positive injectivity radius, then $g(t)$ is of bounded geometry (because strict positivity of the injectivity radius is preserved by the flow) and descends to a metric $g_{0}(t)$. Moreover it is known that the Ricci flow preserves the Kähler condition.

We need an analogous theorem for vector bundles. However, a general reference seems not to be known, so we make the following definition.

\begin{definition} \label{property A} A Riemannian bundle $(E,h,\nabla_{h})\to (X,g)$ on a manifold of bounded geometry has the property $AC^{k}$ for $1\leq k\in \NN$, if there exists a metric $h'$ with metric connection $\nabla_{h'}$ on $E$, strictly positive constants $c_{1},c_{2}$ such that $(E,h')\to (X,g)$ has bounded geometry, and $c_{1}h\leq h'\leq c_{2}h$, and, if $1\leq k$,  the connection form  $\nabla_{h}-\nabla_{h'}=A_{h,h'}\in T^{*}(X)\otimes \End(E)$ has bounded covariant derivatives up to order $k-1$. Moreover, it is required that the symmetries of $(E,h,\nabla_{h})$ are symmetries of $(E,h',\nabla_{h'})$.
\end{definition}

\begin{example}
\begin{itemize}
\item[i)]
Assume that the holomorphic Hermitian vector bundle $(E,h)\to (X,\omega)$,  on a complete Kähler manifold, has bounded Chern curvature. Then the Hermite-Einstein flow exists and is unique (see e.g.\ \cite{Zha}). Moreover, assuming $\omega$ has bounded geometry, the standard estimates imply that $(E,h(t))\to (X,\omega)$ has bounded geometry if $t>0$.
\item[ii)]
Assume that a further flat connection $\nabla$ is given such that $\nabla-\nabla_{h}=A_{h}$ is bounded. Then $A_{h'}=\nabla-\nabla_{h'}$ is bounded, and from the equation $0=\nabla^{2}_{h'}+[\nabla_{h'},A_{h'}]+A_{h'}^{2}$, one infers that all covariant derivatives of $A_{h'}$ are bounded.
\end{itemize}
\end{example}
\subsubsection{Invariance by change of metric}\label{invariance by change of metric}
 The reduced cohomology groups (cf.\ \ref{notations mayer vietoris}) associated to metrics $(h,g)$ and $(h',g(t))$ with the same flat connection $\nabla$ are isomorphic $\Gamma-$Hilbert spaces.  Hence most statements that hold for these cohomology groups for bundles of bounded geometry will hold under the sole hypothesis of bounded curvature on a manifold of bounded curvature and positive injectivity radius. This is the case for property $(i)$ and $(iii)$ of Theorem~\ref{cheeger-gromov}. However, property $(ii)$ a priori requires the stronger assumption $AC^{1}$: in the Hermitian case, when passing from the metric $h$ to $h'$, the transgression formula $\Chern(E,h)-\Chern(E,h')=dT$ that relates the Chern characters should be established with $T$ bounded.

 We notice that the Bergmann kernels (Schwartz kernel of the orthogonal projection from $L^{2}$ onto the harmonic spaces) associated to different metrics will not be comparable in general, for they depend on higher order jets of the metric. Nevertheless, the integrals of theses kernels on a fundamental domain for the $\Gamma-$action will be equal, because the $\Gamma-$dimension of a $\Gamma-$Hilbert module  (cf.\ Def.~\ref{neumann dimension}) does not depend on its embedding.

\subsection{Example of a metric of bounded geometry and finite volume, Poincaré coverings}
The Poincaré metric $\omega_{P}=i\frac{dz\wedge d\bar z}{|z|^{2}(\log|z|^{2})^{2}}=\frac{i}{2}\partial\dbar \log(\log|z|^{2})^{2}$ on the punctured disc $D(0,1)\setminus\{0\}$ has Gaussian curvature -$1$ and finite volume near the puncture. The pullback of the Poincaré metric $\omega_{P}$ by the universal covering map $p:D(0,1)\to D(0,1)\setminus\{0\}$ is of bounded geometry. This fact motivates the following definition:
\begin{definition}\label{poincare covering} Let $Y_{1}$ be a complex manifold and $D$ a normal crossings divisor. A Poincaré covering for the pair $(Y_{1},D)$ is a covering $p:X\to Y_{1}\setminus D$ such that for any chart $D(0,1)^{n}\to U\subset Y_{1}$  with $D\cap U=\{z_{1}.\ldots z_{k}=0\}$, the connected components of $p^{-1}(U\setminus D)$ are simply connected.
\end{definition}
 In general, we will not specify a pair $(Y_{1}, D)$ more precisely, if the context is clear. A Poincaré covering for $(Y_{1}, D)$ restricts to a covering of bounded geometry over neighborhood of the boundary divisor $D$.

\begin{proposition}\label{Poincare metric}
Let $D=D_{1}+\ldots+D_{p}$ be a normal crossings divisor in a compact Kähler manifold $(\bar Y,\omega)$. Let  $s_{i}$ be a canonical section for $\O([D_{i}])$ (i.e.\ $\{s_{i}=0\}=D_{i}$) and $|.|$ be a smooth metric for $\O([D])$. Assume $|s_{i}|^{2}\leq e^{-\alpha}$ for some $\alpha>0$. Then for $\epsilon>0$ small enough, $$\omega_{\bar Y,D}:=\omega+\epsilon \, dd^{c}\sum_{i=1}^{p}-\log(|s_{i}|^{2}_{i}(-\alpha\log|s_{i}|^{2}_{i}))$$ is a complete Kähler metric of finite volume on $Y=\bar Y\setminus |D|$ such that its pullback to a Poincaré covering $X\to \bar Y\setminus |D|$ is of $C^{\infty}-$bounded geometry.
\end{proposition}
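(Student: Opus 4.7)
The plan is to work locally in polydisc charts of $\bar Y$ adapted to $D$, reduce the Kähler, completeness, finite-volume and bounded-geometry claims to explicit asymptotic estimates, and then globalize. Setting $t_i := -\log|s_i|^2 > \alpha$ on $\bar Y\setminus D_i$, one has $\phi_i := -\log(|s_i|^2(-\alpha\log|s_i|^2)) = t_i - \log(\alpha t_i)$, and a direct computation gives
\[
dd^c\phi_i \;=\; \Bigl(1-\tfrac{1}{t_i}\Bigr)\eta_i \;+\; \frac{dt_i\wedge d^c t_i}{t_i^2},
\]
where $\eta_i := -dd^c\log|s_i|^2$ extends to a smooth $(1,1)$-form on $\bar Y$ by Poincaré--Lelong. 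The first summand is smooth and bounded on $\bar Y$, the second is semi-positive, so for $\epsilon$ small enough $\omega_{\bar Y,D}$ is positive definite (the tangential directions are controlled by $\omega$, the normal ones by the Poincaré contribution), closed since each $dd^c\phi_i$ is closed, and hence a Kähler metric on $Y$.

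In a local chart where $D\cap U=\{z_1\cdots z_k=0\}$ and $|s_i|^2 = |z_i|^2 h_i$ with $h_i$ smooth and positive, the dominant normal contribution of $\epsilon\sum_i dd^c\phi_i$ near $D$ is $\epsilon\,i\,dz_i\wedge d\bar z_i/(|z_i|^2(\log|z_i|^2)^2)$, the standard cusp form. Completeness follows from divergence of $\int_0^\delta dr/(r|\log r|)$ along radial paths into $D$, while finiteness of the volume follows from convergence of $\int_0^\delta r\,dr/(r^2(\log r)^2)$ in the same directions combined with compactness of $\bar Y$ in the tangential factor. All covariant derivatives of the Riemann tensor of $\omega_{\bar Y,D}$ are bounded on $Y$ itself, because each derivative in the cusp direction brings only additional decaying factors of $1/t_i$, while smoothness of $\omega$, $\eta_i$ and $h_i$ on the compact $\bar Y$ controls the remaining terms.

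For the pullback to a Poincaré covering $p:X\to Y$ the decisive input is the definition: every connected component of $p^{-1}(U\setminus D)$ is simply connected, hence biholomorphic to the universal cover $\HH^k\times D(0,1)^{n-k}$ of $(D(0,1)\setminus\{0\})^k\times D(0,1)^{n-k}$. Under the uniformizations $z_i = \exp(iw_i)$, $w_i\in\HH$, the leading Poincaré term of $p^*\omega_{\bar Y,D}$ pulls back (up to a constant) to the product of standard hyperbolic metrics on the $\HH$-factors plus a smooth metric on the disc factor, which is a textbook example of a $C^\infty$-bounded-geometry manifold with infinite injectivity radius in every hyperbolic direction. The remaining terms are pullbacks of smooth tensors from $\bar Y$; since each derivative $\partial_{w_i}$ introduces a factor of $z_i = e^{iw_i}$ decaying exponentially in $\Im w_i$, they contribute uniformly bounded covariant derivatives of every order. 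Away from the cuspidal region, the restriction of $p$ to a compact core of $Y$ is a Galois covering of a compact manifold with boundary, for which $C^\infty$-bounded geometry is automatic.

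The main obstacle is the transition between the cuspidal ends and the compact core: one must verify that the two local models glue along a uniform annular region without loss of injectivity radius, and that all covariant derivatives of the Riemann tensor match up smoothly across the gluing. This is precisely what the Poincaré covering hypothesis provides, by decoupling the topology of the cusps (where the short horocyclic loops in $Y$ are unwound in $X$) from that of the compact core (where the remaining deck transformations act properly discontinuously with uniformly positive translation length). Piecing the local bounds together then yields $C^\infty$-bounded geometry globally on $X$.
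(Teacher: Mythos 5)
Your argument is correct and follows essentially the same route as the paper: the same $dd^{c}$ expansion of the potential (positivity, completeness and finite volume from the cusp asymptotics), and the same use of the Poincar\'e-covering hypothesis to identify each component of $p^{-1}(U\setminus D)$ with the universal cover of the punctured polydisc, where the paper simply quotes the Kobayashi/Tian--Yau quasi-coordinate estimates that you sketch by hand. Two cosmetic points only: the uniform $C^{\infty}$ bounds must be measured in charts of uniform size (rescaled half-plane or quasi-coordinates), not in the global $w_{i}$-coordinates where the hyperbolic coefficients degenerate, and over a compact core the injectivity-radius bound is immediate because the injectivity radius can only increase under a Riemannian covering, so no appeal to translation lengths of deck transformations (which need not be available for a non-Galois Poincar\'e covering) is needed.
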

\begin{proof}   We have
\begin{align}dd^{c}(-\log|s_{i}|^{2}_{i})-\alpha\log(-\log|s_{i}|^{2}_{i})= (1-\frac{\alpha}{-\log|s_{i}|^{2}})dd^{c}(-\log|s_{i}|^{2})+\frac{d|s_{i}|^{2}\wedge d^{c}|s_{i}|^{2}}{|s_{i}|^{4}(\log|s_{i}|^{2})^{2}}\,.
\end{align}

It is sufficient to prove that on the Poincaré covering it is of bounded geometry. Following notations of Def.~\ref{poincare covering}, let $c:{D}^{k}\times D^{n-k}\to U\setminus D$\,;  $z\mapsto (\ldots, \exp({\frac{z_{i}+1}{z_{i}-1}}),\ldots,z_{k+1},\ldots, z_{n})$ be the universal covering map.
 By definition any connected component $B_{0}$ of $p^{-1}({D^{*}}^{k}\times D^{n-k})$ is simply connected hence a biholomorphic map $l:D^{n-k}\times D^{k}\to B^{0}$ exists such that $p\circ l=c$. Then one uses the estimates given in \cite[Lemma2, p.405]{KobRyo} or \cite[Lemma 2.1, or p.603--605]{TiaYau}, where the local computations near the divisor do not use the fact that $\omega$ is a Ricci-form.
\end{proof}

\begin{example}
\begin{itemize}
\item[i)]  Let $\rho :\pi_{1}(Y)\to \Gamma$ be a representation such that $\pi_{1}(U\setminus D)\to \pi_{1}(Y)\overset{\rho}{\to} \Gamma$ is injective for any chart $U$ as in definition \ref{poincare covering}. Then the covering associated to $\ker \rho$ is a Poincaré covering. In particular, let $\gamma_{i}$ be a meridian around an irreducible component $D_{i}$ of $D$ and assume $D_{i}$ smooth. Assume that the following implication holds: $D_{i_{1}}\cap \ldots \cap D_{i_{k}}\not=\emptyset\Rightarrow \gamma_{i_{1}},\ldots,\gamma_{i_{k}}$ are rationally independent in $H_{1}(Y_{1},\QQ)$. Then any covering, which dominates the abelian covering is of Poincaré type. As example is the complement of two points in $\PP^{1}$, or of  three lines in general position in $\PP^{2}$.
\item[ii)] The following lemma proves that Poincaré coverings are final for quasi-projective manifolds.
\end{itemize}
\begin{lemma} 
\begin{itemize}
\item[a)] Let $D$ be a normal crossings divisor in a compact Kähler manifold $Y_{1}$. Assume that for any $x\in D$ and any irreducible component $D_{i}$ which contains $x$, there exist effective divisors $L_{i}$, $K_{i}$ such that $D_{i}\leq L_{i}\leq D$, $K_{i}\leq D$, $x\not \in K_{i}$, and $[L_{i}], [K_{i}]$ are proportional classes in $H^{2}(Y,\QQ)$.
Then there exists an Abelian Poincaré covering for the pair $(Y=Y_{1}\setminus D,D)$.
\item[b)]Let $D$ be a strictly normal crossings divisor in the projective manifold $Y_{1}$. Then there exists a finite number of smooth irreducible very ample divisors $H_{1},\ldots, H_{p}$ such that the pair $D'=D\cup_{i}H_{i}$ and  $Y=Y_{1}\setminus D'$ admits an Abelian Poincaré covering. Moreover, the kernel of $\pi_{1}(Y_{1}\setminus D')\to \pi_{1}(Y_{1}\setminus D)$ is central.
\end{itemize}
\end{lemma}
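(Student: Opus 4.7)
The plan splits along the two parts of the lemma, with (a) handled by an intersection-theoretic argument and (b) reduced to (a) via a Bertini construction.

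For (a), I would take the cover associated with the natural surjection $\rho:\pi_{1}(Y)\to H_{1}(Y,\ZZ)/\text{torsion}$, which is Abelian. By Example~(i) above, its Poincaré character is equivalent to the rational linear independence, at every local intersection point $x\in D_{i_{1}}\cap\cdots\cap D_{i_{k}}$, of the meridians $\gamma_{i_{1}},\dots,\gamma_{i_{k}}$ in $H_{1}(Y,\QQ)$. The key tool is the Gysin/excision sequence
\begin{equation*}
H_{2}(Y_{1},\QQ)\xrightarrow{\alpha\mapsto(\alpha\cdot D_{j})_{j}}\bigoplus_{j}H_{0}(D_{j},\QQ)\to H_{1}(Y,\QQ)\to H_{1}(Y_{1},\QQ)\to 0,
\end{equation*}
which identifies a relation $\sum_{l}c_{l}\gamma_{i_{l}}=0$ with the existence of $\alpha\in H_{2}(Y_{1},\QQ)$ such that $\alpha\cdot D_{i_{l}}=c_{l}$ and $\alpha\cdot D_{j}=0$ for $j$ outside $I=\{i_{1},\dots,i_{k}\}$. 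The proportionality $[L_{i_{l}}]=\lambda_{i_{l}}[K_{i_{l}}]$ intersected with such an $\alpha$ gives $\alpha\cdot L_{i_{l}}=\lambda_{i_{l}}\alpha\cdot K_{i_{l}}=0$, where the right-hand vanishing follows from the support hypothesis $x\notin K_{i_{l}}$ (so $K_{i_{l}}$ meets only $D_{j}$'s with $j\notin I$). Writing $L_{i_{l}}=D_{i_{l}}+E_{i_{l}}$ with $E_{i_{l}}\leq D-D_{i_{l}}$, the equations $\alpha\cdot L_{i_{l}}=0$ yield a linear system $c_{l}+\sum_{l'\neq l}\epsilon_{l,l'}c_{l'}=0$ with $\epsilon_{l,l'}\in\{0,1\}$. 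I expect this linear-algebra step to be the principal technical hurdle: the case where each $L_{i_{l}}$ can be chosen with $D_{i_{l}}$ as its only component through $x$ gives $c_{l}=0$ immediately, and more generally one should exploit the existential flexibility of the hypothesis, combining several admissible $(L,K)$-pairs, to produce a triangularizable/invertible system.

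For (b), I would build the $H_{j}$'s by Bertini. Fix a very ample $\O(H)$ on $Y_{1}$, a large integer $N$ so that $|NH|$ contains smooth irreducible members, and for each component $D_{i}$ of $D$ a multiple $m_{i}=N a_{i}\gg 0$ so that $|D_{i}+m_{i}H|$ is very ample. By Bertini I pick smooth irreducible divisors $H_{i}^{(1)},\dots,H_{i}^{(r_{i})}\in|D_{i}+m_{i}H|$, general enough that $\bigcap_{s}H_{i}^{(s)}\cap D=\emptyset$, together with sufficiently many general smooth irreducible $H^{\mathrm{aux}}_{1},H^{\mathrm{aux}}_{2},\ldots\in|NH|$. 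Setting $D':=D\cup\bigcup_{i,s}H_{i}^{(s)}\cup\bigcup_{b}H^{\mathrm{aux}}_{b}$, this is strictly normal crossings by Bertini. For $x\in D'$ on a branch $\Xi$: if $\Xi$ is one of the added divisors, take $L=\Xi$ and $K$ another member of the same linear system avoiding $x$, so $[L]=[K]$; if $\Xi=D_{i}$, pick $H_{i}^{(s)}\not\ni x$, set $K=H_{i}^{(s)}$, $L=D_{i}+H^{\mathrm{aux}}_{b_{1}}+\cdots+H^{\mathrm{aux}}_{b_{a_{i}}}$ with the chosen auxiliary divisors avoiding $x$, so that $[L]=[D_{i}]+a_{i}N[H]=[D_{i}]+m_{i}[H]=[H_{i}^{(s)}]=[K]$. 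Thus $(Y_{1},D')$ satisfies the hypothesis of (a), which produces the desired Abelian Poincaré covering.

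For the centrality of $\ker(\pi_{1}(Y_{1}\setminus D')\to\pi_{1}(Y_{1}\setminus D))$, I would add the $H_{j}$'s one at a time and invoke at each step the standard Zariski--Lefschetz fact: if $V$ is a smooth quasi-projective manifold and $H\subset V$ a smooth very ample divisor, then $\pi_{1}(V\setminus H)\twoheadrightarrow\pi_{1}(V)$ has kernel the cyclic subgroup generated by the meridian $\gamma_{H}$, which is central in $\pi_{1}(V\setminus H)$ by a Lefschetz pencil argument (the meridian bounds a small disc that can be homotoped along any loop of $V\setminus H$). Iterating over the finitely many $H_{j}$'s yields centrality of the full kernel.
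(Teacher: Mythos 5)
Your treatment of (a) is essentially the homological dual of the paper's argument: the paper uses classical Hodge theory on $Y_{1}$ to produce closed logarithmic one-forms $\alpha_{i,I}$ with residue divisor $aL_{i}-bK_{i}$ and defines the covering by their periods, whereas you test a putative relation among the meridians against $H_{2}(Y_{1},\QQ)$ through the Gysin sequence. The two formulations meet at exactly the same decisive point: one must know that the $k\times k$ matrix whose $(l,l')$ entry is the multiplicity of $D_{i_{l'}}$ in $L_{i_{l}}$ (equivalently, up to the factor $a$, the residue of $\alpha_{i_{l},I}$ along $D_{i_{l'}}$) has trivial rational kernel, and a nonnegative integer matrix with positive diagonal need not (the all-ones matrix is already singular). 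You state this step as an expectation rather than prove it, so your proof of (a) is genuinely incomplete there. You have in fact isolated the real content: the paper compresses precisely this point into the assertion that the restrictions of the $\alpha_{i,I}$ to $U\setminus D$ ``generate its cohomology''. To close the gap you would have to exploit the hypothesis at other points and strata, or restrict to the triangular situation in which each $L_{i}$ has $D_{i}$ as its only component through $x$, which, as in the paper, is all that part (b) actually requires.

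For (b) your Bertini construction parallels the paper's: the paper fixes one very ample $H$ with every $H+D_{i}$ very ample and takes $n+1$ general members of $|H|$ and of each $|H+D_{i}|$, so that at any point at least one member of each family misses it; your ``sufficiently many general'' auxiliaries need the same quantification (say $n+1$ members in general position, or enough that $a_{i}$ of them avoid any given point), a routine fix, and since the witnesses you produce are triangular, (b) only uses the easy case of (a), so that reduction is sound. The genuine gap is the centrality claim. The statement you call a standard Zariski--Lefschetz fact is false as stated: for $V=\PP^{2}\setminus(\ell_{1}\cup\ell_{2})$ and $H$ a third line through $\ell_{1}\cap\ell_{2}$, one has $\pi_{1}(V\setminus H)\cong F_{2}$ surjecting onto $\pi_{1}(V)\cong\ZZ$ with kernel neither cyclic nor central, although $H$ is smooth and very ample; your disc-pushing justification would apply verbatim in that example, so it cannot be a proof. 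What makes the meridians of the added divisors central is their positivity combined with the general position arranged by Bertini, and this is exactly Nori's theorem, which the paper invokes (\cite[Cor.~2.5 or 2.10]{Nor}); you need to cite or reprove it, not replace it by the homotopy of a small disc.
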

\begin{proof}
{\it a)} Let $D=\cup_{1\leq i\leq r}D_{i}$ be the irreducible decomposition of $D$, let $I\subset \{1,\ldots,r\}$ be chosen such that $D_{I}:=\cap _{i\in I}D_{i}$ is non empty. The divisors $L_{i}$ and $K_{i}$ associated to one point in $ D_{I}\setminus \cup_{j\not\in I}D_{j}$ satisfy the same condition for all other points in this set. Our hypothesis implies that there exists $a,b\in \NN^{*}$ such that $a L_{i}$ is homologous to $b K_{i}$. By classical Hodge theory there exists a logarithmic one form $\alpha_{i,I}$ with residue $a.L_{i}-b. K_{i}$. Since  $K_{i}$ does not intersect $ D_{I}\setminus \cup_{j\not\in I}D_{j}$, the residue of $\alpha_{i,I}$ on $D_{i}, \,i\in I$, is non vanishing.  Let $(z:U\to D(0,1)^{n})$ be a coordinate neighborhood centered at $x\in  D_{I}\setminus \cup_{j\not\in I}D_{j}$ such that $D\cap U=\{z_{1}\cdot\ldots\cdot z_{k}=0\}$. The restriction of the forms $\alpha_{i,I}, \, i\in I$ to $U\setminus D$ generates its cohomology.  The cover of $Y=Y_{1}\setminus D$ associated to the forms $\alpha_{i,I}, i\in I,\, I\subset \{1,\ldots,r\} $, which are closed holomorphic forms on $Y_{1}\setminus D$, defines an Abelian Poincaré covering of $Y\setminus D$ (set $\alpha_{i,I}=0$ if $D_{I}$ is empty).

{\it b)} Let $H$ be a very ample divisor such that $\forall i\in \{1,\ldots,r\},\, H+D_{i}$ is very ample. Let $H_{0},\ldots, H_{n}$ be smooth elements of the linear system of $H$ such that $\cup_{0\leq i\leq n}H_{i}\cup D$ is a normal crossings divisor. Define inductively divisors $L^{i}_{j}, \, 1\leq i\leq r,\, 0\leq j\leq n$ as follows:

The divisors $L^{1}_{0},\ldots, L^{1}_{n}$ are smooth members of the linear system of $H+D_{1}$ such that $\cup_{j} H_{j}\cup D\cup_{j} L^{1}_{j}$ is a normal crossings divisor.

Assume that the divisors $L^{i}_{j}$, $0\leq j\leq n$,  are defined, if $i\leq k$. Then $L^{k+1}_{0},\ldots, L^{k+1}_{n}$ are smooth members of the linear system of $H+D_{k+1}$ such that $\cup_{j} H_{j}\cup D\cup_{1\leq i\leq k+1, 0\leq j\leq n} L^{i}_{j}$ is a normal crossings divisor.


Then $D'=\cup_{0\leq j\leq n} H_{j}\cup D\cup_{1\leq i\leq r, 0\leq j\leq n} L^{i}_{j}$ satisfies $(a)$ above:  Assume $x\in D'$. After renumbering one may assume that $x$ does not belong to $H_{0}\cup_{1\leq i\leq r}L^{i}_{0}$ for any sets of $n+1$ irreducible component of $D'$ have empty intersection. Let $\sim$ denote linear equivalence. If $x\in H_{i}, i>0$, then $H_{i}-H_{0}\sim 0$, if $x\in D_{i}$ then $H_{0}+D_{i}-L^{i}_{0}\sim 0$, if $x\in L^{i}_{k}, k>0$ then $L^{i}_{k}-L^{i}_{0}\sim 0$ . Note moreover that logarithmic derivatives of the rational functions which give the above linear equivalence may be taken as the logarithmic forms( $\alpha_{i,I}$) constructed in the first point.
%

The fact that $\pi_{1}(Y_{1}\setminus D')\to \pi_{1}(Y_{1}\setminus D)$ is central is Nori's theorem (\cite[Cor.\ 2.5 or 2.10]{Nor}).
\end{proof}
\end{example}

\subsection{Sobolev spaces and functional calculus}
\subsubsection{Dirac operator}
%
%
As a general reference on Laplace operator on vector bundles and twisted Dirac operators  and their functional calculi, we will use \cite{BerGetVer},\cite{Roe-elliptic},\cite[Sec.\ 1]{Roe}, (see also\cite{MaMar}).
%

Let $(V, q)$ be an Euclidian space. The Clifford algebra $C(V,q)$ is the algebra generated by $V$ and relations $vw+wv=-2q(v,w)$, hence $v^{2}=-q(v)$. Let $V \subset C(V,q)$ act on $\Lambda V$ by  $c(v).\alpha=v\wedge\alpha-v\lrcorner\alpha$ where $v\in V$, $\alpha\in \Lambda V$, and $\lrcorner$ is the interior product associated to the Euclidean structure. Since $((v\wedge.) (v\lrcorner.)+(v\lrcorner.)(v\wedge.))\alpha=q(v)\alpha$, this action extends to $C(V,q)$ and defines a Clifford module structure on $\Lambda V$.

A Clifford module $E$ of $C(V,q)$ equipped with a metric is called self-adjoint if the operators $c(v)$ with $v\in V$ are skew-adjoint. Therefore, any unit vector operates in a unitary way, because $c(v)c(v)^{*}=q(v)$.

Let $(X,g)$ be a Riemannian manifold, and let $C(X,g)^{\CC}$ be the complexified Clifford algebra bundle of $X$ whose fiber at $x\in X$ is $C(T^{*}_{x}X,g_{x})\otimes \CC$ (here we use the notation of \cite{BerGetVer} and \cite{Tay}, which is different
from \cite{Roe}).  Since $\nabla g=0$,  the Levi-Cevita connection naturally extends to $C(X,g)^{\CC}$.
\begin{definition}
A Clifford bundle $(E,h,\nabla)\to X$ over $X$ is a hermitian vector bundle with a compatible connection and a structure of a left $C(X,g)^{\CC}-$module such that
\begin{itemize}
\item[i)] $\forall x\in X,\forall v\in T_{x}X$, $||v||=1$, $\quad c(v):E_{x}\to E_{x}$ is an isometry,
\item[ii)] $\quad\nabla c(v).s=c(\nabla v).s+c(v).\nabla s$.

 \item[iii)] The Dirac operator associated to the Clifford module is $D:=c\circ \nabla$, where $c:T^{*}X\otimes E\to E$, is given by the Clifford multiplication.

 \item[iv)] We say that  $E$ is $\ZZ_{2}-$graded, if it decomposes as $E=E^{+}\oplus E^{-}$, the decomposition is orthogonal (hence preserved by the metric connection) and the Clifford action is odd: if $\eta$ is the parity operator $\eta_{|E^{\pm}}=\pm Id_{E^{\pm}}$, then \begin{align}\eta\circ c(v)+c(v)\circ \eta=0\iff c(v):E^{\pm}\to E^{\mp}\,.
 \end{align}
\end{itemize}
 \end{definition}
 In a orthonormal basis $(e_{1},\ldots,e_{n})$ for $T_{x}X$, one has $Ds=\sum_{i}  c(e_{i})\nabla s$. If moreover $E$ is $\ZZ_{2}-$graded, then $D$ is odd, it exchanges sections of the positive and negative eigenbundles of $\eta$. In this case $D: C^{\infty}_{0}(X,E^{\pm})\to C^{\infty}_{0}(X,E^{\mp})$ is a generalized Dirac operator: that is the symbol of $D^{2}$ is $\sigma_{2}(D^{2})(\zeta)=|\zeta|^{2}\iff$ in local coordinates $D^{2}=\sum_{i,j}g^{i,j}\partial_{i}\partial_{j}+\text{first order terms}$ (\cite[p.116]{BerGetVer}, \cite[Chap.1]{Tay}).
 %

%
We give the three examples of Dirac operators that will be used in the sequel (\cite[Sec.\ 3.6]{BerGetVer}, \cite[p.49]{Roe-elliptic})
 \begin{trivlist}
 \item[1)] The DeRham operator: $(\Lambda T^{*}X,\nabla_{g}, g)\to X$ is a Clifford modules bundle. It is graded according to the parity of the degree. The associated Dirac operator is \begin{align}D=d+d^{*}:\C^{\infty}_{0}(X,\,\Lambda T^{*}X)\to \C^{\infty}_{0}(X,\,\Lambda T^{*}X)\,.\end{align}
Indeed let $\epsilon:T^{*}X\otimes \Lambda^{.} T^{*}X\to \Lambda^{.+1} T^{*}X$ be the exterior multiplication and $i:T^{*}X\otimes \Lambda^{.} T^{*}X\to \Lambda^{.-1} T^{*}X$ be the interior product morphism. Then $d=\epsilon\circ \nabla$ and $d^{*}=-i\circ \nabla$ for the Levi-Cevita connection $\nabla$ is torsion free.

\item[2)] Let $(E,\nabla,h)\to X$ be a hermitian vector bundle with a hermitian connection, and let \break $(S,\nabla',h')\to X$ be a Clifford bundle. Then $(E\otimes S,h\otimes h',\nabla\otimes 1+1\otimes \nabla')\to X$ is a Clifford bundle. This construction will be extended to hermitian bundles with a flat connection.
 \item[3)] The Dolbeault operator on Kähler manifolds. Let $(X,g)$ be a hermitian manifold. Then $\Lambda T^{*(0,1)}X$ is a Clifford submodule of $\Lambda TX\otimes \CC$.
 Let $(E,h,\nabla)\to (X,g)$ be a hermitian holomorphic vector bundle with Chern connection $\nabla$ over the Kähler manifold $(X,g)$. Then $\Lambda T^{*(0,1)}(X)\otimes E\to X$ with its product structure is a Clifford bundle. Moreover assume the metric $g$ is Kähler, then the associated Dirac operator is $\sqrt2(\dbar+\dbar^{*})$.
 \end{trivlist}

\begin{theorem}\label{functional calculus} Let $(S,h,\nabla)\to (X,g)$ be a Clifford bundle on the complete Riemannian manifold. Then
\begin{itemize}
\item[1)] The associated Dirac operator $D$ is essentially self-adjoint.
\item[2)] Let $\lambda\mapsto E(\lambda)$ be the spectral resolution of $D$. Then $D$ generates the one parameter group of unitary operators $e^{itD}=\int e^{it\lambda}dE(\lambda)$ (Stone's theorem \cite[p.345]{Yos}).
\item[3)] (\cite{Che} Unit propagation speed) Let $s$ be a compactly supported smooth section. The wave equation \begin{align}\frac{\partial}{\partial t}s_{t}=iDs_{t}
\end{align}
with initial data $s_{0}=s$ has a unique solution. Then $s_{t}=e^{itD}s$, $e^{itD}s$ is smooth and compactly supported, and $e^{itD}s(x)=0$ for $|t|< \text{ distance} (x, \text{support}( s))$.
\item[4)]  The mapping \begin{align}
f\mapsto f(D):=\frac{1}{2\pi}\int \hat f(t)e^{itD}dt\,.
\end{align} is a ring homomorphism from $S(\RR)$ to $B(L^{2}(X,S))$ such that $||f(D)||\leq ||f||_{\infty}$. If $f(x)=xg(x)$, then $f(D)=Dg(D)$ (\cite[Chap.\ 9]{Roe}, \cite{CheGroTay}).
\end{itemize}
\end{theorem}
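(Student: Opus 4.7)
The plan is to treat the four items in the order listed, since (2)--(4) rest on (1). First I would verify formal self-adjointness of $D$ on $C_{0}^{\infty}(X,S)$: the compatibility $\nabla c(v)=c(\nabla v)+c(v)\nabla$, the metric property of $\nabla$, the skew-adjointness of $c(v)$ for $v\in T^{*}X$, and Stokes' theorem together yield $\langle Ds_{1},s_{2}\rangle=\langle s_{1},Ds_{2}\rangle$ whenever one section has compact support. To promote this to essential self-adjointness on a complete manifold, I would use a cutoff argument: take a basepoint $x_{0}\in X$, pick $\psi\in C_{0}^{\infty}(\RR)$ with $\psi\equiv 1$ near $0$, set $\chi_{k}(x)=\psi(d(x,x_{0})/k)$ (with $d$ replaced by a smooth approximation if necessary), so that $\|d\chi_{k}\|_{\infty}\to 0$ by completeness. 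For $\phi\in L^{2}(X,S)$ with $D^{*}\phi=\pm i\phi$ (which is smooth by ellipticity of $D^{2}$), plug $\chi_{k}^{2}\phi$ into the symmetric form to obtain $\|\chi_{k}\phi\|^{2}=\mp\operatorname{Im}\langle \chi_{k}^{2}\phi,D\phi\rangle+\text{commutator term}$; the commutator $[\chi_{k},D]=c(d\chi_{k})$ contributes an error of size $\|d\chi_{k}\|_{\infty}\|\phi\|^{2}\to 0$, forcing $\phi=0$. This is essentially Chernoff's theorem, which one may also invoke directly.

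Item (2) is then Stone's theorem applied to the unique self-adjoint extension $\bar D$, producing $e^{it\bar D}=\int e^{it\lambda}\,dE(\lambda)$. For (3), I would prove finite propagation speed via an energy estimate on truncated balls. Fix $x_{0}\in X$, $R>0$, and assume $\operatorname{supp}(s)\cap B(x_{0},R)=\emptyset$; with $u(t)=e^{itD}s$ and $|t|<R$, consider
\begin{align*}
E(t)=\int_{B(x_{0},R-|t|)}|u(t,x)|^{2}\,d\mathrm{vol}.
\end{align*}
Differentiating in $t$, using $\partial_{t}u=iDu$, and rewriting the bulk term via Green's formula gives a boundary contribution
\begin{align*}
\tfrac{d}{dt}E(t)=-\int_{\partial B(x_{0},R-|t|)}\bigl(|u|^{2}-\operatorname{Re}\langle ic(\nu)u,u\rangle\bigr)dS,
\end{align*}
where $\nu$ is the outward unit conormal. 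Since $c(\nu)$ is unitary, $|\langle ic(\nu)u,u\rangle|\leq|u|^{2}$, so $E'(t)\leq 0$ and hence $E\equiv 0$; the smoothness of $u_{t}$ and the fact that it stays compactly supported follow from elliptic regularity for $D^{2}$ and the group law. Uniqueness of the solution is a direct consequence of the same estimate applied to the difference of two solutions.

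Item (4) is then a formal consequence of the spectral theorem combined with Fourier inversion: for $f\in\mathcal{S}(\RR)$, $f(\lambda)=\frac{1}{2\pi}\int\hat f(t)e^{it\lambda}\,dt$; writing $f(D)$ via the Borel calculus and exchanging integrals (justified weakly against pairs of $L^{2}$ sections, using $\hat f\in\mathcal{S}$ and the unitarity of $e^{itD}$) recovers the oscillatory integral formula. The ring homomorphism property, the norm bound $\|f(D)\|\leq\|f\|_{\infty}$, and the identity $f(D)=Dg(D)$ when $f(x)=xg(x)$ are standard consequences of the Borel functional calculus.

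The main obstacle is item (1): the cutoff argument needs a sufficiently regular exhaustion function with controlled gradient, which in general requires smoothing the distance function (the distance to $x_{0}$ is $1$-Lipschitz but not smooth at cut points). Once essential self-adjointness is established, items (2)--(4) follow by standard spectral calculus, and the slightly delicate point in (3) is handling the non-smoothness of the boundary at focal distances of $x_{0}$, which one circumvents either by co-area/approximation arguments or by replacing balls with sublevel sets of a smooth proper function with unit gradient outside a compact set.
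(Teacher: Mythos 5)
The paper gives no proof of this theorem: it is quoted as a package of standard facts with references (Chernoff for unit propagation speed, Yosida for Stone's theorem, Roe and Cheeger--Gromov--Taylor for the functional calculus), so there is no internal argument to compare against. Your sketch correctly reproduces the standard proofs from that literature: formal symmetry from the Clifford-bundle axioms plus the Gaffney/Wolf cutoff argument (using completeness to get cutoffs with $\|d\chi_k\|_\infty\to 0$, and elliptic regularity to make deficiency vectors smooth) gives essential self-adjointness; Stone's theorem gives (2); the energy estimate on shrinking balls, with the key inequality $|\langle c(\nu)u,u\rangle|\leq |u|^2$ coming from unitarity of Clifford multiplication by a unit covector, gives (3); and Fourier inversion against the spectral measure identifies the wave-operator integral with the Borel calculus, from which the homomorphism property, the bound $\|f(D)\|\leq\|f\|_\infty$, and $f(D)=Dg(D)$ follow. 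The only substantive difference from the cited route is one of ordering: Chernoff derives essential self-adjointness (of all powers of $D$) from hyperbolic-equation theory, i.e.\ from finite propagation, whereas you establish (1) first by the cutoff argument and then prove (3) by the energy method; both are correct, and your acknowledged technical points (smoothing the distance function, Lipschitz boundaries at cut points, and justifying that $e^{itD}s$ lies in $\bigcap_k\Dom(\bar D^k)$ so the integrations by parts are legitimate) are exactly the right ones to flag.
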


The Sobolev space $W^{k}(S)$ is defined for $k\in \NN$ as the completion of the vector space of smooth sections with compact support, equipped with the norm $||s||_{k}=(\sum_{i=0}^{k}||\nabla^{i}s||^{2})^{\frac{1}{2}}$.  Its dual space $W^{-k}(S):=(W^{k})^{'}$ can be identified with a set of distributional sections.

Let $W^{\infty}=\cap_{k\in \ZZ} W^{k}$ and $W^{-\infty}=\cup_{k\in \ZZ} W^{k}$. The space $W^{-\infty}$ is equipped with its weak topology of the dual space of $W^{\infty}$.
The Schwartz kernel theorem implies that a continuous linear operator from $W^{-\infty}(S)$ to $W^{\infty}(S)$ is represented by a smoothing kernel and the induced morphism $L(W^{-\infty},W^{\infty})\to C^{\infty}(X\times X,S\boxtimes S^{*})$ is continuous (see e.g.\ \cite{Tay}).

\begin{proposition}(see   \cite{ShuNantes}, \cite{Roe})\label{smooth kernel}
 If $(S,h,\nabla)\to (X,g)$ is of bounded geometry, then
\begin{itemize}
\item[0)]  $W^{k}(S)\simeq \Dom(1+\Delta)^{\frac{k}{2}}$ and $s\mapsto (1+\Delta)^{\frac{k}{2}}s$ is an isomorphism from $W^{k}(S)$ to $L^{2}(S)$.
\item[i)] $W^{\infty}(S)$ is continuously embedded in $UC^{\infty}(S)$ (see def. in (\ref{sobolev embedding})).
\item[ii)] A continuous linear operator from $W^{-\infty}(S)$ to $W^{\infty}(S)$ is represented by a smoothing kernel which is uniformly bounded as are all its covariant derivatives.
\end{itemize}
The map $L(W^{-\infty},W^{\infty})\to UC^{\infty}(X\times X,S\boxtimes S^{*})$ is continuous for the topology of bounded convergence.

\end{proposition}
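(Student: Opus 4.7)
The plan is to prove (0), (i), (ii) in order, with the bonus continuity statement following from the uniformity of the estimates obtained in (ii). The unifying tool is bounded geometry: the uniform partition of unity $\{\theta_i\}$ of Theorem~\ref{uniform partition of unity}, together with the uniform bundle trivialisations on the $U_{x_i,r}$, reduces every global estimate to a countable family of local estimates on balls in $\RR^n$ with constants independent of $i$. The \emph{main obstacle} lies precisely in securing this uniformity: without bounded geometry the transition cocycles, local G{\aa}rding constants, and local Sobolev embeddings would all vary with the base point, and none of the global conclusions would be valid.

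For (0), the natural Laplacian $\Delta$ (either $\nabla^{*}\nabla$ or $D^{2}$; by Weitzenb\"ock these differ by a bounded bundle endomorphism thanks to bounded curvature of $h$ and $g$) is essentially self-adjoint on $C^{\infty}_{0}(X,S)$ by Theorem~\ref{functional calculus}, so $(1+\Delta)^{k/2}$ is a positive self-adjoint operator defined by the spectral theorem. Using $\{\theta_{i}\}$ one writes $\|s\|_{k}^{2}\asymp \sum_{i}\|\theta_{i}s\|_{k}^{2}$, transports each piece to $\RR^{n}$ through a bounded-geometry trivialisation on $U_{x_{i},r}$, and applies the classical G{\aa}rding inequality there. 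Because the trivialisation cocycles, the metric components and all their derivatives are bounded uniformly in $i$, the local constants patch by the finite-multiplicity bound~(\ref{multiplicity}) to give $\|s\|_{k}\asymp \|(1+\Delta)^{k/2}s\|_{L^{2}}$ on $C^{\infty}_{0}$. Passing to completions identifies $W^{k}(S)$ with $\Dom((1+\Delta)^{k/2})$ and makes $(1+\Delta)^{k/2}:W^{k}(S)\to L^{2}(S)$ a topological isomorphism.

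For (i), the Sobolev embedding~(\ref{sobolev embedding}) yields $W^{s+k}\hookrightarrow UC^{k}$ continuously for $s\geq n/2$, and intersecting over $k\in\NN$ gives the Fr\'echet embedding $W^{\infty}\hookrightarrow UC^{\infty}$. For (ii), let $A:W^{-\infty}\to W^{\infty}$ be continuous, hence $A:W^{-m}\to W^{m}$ bounded for every $m\in\NN$, and invoke the Schwartz kernel theorem to obtain a distributional section $K_{A}$ on $X\times X$ with values in $S\boxtimes S^{*}$. Dualising (i) inside the uniform trivialisations shows that the point-evaluation distribution $\delta_{x}$ and its derivatives $\nabla^{\alpha}\delta_{x}$ lie in $W^{-m-|\alpha|}(S)$ with norms bounded uniformly in $x$ for $m>n/2$. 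Expressing $\nabla_{x}^{\alpha}\nabla_{y}^{\beta}K_{A}(x,y)$ as the pairing $\langle A\,\nabla_{y}^{\beta}\delta_{y},\nabla_{x}^{\alpha}\delta_{x}\rangle$ inside a bounded-geometry trivialisation at $(x,y)$ gives the pointwise bound
$|\nabla_{x}^{\alpha}\nabla_{y}^{\beta}K_{A}(x,y)|\leq C_{\alpha,\beta}\,\|A\|_{W^{-m-|\beta|}\to W^{m+|\alpha|}},$
with $C_{\alpha,\beta}$ depending only on $(r,n)$ and the bounded-geometry data $(C_{\alpha},A_{k},M_{s})$. Applying the same inequality to a bounded family $(A_{\lambda})\subset L(W^{-\infty},W^{\infty})$ delivers the continuity of $A\mapsto K_{A}$ from bounded convergence into the Fr\'echet topology of $UC^{\infty}(X\times X,\,S\boxtimes S^{*})$.
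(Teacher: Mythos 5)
Your proof is correct and is essentially the standard bounded-geometry argument of Shubin and Roe that the paper itself only cites without proof: uniform partition of unity plus uniform local elliptic estimates for (0), the uniform Sobolev embedding for (i), and pairing the Schwartz kernel against uniformly bounded (derivatives of) delta sections for (ii) and for the continuity in the topology of bounded convergence. The one point worth tightening is (0) for odd $k$, where $(1+\Delta)^{k/2}$ is not a differential operator and so does not localize through the cutoffs; there the norm equivalence is better obtained from $\|(1+\Delta)^{k/2}s\|^{2}=\langle (1+\Delta)^{k}s,s\rangle$ via integration by parts with uniform constants and the uniform elliptic estimate for the integer power (or by interpolation from the even case), rather than by a purely local G{\aa}rding argument.
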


\subsection{\texorpdfstring{$\Gamma-$}{G-}dimension}\label{Gamma-dimension}
\begin{definition}
Let $\VN$ be a Von Neumann algebra on a separable Hilbert space. Let $\VN_{+}$ be the cone of positive operators in $\VN$.
\begin{itemize}
\item[1)]A  trace is a function $t: \VN_{+}\to [0,+\infty]$  such that if $\lambda>0$ and $x,y\in \VN_{+}$, then $t(\lambda x)=\lambda t(x)$, $t(x+y)=t(x)+t(y)$, and for all unitary $u\in A$, $t(u^*xu)=t(x)$.
\item[2)] A trace is normal, if it is continuous on limit of increasing nets.
\item[3)]  A trace is faithful, if  $x\in\VN:\,\varphi(x^*x)=0\Rightarrow x=0$.
\item[4)] A faithful trace is called finite, if $t(1)<+\infty$ (and then $\VN$ is called a finite von Neumann algebra). It is called semi-finite, if $\VN_{+}^{t}=\{ y\in \VN_{+}:\, t(y)<+\infty\}$ is weakly dense in $\VN_{+}$. Then for all $x\in\VN_{+}$, $t(x)=\sup_{y\leq x,\ y\in \VN_{+}^{t}}t(y)$.
\end{itemize}
\end{definition}
\begin{example}
\begin{itemize}
\item[1)] Let $\Gamma$ be a discrete group.
Let $\delta_{e}\in l^2(\Gamma)$ be the Dirac function at the unit element $e$ of $\Gamma$. The trace of $n\in \VN(\Gamma)$  is $\tr_{\VN(\Gamma)} n:=<n(\delta_{e}),\delta_{e}>$.
\item[2)] Let $H$ be an infinite dimensional Hilbert space. Then there exists a unique normal semi-finite trace $Tr_{H}$ defined on $\B(H)$ that takes value $1$ at each one dimensional projection. Let $(e_{n})_{n\in \NN}$ be an orthonormal basis of $H$ then (see \cite[Remark 8.5.6]{KadRin})
 \begin{align}
 \forall A\in \B(H)^{+},\quad Tr_{H}A=\sum_{n\in \NN}(Ae_{n},e_{n})\,.
\end{align}

\item[3)] The space of $\Gamma-$invariant bounded operators on the Hilbert space $H\hat\otimes l^{2}(\Gamma)$ is isomorphic to the tensor product $\B(H)\bar\otimes\VN(\Gamma)$. Then $Tr_{H}\otimes \tr_{\VN(\Gamma)}$ defines a semi-finite trace $\Tr_{\Gamma}$ on $\B(H)\bar\otimes\VN(\Gamma)$ (see \cite[Chap.\ IV]{Takesaki}): If $t\in \B(H)\bar\otimes \VN(\Gamma)$ is a positive element represented (in an orthonormal basis) by an infinite matrix $(n_{ij})_{i,j}$ of elements in $\VN(\Gamma)$, then $\Tr_{\Gamma} (T)=\sum_{i}\tr_{\VN(\Gamma)} n_{ii}$.
\end{itemize}
\begin{definition}\label{neumann dimension}
\begin{itemize}
\item[i)](\cite{Shu},\cite[p.318]{Takesaki}) The ideal of $\Gamma-$trace operators on $H\hat\otimes l^{2}(\Gamma)$ is the set of all finite linear combinations of positive $\Gamma-$invariant operators $A$ such that $\Tr_{\Gamma}(A)<+\infty$.
\item[ii)] Let $L$ be a Hilbert space with a unitary representation of $\Gamma$. Let $i:L\to H\hat\otimes l^{2}(\Gamma)$ be a $\Gamma-$invariant embedding and let $A_{L}$ be the orthogonal projection onto $i(L)$. Then $Tr_{\Gamma}(A_{L})$ is independent of the embedding $i$. It is by definition equal to $\dim_{\Gamma}L$.
\end{itemize}
\end{definition}
We resume the above example.
\begin{itemize}
\item[4)] Let $p: (X,g)\to (Y,g)$ be a Galois covering with Galois group $\Gamma$. Let $(S_{Y},h_{Y})\to Y$ be a Riemannian bundle and let $(S,h)\to X$ be the pull back bundle. Let $F$ be a fundamental domain for the action. Let $(e_{n})_{n\in \NN}$ be an orthonormal basis of $l^{2}(F,S_{|F})$. Then $(e_{n}\otimes \delta_{g})_{n\in \NN,g\in \Gamma}$ is an orthonormal basis of $l^{2}(X,S)$ and induces an isomorphism
\begin{align}
l^{2}(X,S)=l^{2}(F,S_{|F})\hat\otimes l^{2}(\Gamma)\,.
\end{align}
Let $A\in \B(l^{2}(X,S))$ be an operator on $l^{2}(X,S)$ equivariant for the $\Gamma-$action. The above isomorphism decomposes $A$ as $(A_{i,j})_{i,j\in \NN}$ with $A_{i,j}$ a bounded $\Gamma-$equivariant operator on $l^{2}(\Gamma)$: The bilinear form $A_{i,j}: l^{2}(\Gamma)\times l^{2}(\Gamma)\to \CC$ defined by $A_{i,j}(f,g)=(A(e_{i}\otimes f),e_{j}\otimes g)$ is continuous, hence given by $A_{i,j}\in \B(l^{2}(\Gamma))$. Assume moreover that $A$ is $\Gamma-$equivariant, then $A_{i,j}$ belongs to $\VN(\Gamma)$.

If $A$ is positive then
\begin{align}
\tr A_{ii}&=(A e_{i}\otimes \delta_{e}, e_{i}\otimes \delta_{e})=\int_{F}(Ae_{i}(x),e_{i}(x))dV_{g}(x)\,,\\
\Tr_{\Gamma}(A)&:= \left( \Tr_{ L^{2}(F,S_{|F}) }\otimes \tr \right)(A)=\int_{F}\sum_{i\in \NN}(Ae_{i}(x),e_{i}(x))dV_{g}(x)\,.
\end{align}
is equal to the integral of the pointwise trace.
\item[5)]
Let $A: L^{2}(X,S)\to L^{2}(X,S)\cap L^{\infty}(X,S)\cap C^{\infty}(X,S)$  be a smoothing operator
with kernel $k\in C^{\infty}(X\times X,S\boxtimes S^{*})$ (see e.g.\ estimates  (\ref{heat kernel domination}),(\ref{LiYau estimate}) below). Then
\begin{align}
\Tr_{\Gamma}(A)=\int_{F}\Tr_{End(S)}k(x,x)dV_{g}(x)\,.
\end{align}
\end{itemize}

We refer to  \cite[Sec.\ 2]{Ati} or \cite[Chap.\ 15]{Roe} for further details.
\end{example}

\subsection{\texorpdfstring{$\Gamma-$}{G-}Fredholm operators}
\begin{definition}
\begin{itemize}
\item[1)](\cite[Chap.\ XVI]{MurVon})
Let $\VN$ be a finite Von Neumann algebra on $H$.
 A closed densely defined operator $h: \Dom(h)\to H$ is said to be affiliated to $\VN$, if it commutes with $\VN'$: for all unitary $u\in \VN'$, $u\Dom(h)=\Dom(h)$ and $uh=hu$.
 \item[2)] (\cite{CheGro85-bounds,Shu,Luc})
 \begin{itemize}
 \item[i)] A self-adjoint positive affiliated operator $h=\int \lambda dE(\lambda)$ on $H$ is said to be \hfill\break $\VN-$Fredholm, if for some $\lambda_{0}>0$, the spectral projection $E(\lambda_{0})$ satisfies $\Tr_{\Gamma}E(\lambda_{0})<+\infty$.
 \item[ii)] Let $f:\Dom(f)\subset U\to V$ be a closed densely defined unbounded operator, which is $\Gamma-$invariant. Then $f$ is called $\Gamma-$Fredholm, if the self-adjoint operators $ff^{*}$ and $f^{*}f$ are $\Gamma-$Fredholm.
\end{itemize}
\end{itemize}

 \end{definition}
 Note that $h$ is $\Gamma-$Fredholm, iff the bounded operator $h(1+h)^{-1}$ is $\Gamma-$Fredholm. The hypothesis in $ii)$ implies that, for $\epsilon>0$ small enough, the bounded operator $g:=f1_{[\epsilon,\epsilon^{-1}]}(|f|)$, which defines an isomorphism from $\Ker(g)^{\perp}$ to $\Ker(g^{*})^{\perp}$, satisfies  that $\Ker(g)$ and $\Ker(g^{*})$ have finite $\Gamma-$dimension. Hence $f$ is $\Gamma-$Fredholm, iff it is boundedly invertible up to $\Gamma-$trace class operators.

This remark implies the following lemma  which takes care of changes of norms:
\begin{lemma}\label{from unbounded to bounded}
Let $d:H_{1}\to H_{2}$ be a $\Gamma-$invariant closed densely defined operator between $\Gamma-$Hilbert modules (hence $\Dom(d)$ is $\Gamma-$invariant). Let $j_{i}:A_{i}\to H_{i}$ be bounded monomorphism of $\Gamma-$Hilbert modules for $i=1,2$. Assume that a bounded $\Gamma-$morphism $a: A_{1}\to A_{2}$ exists such that $j_{2}\, a=d\, j_{1}$, in particular $j_{1}$ maps to $\Dom(D)$.

If $d$ is $\Gamma-$Fredholm,  then $a$ is $\Gamma-$Fredholm. 
\end{lemma}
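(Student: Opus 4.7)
\emph{Proof plan.}
The strategy is to deduce the two spectral conditions defining $\Gamma$-Fredholmness of the bounded operator $a$ --- namely $\dim_\Gamma 1_{[0,\lambda]}(a^{*}a)<\infty$ and $\dim_\Gamma 1_{[0,\lambda]}(aa^{*})<\infty$ for some $\lambda>0$ --- by direct comparison with the corresponding spectral data of $d$. The algebraic input is the identity on $A_1$ obtained by squaring the commutation $j_2 a = d j_1$,
\[
a^{*}(j_2^{*} j_2)\,a \;=\; (d\,j_1)^{*}(d\,j_1)\;=\;j_1^{*}\,d^{*}d\,j_1,
\]
combined with the standard min--max comparison for spectral density functions of positive self-adjoint $\Gamma$-operators: $A\geq B\geq 0$ implies $\dim_\Gamma 1_{[0,\lambda]}(A)\leq \dim_\Gamma 1_{[0,\lambda]}(B)$.

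First I would handle the kernel. If $\xi\in\Ker a$ then $d(j_1\xi)=j_2(a\xi)=0$, so $j_1$ restricts to a bounded injective $\Gamma$-equivariant map $\Ker a\to\Ker d$. Polar decomposition turns any such map into an isometric $\Gamma$-embedding --- the modulus $(j_1^{*}j_1|_{\Ker a})^{1/2}$ is injective positive with dense range on $\Ker a$, so the accompanying partial isometry is a full isometry --- giving $\dim_\Gamma\Ker a\leq \dim_\Gamma\Ker d<\infty$. Next, using $j_2^{*}j_2\leq \|j_2\|^{2}\mathrm{Id}_{A_2}$ in the squared identity yields $\|j_2\|^{2}\,a^{*}a\geq (d\,j_1)^{*}(d\,j_1)$, and the min--max comparison reduces the condition $\dim_\Gamma 1_{[0,\lambda]}(a^{*}a)<\infty$ to showing that the bounded operator $S:=d\,j_1\colon A_1\to H_2$ is itself $\Gamma$-Fredholm. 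Since $\Ker S=\Ker a$ is already finite $\Gamma$-dimensional and $\dim_\Gamma 1_{(0,\mu]}(S^{*}S)=\dim_\Gamma 1_{(0,\mu]}(SS^{*})$, everything further reduces to bounding the low spectrum of
\[
SS^{*}\;=\;d\,R\,d^{*}\qquad\text{on }H_2,\qquad R:=j_1 j_1^{*}.
\]

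The main obstacle I anticipate is this last step: propagating the $\Gamma$-Fredholmness of $dd^{*}$ to the sandwich $d R d^{*}$. My plan is to use the spectral decomposition $H_2=P_\epsilon H_2\oplus Q_\epsilon H_2$ of $|d^{*}|$, with $\dim_\Gamma P_\epsilon H_2<\infty$ and $|d^{*}|\geq\epsilon$ on $Q_\epsilon$, together with the fact that $j_1$ being a $\Gamma$-monomorphism gives $\Ker R=\Ker j_1^{*}=\overline{\Ran(j_1)}^{\perp}$; estimating $\langle SS^{*}\eta,\eta\rangle=\|R^{1/2}d^{*}\eta\|_{H_1}^{2}$ within each summand should then yield the required finite bound on $\dim_\Gamma 1_{[0,\mu]}(SS^{*})$, modulo finite $\Gamma$-dimensional corrections absorbed into $P_\epsilon H_2$. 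The parallel bound $\dim_\Gamma 1_{[0,\lambda]}(aa^{*})<\infty$ is then obtained by running the entire scheme on the adjoint identity $a^{*}j_2^{*}=j_1^{*}d^{*}$, exchanging the roles of $(d,j_1,j_2)$ for $(d^{*},j_2,j_1)$, which is legitimate because $d^{*}$ is also $\Gamma$-Fredholm.
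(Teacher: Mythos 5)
Your reductions up to the flagged ``main obstacle'' are fine (the kernel estimate via the injectivity of $j_{1}|_{\Ker a}$, and the min--max comparison $\|j_{2}\|^{2}a^{*}a\geq S^{*}S$ with $S:=d\,j_{1}$), but the obstacle itself is not a technical step you can fill by estimating $\|R^{1/2}d^{*}\eta\|$ on the two spectral summands: nothing in the hypotheses bounds $R=j_{1}j_{1}^{*}$ from below off a finite $\Gamma$-dimensional subspace. Knowing $\Ker R=\overline{\Ran(j_{1})}^{\perp}$ controls only the kernel, not the spectrum of $R$ near $0$, so on $Q_{\epsilon}$ the inequality $\|d^{*}\eta\|\geq\epsilon\|\eta\|$ gives no lower bound whatever for $\|R^{1/2}d^{*}\eta\|$. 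Concretely, take $H_{1}=H_{2}=\bigoplus_{n\geq1}l^{2}(\Gamma)$, $d=\mathrm{Id}$, $j_{2}=\mathrm{Id}$, and $j_{1}=a=\bigoplus_{n}n^{-1}\mathrm{Id}_{l^{2}(\Gamma)}$: every hypothesis as you (and the lemma) state it holds, $d$ is $\Gamma$-Fredholm, yet $S=d\,j_{1}=a$ satisfies $\dim_{\Gamma}1_{[0,\epsilon]}(a^{*}a)=+\infty$ for every $\epsilon>0$. So the intermediate claim ``$S$ is $\Gamma$-Fredholm'' is simply not a consequence of the listed hypotheses, and no refinement of your sandwich estimate can prove it; some further property of the $j_{i}$ must be identified and used (e.g.\ $j_{1}$ bounded below, or --- what the application really provides --- an inverse of $d$ modulo $\Gamma$-trace class operators which acts boundedly on the spaces $A_{i}$).

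The second half of your plan has an additional structural flaw: taking adjoints of $j_{2}a=d\,j_{1}$ gives $a^{*}j_{2}^{*}=j_{1}^{*}d^{*}$, in which $j_{1}^{*},j_{2}^{*}$ are no longer monomorphisms (they have dense range rather than being injective), so ``running the entire scheme for $(d^{*},j_{2},j_{1})$'' is not an instance of the situation you set up; moreover the adjoint identity controls $a^{*}$ only on $\Ran(j_{2}^{*})$, not on all of $A_{2}$, so finiteness of $\dim_{\Gamma}\Ker(a^{*})$ does not follow. That this is a genuine obstruction is shown by $d=\mathrm{Id}$, $j_{2}=\mathrm{Id}$, and $j_{1}=a$ the isometric inclusion of the first summand $l^{2}(\Gamma)\hookrightarrow\bigoplus_{n}l^{2}(\Gamma)$: here $aa^{*}$ is a projection whose kernel has infinite $\Gamma$-dimension. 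For comparison, the paper does not argue through spectral density functions at all: it deduces the lemma from the preceding remark that $\Gamma$-Fredholmness is equivalent to bounded invertibility up to $\Gamma$-trace class operators, i.e.\ by transporting a parametrix of $d$ through the $j_{i}$ (no details are given, the references being L\"uck and Shubin), and in the actual application the $j_{i}$ are Sobolev-scale inclusions for which the uniform elliptic parametrix supplies exactly the compatibility that your argument --- and indeed any argument from the bare hypotheses --- is missing.
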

%
All the above properties are proved in \cite{Luc}, \cite{Shu}, \cite{ShuBook}, see also \cite{SchNachrichten}.

\section{The Kodaira laplacien on vector bundles of bounded curvature is \texorpdfstring{$\Gamma-$}{G-}Fredholm.}
\subsection{}\label{proof using bochner method}
Following arguments of Donelly-Li \cite{DonLi},  the Bochner method allows a comparison between the heat kernel associated to a Dirac operator and the Riemannian heat kernel. Numerous estimates are known for the latter kernel. This leads easily to a generalization of Theorem~\ref{cheeger-gromov} $(i)$ in the introduction:

\begin{theorem}[Atiyah, Cheeger-Gromov] Let $(X,g)$ be a Riemannian manifold of bounded curvature and positive injectivity radius. Let $\Gamma$ be a discrete group of isometries such that
\begin{align}
\Vol(X/\Gamma)<+\infty\,.
\end{align}
Let $(S,h,\nabla)\to X$ be a $\Gamma-$equivariant Clifford bundle with associated Dirac operator $D=c\circ \nabla$. 
Assume that the curvature of $h$ is bounded.
Then
\begin{align}
D: L^{2}(X,S)\to L^{2}(X,S)
\end{align}
is $\Gamma-$Fredholm.
\end{theorem}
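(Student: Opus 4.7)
The plan is to reduce $\Gamma$-Fredholmness of $D$ to showing that the heat semigroup $e^{-tD^2}$ is $\Gamma$-trace class for some (equivalently all) $t>0$. Since $D$ is essentially self-adjoint (Theorem~\ref{functional calculus}) and $\Gamma$-invariant, $D^2$ is a positive self-adjoint operator affiliated to the commutant of the $\Gamma$-action, so the spectral projections $E_{D^2}(\lambda_0)=1_{[0,\lambda_0]}(D^2)$ are $\Gamma$-invariant. The inequality $1_{[0,\lambda_0]}(x) \leq e^{t\lambda_0} e^{-tx}$ (for $x\geq 0$) combined with the normality of the trace gives
\begin{equation*}
\Tr_{\Gamma}\bigl(E_{D^2}(\lambda_0)\bigr) \leq e^{t\lambda_0}\,\Tr_{\Gamma}\bigl(e^{-tD^2}\bigr),
\end{equation*}
so finiteness of $\Tr_\Gamma(e^{-tD^2})$ forces $D^2$, and hence $D$, to be $\Gamma$-Fredholm.

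To bound $\Tr_\Gamma(e^{-tD^2})$, I would follow the Donnelly--Li strategy of dominating the bundle heat kernel by the scalar Riemannian heat kernel. The Weitzenböck/Bochner formula for a Clifford bundle reads $D^2 = \nabla^{*}\nabla + \K$, where the zeroth order curvature endomorphism $\K\in\End(S)$ is a bounded section, because by hypothesis both the Riemannian curvature of $g$ and the curvature of $(S,h,\nabla)$ are bounded; set $C := \|\K\|_\infty < +\infty$. Combining the pointwise Kato inequality $|\nabla|s|\,| \leq |\nabla s|$ with the Bochner identity, the function $u(x,t) := |e^{-tD^2}s_0|(x)$ satisfies the differential inequality $(\partial_t + \Delta_g) u \leq C u$ in the distributional sense, where $\Delta_g$ is the non-negative scalar Laplacian. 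A maximum principle argument (valid because $(X,g)$ is complete with bounded curvature) yields the domination
\begin{equation*}
|e^{-tD^2}s_0|(x) \leq e^{tC}\int_X p_t^g(x,y)\,|s_0|(y)\,dV_g(y),
\end{equation*}
where $p_t^g$ is the scalar heat kernel of $(X,g)$. Equivalently, the smooth Schwartz kernel $k_t(x,y)$ of $e^{-tD^2}$, which exists and is $\Gamma$-equivariant by the functional calculus together with unit propagation speed (Theorem~\ref{functional calculus}), satisfies the pointwise bound $\|k_t(x,y)\|_{\End(S)} \leq e^{tC}\, p_t^g(x,y)$.

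For the scalar factor I would invoke the Li--Yau type Gaussian upper bound: on a complete manifold with bounded Ricci curvature and positive injectivity radius one has a uniform diagonal estimate $\sup_{x\in X} p_t^g(x,x) \leq A(t) < +\infty$ for every $t>0$. Since $p$ is a Galois covering of $(Y,g_0)$ with $\Vol(Y) < +\infty$, any fundamental domain $F$ for the $\Gamma$-action satisfies $\Vol(F) = \Vol(Y) < +\infty$. Computing the $\Gamma$-trace via Example~$(5)$ of Section~\ref{Gamma-dimension},
\begin{equation*}
\Tr_\Gamma\bigl(e^{-tD^2}\bigr) = \int_F \tr_{\End(S_x)} k_t(x,x)\,dV_g(x) \leq (\rank S)\,e^{tC}\,A(t)\,\Vol(F) < +\infty.
\end{equation*}
This concludes that $D^2$, and hence $D$, is $\Gamma$-Fredholm.

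The technical heart of the argument, and where I expect the main obstacle, is the rigorous justification of the heat-kernel domination on a non-compact complete manifold: one must verify both the applicability of Kato's inequality at the level of the kernel (rather than just for individual sections) and the Li--Yau uniform diagonal bound in the bounded-geometry setting without compactness. The former is handled by Hess--Schrader--Uhlenbrock type semigroup domination arguments, the latter by a parametrix construction using the uniform canonical coordinates of Theorem~\ref{uniform partition of unity}; everything else is essentially formal manipulation with the $\Gamma$-trace.
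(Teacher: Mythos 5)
Your proposal is correct and follows essentially the same route as the paper: the Lichnerowicz--Weitzenb\"ock formula with bounded curvature term, the Donnelly--Li domination of the bundle heat kernel by the scalar heat kernel, the Li--Yau type uniform bound under bounded curvature and positive injectivity radius, integration over a finite-volume fundamental domain, and the spectral inequality $1_{[0,\epsilon]}(D^{2})\leq e^{t\epsilon}e^{-tD^{2}}$. The only cosmetic difference is that the paper simply cites Donnelly--Li (Thm.\ 4.3) for the kernel domination, noting their compact-with-boundary argument applies verbatim to complete manifolds, whereas you sketch the Kato-inequality/maximum-principle mechanism behind it.
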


\begin{proof}
For Dirac operators the Lichnerowicz-Weitzenböck Formula was derived in \cite[Thm.\ 3.52]{BerGetVer}, \cite[p.44]{Roe} (see also \cite[(6.1.22) p.430]{Wu}, and \cite[Chap.\ 10, Sec.\ 4]{Tay}): If $s$ is a section of the Clifford bundle $S$, then
\begin{align}
D^{2}s=\nabla^{*}\nabla s+ L.s\, .
\end{align}
Here $\nabla^{*}$ is the formal adjoint of the connection $\nabla: C^{\infty}(S)\to C^{\infty}(T^{*}M\otimes S)$, and $L$ is a smooth section of the Riemannian bundle $\Omega^{2}(X)\otimes End(S)$. The hypotheses made on the curvature of $h$ and $g$ imply that $L$ is a bounded section of this bundle.
%
%
%
%

\begin{lemma}Let $K$ and $H$ be the heat Kernels on $L^{2}(X)$ and $L^{2}(X,S)$ resp. Let -$b\in \RR$ be a lower bound of the curvature operator being part of the Lichnerowicz-Weitzenböck formula: for all smooth section with compact support $(Ls,s)\geq -b||s||^{2}$. Let $|H(t,x,y)|$ denote the norm of the morphism $H(t,x,y)\in E_{x}\otimes E^{*}_{y}$. Then
\begin{align}
\label{heat kernel domination}|H(t,x,y)|\leq e^{bt}K(t,x,y)\,.\end{align}

\end{lemma}
\begin{proof} The proof for the case of a compact manifold with boundary (and Dirichlet condition) is given in \cite[Thm.\ 4.3]{DonLi}. It is a variant of the Bochner method on estimating $\Delta ||s_{t}||^{2}$ for $s_{t}$ a section of the bundle $E\to [0,+\infty[\times M$ which solves the heat equation.
The arguments given in their work apply literally to a complete manifold.
\end{proof}
It is well known (see \cite{CheGroTay}, \cite{Gri},\cite[Thm.\ 4]{CheLiYau}, \cite[Cor.\ 3.1]{LiYau}) that the heat Kernel on a manifold with (sectional) curvature bounded by $k_{1}$ and injectivity radius bounded from below by $k_{2}>0$ is bounded by
\begin{align}\label{LiYau estimate}
K(t,x,y) \leq     ct^{-\frac{n}{2}}e^{-d^{2}(x,y)/at}   \quad  \text{ on }X\times X\times [0,T]\,.
\end{align}
for some suitable constants $a,c$ which depend only on $k_{1},k_{2}$ and $T$.
%
Estimates (\ref{heat kernel domination}), and (\ref{LiYau estimate}) imply that the heat kernel of $e^{-tD^{2}}$ is bounded.

But
\begin{align}
e^{-t D^{2}}=\int e^{-tx}dP_{D^{2}}(x)\geq \int 1_{[0,\epsilon]}(x).e^{-tx}dP_{D^{2}}(x)\geq e^{-t\epsilon}1_{[0,\epsilon]}(D^{2}) \,,
\end{align}
 and the trace is positive on positive operator, hence
 \begin{align}0\leq  \Tr_{\Gamma} 1_{[0,\epsilon]}(D^{2})\leq e^{t\epsilon}\Tr_{\Gamma}e^{-t D^{2}}<+\infty\,.\end{align}
\end{proof}
More than the $\Gamma-$Fredholm property was shown: For all $\lambda>0$, the spectral projection $1_{[0,\lambda]}(D^{2})$ have finite $\Gamma-$dimensional range.

\begin{lemma}
Let $A:H\to H$ be an unbounded, closed, and densely defined self-adjoint  operator such that for any bounded interval $I\subset \RR$, $1_{I}(A)$ is $\Gamma-$finite. Then for any bounded self-adjoint $\Gamma-$operator $B$, $A+B$ satisfies the same property.
\end{lemma}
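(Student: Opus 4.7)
The plan is to compare spectral projections of $A+B$ on a bounded interval with spectral projections of $A$ on a slightly larger interval. First, since $B$ is bounded and self-adjoint, Kato--Rellich gives that $A+B$ is self-adjoint with $\Dom(A+B) = \Dom(A)$; because $A$ is affiliated to the algebra and $B$ is a bounded $\Gamma$-equivariant operator, all bounded Borel functions of $A+B$ are $\Gamma$-equivariant, so $1_I(A+B)$ is a projection in the algebra for every bounded Borel $I$. Fix a bounded interval $I = [c-r,\, c+r]$, set $P = 1_{I}(A+B)$ and $M = \|B\|$. I would then compare $P$ with $P' = 1_{I'}(A)$, where $I' = [c-r-M-\epsilon,\, c+r+M+\epsilon]$ for some fixed $\epsilon > 0$.

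The key analytic step is the norm estimate $\|(1-P')P\| < 1$. For $v \in \Ran(P) \subset \Dom(A)$, functional calculus applied to $A+B$ gives $\|((A+B)-c)v\| \leq r\,\|v\|$, whence $\|(A-c)v\| \leq (r+M)\,\|v\|$ by the triangle inequality. The projection $P'$ commutes with $A-c$, so the decomposition $v = P'v + (1-P')v$ is orthogonal and diagonalizes $A-c$; moreover on $\Ran(1-P')$ the functional calculus for $A$ gives $(A-c)^{2} \geq (r+M+\epsilon)^{2}\,\Id$. Combining these,
\begin{equation*}
(r+M)^{2}\|v\|^{2} \;\geq\; \|(A-c)v\|^{2} \;\geq\; \|(A-c)(1-P')v\|^{2} \;\geq\; (r+M+\epsilon)^{2}\|(1-P')v\|^{2},
\end{equation*}
so $\|(1-P')P\| \leq (r+M)/(r+M+\epsilon) < 1$.

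To convert this into a dimension inequality I would invoke the standard comparison principle for semifinite von Neumann algebras: two projections $P,P'$ in the algebra with $\|(1-P')P\| < 1$ satisfy $\Tr_{\Gamma}(P) \leq \Tr_{\Gamma}(P')$. Indeed, $T = P'P$ satisfies $T^{*}T = PP'P = P - P(1-P')P \geq (1 - \|(1-P')P\|^{2})\,P$, which is strictly positive on $\Ran(P)$; the polar decomposition $T = U|T|$ then produces a partial isometry $U$ in the algebra with initial projection $U^{*}U = P$ and final projection $UU^{*} \leq P'$, and traciality gives $\Tr_{\Gamma}(P) = \Tr_{\Gamma}(UU^{*}) \leq \Tr_{\Gamma}(P')$. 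By hypothesis $\Tr_{\Gamma}(P') < +\infty$, which completes the proof. The main technical point is the spectral perturbation estimate with its buffer of size $\|B\|$; the comparison lemma is routine in the semifinite setting. Letting $\epsilon \to 0$ and using normality of the trace on the decreasing family $1_{I'}(A)$ yields the sharper bound $\Tr_{\Gamma}(1_{I}(A+B)) \leq \Tr_{\Gamma}(1_{I''}(A))$, where $I''$ is the closed $\|B\|$-neighborhood of $I$.
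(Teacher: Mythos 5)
Your proof is correct and follows essentially the same route as the paper: both compare $1_{I}(A+B)$ with a spectral projection of $A$ on an interval enlarged by $\|B\|$, using the elementary bound $\|(A-c)v\|\leq(r+\|B\|)\|v\|$ for $v\in \Ran(1_{I}(A+B))$. The only difference is the last step: the paper deduces $\dim_{\Gamma}\Ran(1_{I}(A+B))\leq \dim_{\Gamma}\Ran(1_{[-C',C']}(A))$ from mere injectivity of the larger spectral projection of $A$ on $\Ran(1_{I}(A+B))$ (invoking monotonicity of $\Gamma$-dimension under injective $\Gamma$-maps), whereas you add an $\epsilon$-buffer to get the quantitative estimate $\|(1-P')P\|<1$ and then carry out the polar-decomposition comparison of projections by hand, which is a self-contained and equally valid way to finish.
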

\begin{proof}
The domain $\Dom(A+B)=\Dom(A)$ for $B$ is bounded. The operator $A+B$ is self-adjoint for $(A+B)^{*}=A^{*}+B^{*}=A+B$. Let $I\subset \RR$ be a bounded interval. Then $x\in \Ran(1_{I}(A+B))\Rightarrow x\in \Dom(A+B)$ and $||(A+B)x||\leq C||x||\Rightarrow ||A(x)||\leq C'||x||$. Therefore the spectral projection $1_{[-C',C']}(A)$ is injective on $\Ran(1_{I}(A+B))$ for an element in $\Ker(1_{[-C',C']}(A)\cap \Ran(1_{I}(A+B))$ belongs to $\Dom(A+B)=\Dom(A)$ and is equal to $1_{\RR\setminus [-C',C']}(x)$. Hence $||A(x)||>C'||x||$, if $x$ was non-vanishing (\cite[p.192,(2) in Prop.]{Weid}).
Hence  $\dim_{\Gamma}\Ran(1_{I}(A+B))\leq \dim_{\Gamma}\Ran(1_{[-C',C']}(A))$.
\end{proof}

\begin{corollary}
\begin{itemize}
\item[i)] In the above situation, let $a$ be a bounded section of the symetric endomorphisms bundle of $(S,h)\to M$. Then $D':=D+c(a+a^{*})$ is also $\Gamma-$Fredholm.
\item[ii)] Let $(E,h,\nabla_{h},\nabla)\to (X,g)$ be a $\Gamma-$equivariant vector bundle equipped with a Riemannian connection $\nabla_{h}$ and another connection $\nabla$. Assume that the curvature of $\nabla_{h}$ is bounded and that $\nabla_{h}-\nabla$ is bounded. Then the generalized Dirac operator $\nabla+\nabla^{*}:\Lambda^{.}T^{*}(X)\otimes E\to \Lambda^{.}T^{*}(X)\otimes E$ is $\Gamma-$Fredholm.
\item[iii)] Assume moreover $(X,g)$ is Hermitian of bounded curvature and that $E$ in $(ii)$ is moreover holomorphic with Chern connection $\nabla_{h}$. Then
$\nabla^{1,0}+{\nabla^{1,0}}^{*}$ and $\nabla_{h}^{0,1}+{\nabla_{h}^{0,1}}^{*}$ are $\Gamma-$Fredholm.
\end{itemize}
\end{corollary}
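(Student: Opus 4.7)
The plan is to view each operator in the corollary as a bounded self-adjoint perturbation of a Dirac operator of a Clifford bundle to which the theorem just proved applies, and then to conclude via the preceding lemma on perturbation of self-adjoint operators whose spectral projections on bounded intervals are $\Gamma$-finite.

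For $(i)$: the endomorphism $a+a^{*}$ is bounded and self-adjoint, so it acts as a bounded self-adjoint operator on $L^{2}(X,S)$. The theorem above showed more than just $\Gamma$-Fredholmness of $D$: it established that $1_{[0,\lambda^{2}]}(D^{2})=1_{[-\lambda,\lambda]}(D)$ has finite $\Gamma$-dimensional range for every $\lambda>0$. Since $D$ is essentially self-adjoint by Theorem~\ref{functional calculus}, the hypothesis of the preceding lemma is satisfied with $A=D$; taking $B=c(a+a^{*})$ yields that $D'=D+c(a+a^{*})$ has the same spectral property, hence is $\Gamma$-Fredholm.

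For $(ii)$: equip $\Lambda^{\bullet}T^{*}X\otimes E$ with the Clifford structure of the second example in the paper's list of Dirac operators (Clifford action on the first factor, tensor-product connection built from Levi-Civita and $\nabla_{h}$). The associated Dirac operator is $D_{h}:=d_{\nabla_{h}}+d_{\nabla_{h}}^{*}$. Boundedness of the Riemann tensor of $g$ and of the curvature of $\nabla_{h}$ ensures this Clifford bundle has bounded curvature, so the theorem yields that $D_{h}$ is $\Gamma$-Fredholm. Now $A:=\nabla-\nabla_{h}$ is a bounded section of $T^{*}X\otimes\End(E)$; exterior multiplication by $A$, and its $L^{2}$-adjoint (contraction against $A^{*}$), are therefore bounded operators on $L^{2}(\Lambda^{\bullet}T^{*}X\otimes E)$. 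Writing $d_{\nabla}+d_{\nabla}^{*}=D_{h}+B$ with $B=(d_{\nabla}-d_{\nabla_{h}})+(d_{\nabla}-d_{\nabla_{h}})^{*}$ bounded and self-adjoint, part $(i)$ (equivalently, the preceding lemma) gives the result.

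For $(iii)$: in the K\"ahler case, the third example in the paper's list identifies $\sqrt{2}(\dbar+\dbar^{*})=\sqrt{2}(\nabla_{h}^{0,1}+(\nabla_{h}^{0,1})^{*})$ as the Dirac operator of the Clifford bundle $\Lambda^{0,\bullet}T^{*}X\otimes E$ equipped with the Chern connection; bounded Chern curvature of $h$ together with bounded curvature of $g$ gives a bounded-curvature Clifford bundle, so the theorem applies. For $\nabla^{1,0}+(\nabla^{1,0})^{*}$ one repeats the scheme of $(ii)$: first invoke the theorem for the Dirac operator $\nabla_{h}^{1,0}+(\nabla_{h}^{1,0})^{*}$ acting on $\Lambda^{\bullet,0}T^{*}X\otimes E$, then perturb by the bounded self-adjoint operator determined by $\nabla^{1,0}-\nabla_{h}^{1,0}$. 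The main delicate step is the passage from K\"ahler to merely Hermitian: there, $\sqrt{2}(\dbar+\dbar^{*})$ no longer coincides exactly with the Dirac operator of $\Lambda^{0,\bullet}T^{*}X\otimes E$, differing from it by a bounded zeroth-order torsion term, which must be absorbed via $(i)$. Verifying uniform boundedness of this torsion term from the bounded-curvature assumption on $g$ alone is where the argument has to be most careful.
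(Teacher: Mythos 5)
Your parts (i) and (ii) are correct and coincide with the paper's intended argument, which is never spelled out beyond the juxtaposition of the theorem and the perturbation lemma: the theorem gives $\Gamma$-finiteness of $1_{[0,\lambda]}(D^{2})=1_{[-\sqrt{\lambda},\sqrt{\lambda}]}(D)$ for every $\lambda>0$, the lemma transfers this to $D+B$ for any bounded self-adjoint $\Gamma$-operator $B$, and in (ii) one takes $B=(d_{\nabla}-d_{\nabla_{h}})+(d_{\nabla}-d_{\nabla_{h}})^{*}$, which is bounded because $\nabla-\nabla_{h}$ is a bounded $\End(E)$-valued one-form (and adjoints add correctly since the perturbation is everywhere defined and bounded). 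Up to this point your proposal is essentially the paper's proof.

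The genuine gap is the step you flag in (iii) and then leave open. On a merely Hermitian manifold the Levi-Civita connection does not preserve $\Lambda^{0,\bullet}T^{*}X\otimes E$, so $\sqrt{2}(\dbar+\dbar^{*})$ is not literally the Dirac operator of a Clifford connection to which the theorem applies; the discrepancy is a zeroth-order term controlled by the torsion $d\omega$ (and, if you want the auxiliary Chern or Bismut-type Clifford connection to have bounded curvature, also by $\nabla d\omega$). Boundedness of these torsion quantities is not a consequence of bounded curvature of $g$ in any evident way, so "absorb the torsion via (i)" is not available as stated; note that the paper itself only identifies $\sqrt{2}(\dbar+\dbar^{*})$ with a Dirac operator under the K\"ahler hypothesis. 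To close (iii) you must either restrict to the K\"ahler case -- which is what all the paper's applications use, and where your scheme goes through verbatim: $\dbar+\dbar^{*}$ on $\Lambda^{p,\bullet}T^{*}X\otimes E$ (sum over $p$ to cover all bidegrees) is exactly a Dirac operator with bounded curvature, $\nabla_{h}^{1,0}+(\nabla_{h}^{1,0})^{*}$ is handled the same way after conjugation, and $\nabla^{1,0}$ is the bounded perturbation by $\nabla^{1,0}-\nabla_{h}^{1,0}$ -- or add an explicit hypothesis that the torsion (and its first covariant derivative) is bounded and verify the resulting zeroth-order term is bounded. As written, the Hermitian case of (iii) remains unproved in your proposal, and indeed it is glossed over in the paper as well.
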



\section{application to a Galois \texorpdfstring{$\partial\dbar-$}{ddbar-}lemma}
\begin{corollary}\label{elliptic lifting} Let $X$ be a complete manifold of bounded curvature and positive injectivity radius. Let $\Gamma$ be a discrete group of isometries such that $X/\Gamma$ has finite volume. Let $(S,D,h)\to X$ be a $\Gamma-$equivariant Dirac bundle such that the curvature of $h$ is bounded.
Let $\alpha\in L^{2}(X,S)$ which belongs to $\Adh{\Ran(S)}$. Then there exists an $r\in VN(\Gamma)$ injective with dense range, such that $r.\alpha\in \Ran(S)$.
\end{corollary}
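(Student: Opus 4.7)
I would reduce the claim to a dimension-vs-torsion theorem of L\"uck for modules over the group von Neumann algebra $\VN(\Gamma)$, acting on $L^{2}(X,S)\cong L^{2}(F,S_{|F})\hat\otimes l^{2}(\Gamma)$ through the embedding $r\mapsto 1\otimes r$ described in Section~\ref{Gamma-dimension}. Since the Dirac operator $D$ on a complete Riemannian manifold is essentially self-adjoint (Theorem~\ref{functional calculus}~(1)), one has $\Ker D=\Ker D^{*}$ and hence $\overline{\Ran D}=(\Ker D)^{\perp}$. The preceding theorem ensures that $D$ is $\Gamma$-Fredholm, i.e.\ the spectral projection $1_{[0,\varepsilon]}(D^{2})$ is of finite $\Gamma$-trace for some $\varepsilon>0$; this is the only analytic input of the argument.

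\textbf{Producing $r$.} Consider the algebraic quotient $Q:=\overline{\Ran D}/\Ran D$, regarded as a left $\VN(\Gamma)$-module. L\"uck's extended dimension function is additive on short exact sequences of $\VN(\Gamma)$-modules and, for subspaces of a Hilbert $\VN(\Gamma)$-module, assigns to a non-closed submodule the same dimension as to its closure; combined with the $\Gamma$-Fredholm property, this forces $\dim_{\VN(\Gamma)}Q=0$ (cf.\ \cite{Luc}). A zero-dimensional $\VN(\Gamma)$-module, again by L\"uck, coincides with its own torsion submodule: every element is annihilated by some non-zero divisor of $\VN(\Gamma)$. Applied to the class $\bar\alpha\in Q$ this produces $r\in\VN(\Gamma)$ with $r\cdot\bar\alpha=0$, i.e.\ $r\cdot\alpha\in\Ran D$. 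Because $\VN(\Gamma)$ is a finite von Neumann algebra, being a non-zero divisor is equivalent to acting injectively with dense range on $l^{2}(\Gamma)$; the operator $1\otimes r$ on $L^{2}(X,S)$ inherits that property and yields the required $r$.

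\textbf{Main obstacle.} The genuinely delicate point is the vanishing $\dim_{\VN(\Gamma)}Q=0$, which lies at the interface between the analytic $\Gamma$-Fredholm condition and the algebraic theory of $\VN(\Gamma)$-modules. I would establish it by approximating $\Ran D$ from inside by the closed Hilbert submodules $\Ran\bigl(D\cdot 1_{[1/n,n]}(|D|)\bigr)$ and comparing $\Gamma$-dimensions, using the finiteness of $\Tr_{\Gamma}\,1_{[0,\varepsilon]}(D^{2})$ to control the spectral mass of $|D|$ near zero. It is important here to stay within $\VN(\Gamma)$ and not appeal to the larger commutant: a naive functional-calculus choice such as $r=D^{2}/(D^{2}+1)+P_{\Ker D}$ clears $\alpha$ into $\Ran D$ but only delivers an injective dense-range element of $\B(L^{2}(F,S_{|F}))\bar\otimes\VN(\Gamma)$, not of $\VN(\Gamma)$ itself, and therefore falls short of the statement. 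The L\"uck-torsion step is precisely what replaces such a spectral regularisation by a factor acting only through the $l^{2}(\Gamma)$ tensor component.
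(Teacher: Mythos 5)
Your skeleton is in fact the same as the paper's: the paper's entire proof consists of applying \cite[Lemma 2.15]{Din2013} to the bounded $\Gamma$-Fredholm operator $D^{2}(1+D^{2})^{-1}$ (so there $r.\alpha$ lands in $\Ran(D^{2})$, which is what the Galois $\partial\dbar$-lemma later needs; whether one works with $\Ran D$ or $\Ran D^{2}$ is immaterial), and that cited lemma is precisely the torsion statement you are trying to prove. Your observation that the naive choice $D^{2}(1+D^{2})^{-1}+P_{\Ker D}$ lies in $\B(L^{2}(F,S_{|F}))\bar\otimes\VN(\Gamma)$ rather than in $\VN(\Gamma)$ is also exactly the right point. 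The gap is in the step you quote as a black box: \emph{``a zero-dimensional $\VN(\Gamma)$-module coincides with its own torsion submodule.''} For arbitrary $\VN(\Gamma)$-modules this is not a theorem of L\"uck, and it is false. Take $\Gamma=\ZZ$, so $\VN(\Gamma)=L^{\infty}(S^{1})$ acting on $l^{2}(\ZZ)=L^{2}(S^{1})$, and let $L\subset L^{2}(S^{1})$ be the submodule of functions vanishing a.e.\ on some neighbourhood of $\theta=0$. Then $Q=L^{2}(S^{1})/L$ has $\dim_{\VN(\Gamma)}Q=0$: the annihilator of any class $[g]$ contains $\chi_{\{|\theta|>1/n\}}L^{\infty}$ for every $n$, hence is of the form $\chi_{E^{c}}L^{\infty}$ only when $|E|=0$, so $Q$ contains no nonzero finitely generated projective submodule. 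Yet the class of the constant function $1$ is annihilated by \emph{no} injective $f\in L^{\infty}(S^{1})$, since $f\cdot 1\in L$ forces $f$ to vanish a.e.\ near $0$. Note that $L$ is an increasing union of closed submodules whose $\Gamma$-dimensions converge to $\dim_{\Gamma}L^{2}(S^{1})$, so this example is exactly of the shape produced by your approximation scheme: additivity, cofinality and ``dimension of a submodule equals dimension of its closure'' do yield $\dim Q=0$, but zero dimension is strictly weaker than Ore-torsion.

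Consequently the delicate point is not the one you flag. The vanishing $\dim_{\VN(\Gamma)}\bigl(\Adh{\Ran D}/\Ran D\bigr)=0$ is the easier half; what actually needs proof is that for \emph{this particular} dense submodule --- the range of a single $\Gamma$-equivariant operator with $\Tr_{\Gamma}1_{[0,\epsilon]}(D^{2})<+\infty$ --- every class is killed by an injective element of $\VN(\Gamma)$. L\"uck's equivalence between zero dimension and vanishing of $\U(\Gamma)\otimes_{\VN(\Gamma)}(-)$ is available for finitely presented modules (cokernels of morphisms of finitely generated Hilbert modules), but $L^{2}(X,S)$ is an infinitely generated Hilbert module, and even the spectral subspace $\Ran 1_{(0,\epsilon]}(|D|)$, though of finite $\Gamma$-dimension, need not be finitely generated over $\VN(\Gamma)$; so that theory does not apply off the shelf. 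The element $r$ has to be constructed from $D$ and $\alpha$ jointly: one passes to the finite-trace corner $P\M P$, $P=1_{(0,\epsilon]}(|D|)$, of the algebra $\M$ of $\Gamma$-equivariant operators, where the inverse of $D$ and the operator determined by the vector $\alpha$ can be composed inside the $*$-algebra of affiliated operators of a finite von Neumann algebra, and $r$ is then obtained by functional calculus on the commutant side --- the membership $r\alpha\in\Ran D$ is a summability condition near $0$ in the spectral decomposition, which injective damping can achieve, unlike the vanishing condition in the counterexample above. That construction, in whatever form, is the content of \cite[Lemma 2.15]{Din2013}; as written, your argument assumes it rather than proves it.
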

\begin{proof} Apply \cite[Lemma 2.15]{Din2013} to the bounded operator $D^{2}(1+D^{2})^{-1}: L^{2}(X,S)\to L^{2}(X,S)$, which is $\Gamma-$Fredholm with the same range than $D^{2}$, because $(1+D^{2})^{-1}$ is an isomorphism from $L^{2}(X,S)$ to $\Dom(D^{2})$.
\end{proof}

\begin{corollary}[A Galois $\partial\dbar-$lemma.]\label{Galois ddbar lemma} Let $p:X\to X/\Gamma$ be a Galois covering of a complete Kähler manifold with Galois group $\Gamma$. Assume $\Vol(X/\Gamma)<+\infty$, and assume that $X$ is of bounded curvature and positive injectivity radius. 
Let $x$ be a square integrable closed $(p,q)-$form in $X$, which is orthogonal to the space of square integrable harmonic forms.

Then there exists $r\in \Vn(\Gamma)$ injective, and a square integrable form  $y$ of type $(p-1,q-1)$, which is in the domain of $\partial\dbar$ such that  $$r.x=\partial\dbar y\,.$$
\end{corollary}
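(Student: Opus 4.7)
The plan is to mimic the classical compact-K\"ahler $\ddbar$-lemma, replacing the orthogonal Hodge decomposition by two applications of the $\Gamma$-Fredholm lifting of Corollary~\ref{elliptic lifting}. Both Dirac operators $\dbar+\dbar^*$ and $\partial+\partial^*$ acting on scalar forms are $\Gamma$-Fredholm on $X$, by the last corollary of Section~3 applied to the trivial line bundle. Since $X$ is K\"ahler, the two Laplacians agree on each $(p,q)$-component, $\Delta_\partial=\Delta_{\dbar}$, so in particular the harmonic spaces coincide, $\H^{p,q}_\partial=\H^{p,q}_{\dbar}=:\H^{p,q}$. Each application of the lifting produces an injective multiplier $r_i\in\VN(\Gamma)$, and I will set $r:=r_1r_2$, which is still injective because $\VN(\Gamma)$ is a finite von Neumann algebra.

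First, $x$ is $d$-closed of pure type, so $\dbar x=0$; and by hypothesis $x\perp\H^{p,q}$, hence $x\in\overline{\Ran(\Delta_{\dbar})}$. Corollary~\ref{elliptic lifting} applied to $D_1=\sqrt2(\dbar+\dbar^*)$ yields an injective $r_1\in\VN(\Gamma)$ and $\gamma_1\in\Dom(\Delta_{\dbar})$ with $r_1x=\dbar\dbar^*\gamma_1+\dbar^*\dbar\gamma_1$. Applying $\dbar$ and using $\dbar(r_1x)=r_1\dbar x=0$ forces $\dbar\dbar^*\dbar\gamma_1=0$, and pairing with $\dbar\gamma_1$ gives $\|\dbar^*\dbar\gamma_1\|^2=0$, so $\dbar^*\dbar\gamma_1=0$. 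Thus $r_1x=\dbar\alpha$ with $\alpha:=\dbar^*\gamma_1$ orthogonal to $\Ker(\dbar)$, in particular to $\H^{p,q-1}$. Second, the equality $\H^{p,q-1}_\partial=\H^{p,q-1}_{\dbar}$ gives $\alpha\in\overline{\Ran(\Delta_\partial)}$, and a further application of the lifting to $D_2=\sqrt2(\partial+\partial^*)$ provides an injective $r_2\in\VN(\Gamma)$ and $w\in\Dom(\Delta_\partial)$ with $r_2\alpha=\partial u+\partial^*v$, where $u:=\partial^*w\in L^2\Omega^{p-1,q-1}$ and $v:=\partial w\in L^2\Omega^{p+1,q-1}$.

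Putting things together, the bounded $\Gamma$-equivariant operators $r_1,r_2$ commute with the closed operators $\partial,\dbar,\partial^*,\dbar^*$ on their domains, so
\begin{align*}
r_1r_2x=\dbar(r_2\alpha)=\dbar\partial u+\dbar\partial^*v=-\partial\dbar u+\partial^*\dbar v,
\end{align*}
using $\partial\dbar+\dbar\partial=0$ and the K\"ahler identity $[\dbar,\partial^*]=0$. Applying $\partial$ annihilates the left side since $\partial x=0$, whence $\partial\partial^*\dbar v=0$; pairing with $\partial^*\dbar v$ then gives $\|\partial^*\dbar v\|^2=0$, i.e.\ $\partial^*\dbar v=0$. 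Therefore $rx=\partial\dbar y$ with $y:=-u$, and $y$ lies in $\Dom(\partial\dbar)$ because $\partial\dbar y=-rx\in L^2$. The main technical obstacle is the bookkeeping of unbounded operators: one must check that elements of $\VN(\Gamma)$ genuinely commute with $\partial,\dbar$ and their adjoints on the relevant Sobolev domains, and that the K\"ahler identity $[\dbar,\partial^*]=0$ extends from compactly supported smooth forms to its $L^2$-closure. Both points follow from essential self-adjointness of the Dirac operators on the complete bounded-geometry manifold (Theorem~\ref{functional calculus}) via standard approximation by compactly supported smooth sections.
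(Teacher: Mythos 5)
Your argument is sound in outline, but it takes a genuinely different route from the paper's, and the difference is exactly where the technical weight sits. The paper applies Corollary~\ref{elliptic lifting} once, to the full Laplacian: it writes $r.x=\Delta y$ with $y$ of pure type $(p,q)$, deduces $\partial y=\dbar y=0$ from the fact that $r.x$ is $d$-closed, and then converts $r.x=\dbar\dbar^{*}y$ into $-i\partial\dbar\Lambda y$ using $\dbar^{*}=-i[\Lambda,\partial]$; the primitive is $-i\Lambda y$, and every operator that ever hits $y$ is first order (plus the bounded operator $\Lambda$), so $y\in\Dom(\Delta)$ suffices. You instead apply the lifting twice ($\Delta_{\dbar}$, then $\Delta_{\partial}$) and run the classical two-step argument; this is essentially the ``traditional'' proof the authors point to in the remark following their proof (the route of \cite{Din2013,Din2018}), and the cost is exactly what that remark warns about: you must give meaning to mixed second-order quantities such as $\dbar u=\dbar\partial^{*}w$ and $\partial^{*}\dbar v$ knowing only $w\in\Dom(\Delta_{\partial})$, and justify the integration by parts that yields $\|\partial^{*}\dbar v\|^{2}=0$ (note also that the pairing should be against $\dbar v$, not against $\partial^{*}\dbar v$). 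Your closing appeal to essential self-adjointness and approximation by compactly supported forms is not by itself enough here; one needs the second-order elliptic estimate placing $\Dom(\Delta)$ in the Sobolev space of order two, available under the bounded-curvature hypothesis but requiring an argument -- precisely the bookkeeping the paper's single-application-plus-$\Lambda$ device avoids. Two further small points: the K\"ahler identity you use is the anticommutator $\dbar\partial^{*}+\partial^{*}\dbar=0$, so the relevant term is $-\partial^{*}\dbar v$ rather than $+\partial^{*}\dbar v$ (harmless, since it is killed anyway); and both your proof and the paper's rely on the multipliers from Corollary~\ref{elliptic lifting} being compatible with the differentials ($\dbar(r_{1}x)=r_{1}\dbar x$, $r_{2}\dbar\alpha=\dbar(r_{2}\alpha)$, $\partial(r_{2}r_{1}x)=0$) -- you invoke this a few more times than the paper does, but it is the same property the paper uses implicitly when asserting that $r.x$ is $d$-closed, and it comes with the construction in \cite[Lemma~2.15]{Din2013}. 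Granted those domain justifications, your composite $r=r_{1}r_{2}$ is injective and the statement follows, so the proposal is correct but should be viewed as the Sobolev-heavier alternative to the paper's argument.
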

\begin{proof} It is similar to the proof given in \cite{Din2013}. We give it for the convenience of the reader.  By Hodge decomposition $L^{2}(X, \Lambda T^{*}X)=\H_{(2)}(X)\overset{\perp}{\oplus} \Adh{\Ran(\Delta(1+\Delta)^{-1})}$ holds. The preceding corollary implies that there exists $r\in \VN(\Gamma)$ injective with dense range such that $r.x=\Delta y$. As $X$ is Kähler,
\begin{eqnarray} r.x=\Delta y=\Delta_{\dbar} y=\Delta_{\partial} y & = & (\dbar+\dbar^{*})^{2}y=(\partial+\partial^{*})^{2} y\,.
\end{eqnarray}
Therefore, we may assume that $y$ is of pure type $(p,q)$.
The completeness of the metric implies that $y$ belongs to the domain of $\dbar+\dbar^{*}$ and of $\partial+\partial^{*}$ (see \cite[Cor.\ 6]{AndVes}) and $\Delta$, hence to the domain of $\dbar \dbar^{*},\dbar^{*}\dbar,\ldots$

The fact that $r.x$ is a $d-$closed $(p,q)-$form implies that it is both $\partial$ and $\dbar-$closed. Hence $$r.x-\dbar \dbar^{*} y=\dbar^{*}\dbar y \in \Ker(\dbar)\cap \Ker(\dbar)^{\perp}\,. \text{ This form is vanishing.}$$
Now $\dbar^{*}\dbar y=0$ implies that $\dbar y=0$. Also the equation $$r.x-\partial \partial^{*} y=\partial^{*}\partial y \in \Ker(\partial)\cap \Ker(\partial)^{\perp}$$ implies that $\partial y=0$. Hence  $\partial y=\dbar y=0$ and $$r.x=\dbar\dbar^{*}y=-i\dbar (\Lambda \partial-\partial \Lambda )y=-i\partial \dbar \Lambda y\,.$$

\end{proof}
\begin{remark}
\begin{itemize}
\item[1)]
The more traditional proof using first a $\dbar-$primitive, then a $\dbar^{*}-$primitive was given in \cite{Din2013,Din2018}. However, it requires uniform Sobolev spaces to justify integrations by parts. Here, using the elliptic second order operator $\Delta$, one obtains a form which belongs already to the Sobolev space of order two.

\item[2)] Note that the form $y$ from the proof, which satisfies $r.x=\partial\dbar \Lambda y$, is also a closed form of type $(p,q)$. Hence, one may iterate the procedure.
\item[3)] The above results are valid in any of the uniform Sobolev spaces, that is for any current of the form $\Delta^{k}[\alpha]$, where $\alpha$ is square integrable (see \cite{Din2018}).
\end{itemize}
\end{remark}

A version of the Galois $\partial\dbar-$lemma for unitary bundles with Higgs fields, and for harmonic Higgs bundles will be given in Section~\ref{ddbar lemma for higgs bundles}.

%
\section{Computation of the $l^{2}-$invariants}
The heat equation proof of the index theorem needs bounds on the remainder term in the asymptotic expansion of the heat kernel $e^{-tD^{2}}$ as $t$ goes to zero. Hence, in this section we restrict ourselves to bundles of bounded geometry over manifolds of bounded geometry. Note however that Riemannian metric of bounded curvature and positive injectivity radius can be uniformly approximated by a metric of bounded geometry (see \ref{approximation by bounded geometry}).

\subsection{Invariance of the super-trace}
\subsubsection{Definition of the supertrace}
Let $E=E_{+}\oplus E_{-}$ be a $\ZZ_{2}-$graded vector space. Let $\eta$ be the grading operator ($\eta_{|E^{\pm}}=\pm Id_{E^{\pm}}$). Then $\End(E)$ decomposes into odd and even operators:
\begin{align}
\End^{+}(E) &=\Hom(E_{+},E_{+})\oplus \Hom(E_{-},E_{-})\\
\End^{-}(E) &=\Hom(E_{+},E_{-})\oplus \Hom(E_{-},E_{+})\\
A\in \End^{\pm}(E)&\iff A=\pm\eta\circ A\circ\eta
\end{align}

The supercommutator is defined as
$ab-(-1)^{|a| |b|}ba$ on $\ZZ_{2}-$homogeneous morphisms and extended by linearity.
Let $\Tr$ be a trace defined on a $\eta-$stable sub-algebra of $\End(E)$. The supertrace is given by \begin{align}\Tr_{s}(A):=\Tr(\eta\circ A)\,.\end{align}
Then $\Tr_{s}(A)=\Tr_{E^{+}}(A_{|E^{+}})-\Tr_{E^{-}}(A_{|E^{-}})$ on even operators, whereas $\Tr_{s}(A)=0$ on odd operators, and the super trace vanishes on a supercommutator $\Tr_{s}[A,B]_{s}=0$ (see \cite[Prop.\ 1.31]{BerGetVer}).

This formalism is applied to a smoothing operator $f(D)$ on $l^{2}(X,S)$ according to Theorem~\ref{functional calculus}, where $(S,h,D)\to (X,g)$ is a $\Gamma-$equivariant $\ZZ_{2}-$graded Dirac operator. Let $k_{f}(x,x)\in \C^{\infty}(\End(S))$ be the restriction of its Schwartz kernel along the diagonal. The pointwise decomposition $S_{x}=S^{+}_{x}\oplus S^{-}_{x}$ defines $\Tr_{s}k_{f}(x,x)$ as the pointwise super trace of this endomorphism. $\Gamma-$equivariance implies that $x\mapsto \Tr_{s}k_{f}(x,x)$ defines a function $y\mapsto \Tr_{s}(k_{f})(y)$ on $Y=X/\Gamma$. Let $F\subset X$ be a fundamental domain for the $\Gamma-$action. Results from Section~\ref{Gamma-dimension} imply
\begin{align} \Tr_{\Gamma,\, s}f(D)=\int_{F} \Tr_{s}k_{f}(x,x)dV_{g}(x)=\int_{y\in X/\Gamma} (\Tr_{s}k_{f})(y)dV_{g}(y)\,.
\end{align}

\subsubsection{Invariance of the super trace}

The following lemma, based on the existence of a functional calculus $f\mapsto f(D)$ within the $\Gamma-$trace class operators, proves that the $\Gamma-$super trace of the heat operator is time independent (see e.g. \cite[Prop.\ 11]{Roe-elliptic}, and \cite{BerGetVer,CheGro85-characteristic}):
\begin{lemma} Let $(S,h,D)\to (X,g)$ be a $\Gamma-$equivariant $\ZZ_{2}-$graded  Dirac bundle of bounded geometry.
Let $f_{1}$ be a rapidly decreasing smooth function on $\RR_{+}$ (in the Schwartz class) such that $f_{1}(0)=0$ then
\begin{align}\Tr_{\Gamma,s}f_{1}(D^{2})=0\,.
\end{align}
In particular $\Tr_{\Gamma,s}f(D^{2})$ is independent of the function $f$ in the Schwartz class such that $f(0)=1$.
\end{lemma}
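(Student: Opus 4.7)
The plan is to reduce the vanishing of $\Tr_{\Gamma,s}f_1(D^2)$ to the fact that the $\Gamma$-supertrace vanishes on a super-commutator of two odd operators, using the $\ZZ_{2}$-grading of $S$. Because $f_1(0)=0$, I would first factor $f_1(x)=x\,g(x)$ with $g\in\mathcal{S}(\RR_+)$, so that by the functional calculus of Theorem~\ref{functional calculus} one has
\[
f_1(D^2)=D^{2}g(D^{2})=D\cdot h(D),\qquad h(x):=x\,g(x^{2})\in\mathcal{S}(\RR).
\]
Since $D$ is odd for the $\ZZ_{2}$-grading and $g(D^{2})$ is even, the operator $h(D)=D\,g(D^{2})$ is odd. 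Using that $D$ commutes with $g(D^{2})$ one gets $D\,h(D)=h(D)\,D=f_1(D^{2})$, and so
\[
2\,f_1(D^2)=D\,h(D)+h(D)\,D=[D,h(D)]_{s}.
\]

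The next step would be to invoke the general principle that the $\Gamma$-supertrace vanishes on super-commutators, applied block by block to $S=S^{+}\oplus S^{-}$. Writing $D$ and $h(D)$ in off-diagonal block form, cyclicity of the ordinary $\Gamma$-trace on each of $\End(S^{\pm})$ yields $\Tr_{\Gamma,s}(D\,h(D))=-\Tr_{\Gamma,s}(h(D)\,D)$, which combined with the previous identity gives $\Tr_{\Gamma,s}f_1(D^2)=0$. The \emph{in particular} statement then follows by applying the vanishing to the Schwartz function $f-\tilde f$ whenever $f,\tilde f$ both equal $1$ at $0$.

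The main obstacle is justifying cyclicity of the $\Gamma$-trace in presence of the unbounded factor $D$. This is precisely where the bounded-geometry hypothesis enters, through Proposition~\ref{smooth kernel}: $h(D)$ maps $W^{-\infty}(S)$ continuously into $W^{\infty}(S)$ and therefore has a smooth Schwartz kernel which is uniformly bounded on $X\times X$, so that $f_1(D^{2})=D\,h(D)=h(D)\,D$ is represented by such a kernel as well and is $\Gamma$-trace class. To legitimately split the unbounded $D$ from the trace-class factor I would write
\[
D\,h(D)=\bigl(D\,(1+D^{2})^{-1/2}\bigr)\cdot\bigl((1+D^{2})^{1/2}h(D)\bigr),
\]
where the first factor is a bounded odd operator by the spectral theorem and the second equals $\tilde h(D)$ with $\tilde h(x)=(1+x^{2})^{1/2}h(x)\in\mathcal{S}(\RR)$, hence is again smoothing with uniformly bounded kernel by Proposition~\ref{smooth kernel}. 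Cyclicity of the $\Gamma$-trace on a bounded operator composed with a $\Gamma$-trace class operator then applies unambiguously, and the same rewriting handles $h(D)\,D$; this produces the required sign and finishes the argument.
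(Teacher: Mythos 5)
Your proof is correct and follows essentially the same route as the paper: factor $f_{1}$ using $f_{1}(0)=0$, realize $f_{1}(D^{2})$ as a supercommutator of odd operators that are of $\Gamma-$trace class thanks to bounded geometry and finite covolume, and conclude by the vanishing of the $\Gamma-$supertrace on such supercommutators. The only (cosmetic) difference is the choice of factorization: the paper writes $h_{0}(x^{2})=h_{1}(x)h_{2}(x)$ so that both entries $Dh_{1}(D),Dh_{2}(D)$ of the supercommutator are Schwartz functions of $D$ (hence uniformly smoothing), whereas you pair the bounded operator $D(1+D^{2})^{-1/2}$ with the smoothing operator $(1+D^{2})^{1/2}h(D)$, which instead invokes the standard trace property $\Tr_{\Gamma}(AB)=\Tr_{\Gamma}(BA)$ for $A$ bounded and $B$ of $\Gamma-$trace class.
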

\begin{proof} Since $S\to X$ is a Dirac bundle of bounded geometry,  \cite[Thm.~5.5]{Roe} implies that the functional calculus $f\mapsto f(D):=\int_{\RR}\hat f(t)e^{itD}dt$, from the Schwartz class to bounded operators on $L^{2}(X,S)$, gives uniform operators (loc.cit.\ Def.~5.3 and Prop.~5.4). Hence the arguments of \cite[Chap.~15]{Roe-elliptic} apply:
There exist functions $h_{0},h_{1}, h_{2}$ in the Schwartz class such that $f_{1}(x)=xh_{0}(x)$ and $h_{0}(x^{2})=h_{1}(x)h_{2}(x)$. Then $f_{1}(D^{2})=D^{2}h_{1}(D)h_{2}(D)=\frac{1}{2}[Dh_{1}(D),Dh_{2}(D)]_{s}$. One concludes the proof using the vanishing of the super trace on super commutators.
 \end{proof}

\subsection{Convergence  of the Von Neumann index of $\mathbf D$ to the characteristic integral}
We recall the fundamental identity between $l^{2}-$characteristic on $X$ and the integral of Chern Gauss Bonnet on the manifold $Y=X/\Gamma$ (for a quick review on the Murray-Von Neumann dimension, we refer to \cite[Sec.~6]{CheGro85-characteristic}, and \cite{CheGro85-bounds}).
\begin{theorem}[see Atiyah\cite{Ati}, Cheeger-Gromov \cite{CheGro85-characteristic}]\label{index theorem} Let $\Gamma$ be a discrete group of isometries of a Riemannian manifold of bounded geometry  $(X,g)$ such that $\mathrm{Vol}(X/\Gamma)<+\infty$.
Let $(D,S)\to (X,g)$ be a $\Gamma-$equivariant $\ZZ_{2}-$graded Dirac operator of bounded geometry\footnote{see also Sec.~\ref{invariance by change of metric}}.
Let $P$ be the bounded endomorphism of $l^{2}(X,S)$ defined by the orthogonal projection onto $\Ker(D)$.
Then X
\begin{align}
\chi_{(2)}(D,S):=\Tr_{\Gamma,s}(P)=\int_{F}\Tr_{s}(P)dv= \int_{Y}\hat{A}(TY)\wedge \Ch(S/\Delta)\,.
\end{align}
\end{theorem}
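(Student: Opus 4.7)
The plan is to combine a McKean--Singer argument with Getzler's local index theorem, using the bounded-geometry hypothesis to interchange the limit $t\to 0^{+}$ with the integral over the finite-volume quotient $Y$.

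First, I would establish the $\Gamma$-McKean--Singer identity $\Tr_{\Gamma,s}(e^{-tD^{2}})=\chi_{(2)}(D,S)$ for every $t>0$. Since $D^{2}$ commutes with the grading $\eta$, the heat semigroup preserves the decomposition $L^{2}(X,S)=\Ker D\oplus\overline{\Ran D}$, acts as the identity on $\Ker D$, and the polar decomposition of $D^{+}$ identifies the spectral measures (in the $\Gamma$-sense) of $D^{-}D^{+}$ and $D^{+}D^{-}$ on the orthogonal complements of their respective kernels. Hence $\Tr_{\Gamma,s}(e^{-tD^{2}}(1-P))=0$ and the identity follows. Representing the $\Gamma$-trace as a fundamental-domain integral of the diagonal Schwartz kernel $k_{t}$ of $e^{-tD^{2}}$, one gets
\begin{align*}
\chi_{(2)}(D,S) \;=\; \int_{F}\Tr_{s}\,k_{t}(x,x)\,dV_{g}(x) \;=\; \int_{Y}(\Tr_{s}k_{t})(y)\,dV_{g_{0}}(y), \qquad t>0.
\end{align*}

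Next I would let $t\to 0^{+}$ and invoke a uniform version of Getzler's local index theorem. Unit propagation speed (Theorem~\ref{functional calculus}(3)) localizes $e^{-tD^{2}}$ to an arbitrarily small geodesic ball of radius $r<r_{inj}(X)$ modulo an error with kernel of order $\exp(-c/t)$. In canonical coordinates on such a ball, the bounded geometry of $(X,g)$ and of the Dirac bundle $(S,h)$ yields uniform bounds in $x\in X$ on all coefficients of $D^{2}$ and their derivatives, so the Minakshisundaram--Pleijel parametrix produces a diagonal expansion
\begin{align*}
k_{t}(x,x)=(4\pi t)^{-n/2}\sum_{i=0}^{N}t^{i}\,a_{i}(x)+r_{N}(t,x), \qquad |r_{N}(t,x)|\le C_{N}\,t^{N+1-n/2},
\end{align*}
with $C_{N}$ independent of $x$. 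Getzler's rescaling, being purely local, then applies fiberwise, cancels the a priori singular negative powers of $t$ in the supertrace, and identifies the $t^{0}$ coefficient of $\Tr_{s}k_{t}(x,x)$ as the Chern--Weil representative $[\hat A(TX)\wedge\Ch(S/\Delta)]_{2n}(x)$, a $\Gamma$-invariant form that descends to $Y$.

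Finally, the uniform bound on $\Tr_{s}k_{t}(x,x)$ for $0<t\le 1$ together with $\Vol(Y)<+\infty$ allows dominated convergence on $Y$, yielding
\begin{align*}
\chi_{(2)}(D,S) \;=\; \int_{Y}\bigl[\hat A(TY)\wedge\Ch(S/\Delta)\bigr]_{2n}(y)\,dV_{g_{0}}(y),
\end{align*}
which is the desired formula. The principal obstacle is the uniform $C^{0}$ control of the remainder $r_{N}(t,x)$ on the non-compact manifold $X$: this is precisely where bounded geometry on both the manifold and the Dirac bundle is indispensable, since it permits a parametrix construction with constants independent of $x$. The combination with finite propagation speed replaces the compactness argument used in the classical heat-equation proof.
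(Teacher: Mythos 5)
Your plan is correct, and its overall shape (heat-equation proof: supertrace of $e^{-tD^{2}}$, small-time asymptotics with uniform remainders, integration over the finite-volume quotient) matches the paper; but the first half takes a genuinely different route. You obtain the identity $\Tr_{\Gamma,s}(e^{-tD^{2}})=\Tr_{\Gamma,s}(P)$ for every $t>0$ directly, Atiyah-style, from the polar decomposition of $D^{+}$: the $\Gamma$-equivariant partial isometry intertwines the spectral measures of $D^{-}D^{+}$ and $D^{+}D^{-}$ off their kernels, so the supertrace of $e^{-tD^{2}}(1-P)$ cancels exactly. The paper instead splits this into two lemmas: an algebraic one, that $\Tr_{\Gamma,s}f_{1}(D^{2})=0$ whenever $f_{1}(0)=0$, proved by writing $f_{1}(D^{2})$ as a supercommutator $\frac{1}{2}[Dh_{1}(D),Dh_{2}(D)]_{s}$ of uniform operators (Roe's functional calculus), which gives $t$-independence of $\Tr_{\Gamma,s}e^{-tD^{2}}$; and an analytic one, that the heat kernel converges as $t\to+\infty$ to the (uniformly bounded) kernel of $P$, which identifies the constant. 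Your route avoids both the uniform-operator supercommutator machinery and the long-time limit, at the price of invoking the semifinite trace property $\Tr_{\Gamma}(u^{*}Au)=\Tr_{\Gamma}(Auu^{*})$ with one factor $\Gamma$-trace class, so you should record explicitly that $e^{-tD^{2}}$ (hence $P$, by $0\leq P\leq e^{-tD^{2}}$) is $\Gamma$-trace class, which follows from the uniform heat-kernel bounds of bounded geometry together with $\Vol(X/\Gamma)<+\infty$. The paper's supercommutator lemma buys a bit more (it shows $\Tr_{\Gamma,s}f(D^{2})$ depends only on $f(0)$ for any Schwartz $f$, a fact reused elsewhere), while your exact McKean--Singer identity is more elementary and self-contained. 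For the small-time side the two arguments essentially coincide: the paper quotes Roe's uniform diagonal expansion with $x$-uniform remainders and the standard identification of the $t^{0}$ coefficient with the degree-$n$ part of $\hat A(TY)\Ch(S/\Delta)$, which is exactly what your finite-propagation localization, uniform parametrix, Getzler rescaling, and dominated convergence over $Y$ reconstruct; the paper additionally observes that constancy in $t$ alone already forces the integrals of the singular coefficients to vanish, so pointwise Getzler cancellation is only needed to identify the constant term.
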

$\hat A(X)$ is by definition the $\hat A-$genus of the manifold and $\Chern(S/\Delta)$ is the relative Chern character (see \cite[4.25]{Roe} for a definition).

\subsubsection{} We sketch the proof of the theorem, since it is well referenced (see e.g.~\cite{Roe,MaMar} for the cocompact case).

\begin{lemma}[{see e.g.\cite[Part II]{Roe}}]  As $t\mapsto +\infty$, the Schwartz kernel of $e^{-tD^{2}}$ tends to the Schwartz kernel of $P$ (the projection onto the kernel of  $D$) in the Fr\'{e}chet topology of $C^{\infty}(S\boxtimes S^{*})$ (which is the topology of uniform convergence on compact subsets of $X\times X$). Consequently the Schwartz kernel of $P$ is uniformly bounded together with all its covariant derivatives.
\end{lemma}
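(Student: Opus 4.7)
My plan is to combine strong convergence from the spectral theorem, valid in every Sobolev norm, with uniform $\UC^{\infty}$-bounds on $\{K_{t}\}_{t\geq 2}$ coming from the semigroup property, and to conclude by an Arzel\`a--Ascoli argument.

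First, the spectral resolution $D^{2}=\int_{[0,\infty)}\lambda\,dE(\lambda)$ and $P=E(\{0\})$ give, for every $\psi\in L^{2}(X,S)$ and every $k\in\NN$,
\begin{align*}
\|(e^{-tD^{2}}-P)\psi\|_{W^{2k}}^{2}=\int_{(0,\infty)}(1+\lambda)^{2k}e^{-2t\lambda}\,d\|E(\lambda)\psi\|^{2}\xrightarrow[t\to+\infty]{}0,
\end{align*}
by dominated convergence (the integrand is, for $t\geq 1$, dominated by the bounded function $(1+\lambda)^{2k}e^{-2\lambda}$). Here Proposition~\ref{smooth kernel}(0), together with the bounded zero-order term in $D^{2}=\nabla^{*}\nabla+L$ from Section~\ref{proof using bochner method}, identifies the above norm with the $W^{2k}$-norm up to equivalence. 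Thus $e^{-tD^{2}}\psi\to P\psi$ in $W^{\infty}$, and so in $\UC^{\infty}$ by Proposition~\ref{smooth kernel}(i).

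Second, using $Pe^{-D^{2}}=P$, the factorisation $e^{-tD^{2}}-P=e^{-D^{2}}(e^{-(t-2)D^{2}}-P)e^{-D^{2}}$ (valid for $t\geq 2$) and Cauchy--Schwarz in the middle variables give, for any multi-indices $\alpha,\beta$,
\begin{align*}
|\nabla^{\alpha}_{x}\nabla^{\beta}_{y}(K_{t}-K_{P})(x,y)|\leq\|\nabla^{\alpha}_{x}K_{1}(x,\cdot)\|_{L^{2}}\cdot\|(e^{-(t-2)D^{2}}-P)\,\nabla^{\beta}_{y}K_{1}(\cdot,y)\|_{L^{2}}.
\end{align*}
The first factor is uniformly bounded, by the scalar Gaussian estimate~(\ref{LiYau estimate}) transferred to $S$ through the Bochner domination $|H(t,x,y)|\leq e^{bt}K(t,x,y)$ of Section~\ref{proof using bochner method} (together with the analogous bounds on covariant derivatives). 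The second factor tends to $0$ pointwise in $y$ by the first step applied to $\psi=\nabla^{\beta}_{y}K_{1}(\cdot,y)\in L^{2}$; moreover the set $\{\nabla^{\beta}_{y}K_{1}(\cdot,y):y\in K\}$ is a continuous image of the compact $K\subset X$ inside $L^{2}$, hence compact, so the convergence is uniform in $y\in K$. The same factorisation applied to $K_{t}$ itself shows $\{K_{t}\}_{t\geq 2}$ is uniformly bounded in $\UC^{\infty}$.

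The uniform $\UC^{\infty}$-bound just established provides equicontinuity in every $C^{m}$-norm, and combined with the uniform convergence on compacts of all derivatives, Arzel\`a--Ascoli upgrades this to $C^{\infty}$-convergence of $K_{t}$ towards $K_{P}$ on every compact subset of $X\times X$, which is the Fr\'echet topology asserted. The uniform bounds pass to the limit, so $K_{P}$ is likewise uniformly bounded together with all its covariant derivatives. The main obstacle is establishing the uniform $L^{2}$-estimates on covariant derivatives $\nabla^{\alpha}K_{1}$ of the bundle-valued heat kernel: the Riemannian estimate~(\ref{LiYau estimate}) only controls the scalar kernel, and one must combine it with the Bochner domination of Section~\ref{proof using bochner method} and the bounded-geometry Sobolev theory (Proposition~\ref{smooth kernel} and the uniform partition of unity of Theorem~\ref{uniform partition of unity}) to obtain Gaussian bounds for derivatives, which yield the $L^{2}$-integrability of the kernel slices used above.
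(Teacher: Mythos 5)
The paper does not actually prove this lemma -- it simply cites Roe -- so there is nothing to match word for word; your sandwich argument $e^{-tD^{2}}-P=e^{-D^{2}}(e^{-(t-2)D^{2}}-P)e^{-D^{2}}$, with strong convergence of the middle factor from the spectral theorem and Cauchy--Schwarz against the slices of $K_{1}$, is exactly the standard proof behind that citation, and the uniformity in $y$ on compacts via compactness of $\{\nabla^{\beta}_{y}K_{1}(\cdot,y):y\in K\}$ in $L^{2}$ together with the uniform operator bound $\|e^{-(t-2)D^{2}}-P\|\leq 2$ is correct. (Your closing Arzel\`a--Ascoli paragraph is redundant: the Cauchy--Schwarz estimate already gives uniform convergence on compacts of every mixed covariant derivative, which is the asserted Fr\'echet convergence.) The one place where you lean on something not available off the shelf is the bound on the first factor: the Bochner domination (\ref{heat kernel domination}) controls only the pointwise norm of the bundle heat kernel, not its covariant derivatives, so ``the analogous bounds on covariant derivatives'' -- which you yourself flag as the main obstacle -- is an unproven step as written. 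It can be closed much more cheaply than by proving Gaussian bounds for derivatives: by the semigroup law and self-adjointness, $\|\nabla^{\alpha}_{x}K_{1}(x,\cdot)\|_{L^{2}}^{2}=(\nabla^{\alpha}\otimes\nabla^{\alpha}K_{2})(x,x')|_{x'=x}$, and the right-hand side is uniformly bounded because $e^{-2D^{2}}$ lies in $L(W^{-\infty},W^{\infty})$ for a Dirac bundle of bounded geometry (Roe's uniform functional calculus, already invoked in the super-trace lemma, together with Proposition~\ref{smooth kernel}(ii)); the same identity bounds every slice $\nabla^{\beta}_{y}K_{1}(\cdot,y)$ used in your second factor, and no appeal to (\ref{LiYau estimate}) or to derivative Gaussian estimates is needed anywhere. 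With that substitution your argument is complete and also yields the final assertion, since $P=e^{-D^{2}}Pe^{-D^{2}}$ exhibits $P$ itself as an element of $L(W^{-\infty},W^{\infty})$, whose kernel is uniformly bounded with all covariant derivatives.
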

%
%
Then one uses
 (see \cite[Prop.~2.11]{Roe}; see also \cite[Thm.~7.15, p.101]{Roe-elliptic}, or  \cite[Chap.~4]{BerGetVer}):
\begin{proposition} Let $(S,D)\to (X,g)$ be a Dirac operator of bounded geometry on a manifold of bounded geometry with $X$ oriented. Then the operator $e^{-tD^{2}}$ is represented by a uniformly bounded smoothing kernel $k_{t}(x,y)$, and there is an asymptotic expansion
\begin{align*}
k_{t}(x,x)\sim (4\pi t)^{-\frac{n}{2}} \sum_{k\geq 0} t^{k}\Psi_{k}(x)\,,
\end{align*}
where the $\Psi_{k}$ are smooth sections of $\End(S)\otimes \Lambda^{n}T^{*}X$, locally computable in terms of the curvatures of $X$ and $S$ and their covariant derivatives. Moreover the remainder terms, which appear implicitly in the asymptotic expansion, are uniformly bounded in $x\in X$.
\end{proposition}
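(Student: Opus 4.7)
My plan is to adapt the classical Minakshisundaram--Pleijel parametrix construction, taking care to make every estimate uniform in $x\in X$, which is possible because both $(X,g)$ and $(S,h,\nabla)$ are of bounded geometry.

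First, I would fix a radius $r<\min(r_{inj}(X),1)$ and, for each $x_0\in X$, work in geodesic normal coordinates on $B(x_0,r)$ together with a synchronous frame for $S$ obtained by parallel transport along radial geodesics from $x_0$. In such a frame, the formal ansatz
\begin{align*}
k_t^{x_0}(x,y) \sim (4\pi t)^{-n/2}\, e^{-d(x,y)^2/(4t)}\, j(x,y)^{-1/2}\,\sum_{k\geq 0} t^k \Phi_k(x,y),
\end{align*}
(with $j$ the Jacobian of the exponential map) reduces the equation $(\partial_t+D^2)k_t^{x_0}=0$ to the standard transport equations for the $\Phi_k$. Solving these equations recursively along radial geodesics yields smooth sections of $S\boxtimes S^*$ that are polynomial expressions in the components of the Riemann tensor of $g$, of the curvature of $\nabla$, and of their covariant derivatives up to order depending on $k$. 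The bounded geometry of $(X,g,S,h)$ guarantees $C^m$--bounds on the coefficients $g_{ij}$, on $\nabla h$, and on the curvature tensors that are uniform with respect to the basepoint $x_0$; consequently the $\Phi_k$ and $j^{-1/2}$ admit $C^m$--bounds on $B(x_0,r)$ uniform in $x_0$. Setting $y=x_0$ kills the exponential factor and gives $\Psi_k(x_0)=\Phi_k(x_0,x_0)$.

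Second, I would glue the local parametrices into a global one $K_t^{(N)}$ using a uniform partition of unity $\{\theta_i\}$ from Theorem~\ref{uniform partition of unity}. Because this partition is uniform (bounded $C^k$--norms and bounded multiplicity), the associated error $r_t^{(N)}=(\partial_t+D^2)K_t^{(N)}$ is of order $t^{N-n/2}$ with a kernel bound which is \emph{uniform} in $x$. Applying Duhamel's formula and iterating, one obtains the true heat kernel as
\begin{align*}
k_t=K_t^{(N)}+\sum_{j\geq 1} (-1)^j K_t^{(N)}*\underbrace{r^{(N)}*\cdots *r^{(N)}}_{j},
\end{align*}
where the Volterra-type convolution series converges in operator norm on $L^2$ and, thanks to the uniform bounds on the pieces and to the multiplicity control of the cover, in uniform $C^m$--norm on the diagonal. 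This gives the claimed asymptotic expansion on the diagonal with a uniformly bounded remainder.

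Third, to transfer the $L^2$ error bound into a pointwise $C^m$--bound on the diagonal I would invoke the uniform Sobolev embedding \eqref{sobolev embedding}: for $(S,h)$ of bounded geometry one has $W^{s+k}(S)\hookrightarrow UC^k(S)$ continuously as soon as $s\geq n/2$, and the analogue holds for $S\boxtimes S^*\to X\times X$ with the product bounded geometry. Combined with Proposition~\ref{smooth kernel}~(ii), this yields the uniform $C^m$--bound on the remainder $k_t(x,x)-(4\pi t)^{-n/2}\sum_{k\leq N}t^k\Psi_k(x)$.

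The main obstacle is the uniformity claim rather than the existence of the expansion itself: the classical construction is pointwise, and the non-trivial input is that bounded geometry of $(X,g)$ together with that of $(S,h)$ (plus the uniform partition of unity and the uniform Sobolev embedding) makes every constant in the parametrix construction, the Duhamel iteration, and the resulting Sobolev estimates independent of the basepoint. Once this uniformity is set up, the identification of the $\Psi_k$ as universal polynomials in the curvatures and their covariant derivatives is a purely local computation, identical to the compact case.
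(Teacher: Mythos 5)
Your proof is correct and follows essentially the same route as the paper, which simply delegates this statement to Roe (Prop.~2.11 of \emph{An index theorem on open manifolds}, Thm.~7.15 of the book) and Berline--Getzler--Vergne: those sources prove it precisely by the Minakshisundaram--Pleijel parametrix construction made uniform by bounded geometry, glued with a uniform partition of unity and corrected by a Duhamel/Volterra iteration with basepoint-independent constants. Your uniformity bookkeeping (uniform normal coordinates and synchronous frames, uniform cover multiplicity, uniform Sobolev embedding) is exactly the added input the cited proofs rely on, so there is nothing essentially new or missing in your argument.
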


Now assume that $(S,D)\to (X,g)$ is $\Gamma-$equivariant, $\Vol(X/\Gamma)<+\infty$. Then 
the terms $\Psi_{k}(x)$, which are $\Gamma-$equivariant, are integrable on $X/\Gamma$. Recall also that the remainder terms in the asymptotic expansion are bounded. Hence,  one has the asymptotic expansion
%
\begin{eqnarray}
\int_{X/\Gamma}\Tr_{s}(k_{t})(y)dV_{g}\sim \frac{1}{ (4\pi t)^{\frac{n}{2}}}\left(\int_{X/\Gamma}\Tr_{s}\Psi_{0}dV_{g}+t \int_{X/\Gamma}\Tr_{s}\Psi_{1}dV_{g}+\ldots\right)
\end{eqnarray}
(see p.147 of Roe's book).
Observe that the left hand side is constant in $t$, hence in the right hand side, the terms vanish if $k<\frac{n}{2}$ (even if $k\not=\frac{n}{2}$ see \cite{BerGetVer} p.141) and
\begin{eqnarray}
\int_{X/\Gamma}\Tr_{s}(e^{-tD^{2}} )&= &   \int_{X/\Gamma} \frac{1}{ (4\pi )^{\frac{n}{2}}} \Tr_{s}\Psi_{\frac{n}{2}}dV\,.
\end{eqnarray}

In particular the index is vanishes, if $n$ is odd.

It is known (\cite[Chap.~4]{BerGetVer}, \cite[Chap.~12]{Roe}) that
 $\frac{1}{ (4\pi )^{\frac{n}{2}}} \Tr_{s}\Psi_{\frac{n}{2}}dV$
is the component in degree $n$ of \break$\hat A(Y)\Chern(S/\Delta)$,
where $\hat A(Y)={\det}^{\frac{1}{2}}\left( \frac{ (\frac{i}{2\pi}R)/2 }{ \sinh(\frac{i}{2\pi}R)/2) } \right)$ is the $\hat A-$genus of the manifold (where $R$ denotes the curvature endomorphism) and $\Chern(S/\Delta)=\Tr^{S/\Delta}(\exp(-\frac{1}{2i\pi}F^{s}))$ is the relative Chern character (see  \cite[Example~2.28 and 4.25]{Roe} or \cite[Sec.~1.5, and Sec.~4.1]{BerGetVer}). This completes the proof of the theorem.


%

%
\begin{example}
 Let $(E,h)\to (Y,\omega)$ be a holomorphic bundle over a Kähler manifold. Then $\Lambda (T^{*(0,1)}(Y))\otimes E\to Y$ is a Clifford  bundle with associated Dirac operator $\sqrt2(\dbar+\dbar^{*})$. Then using \cite[p.148]{BerGetVer}, one sees that
\begin{align}
\chi_{(2)}(X,E,h,\dbar):=\sum_{i}(-1)^{i}\dim_{\Gamma}\H^{0,i}_{\dbar(2)}(X,E,h)=\int_{Y} \Todd(Y,\omega)\,\Chern(E,h)
\end{align}
where $\Todd(Y,\omega)=\det\left( \frac{ \frac{i}{2\pi} R^{+} }{e^{\frac{i}{2\pi} R^{+}}-1}\right)$ and $R^{+}$ is the curvature induced by the Kähler metric on the holomorphic tangent bundle $T^{(1,0)}(Y)$ and $\Chern(E,h)=\Tr\exp\left(\frac{i}{2\pi}\Theta(h)\right)$ is the Chern character of the hermitian bundle $(E,h)$.\footnote{Recall that $\Todd(E)=1+c_{1}/2+(c_{1}^{2}+c_{2})/12+\ldots$ is multiplicative: $\Todd(E\oplus E')=\Todd(E)\cdot\Todd(E')$, and $\Ch(E)=\Rank(E)+c_{1}+(c_{1}^{2}-2c_{2})/2+\ldots$ is a homomorphism from the ring of bundles with connections to the ring of differential forms.
}
\end{example}

\subsection{Interpretation of the characteristic integrals}\label{Interpretation of the characteristic integrals}
When $Y=\bar Y\setminus D$ is the complement of a normal crossings divisor $D$ in a compact complex manifold $\bar Y$ and $E\to Y$ is the restriction of a holomorphic bundle $\bar E\to \bar Y$, the characteristic integrals associated to $(E,h)\to (Y,\omega_{\bar Y,D})$ will be related to characteristic integrals of $\bar E$ over the logarithmic pair $(\bar Y,D)$.

In \cite{Mum} Mumford introduced the notion of good metric on $\bar E_{|\bar Y\setminus D}\to \bar Y\setminus D:=Y$.
In the same article, he proved that automorphic vector bundles admit such compactifications (see also \cite{Zuc,HarPho}).
Other boundary behaviors  along the normal crossings divisor $\bar Y\setminus Y$ are studied in \cite{BurKraKuh}.

A smooth form on $\Delta^{p}\setminus D$ has {\em Poincaré growth}, if it is bounded in the Poincaré metric: \begin{align*}\exists C\geq 0\quad \text{s.t. }\quad |\eta(t_{1},\ldots,t_{p})|^{2}\leq C_{p}\omega_{P}(t_{1},t_{1})\ldots\omega_{P}(t_{p},t_{p})\,.\end{align*}
It defines an integrable current ($\int_{\Delta^{p}}|\eta|<+\infty$).
A form is good if both $\eta$ and $d\eta$ have Poincaré growth. Hence $d[\eta]=[d\eta]$, in particular $\eta$ has vanishing residues.
\begin{definition}A smooth hermitian metric $h$ on $E$ is good on $\bar Y$, if for all $y\in \bar Y\setminus Y$ and all bases $e_{1},\ldots,e_{k}$ of $\bar E$ in a neighborhood $\Delta^{n}$ near $x$ such that $\bar Y\setminus Y$ is defined by $z_{1}\cdot\ldots\cdot z_{p}=0$. Let $h_{i,j}=h(e_{i},e_{j})$. Then
\begin{itemize}
\item[i)] $|h_{ij}|, \quad (\det h)^{-1}\leq C(\sum_{i=1}^{p}\log|z_{i}|)^{2n'}$.
\item[ii)] The 1-form  $(\partial h.h^{-1})_{ij}$ is good on $\bar Y\cap U$.
\end{itemize}
\end{definition}
\begin{example}
Let $D=\bar Y\setminus Y$ be a normal crossings divisor. Let $\omega$ be the Kähler metric on $Y$ of Poincaré type along $D$ as in \ref{Poincare metric}. Then $\omega$ is good on the bundle of logarithmic forms $\bar E=\Omega^{1}_{\bar Y}(\log(D))\to \bar Y$, which is the extension of $(\Omega_{Y},\omega)\to Y$. It is enough to prove this in the one dimensional case: The square norm in the Poincaré metric of the local frame $\frac{dz}{z}$ of $\Omega^{1}([0])$ is $(\log|z|^{2})^{2}$, and $\partial h. h^{-1}= \frac{2}{-\log|z|^2}\frac{dz}{z}$
is bounded with respect to the Poincaré metric.
  \end{example}

We refer to the original papers \cite{CorGri,Mum} for a proof of the following theorem:

\begin{theorem}[Cornalba-Griffiths, Mumford]
\begin{itemize}
\item[i)] Let $(E,h)\to Y$ be a Hermitian vector bundle, whose Chern curvature $\Theta(h)$ is bounded in the Poincaré metric near the normal crossings divisor $\bar Y\setminus Y$. Then there exists an algebraic vector bundle $\bar E\to \bar Y'$ on a compactification of $Y$ dominanting $\bar Y$ such that $\bar E_{|Y}=E$.
\item[ii)] If $h$ is a good metric, then for any standard coordinate neigborhood $(U,(z_{i}))$ as above,
    \begin{align}
    \Gamma(\bar U,\bar E)=\{s\in \Gamma(\bar U \setminus D,E) \text{ s.t. }\exists\, C,n',\, h(s,s)\leq C(\sum_{i=1}^{p}\log|z_{i}|)^{2n'}\}\,.
    \end{align}
Hence there exists at most one extension of $E$ to a vector bundle $\bar E$ such that $h$ is good on $\bar E$.
\item[iii)]  If $h$ is good then $c_{k}(E,h)$ is good and the currents $[c_{k}(E,h)]$ represent the cohomology class $[c_{k}(\bar E)]\in H^{2k}(\bar Y,\bar E)$.
\end{itemize}
\end{theorem}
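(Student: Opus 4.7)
My plan is to tackle the three parts separately, since they rest on rather different ingredients. For part (i) the key input is that bounded Chern curvature in the Poincaré metric forces controlled asymptotics of parallel transport. I would cover a punctured polydisc neighborhood $\Delta^{*p}\times \Delta^{n-p}$ of a point of $D$ by its universal cover $\mathbb{H}^p\times\Delta^{n-p}$, flatten the bundle there using the (now complete and hyperbolic) metric, and analyze the monodromy operators $T_1,\dots,T_p$ around the components of $D$. The boundedness of $\Theta(h)$ in $\omega_P$ together with Ahlfors–Schwarz-type growth estimates for the Chern connection yields that each $T_i$ is quasi-unipotent (so its Jordan decomposition exists after finite base change). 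When the $T_i$ commute, the iterated logarithms $N_i=\Log T_i/(2\pi i)$ provide a canonical extension of $E$ across $D$. In general they need not commute, which forces passage to a modification $\bar Y'\to \bar Y$ obtained by successive blow-ups along strata of $D$, on which the monodromies can be simultaneously put in upper-triangular form.

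For part (ii), I would argue both containments through a good frame $e_1,\dots,e_k$ of $\bar E$ on a chart $\bar U$. The goodness of $h$, specifically $|h_{ij}|\le C(\sum\log|z_i|)^{2n'}$, immediately shows that any holomorphic section of $\bar E$ has $h(s,s)$ bounded by a polynomial in $\sum\log|z_i|$, giving the forward inclusion. For the converse, write a given $s\in\Gamma(\bar U\setminus D,E)$ as $s=\sum_i f_i\,e_i$ with $f_i$ holomorphic on $\bar U\setminus D$, and use the lower bound $(\det h)^{-1}\le C(\sum\log|z_i|)^{2n'}$ to deduce that each $|f_i|$ is bounded by a power of $\sum\log|z_i|$ on $\bar U\setminus D$. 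Since a holomorphic function on the punctured polydisc with such sub-polynomial growth in $\log|z_i|$ extends holomorphically across $D$ (Riemann removable singularity), $s$ extends to a section of $\bar E$ on $\bar U$. Uniqueness of the extension follows, because the right-hand side is intrinsic in $(E,h)$.

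For part (iii), I would invoke Chern–Weil: $c_k(E,h)$ is a universal polynomial in $\Theta(h)/(2\pi i)$, so the hypothesis that $\Theta(h)$ is bounded in the Poincaré metric (which follows from the good metric assumption together with the explicit estimate $\partial h\cdot h^{-1}=O(\frac{1}{-\log|z|^2})\frac{dz}{z}$) gives that $c_k(E,h)$ has Poincaré growth. Its $d$-derivative vanishes by the Bianchi identity, so $c_k(E,h)$ is automatically good as a form. To identify the cohomology class, choose any smooth Hermitian metric $h'$ on $\bar E$ and use the Bott–Chern transgression $c_k(\bar E,h')-c_k(E,h)=dT_k(h,h')$ on $Y$; the key point, established using good frames as in part (ii), is that $T_k(h,h')$ has Poincaré growth, hence defines a current on $\bar Y$ with $d[T_k]=[dT_k]$, so that $[c_k(E,h)]=[c_k(\bar E,h')]=[c_k(\bar E)]$ in $H^{2k}(\bar Y,\mathbb{C})$.

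The main obstacle is part (i): the non-commutation of the monodromies around different components of $D$ makes the construction of $\bar Y'$ delicate. Concretely, one has to control how the iterated logarithms $N_i$ interact and choose the blow-up so that on the exceptional divisor the relevant filtrations become flag-compatible. Parts (ii) and (iii) are essentially local analytic estimates once a good frame has been produced, and should follow routinely from the definition of goodness and standard removable singularity arguments.
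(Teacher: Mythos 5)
The paper itself offers no proof of this statement — it explicitly refers to the original papers \cite{CorGri,Mum} — so your proposal has to be measured against those arguments. For parts (ii) and (iii) your sketch is essentially Mumford's: the forward inclusion from the bound $|h_{ij}|\leq C(\sum\log|z_{i}|)^{2n'}$, the converse via a frame of $\bar E$, Cramer's rule together with $(\det h)^{-1}\leq C(\sum\log|z_{i}|)^{2n'}$, and Riemann removable singularities; and for (iii) the fact that the Chern forms are closed forms of Poincar\'e growth together with a transgression argument. The one place you wave your hands is the claim that the Bott--Chern transgression $T_{k}(h,h')$ between the good metric and a smooth metric on $\bar E$ has Poincar\'e growth; that estimate is precisely the content of Mumford's Theorem 1.4 and needs the good-frame bounds on $\partial h\cdot h^{-1}$, not just on $h$, so it should be carried out rather than asserted.

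The genuine gap is in part (i). Your argument rests on monodromy operators $T_{1},\dots,T_{p}$ around the branches of $D$, their quasi-unipotence, and logarithms $N_{i}$ giving a canonical extension. But $(E,h)$ carries no flat connection: the hypothesis is only that the Chern curvature is \emph{bounded} in the Poincar\'e metric, not that it vanishes, so there is no monodromy representation, no ``flattening'' of the bundle on the universal cover of the punctured polydisc, and no meaning to quasi-unipotence or to a Deligne-type canonical extension via $\log T_{i}/(2\pi i)$. Those tools belong to flat bundles and local systems with regular singularities (the paper uses them only later, for harmonic and Higgs bundles), not to a general Hermitian holomorphic bundle with bounded curvature. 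The Cornalba--Griffiths proof is of a different nature: the curvature bound is used to produce holomorphic sections and frames of finite (logarithmic/polynomial) order of growth near $D$ — via $L^{2}$/$\bar\partial$ techniques with weights controlled by the curvature — which exhibit $E$ as the restriction of a coherent, in fact locally free, sheaf on a modification $\bar Y'\to\bar Y$; algebraicity then comes from GAGA-type results for finite-order analytic objects. The passage to $\bar Y'$ is forced by the possible failure of a locally free extension on $\bar Y$ itself, not by any need to make monodromy logarithms flag-compatible. As written, the step ``boundedness of $\Theta(h)$ yields that each $T_{i}$ is quasi-unipotent'' has no content, and the whole strategy for (i) would have to be replaced.
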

%
%
The characteristic integrals given by Atiyah-Cheeger-Gromov's theorem have a good interpretation for a good metric. The next formula applies in particular for holomorphic tensor bundles equipped with the metric induced by the Kähler metric with Poincaré growth along $D$. Let $T\bar Y(-\log(D))$ be the logarithmic tangent bundle, which is by definition the dual of the bundle of logarithmic forms $\Omega^{1}_{\bar Y}(\log(D))\to \bar Y$.

\begin{corollary}\label{characteristic logarithmic}
Let $p:X\to Y$ be a Poincaré covering and $(E,h)\to Y$ be a hermitian bundle. Assume that its pullback $(E,h)\to (X,p^{*}(\omega_{\bar Y,D}))$ is of bounded geometry.
\begin{itemize}
\item[i)] Let $h_{\log(D)}$ be a Hermitian metric on $T\bar Y(-\log(D))$. Then
\begin{align}\chi_{(2)}(X,E,h,\dbar)=\int_{Y}\Todd(T\bar Y(-\log(D)), h_{\log(D)})\Ch(E,h)\,.
\end{align}
\item[ii)] Assume $h$ is a good metric for $\bar E\to \bar Y$.
Then
\begin{align}
\chi_{(2)}(X,E,h,\dbar )=\chi((\bar Y,D), \bar E):=\int_{\bar Y}\Todd(T\bar Y(-\log(D)))\Ch(\bar E)\,.
\end{align}
\end{itemize}
\end{corollary}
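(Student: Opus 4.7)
The plan is to start from the index theorem already established in the previous paragraph, which applied to the Dirac operator $\sqrt{2}(\dbar+\dbar^{*})$ on $\Lambda^{0,\bullet}T^{*}Y\otimes E$ gives
$$\chi_{(2)}(X,E,h,\dbar)=\int_{Y}\Todd(T^{(1,0)}Y,\omega_{\bar Y,D})\,\Ch(E,h),$$
and then to rewrite the right-hand side by changing the metric on the logarithmic tangent bundle, and, for (ii), also on $E$. The key observation that makes such changes available is that $\omega_{\bar Y,D}$, viewed as a singular Hermitian metric on $T\bar Y(-\log(D))$, is good in the sense of Mumford; this is the content of the example following the definition of a good metric, applied to $\Omega^{1}_{\bar Y}(\log(D))$ and dualized.

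For (i) I would compare the two $\Todd$ forms associated to $\omega_{\bar Y,D}$ and to a smooth reference metric $h_{\log(D)}$ on $T\bar Y(-\log(D))$. Along a path of Hermitian metrics on $T\bar Y(-\log(D))|_{Y}$ interpolating between the two, the Bott-Chern transgression construction produces a secondary form $\tilde T$ on $Y$ with
$$\Todd(T^{(1,0)}Y,\omega_{\bar Y,D})-\Todd(T\bar Y(-\log(D)),h_{\log(D)})=d\tilde T.$$
A direct local computation in a polydisc chart around a point of $D$, using the good-metric estimates, shows that both $\tilde T$ and $d\tilde T$ have Poincar\'e growth. The hypothesis that the pullback of $(E,h)$ to $X$ is of bounded geometry with respect to $p^{*}\omega_{\bar Y,D}$ forces $\Ch(E,h)$ to be Poincar\'e-bounded as well, and it is closed. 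Letting $Y_{\epsilon}$ denote the complement of a Poincar\'e tube of radius $\epsilon$ around $D$, Stokes' theorem gives
$$\int_{Y_{\epsilon}}d\tilde T\wedge \Ch(E,h)=\int_{\partial Y_{\epsilon}}\tilde T\wedge \Ch(E,h),$$
and the right-hand side tends to $0$ as $\epsilon\to 0$ because the Poincar\'e volume of $\partial Y_{\epsilon}$ decays fast enough to absorb the logarithmic growth of the integrand. Letting $\epsilon\to 0$ yields (i).

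For (ii) the extra assumption that $h$ is a good metric on $\bar E\to\bar Y$ allows one to apply part (iii) of the Cornalba-Griffiths-Mumford theorem to both factors of the integrand: $\Ch(E,h)$ is a good current on $\bar Y$ representing $\Ch(\bar E)\in H^{\bullet}(\bar Y,\RR)$, while $\Todd(T\bar Y(-\log(D)),h_{\log(D)})$ is already smooth on $\bar Y$ and represents $\Todd(T\bar Y(-\log(D)))$. The product of two good forms is again good, hence extends as a closed current on $\bar Y$ representing the product cohomology class, so the integral of this product over $Y$ equals the cohomological pairing against $[\bar Y]$. Combining with (i) gives the stated equality with $\int_{\bar Y}\Todd(T\bar Y(-\log(D)))\Ch(\bar E)$.

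The main obstacle is the Stokes justification in the second paragraph: one must verify both that the Bott-Chern secondary form $\tilde T$ built from the path joining $\omega_{\bar Y,D}$ to $h_{\log(D)}$ has controlled growth near $D$, and that the resulting boundary terms vanish in the limit. Both points reduce to the local computation in a polydisc coordinate chart around a point of $D$, where the good-metric estimates force $\tilde T$ to be dominated by powers of $\log|z_{i}|$ times Poincar\'e-bounded forms, while the Poincar\'e volume of the truncating tube vanishes at precisely the rate needed to kill the integrand.
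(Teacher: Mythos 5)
Your argument is essentially the paper's: start from the heat-kernel index theorem, observe that $\omega_{\bar Y,D}$ is a good metric on $T\bar Y(-\log(D))$, transgress the Todd form to a smooth metric $h_{\log(D)}$, and kill the exact term against the Poincar\'e-bounded closed form $\Ch(E,h)$; for (ii) one moves $\Ch(E,h)$ to $\bar Y$ using goodness of $h$. The only real divergence is how the exact term is disposed of: the paper notes that $T\wedge\Ch(E,h)$ is Poincar\'e-bounded, hence integrable, and then simply invokes Gaffney's theorem on the complete manifold $(Y,\omega_{\bar Y,D})$, whereas you run the truncation and boundary estimate by hand. In (ii) the paper performs a second transgression $\Ch(E,h)=\Ch(\bar E,\bar h)_{|Y}+dT'$ with $T'$ Poincar\'e-bounded and integrates by parts once more, then uses metric independence of characteristic integrals on the compact $\bar Y$; your appeal to Mumford's statement (iii) together with multiplicativity of good forms is a repackaging of the same step, and the claim that the wedge of good representatives represents the product class is most cleanly justified by exactly that double transgression.

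One quantitative point in your Stokes argument needs tightening. The Poincar\'e length of the circle $|z_{1}|=\epsilon$ is of order $1/\log(1/\epsilon)$, so the boundary volume of $\partial Y_{\epsilon}$ decays only like one inverse power of the logarithm; a transgression form ``dominated by powers of $\log|z_{i}|$ times Poincar\'e-bounded forms'' would therefore not visibly give a vanishing boundary term. What saves the argument, and what the paper uses via Mumford's Theorem~1.4, is that the transgression form between the two metrics is itself good, i.e.\ genuinely bounded in the Poincar\'e metric, so the boundary contribution is $O(1/\log(1/\epsilon))$; alternatively, once $T\wedge\Ch(E,h)$ and its differential are known to be Poincar\'e-bounded (hence integrable), Gaffney's theorem gives $\int_{Y}d\bigl(T\wedge\Ch(E,h)\bigr)=0$ without any truncation. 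With that correction your proof closes and coincides with the paper's.
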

\begin{proof}
\begin{itemize}
\item[i)]
Since the metric $\omega_{\bar Y,D}$ is good for $T\bar Y(-\log(D))$, there exists a transgression formula
\begin{align}
\Todd(T\bar Y(-\log(D)))_{|Y},\omega_{\bar Y,D})=\Todd(T\bar Y(-\log(D)),h_{\log(D)})+dT
\end{align}
such that $T$ is bounded in the Poincaré metric (see the argument in \cite[Thm.~1.4 ]{Mum}). The form $T\wedge \Ch(E_{0},h_{0})$ is bounded in the Poincaré metric, it is therefore integrable. Gaffney's theorem applied to the complete manifold $(Y,\omega_{\bar Y,D})$ implies that $\int_{Y}dT\wedge \Ch(E_{0},h_{0})=0$ (see \cite{KobRyo}).
\item[ii)] As above, one has $\Ch(E,h)=\Ch(\bar E,\bar h)_{|Y}+dT'$ with $T'$ bounded in the Poincaré metric, hence it is possible to integrate by parts one more time in the above formula. Finally one uses the fact that characteristic integrals over compact manifolds do not depend on the metric.
\end{itemize}
\end{proof}

\begin{example}
\begin{itemize}
\item[i)] Mumford proved in \cite{Mum} that an automorphic vector bundle on a quotient of a symmetric domain by a net lattice admits an extension such that the quotient metric is good for the extension. Tensor bundles extend via tensor products of the bundle of logarithmic forms or logarithmic vector fields.
\item[ii)]
In \cite{LiuSunYau2004,LiuSunYau2005}, Liu, Sun and Yau introduced the Ricci and the perturbed Ricci metrics on $\mathcal{M}_{g}$, the moduli space of Hyperbolic Riemann surfaces. These metrics are equivalent to the Bergmann metric and to the Kähler-Einstein metric on $\mathcal{M}_{g}$. They prove that these metrics are of bounded geometry on the Teichmüller space (see also \cite{GSch}). One will consider theses metrics only on the finite branched covers of $\M_{g}$ (e.g.\ moduli of Riemann surfaces with level structure), which are manifolds.

In \cite{LiuSunYau2014}, the authors prove that the Weil-Petersson metric and the above two metrics induce good Hermitian metrics on the logarithmic tangent bundle of the Deligne-Mumford compactification $(\overline{ \mathcal{M}_{g}},\overline{\mathcal{M}_{g}}\setminus \mathcal{M}_{g})$.
\end{itemize}
\end{example}

\begin{example}
\begin{itemize}
\item[i)] For any holomorphic vector bundle $E$, let $\Lambda^{0}E=\CC$ be the trivial holomorphic bundle. For any $i> 0$, the Poincaré type metric $\omega_{\bar Y,D}$ is good for $\overline{\Lambda^{i}\Omega^{1}}:=\Lambda^{i}\Omega^{1}_{\bar Y}(\log(D))=\Omega^{i}_{\bar Y}(\log(D))$ and, for $i\geq 0$,
\begin{align}\chi_{(2)}(X,\Omega^{i}_{X}, \dbar, p^{*}(\omega_{\bar Y,D}))=\int_{\bar Y}\Todd (T\bar Y(-\log(D)))\Chern(\Omega^{i}_{\bar Y}(\log(D)))\,.\end{align}
In particular $\quad\chi_{(2)}(X,\CC,\dbar,p^{*}(\omega_{\bar Y,D}))=\int_{\bar Y}\Todd (T\bar Y(-\log(D)))$.
\item[ii)] For any holomorphic bundle $\bar E\to \bar Y$ of rank $r$, we have (see \cite[Ex.\ 3.2.5]{Ful})
\begin{align}\label{relation for Todd}
\sum_{i\geq 0}(-1)^{i}\Chern(\Lambda^{i}\bar E^{*})=c_{r}(\bar E)\Todd(\bar E)^{-1}\,.
\end{align}
In particular, if $rank (\bar E)=n=\dim\bar Y$, then (see \cite[Ex.\ 183.7]{Ful})
\begin{align}\label{relation for Todd 2}
\Todd(T\bar Y(-\log(D))) c_{n}(\bar E)\Todd(\bar E)^{-1}=c_{n}(\bar E)=\Todd(T\bar Y) c_{n}(\bar E)\Todd(\bar E)^{-1}\,.
\end{align}
\end{itemize}
 Therefore:
\begin{corollary} Let $\bar Y$ be a compact Kähler manifold, $D$ a normal crossings divisor, $Y=\bar Y\setminus D$, and let $p:X\to Y$ be a Poincar\'{e} covering. Let $\bar E\to \bar Y$ be  a vector bundle of rank $n=\dim X$ and let $h$ be a metric on $E:=\bar E_{|Y}$, which is good for $\bar E$. Then
\begin{itemize}
\item[i)]
\begin{align}
\sum_{i}(-1)^{i}\chi_{(2)}(X,\Lambda^{i}E^{*},\dbar)=\int_{\bar Y}c_{n}(\bar E)=\sum_{i}(-1)^{i}\chi_{}(\bar Y,\Lambda^{i}\bar E^{*})\,.
\end{align}

\item[ii)] In particular, for the trivial real bundle $\CC\to (X,p^{*}(\omega_{\bar Y,D}))$, one gets

\begin{align}
\chi_{(2)}(X,\CC,d):=\sum_{i}\dim_{\Gamma}\H^{i}_{d(2)}(X)=\int_{\bar Y}c_{n}(T\bar Y(-\log(D)))=\chi(\bar Y-D)\,.
\end{align}
\end{itemize}
\end{corollary}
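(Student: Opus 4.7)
My plan is to reduce both assertions to Corollary~\ref{characteristic logarithmic}(ii) applied termwise, combined with the two purely cohomological identities (\ref{relation for Todd}) and (\ref{relation for Todd 2}) of Fulton. For (ii), I would additionally specialise $\bar E$ to the logarithmic tangent bundle and invoke the Hodge-to-de~Rham Euler characteristic identity recorded in the introduction.

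\textbf{Proof of (i).} First I would verify that the induced Hermitian metric on each exterior power $\Lambda^{i}E^{*}$ remains good: the defining growth conditions on $|h_{ij}|$, $(\det h)^{-1}$ and the Poincar\'e growth of $(\partial h)h^{-1}$ behave naturally under duals and wedge products, and the bounded geometry hypothesis passes to exterior powers for the same reason. Corollary~\ref{characteristic logarithmic}(ii) then yields, for each $0\leq i\leq n$,
\[
\chi_{(2)}(X,\Lambda^{i}E^{*},h,\dbar)=\int_{\bar Y}\Todd(T\bar Y(-\log D))\,\Ch(\Lambda^{i}\bar E^{*}).
\]
Multiplying by $(-1)^{i}$, summing, and applying (\ref{relation for Todd}) with $r=n=\rank\bar E$ collapses the integrand to $\Todd(T\bar Y(-\log D))\,c_{n}(\bar E)\,\Todd(\bar E)^{-1}$, which by (\ref{relation for Todd 2}) equals $c_{n}(\bar E)$. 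For the classical side I would apply Hirzebruch--Riemann--Roch on the compact manifold $\bar Y$: $\chi(\bar Y,\Lambda^{i}\bar E^{*})=\int_{\bar Y}\Todd(T\bar Y)\Ch(\Lambda^{i}\bar E^{*})$; the same two identities produce the same answer $\int_{\bar Y}c_{n}(\bar E)$, establishing (i).

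\textbf{Proof of (ii).} I would specialise part~(i) to $\bar E:=T\bar Y(-\log D)$, whose rank is $n=\dim X$ and whose dual is $\Omega^{1}_{\bar Y}(\log D)$, so that $\Lambda^{i}\bar E^{*}=\Omega^{i}_{\bar Y}(\log D)$. The Poincar\'e-type metric $\omega_{\bar Y,D}$ is good on $\Omega^{1}_{\bar Y}(\log D)$ by the example in Section~\ref{Interpretation of the characteristic integrals}, and its pullback has bounded geometry by Proposition~\ref{Poincare metric}. Part~(i) then delivers $\sum_{i}(-1)^{i}\chi_{(2)}(X,\Omega^{i}_{X},\dbar)=\int_{\bar Y}c_{n}(T\bar Y(-\log D))$. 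The left-hand side equals $\chi_{(2)}(X,\CC,d)$ by the identity $\chi_{(2)}(X,\CC,d)=\sum_{j}(-1)^{j}\chi_{(2)}(X,\Omega^{j}(X),\dbar)$ recorded in the introduction (a direct consequence of the Hodge-to-de~Rham spectral sequence), and the final equality $\int_{\bar Y}c_{n}(T\bar Y(-\log D))=\chi(\bar Y\setminus D)$ is the logarithmic Gauss--Bonnet formula.

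\textbf{Main obstacle.} The genuinely analytic content has been absorbed into Corollary~\ref{characteristic logarithmic}(ii), whose proof rests on Mumford's transgression formula and Gaffney's integration-by-parts theorem on complete manifolds. What remains is essentially bookkeeping: the only delicate step will be to confirm that goodness and bounded geometry are preserved under duals, wedge products and tensor operations, which is a routine tensorial check using the multiplicativity of Chern classes and the multiplicative stability of Poincar\'e growth. Interchanging the finite alternating sum with the integral is harmless because the sum has only $n+1$ terms and each $\chi_{(2)}$ is finite by the Fredholm theorem established earlier.
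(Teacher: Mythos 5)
Your proposal is correct and follows essentially the paper's own route: part (i) is obtained exactly as in the paper by applying the $l^{2}-$index theorem (via Corollary~\ref{characteristic logarithmic}) together with Hirzebruch--Riemann--Roch and the Fulton identities (\ref{relation for Todd})--(\ref{relation for Todd 2}), after checking that goodness passes to the exterior powers. The only divergence is in the bookkeeping for (ii): the identity $\chi_{(2)}(X,\CC,d)=\sum_{j}(-1)^{j}\chi_{(2)}(X,\Omega^{j}_{X},\dbar)$ should not be quoted from the introduction (that statement is part of what is being proved here, and the $l^{2}-$Hodge-to-DeRham spectral sequence of the paper only degenerates at $E_{2}$ after tensoring with $\U(\Gamma)$, so it is not the cleanest source); the paper instead derives it directly from the K\"ahler Hodge decomposition $\H^{i}_{d(2)}=\oplus_{k+l=i}\H^{k,l}_{\dbar(2)}$ of the $l^{2}-$harmonic spaces, and it obtains the final equality $\int_{\bar Y}c_{n}(T\bar Y(-\log D))=\chi(\bar Y\setminus D)$ from Deligne's identification $H^{i}(\bar Y\setminus D,\CC)=\HH^{i}(\bar Y,(\Omega^{\tbullet}_{\bar Y}(\log D),d))$ rather than by quoting logarithmic Gauss--Bonnet as a black box; with these two substitutions your argument coincides with the paper's.
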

\begin{proof}
\begin{itemize}
\item[i)]
If $h$ is good for $\bar E\to \bar Y$, it induces a good metric on $\Lambda^{i}\bar E$. From (\ref{relation for Todd 2}), the $l^{2}-$Index Theorem~\ref{index theorem} and Grothendieck-Riemann-Roch Theorem, one gets
\begin{align}
\sum_{i}(-1)^{i}\chi_{(2)}(X,\Lambda^{i}E^{*},\dbar )=\int_{\bar Y}c_{n}(\bar E)=\sum_{i}(-1)^{i}\chi_{}(\bar Y,\Lambda^{i}\bar E^{*})\,.
\end{align}
\item[ii)] The metric $p^{*}(\omega_{\bar Y,D})$ is Kähler, hence Hodge decomposition reads
\begin{align}\H^{i}_{d (2)}(X,p^{*}(\omega_{\bar Y,D})) &=\oplus_{k+l=i}\H^{k,l}_{\dbar (2)}(X,p^{*}(\omega_{\bar Y,D}))\,.
\end{align}
Hence $(i)$ implies
\begin{align}
\sum_{i}(-1)^{i}\chi_{(2)}(X,\Lambda^{i}\Omega^{1}_{X},\dbar)=\sum_{i}(-1)^{i}\chi(\bar Y,\Lambda^{i}\Omega^{1}_{\bar Y}(\log D))
=\int_{\bar Y}c_{n}(T\bar Y(-\log(D)))\,.
\end{align}

Deligne's Mixed Hodge theory (see \cite{Deldeux} (3.2.2)) implies $$H^{i}(\bar Y\setminus D,\CC)=\HH^{i}(\bar Y,(\dlog{.}{D},d)).$$ This gives the result.
\end{itemize}
\end{proof}

\begin{remark}Homotopy invariance will reprove that the $l^{2}-$Euler characteristic  is equal to the Euler characteristic of $Y$. \end{remark}

\end{example}
\subsection{The case of a curve.}
\begin{example}
Let $(\bar Y,D)$ be a punctured Riemann surface with $Y=\bar Y \setminus D$ hyperbolic. Denote by $p:X=D(0,1)\to Y$ a uniformization map with fundamental domain $F$ for $\Gal(p)$. The Poincaré metric $\omega_{P}$ on $D(0,1)$ descends to a complete hyperbolic metric $\omega_{\bar Y,D}$ on $Y$, of bounded volume and curvature -$1$. For this metric on $D(0,1)$, one has
\begin{align*}
\H^{0}_{(2)}(D(0,1))=0 \,;& & \H^{1,0}_{(2)}(D(0,1))\sim\overline{\H^{0,1}_{(2)}(D(0,1))} \,; &&\H^{2}_{(2)}(D(0,1))=0\,.
\end{align*}
The Bergmann form $B(z)dV$ of the space $\H^{1,0}_{(2)}(D(0,1))$ is independant of the metric, hence
 \begin{align}
B(z)dV&=\sum_{k=0}^{+\infty}\frac{k+1}{2\pi}|z|^{2k}idz\wedge d\bar z=\frac{1}{2\pi}\frac{idz\wedge d\bar z}{(1-|z|^{2})^{2}}=\frac{1}{4\pi}\omega_{P}\,.\\
\dim_{\Gal(p)}\H^{1,0}_{(2)}(D(0,1))&=\int_{F}\frac{1}{4\pi}\omega_{P}=\frac{1}{4\pi}\Vol_{\omega_{\bar Y,D}}(Y)\,.
\end{align}
Assume $\bar Y$ has genus $g$ and $D$ contains $s$ points. The Gauss-Bonnet formula for a hyperbolic Riemann surface of finite volume (\cite[Cor.\ 10.4.4]{Bea}) states that $\Vol_{\omega_{\bar Y,D}}(Y)=2\pi(2g-2+s)=-2\pi\chi(Y)$. Hence
\begin{align}
\chi_{(2)}(X,\CC,d) =2\,\chi_{(2)}(X,\CC,\dbar)=-\frac{1}{2\pi}\Vol_{\omega_{\bar Y,D}}(Y)=2-2g-s=2\int_{\bar Y}\Todd(K^{*}\otimes[-D])\,.
\end{align}
The last equality is true, since $\Todd(K^{*}\otimes[-D])=(1+\frac{1}{2}c_{1}(K^{*}\otimes [-D])=1-\frac{1}{2}c_{1}(K)-\frac{1}{2}c_{1}(D)$.
In particular $\chi_{(2)}(X,\CC,\dbar)$ belongs to $\frac{1}{2}\ZZ$ in general. The above corollary reproves the Gauss-Bonnet Theorem. The equality between $l^{2}-$characteristic and Euler characteristic is also a consequence of the homotopy invariance of the $l^{2}-$Betti numbers (Sec.~\ref{invariance of the Betti numbers}) because $Y$ possesses a retract to a wedge of $2g+s-1$ circles.

As an example, let $\bar Y$ be an elliptic curve and $D=\{P\}$ be a point in $\bar Y$. Then $Y=\bar Y\setminus\{P\}$ is a hyperbolic Riemann surface with fundamental group $\FF_{2}$. A fundamental domain $F\subset \Adh{D(0,1)}$ for the Galois action is bounded by two pairs of geodesics $((z, \gamma_{1}(z)),(\gamma_{2}(z),\gamma_{2}\gamma_{1}(z)))$ $((z,\gamma_{2}(z)),(\gamma_{1}(z),\gamma_{1}\gamma_{2}(z)) )$ (where a geodesic $(z,w)$ joins the points  $z,w\in \partial D(0,1)$).

Hence $F$ is a hyperbolic square with vertex on the unit circle. It is the disjoint union of two ideal triangles (cut along the geodesic $(z,\gamma_{2}\gamma_{1}(z))$) each of which has area equal to $\pi$ in the Poincaré metric so that $\Vol(F)=2\pi$; $\chi(Y)=-1$ for $Y$ retracts to a wedge of two circles.

\end{example}

\begin{example}$\CC^{*}\subset \PP^{1}$. The covering $p:\CC\to \CC^{*}$ $z\mapsto e^{iz}$ is a Poincaré covering. It is easily seen that the $l^{2}-$reduced cohomology of $(\CC,p^{*}(\omega_{\PP^{1},[0]+[\infty]}))$ is vanishing as $c_{1}(T_{\PP^{1}}(-2))$.
\end{example}
We now refer to Example~\ref{example} below.
\begin{example}
Let $p:X\to Y$ be the universal covering map. Let $L\oplus L^{-1}\to Y$ be the Hitchin bundle. Then $\chi_{(2)}(X,Sym^{n}(L\oplus L^{-1}),\dbar)=-\chi_{(2)}(X,L^{n+2},\dbar)$ is linear in $n$ as predicted by the $l^{2}-$Riemann-Roch theorem. Serre duality gives that $b^{0}_{(2)}(X,L)=b^{1}_{(2)}(X,L)$ (hence $\chi_{(2)}(X,L,\dbar)=0$) but $b^{0}_{(2)}(X,L)$ vanishes, because there are no square integrable half-forms with respect to the Poincaré metric on the unit disc.
\end{example}

 Let  $(E,h)\to Y$ be a hermitian holomorphic bundle over a punctured curve $Y=\bar Y\setminus\{P_{1},\ldots,P_{s}\}$ such that its curvature is bounded in the Poincaré metric. According to \cite[Lemma~6.1, p.749]{Sim90} one has $$\int_{Y}C_{1}(E,h)=\deg(E,\{E_{\alpha,P_{i}}\})$$
where $\deg(E,\{E_{\alpha,P_{i}}\})$ is an algebraic degree defined in terms of the prolongation bundle (loc.cit.\ Section~3 and Prop.~3.1).
One obtains the following version of Atiyah's Riemann-Roch theorem:
\begin{theorem}
Let $(E,h)\to Y$ be a hermitian holomorphic bundle over the punctured curve $Y$ with bounded curvature together with its derivatives. Let $p:X\to Y$ be a Galois Poincaré covering with covering group $\Gamma$ (iff the class of circles around punctures are not of finite order in $\pi_{1}(Y)/p_{*}(\pi_{1}(X))$). Then, with $r=Rank(E)$,
\begin{align}
\chi_{(2)}(X,E,h,\dbar)&=r[1-g-\frac{s}{2}]+\deg(E,\{E_{\alpha,P_{i}}\}) ,\\
\chi_{(2)}(X,(E,h)\otimes \Omega^{1},\dbar)&=r[g-1+\frac{s}{2}]+\deg(E,\{E_{\alpha,P_{i}}\})\,. 
\end{align}

\end{theorem}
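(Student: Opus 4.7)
The plan is to apply the $l^{2}$-index theorem for Dirac operators of bounded geometry (Theorem~\ref{index theorem}) to the Dolbeault operator $\sqrt{2}(\dbar+\dbar^{*})$ twisted by $p^{*}E$, and then to interpret the resulting characteristic integral via Corollary~\ref{characteristic logarithmic}(i). First I verify the hypotheses: by Proposition~\ref{Poincare metric}, $(X,p^{*}\omega_{\bar Y,D})$ is of $C^{\infty}$-bounded geometry, and the assumption that $(E,h)$ has bounded curvature together with all its covariant derivatives makes $(p^{*}E,p^{*}h)$ a bundle of bounded geometry in the sense of Section~\ref{manifold of bounded geometry}. Theorem~\ref{index theorem} then yields
\begin{align*}
\chi_{(2)}(X,E,h,\dbar) = \int_{Y} \Todd(T^{(1,0)}Y,\omega_{\bar Y,D})\,\Ch(E,h).
\end{align*}

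Since $\omega_{\bar Y,D}$ is a good metric on the logarithmic tangent bundle $T\bar Y(-\log D)$, I invoke Corollary~\ref{characteristic logarithmic}(i) with any smooth Hermitian metric $h_{\log D}$ on $T\bar Y(-\log D)$ extending to $\bar Y$: the transgression $\Todd(T^{(1,0)}Y,\omega_{\bar Y,D}) - \Todd(T\bar Y(-\log D),h_{\log D}) = dT$ has $T$ bounded in Poincar\'{e} norm, and $\Ch(E,h)$ is also Poincar\'{e}-bounded by the bounded curvature hypothesis on the one-dimensional base, so Gaffney's integration by parts eliminates the correction. In dimension one, what remains is
\begin{align*}
\chi_{(2)}(X,E,h,\dbar) = \int_{Y} c_{1}(E,h) + \tfrac{r}{2}\int_{Y} c_{1}(T\bar Y(-\log D),h_{\log D}).
\end{align*}
The first integral equals $\deg(E,\{E_{\alpha,P_{i}}\})$ by Simpson's formula \cite[Lemma~6.1, p.~749]{Sim90}. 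The second, since $h_{\log D}$ extends smoothly to $\bar Y$, equals the topological degree $\deg(T\bar Y(-\log D)) = 2-2g-s$. Summing yields the first asserted formula.

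For the twisted statement, I repeat the argument with $E \otimes \Omega^{1}_{Y}$ in place of $E$, where $\Omega^{1}_{Y}$ carries the metric induced by $\omega_{\bar Y,D}$; this induced metric is good on the extension $\Omega^{1}_{\bar Y}(\log D) = T\bar Y(-\log D)^{\vee}$, so the tensor product still satisfies the bounded curvature hypothesis. Additivity of $c_{1}$ under tensor product gives $c_{1}(E \otimes \Omega^{1}) = c_{1}(E,h) + r\,c_{1}(\Omega^{1}_{\bar Y}(\log D),h_{P})$, and by Cornalba--Griffiths--Mumford applied to the good metric $h_{P}$ the second term integrates to $\deg(\Omega^{1}_{\bar Y}(\log D)) = 2g-2+s$. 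Combining with the Todd contribution above gives $\deg(E,\{E_{\alpha,P_{i}}\}) + r(2g-2+s) + \tfrac{r}{2}(2-2g-s) = r[g-1+\tfrac{s}{2}] + \deg(E,\{E_{\alpha,P_{i}}\})$, as claimed. The main obstacle in the plan is the transgression step: although $h$ is only assumed to have bounded curvature and is not in general a good metric in Mumford's sense, the Poincar\'{e}-boundedness of $\Ch(E,h)$ must still suffice to push the transgression inside $\int_{Y}$ and discard it via Gaffney. This forces the argument through Corollary~\ref{characteristic logarithmic}(i) with a reference metric on the logarithmic extension, rather than through (ii), and requires Simpson's prolongation-bundle interpretation to make sense of $\int_{Y} c_{1}(E,h)$.
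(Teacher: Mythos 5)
Your proposal is correct and takes essentially the same route the paper intends: the $l^{2}$-index theorem in the form of Corollary~\ref{characteristic logarithmic}(i) (transgression of the Todd class of $T\bar Y(-\log D)$ plus Gaffney's integration by parts, which only needs bounded geometry of the pullback, not goodness of $h$), Simpson's formula \cite[Lemma~6.1]{Sim90} identifying $\int_{Y}c_{1}(E,h)$ with $\deg(E,\{E_{\alpha,P_{i}}\})$, and the degree computations $\deg T\bar Y(-\log D)=2-2g-s$ and $\deg\Omega^{1}_{\bar Y}(\log D)=2g-2+s$ for the twisted case via goodness of the Poincar\'e metric. The paper leaves these steps implicit, and your write-up fills them in consistently.
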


\subsubsection{Anticipating the harmonic bundle case}
Characteristic integrals are more delicate to compute in the higher dimensional case (see e.g.\ \cite{BurKraKuh} for this problem).
For (tame) harmonic bundles, which will be defined in the next section, stronger statements can be made.

\begin{theorem}\label{characteristic integral for a tame harmonic bundle}
\begin{itemize}
\item[i)] Let $(E,h,\nabla)\to (Y,\omega)$  be a harmonic Higgs bundle of rank $r$ with bounded Higgs field. Let $p:X\to Y$ be a Galois covering such that $p^{*}(\omega)$ is of bounded geometry with positive injectivity radius.
Then
\begin{align}\chi_{(2)}(X,(E,h)\otimes \Omega^{p}_{X},\dbar)=r\int_{Y}\Todd (T(Y))\Chern(\Omega^{p}_{Y})= r\chi_{(2)}(X,\Omega^{p}_{X},\dbar)\,.
\end{align}
\item[ii)] If moreover $Y=\bar Y\setminus D$, and  $\omega$ is of Poincaré type along the normal crossings divisor $D$, and if $(E,h,\nabla)\to (Y,\omega)$ is a tame nilpotent harmonic bundle by assumption, then
\begin{align}\chi_{(2)}(X, (E,h)\otimes \Omega^{p}_{X},\dbar)=r\int_{\bar Y}\Todd (T\bar Y(-\log(D)))\Chern(\Omega^{p}_{\bar Y}(\log(D)))\,.\end{align}
\end{itemize}
\end{theorem}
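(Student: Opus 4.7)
The strategy is to apply the $l^{2}$-Dolbeault index theorem to the bundle $(E,h)\otimes \Omega^{p}_{X}$ and then kill the Chern character of $(E,h)$ using the flat connection of the harmonic bundle. First, one checks that the hypotheses of the index theorem (Theorem~\ref{index theorem}, in the Dolbeault version of the example that follows it) are met: the harmonic bundle equation $\Theta(h) = -[\theta,\theta^{*}]$ together with the bounded Higgs field hypothesis makes the Chern curvature of $(E,h)$ bounded, and since $\nabla-\nabla_{h} = \theta + \theta^{*}$ is bounded with all further covariant derivatives bounded (by the example following Definition~\ref{property A}), the bundle $(E,h)$ has property $AC^{\infty}$. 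Hence
\begin{align*}
\chi_{(2)}(X,(E,h)\otimes \Omega^{p}_{X},\dbar) &= \int_{Y} \Todd(T_{Y})\,\Chern(E,h)\,\Chern(\Omega^{p}_{Y}),
\end{align*}
the integrand being computed with respect to the Chern connection $\nabla_{h}$ of the harmonic metric.

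To remove the factor $\Chern(E,h)$, I would consider the one-parameter family of metric connections $\nabla_{t} := \nabla_{h} + t(\theta+\theta^{*})$ for $t\in[0,1]$. At $t=0$, $\nabla_{0}=\nabla_{h}$ is the Chern connection; at $t=1$, the harmonic equations force $\nabla_{1}=\nabla$ to be flat, so $\Chern(\nabla_{1})=r$. The standard Chern-Weil transgression then gives
\begin{align*}
\Chern(E,h)-r \;=\; d\alpha, \qquad \alpha \;=\; -\tfrac{i}{2\pi}\int_{0}^{1} \Tr\bigl((\theta+\theta^{*})\exp(\tfrac{i}{2\pi}F_{t})\bigr)\,dt,
\end{align*}
where $F_{t} = (t^{2}-1)[\theta,\theta^{*}]$ by direct computation using $\dbar\theta=0$, $\theta\wedge\theta=0$, and $\Theta_{h} = -[\theta,\theta^{*}]$. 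Thus $\alpha$ is a universal polynomial in $\theta$ and $\theta^{*}$ with no differential operators appearing, and the bounded Higgs field hypothesis guarantees that $\alpha$ is uniformly bounded in the base metric.

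For part~(i), both $\Todd(T_{Y})$ and $\Chern(\Omega^{p}_{Y})$ are closed and uniformly bounded, so Gaffney's theorem on the complete finite-volume manifold $(Y,\omega)$, applied exactly as in the proof of Corollary~\ref{characteristic logarithmic}, kills the exact term $d\alpha\wedge\Todd(T_{Y})\Chern(\Omega^{p}_{Y})$, yielding
\begin{align*}
\chi_{(2)}(X,(E,h)\otimes \Omega^{p}_{X},\dbar) = r\int_{Y} \Todd(T_{Y})\Chern(\Omega^{p}_{Y}) = r\,\chi_{(2)}(X,\Omega^{p}_{X},\dbar),
\end{align*}
the last equality being the index theorem applied to $\Omega^{p}_{X}$ alone. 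For part~(ii), the tame nilpotent hypothesis is exactly what ensures that $\theta$ and $\theta^{*}$ remain bounded in the Poincar\'e metric $\omega_{\bar Y,D}$ (via Simpson-Mochizuki norm estimates), so the same transgression-plus-Gaffney argument goes through on the complete finite-volume manifold $(Y,\omega_{\bar Y,D})$; one then invokes Corollary~\ref{characteristic logarithmic}(ii) applied to $\Omega^{p}_{\bar Y}(\log D)$, for which $\omega_{\bar Y,D}$ is a good metric in the sense of Mumford, to rewrite $\int_{Y}\Todd(T_{Y})\Chern(\Omega^{p}_{Y})$ as $\int_{\bar Y}\Todd(T\bar Y(-\log D))\Chern(\Omega^{p}_{\bar Y}(\log D))$. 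The main obstacle is precisely this last step in part~(ii): verifying that the transgression form $\alpha$ inherits bounded Poincar\'e-type behaviour near $D$ from tameness and nilpotency of the residues, which is where the finer asymptotic analysis of Simpson and Mochizuki is essential, since without sufficient decay one cannot apply Gaffney or extend the good-metric calculation to $\bar Y$.
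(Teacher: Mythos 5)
Your argument is correct, but it takes a genuinely different route at the central step. The paper does not transgress at all: it observes that the Chern character form of $(E,h)$ is \emph{pointwise} equal to $r$. Indeed $\Theta_{h}=-[\theta,\theta^{*}_{h}]$, and using $\theta\wedge\theta=0$, $\theta^{*}\wedge\theta^{*}=0$ one gets $(-1)^{k}\Trace(\Theta_{h}^{k})=\Trace((\theta\theta^{*})^{k})+\Trace((\theta^{*}\theta)^{k})$, and each summand is the trace of a commutator, e.g.\ $\Trace((\theta\theta^{*})^{k})=\Trace([(\theta\theta^{*})^{k-1}\theta,\theta^{*}])=0$; hence $\Ch(E,h)=r$ as a form and the factor drops out of the index integral immediately, with the remaining comparison $\int_{Y}\Todd(TY)\Ch(\Omega^{p}_{Y})=\int_{\bar Y}\Todd(T\bar Y(-\log D))\Ch(\Omega^{p}_{\bar Y}(\log D))$ handled exactly as in Corollary~\ref{characteristic logarithmic}. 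Your version replaces this algebraic identity by the Chern--Weil transgression $\Ch(E,h)-r=d\alpha$ along $\nabla_{t}=\nabla_{h}+t(\theta+\theta^{*})$ (your computation $F_{t}=(t^{2}-1)[\theta,\theta^{*}]$ is right, using $\dbar\theta=0$ and its conjugate) together with Gaffney's theorem on the complete finite-volume base; this is valid, since $\alpha$ is a universal polynomial in $\theta,\theta^{*}$ and hence bounded whenever the Higgs field is, but it is strictly heavier: the pointwise identity makes the boundedness of $\alpha$ near $D$ and the Gaffney step for the $E$-factor -- the point you flag as the ``main obstacle'' in (ii) -- entirely moot, as the Simpson--Mochizuki estimates are only needed there to give boundedness of $\theta$ in the Poincar\'e metric, which both arguments already use. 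One small point of rigor: to invoke Theorem~\ref{index theorem} you need bounded geometry of the pulled-back bundle, which is not a consequence of bounded curvature plus the $AC$-property alone as you state it, but is exactly what the corollary of Section~\ref{higgs bundle of bounded geometry} (Mochizuki's compactness theorem for harmonic bundles with bounded Higgs field) provides, and that is the reference the paper uses.
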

\begin{proof}
The hypotheses made about the harmonic bundle implies that the pullback bundle by $p$ has bounded geometry (see Sec.~\ref{higgs bundle of bounded geometry}). The Chern character is multiplicative, hence $\Ch(E\otimes \Omega^{p})=\Ch(E)\wedge \Ch(\Omega^{p})$. But the higher degree Chern forms of the hermitian holomorphic bundle $(E,h)$ vanish: the Chern curvature is a commutator $\Theta_{h}=-[\theta,\theta^{*}]$ with $\theta$ the Higgs field. Granted the relations $\theta\wedge\theta=0$ and $ \theta^{*}\wedge\theta^{*}=0$, one infers that \begin{align}(-1)^{k}\Trace(\Theta_{h}^{k})=\Trace((\theta\theta^{*})^{k})+\Trace((\theta^{*}\theta)^{k})\,,\end{align} but
\begin{align}\Trace((\theta\theta^{*})^{k})=\Trace([(\theta\theta^{*})^{k-1}\theta,\theta^{*}])=0\end{align}
since the trace vanishes on commutators. Hence $\Ch(E)=r$ and
\begin{align}
\chi_{(2)}(X,p^{*}(E,h)\otimes \Omega^{p}_{X},\dbar)=r\int_{Y}\Todd (T(Y))\Chern(\Omega^{p}_{Y})\,.
\end{align}
The last formula is proved as in Cor.~\ref{characteristic logarithmic}
\end{proof}

\subsubsection{Computations of the Todd logarithmic classes}
The Todd class is additive on exact sequences: If $0\to E_{1}\to E\to E_{3}\to 0$ is an exact sequence of vector bundles, then $\Todd(E)=\Todd(E_{1})\wedge \Todd(E_{2})$.
Let $N_{D_{i}}$ be the normal bundle of $D_{i}$ in $\bar Y$. The logarithmic tangent bundle fits into the following exact sequence of coherent analytic sheaves
\begin{align}
0\to T\bar Y\to T\bar Y(-\log(D))\to \oplus_{i}N_{D_{i}}\otimes \O_{D_{i}}\to 0\,.
\end{align}
Hence $$\Todd(T\bar Y(-\log(D)))= \Todd(T\bar Y)\cdot\Todd( \oplus_{i}N_{D_{i}}\otimes \O_{D_{i}})\,,$$
where the Todd class is extended to elements of the Grothendieck Ring $K_{0}(X)$.

The standard sequence $0\to \O\to [D_{i}]\to N_{D_{i}}\otimes \O_{D_{i}}\to 0$ implies that $\Todd( \oplus_{i}N_{D_{i}}\otimes \O_{D_{i}})=\Pi_{i}\Todd([D_{i}])$, hence
\begin{align}
\Todd(T\bar Y(-\log(D)))= \Todd(T\bar Y)\cdot\frac{1}{(1+\frac{c_{1}(D_{1})}{2})\cdot\ldots\cdot(1+\frac{c_{1}(D_{r})}{2})}\,.
\end{align}

Similar computations can be done for the Chern character. In principle this approach allows the computation of the logarithmic characteristic integrals.

\section{Main example: Tame Nilpotent harmonic Higgs bundles}
\subsection{Definition of harmonic Higgs bundles and associated Laplacians}
Following the exposition of Simpson \cite{Sim92} (see also Sabbah \cite{Sab00}),
we consider a flat smooth complex vector bundle $(V,\nabla)\to X$ corresponding to a representation $\rho:\Pi_{1}(X,x_{0})\to GL_{d}(\CC)$.
In general such a representation is not conjugate to a unitary one, and there does not exist a metric $h$ such that $\nabla$ is a metric connection.
\begin{lemma}Let $h$ be a Hermitian metric on $V$. There exists a unique metric connection $D_{h}=\nabla-\vartheta_{h}$ such that $\vartheta'_{h}=\nabla^{1,0}-D_{h}'$ and $\vartheta''_{h}=\nabla^{0,1}-D_{h}''$ are $h-$adjoint.
\end{lemma}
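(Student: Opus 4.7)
The plan is to build $D_h$ and $\vartheta_h$ as the symmetric and antisymmetric parts of $\nabla$ under the involution given by $h$-adjunction, and then to read off both the metric property and the adjointness relation $(\vartheta'_h)^{\ast}=\vartheta''_h$ from the construction.

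First I introduce the $h$-adjoint connection $\nabla^{\ast h}$, characterized on smooth local sections $s,t$ of $V$ by $dh(s,t)=h(\nabla s,t)+h(s,\nabla^{\ast h} t)$. The Leibnitz rule for $\nabla^{\ast h}$ follows from that of $\nabla$ and the fact that $dh(\cdot,\cdot)$ is a derivation, so $\nabla^{\ast h}$ is a genuine $\C^{\infty}$ connection on $V$. Swapping $s$ and $t$ and using Hermitian symmetry of $h$ (so that $\overline{dh(t,s)}=dh(s,t)$) shows that the adjunction is involutive: $(\nabla^{\ast h})^{\ast h}=\nabla$. Note at the outset that a connection is metric with respect to $h$ precisely when it equals its own $h$-adjoint, and that the difference of two metric connections is necessarily a $1$-form with values in skew-Hermitian endomorphisms of $V$.

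Next I set $D_h:=\tfrac12(\nabla+\nabla^{\ast h})$ and $\vartheta_h:=\tfrac12(\nabla-\nabla^{\ast h})$. Adding the defining identities for $\nabla^{\ast h}$ and for $(\nabla^{\ast h})^{\ast h}=\nabla$ yields $h(D_h s,t)+h(s,D_h t)=dh(s,t)$, so $D_h$ is metric; subtracting them gives $h(\vartheta_h s,t)=h(s,\vartheta_h t)$, so $\vartheta_h$ is a $1$-form taking values in the bundle of $h$-Hermitian endomorphisms of $V$. Splitting by type, $\vartheta_h=\vartheta'_h+\vartheta''_h$ with $\vartheta'_h=\nabla^{1,0}-D_h'$ and $\vartheta''_h=\nabla^{0,1}-D_h''$, the Hermiticity of $\vartheta_h(X)$ for every real tangent vector $X$ translates exactly into the statement $(\vartheta'_h)^{\ast}=\vartheta''_h$ that is claimed.

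For uniqueness, suppose $\nabla=\tilde D_h+\tilde\vartheta_h$ is another such decomposition. Then $D_h-\tilde D_h=\tilde\vartheta_h-\vartheta_h$ is simultaneously skew-Hermitian (as the difference of two metric connections) and Hermitian (as the difference of two $\End(V)$-valued $1$-forms whose type components are $h$-adjoint), and therefore vanishes identically. The main obstacle in the argument is purely notational — keeping the sesquilinearity convention on $h$ consistent when passing between complex conjugation of forms and pointwise endomorphism adjunction — since everything else reduces to linear algebra in each fibre.
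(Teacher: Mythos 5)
Your proof is correct, and it is exactly the standard argument: the paper states this lemma without proof (it is part of the review of Simpson's theory, cf.\ \cite{Sim92}), where one decomposes $\nabla$ into its unitary part $\tfrac12(\nabla+\nabla^{*h})$ and its $h$-self-adjoint part $\tfrac12(\nabla-\nabla^{*h})$, just as you do. Your translation of the pointwise Hermiticity of $\vartheta_{h}(X)$ for real $X$ into the type-splitting statement $(\vartheta'_{h})^{*}=\vartheta''_{h}$, and the uniqueness argument via a $1$-form that is simultaneously Hermitian- and skew-Hermitian-valued, are both sound.
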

Hence any flat connection $\nabla=D_{h}+\vartheta_{h}$ is equal to the sum of a metric connection and a real self-adjoint field. Flatness and type considerations imply $[D'_{h},\theta'_{h}]=0, [D''_{h},\theta''_{h}]=0$.
\begin{definition}The metric $h$ on a flat bundle $(V,\nabla)$ is called harmonic\footnote{
The term harmonic is explained as follows (\cite[p.16]{Sim92}): Let $\rho:\pi_{1}(X)\to GL_{n}(\CC)$ be the representation defined by the given flat bundle. A metric on $V$ can be considered as a $\rho-$equivariant map $\tilde h:\tilde X\to GL_{n}(\CC)/U(n)$. The differential of $\tilde h$ is given by $\vartheta$ and $\tilde h$ is a harmonic map, iff $D^{*}_{g}\vartheta=0$. 
}
if the operator $D''_{h}+\vartheta'_{h}$ has zero square, ie
\begin{align}
D''^{2}_{h}=0 &&\quad D_{h}''\vartheta'_{h}+\vartheta'_{h}D_{h}''=0 && \vartheta'_{h}\wedge \vartheta '_{h}=0
\end{align}
\end{definition}
If the metric $h$ is harmonic, then the subsheaf $\Ker(D''_{h})$ of the sheaf of smooth sections of $V$ defines a holomorphic structure $E\to X$ on $V\to X$ such that $\vartheta'_{h}$ is a holomorphic $\End(E)-$valued form with $\vartheta'_{h}\wedge \vartheta'_{h}=0$.

The converse construction holds:
\begin{definition} (Cf.\ Simpson \cite{Sim92})\label{definition of harmonic higgs bundle}
\begin{itemize}
\item[i)] A Higgs bundle over a complex manifold $X$ is a holomorphic vector bundle $(E,\dbar)\to X$ together with a holomorphic map $\theta: E\mapsto E\otimes \Omega^{1}_{X}$ such that $\theta\wedge\theta=0$ in $\End(E)\otimes \Omega^{2}_{X}$. Define $D'':=\dbar+\theta$. Then ${D''}^{2}=0$. 
\item[ii)] (\cite[p.18]{Sim92}) Let $K$ be a Hermitian metric on a Higgs bundle, let $\partial_{K}+\dbar$ be the Chern connection, and $\Theta(K)$ its curvature form. Let $\theta_{K}^{*}$ be the adjoint of $\theta$ with respect to $K$ (see the definition below). Let $D_{K}=(\partial_{K}+\dbar)+\theta+\theta^{*}_{K}$ be the Higgs connection. The metric $K$  is called harmonic, if its Higgs connection is flat: $$F_{K}:=D_{K}^{2}= \Theta(K)+[\partial_{K},\theta]+[\dbar,\theta^{*}_{K}]+[\theta,\theta^{*}_{K}]=0\,.$$  \end{itemize}
\end{definition}
A harmonic Higgs bundle $(E,\dbar, \theta,K)$ defines a locally constant sheaf $\underline{\mathcal{E}}=\Ker(D_{K})\to X$. Let $\underline{E}\to X$ be the associated vector bundle with constant transition functions, and let $\nabla$ be the flat connection on the sheaf of its smooth sections $\C^{\infty}\otimes_{\CC}\underline{\mathcal{E}}$. Then $\underline{E}$ and $E$ are isomorphic as smooth bundles. Let $h$ be the metric induced by $K$. Then  $(\underline{E},\nabla,h)$ is a flat bundle with harmonic metric in the previous sense, and the corresponding holomorphic bundle given by the harmonic metric is $(E,\dbar)$. Hence a harmonic Higgs bundle and a harmonic flat bundle are equivalent constructions, and the Higgs field $\theta$ is equal to the $(1,0)-$part  $\vartheta'_{h}$ of the real self-adjoint field constructed above. 
After this equivalence is granted, the harmonic metrics on $\underline{E}$ and $E$ will be denoted by the same letter.

\begin{example}\label{example}
\begin{itemize}
\item[i)]A harmonic line bundle is given by a unitary line bundle and a closed holomorphic one form. A unitary bundle, i.e. a flat hermitian holomorphic bundle, $(E,\dbar,h)$ defines a harmonic bundle  with trivial Higgs field. Let $\alpha$ be a closed holomorphic one form. Multiplication by $\alpha$ defines a diagonal endomorphism $\alpha\in \End(E)\otimes \Omega^{1}$. One checks that $(E,\dbar,\alpha,h)$ is a harmonic bundle.
\item[ii)] Following Hitchin \cite{Hit87}, let $(C,\omega)$ be a hyperbolic Riemann surface of constant sectional curvature -$4$. Let $L\to X$ be a square root of the canonical bundle $K$, i.e.\ $L\otimes L\simeq K$. 
Let $1\in \Hom(L,L^{-1})\otimes K\sim \CC$ be the section defined by the above isomorphism. Then $L\oplus L^{-1}\to C$ with Higgs field $1$ is a Higgs bundle. The hyperbolic metric induces a metric on $L\oplus L^{-1}$. A direct computation shows that the Higgs bundle $(L\oplus L^{-1}, h,1)$ is harmonic. The $n-$th symmetric power of this harmonic bundle is $\oplus_{i+j=n}L^{\otimes i-j}$ with Higgs field $\oplus1\in \oplus \Hom(L^{i-j},L^{i-j-2})\otimes K$ and diagonal metric.
\end{itemize}
\end{example}
\subsubsection{First order Kähler identities.}
Let $(E,\dbar,\theta,h)\to X$ be a harmonic bundle with flat connection $\nabla$. Let $D_{h}=D^{(1,0)}_{h}+\dbar$ be the Chern connection of $(E,\dbar,h)$. Let $\theta^{*}_{h}$ be defined by
\begin{align}
\forall e,e'\in E_{x}, \, (\theta^{*}_{h}e,e')_{h}=(e,\theta e')_{h} \in T^{*(0,1)}_{x}X
\end{align}
Then $\theta$ and $\theta^{*}_{h}$ are parallel with respect to the Chern connection.
Set
\begin{align}
 D''_{\Higgs}=\dbar+\theta \, ,  &\quad D'_{h,\Higgs}=D^{(1,0)}_{h}+\theta^{*}_{h}\, ,
 &\nabla^{c}_{\Higgs} = i(D''_{\Higgs}-D'_{h,\Higgs}) \,.
\end{align}

Let the operators $\nabla, D''_{\Higgs},$ etc.\ act on $E-$valued smooth forms with compact support. The metric $h$ is harmonic, hence each of the above operators has vanishing square and by definition
 \begin{align}
[D'_{h,\Higgs},\,D''_{\Higgs}] &= D'_{h,\Higgs}D''_{\Higgs}+  D''_{\Higgs}      D'_{h,\Higgs}=F_{h,\Higgs} =0\,.
 \end{align}

  Their formal adjoints with respect to the global scalar product $\int_{X}(.,.)_{E\otimes \Lambda^{.} T^{\CC}X}\omega^{n}$ that is induced by a Kähler metric $\omega$ are given by the usual first order Kähler identities (\cite[p.15]{Sim92}).
\begin{align}
 (\theta)^{\star} & =i[\theta^{*}_{h},\Lambda]   &(\theta^{*}_{h})^{\star}&=-i [\theta,\Lambda]    \\
(D'_{h,\Higgs})^{\star} &=  i [\Lambda,D_{\Higgs}'']      & (D_{\Higgs}'')^{\star}&=-i [\Lambda, D'_{h,\Higgs}] \\
(\nabla)^{\star}&=   [\Lambda,\nabla^{c}_{\Higgs}]   & (\nabla^{c}_{\Higgs})^{\star}&=- [\Lambda, \nabla]
\end{align}

These define Kodaira's Laplacians:
\begin{align}
&\Delta=(\nabla+\nabla^{\star})^{2}    &\Delta^{c}_{\Higgs}=(\nabla^{c}_{\Higgs}+\nabla^{c\star}_{\Higgs})^{2} \\
&  \Delta_{\Higgs}''=(D_{\Higgs}''+(D_{\Higgs}'')^{\star})^{2}    &\Delta_{\Higgs}'=(D_{\Higgs}'+(D_{\Higgs}')^{\star})^{2} \,.
\end{align}

Then
\begin{align}
\Delta=\Delta^{c}_{\Higgs}=2\Delta_{\Higgs}''=2\Delta_{\Higgs}'\,.
\end{align}
Assume that the Kähler manifold $(X,\omega)$ is complete, then the formal adjoint and the Hilbertian adjoint are equal. Moreover (see \cite[Cor.~6]{AndVes})
\begin{align}
\Dom(\Delta)\subset \Dom(D+D^{*}),& \quad  \Dom(\Delta'_{\Higgs})\subset \Dom(D'_{\Higgs}+(D'_{\Higgs})^{*}),\\ \Dom(\Delta''_{\Higgs})\subset& \Dom(D''_{\Higgs}+(D''_{\Higgs})^{*})\,.
\end{align}

\subsection{Condition for the existence of a harmonic metric}

Let $\bar Y$ be a complex manifold and $D$ be a normal crossings divisor. Let $(E,\dbar,\theta,h)\to Y:=\bar Y\setminus D$ be a harmonic bundle. Let $P\in D$ and $D(0,1)^{n}\to U$ be a holomorphic coordinate chart centered at $P$ such that $D\cap U=\{z_{1}\cdot\ldots\cdot z_{k}=0\}$. In these coordinates let $\theta=\sum_{1\leq j\leq k }f_{j}\frac{dz_{j}}{z_{j}}+\sum_{k+1\leq j\leq n}g_{j}dz_{j}$.
\begin{definition}\label{tame}
The harmonic bundle $(E,\dbar,\theta,h)\to Y:=\bar Y\setminus D$ is said to be tame, if the coefficients of the characteristic polynomials $\det(t-f_{j})$ and $\det(t-g_{j})$ are holomorphic on $U$.
\end{definition}
We note that such a harmonic bundle is tame, iff there exists a holomorphic bundle $\tilde E\to \bar Y$ and a regular Higgs field $\tilde \theta\in \tilde E\otimes \Omega^{1}_{\bar Y}(\log D)$ such that $(\tilde E,\tilde \theta)_{|Y}=(E,\theta)$ (see e.g. \cite[Lemma 22.1]{Moc07}). In fact the sheaf $\mathcal{E}_{b}\to \bar Y$ ($b\in \RR^{k}$ fixed) defined by the prolongation by increasing order is coherent and locally free. The logarithmic estimate for the Higgs field implies that $\theta$ induces a sheaf homomorphism $\theta:\mathcal{E}_{b}\to \mathcal{E}_{b}\otimes \Omega_{1}(\log(D))$  (see \cite[Thm.~2, p.738]{Sim90}).

The main theorem of Simpson, Mochizuki, and Jost-Zuo is the following (see \cite[Thm.~1.19, p.8]{Moc07} for the relevant definitions).
\begin{theorem}Let $\bar Y$ be a complex projective manifold.
Let $(E,\nabla)$ be a semisimple flat bundle over $ \bar Y\setminus D$, i.e. the monodromy representation $\rho:\pi_{1}(\bar Y\setminus D)\to \Gl(\CC^{n})$ is semisimple. Then there exists a  pure imaginary tame harmonic metric $h$ on $E$, which is unique up to a flat endomorphism of $E$.
\end{theorem}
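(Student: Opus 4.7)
The plan is to combine the Corlette--Donaldson existence theorem in the compact K\"ahler case with an exhaustion argument and a careful asymptotic analysis near the boundary divisor $D$. A Hermitian metric on $E$ is the same as a $\rho$-equivariant map $\tilde h:\tilde Y\to \Gl_n(\CC)/U(n)$ from the universal cover to the non-positively curved symmetric space, and harmonicity of $h$ in the sense of Definition~\ref{definition of harmonic higgs bundle} corresponds to harmonicity of this equivariant map. Since the Higgs field $\theta=\vartheta'_h$ is built from the derivative of $\tilde h$, one reads off the Higgs data from the harmonic map.

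The first reduction is to the irreducible case: semisimplicity of $\rho$ gives a decomposition of $(E,\nabla)$ into irreducible flat bundles, and an orthogonal direct sum of harmonic metrics on the pieces is harmonic, so we may assume $\rho$ is irreducible. Next, equip $Y=\bar Y\setminus D$ with the Poincar\'e-type complete K\"ahler metric $\omega_{\bar Y,D}$ provided by Proposition~\ref{Poincare metric}, and exhaust $Y$ by relatively compact open sets $Y_1\Subset Y_2\Subset\cdots$ with smooth boundary. Starting from an arbitrary smooth equivariant initial map $\tilde h_0$, solve on each $Y_k$ the Dirichlet problem for the harmonic map equation with boundary values $\tilde h_0|_{\partial Y_k}$, using Hamilton's theorem for harmonic maps into non-positively curved targets. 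One then passes to the limit $k\to\infty$: the crucial input is Corlette's non-degeneracy lemma, which uses the irreducibility of $\rho$ to prevent the equivariant solutions $\tilde h_k$ from escaping to infinity in $\Gl_n(\CC)/U(n)$ modulo $\rho(\pi_1)$. After a diagonal extraction one gets $C^\infty_{\rm loc}$ convergence to an equivariant harmonic $\tilde h$, i.e.\ a harmonic metric $h$ on $E$ over $Y$.

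The main obstacle is tameness, i.e.\ proving that $\theta=\vartheta'_h$ has at worst logarithmic singularities along $D$ (Definition~\ref{tame}). I would follow the Jost--Zuo / Mochizuki strategy: on a punctured polydisk coordinate neighbourhood around a point of $D$, compare $h$ with an explicit model tame harmonic metric built from the local semisimple part of the monodromy, and use a subharmonic bound on the distance $\rho(\tilde h,\tilde h_{\rm model})$. Completeness and bounded geometry of $\omega_{\bar Y,D}$ (Proposition~\ref{Poincare metric}) provide uniform elliptic regularity, and the maximum principle together with the finite volume of $Y$ forces the comparison to hold in $C^0$; standard bootstrap then upgrades this to the polynomial growth of $h$ and the logarithmic bound on the coefficients of the characteristic polynomials of the residues of $\theta$, which is exactly the tame condition. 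The ``pure imaginary'' condition is built into the construction since the harmonic map lands in the totally real symmetric space $\Gl_n(\CC)/U(n)$, making the real self-adjoint field $\vartheta_h$ Hermitian with respect to $h$, so that $\theta$ and $\theta^*_h$ are exchanged by an anti-holomorphic involution; this reality is preserved by the limiting procedure and the comparison arguments.

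Uniqueness up to a flat endomorphism of $E$ follows from the standard non-positive-curvature convexity argument: given two harmonic metrics $h_1,h_2$, the function $\rho(\tilde h_1,\tilde h_2)$ is subharmonic on $\tilde Y$ and descends to a bounded subharmonic function on $Y$ of finite volume, so it is constant; the geodesic interpolation between $\tilde h_1$ and $\tilde h_2$ then yields a parallel self-adjoint endomorphism $\phi\in \End(E)$ with $h_2=\phi\cdot h_1$. The hardest step is the tameness analysis near $D$; the existence of a harmonic metric on $Y$ alone is essentially an adaptation of Corlette--Donaldson, but controlling its asymptotic behavior to produce a \emph{tame} harmonic metric is the deep content of the Jost--Zuo--Mochizuki theorem and relies on the full strength of the local model computations and the comparison principle sketched above.
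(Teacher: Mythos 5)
You should first be aware that the paper does not prove this statement at all: it is quoted as "the main theorem of Simpson, Mochizuki, and Jost--Zuo" with a pointer to Mochizuki's memoir, so there is no internal argument to compare with, only the literature your sketch is implicitly paraphrasing. At that level your outline does follow the known strategy (reduction to the irreducible case, Dirichlet problems for equivariant harmonic maps on an exhaustion with a Corlette/Donaldson-type compactness step, comparison with a local model near $D$ for tameness, and convexity of the distance in the nonpositively curved target for uniqueness), but as a proof it has genuine gaps.

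The most concrete one is your treatment of ``pure imaginary''. Every Hermitian metric on $E$ corresponds to a $\rho$-equivariant map into $\Gl_{n}(\CC)/U(n)$, so nothing about that target can single out the pure imaginary harmonic metrics; in Mochizuki's terminology the condition is a constraint on the KMS spectrum, i.e.\ on the parabolic weights and the eigenvalues of the residues of the tame Higgs field along $D$ (matching the eigenvalues of the local monodromy), and it is exactly this normalization that makes the metric essentially unique among tame harmonic metrics. Your construction never produces or verifies it, and without it the uniqueness statement you aim at is not even the right one. Second, the analytic core is not addressed by starting the exhaustion from ``an arbitrary smooth equivariant initial map'': on the noncompact base the Dirichlet solutions are only controlled if the boundary data come from a model metric built from the local monodromy near $D$ with finite (or logarithmically controlled) energy, and this same model comparison is what eventually yields tameness; asserting that ``the maximum principle together with finite volume forces the comparison to hold in $C^{0}$'' presupposes the a priori asymptotic estimates that constitute the bulk of Jost--Zuo's and Mochizuki's work. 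The uniqueness step has the same circularity: $d(\tilde h_{1},\tilde h_{2})$ is indeed subharmonic, but its boundedness on $Y$ (needed before any Yau-type ``bounded subharmonic on a complete finite-volume manifold is constant'' argument applies) is only known after one has established the mutual asymptotic comparison of the two metrics near $D$, i.e.\ after tameness and the pure imaginary normalization are in hand. So the proposal is a reasonable roadmap to the cited theorem, but it assumes at several key points the estimates it is supposed to prove.
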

%
\subsection{Higgs bundle of bounded geometry}\label{higgs bundle of bounded geometry}
The combination of the following theorems proves that a tame, nilpotent, harmonic bundle on a Zariski open set of a Kähler manifold $Y=\bar Y\setminus D$ lifts as a bundle of bounded geometry to a Poincaré covering of $(Y,D)$.
\begin{theorem}(Simpson, Mochizuki) Let $(E,h)\to Y$ be a tame, harmonic Higgs bundle on a quasi projective manifold $Y=\bar Y\setminus D$ where $D$ is a normal crossings divisor. Assume the residue of the Higgs field is nilpotent (i.e.\ the associated representation is unipotent at infinity). Then the Higgs field is bounded with respect to the Poincaré metric.
\end{theorem}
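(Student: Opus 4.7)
The plan is to reduce to a local assertion near each point of $D$ and then invoke the asymptotic norm estimates for tame harmonic metrics due to Simpson in one variable (\cite{Sim90}) and Mochizuki in the general case. Fix $P\in D$ and choose holomorphic coordinates $(z_{1},\ldots,z_{n})$ on a polydisk $U$ centered at $P$ with $D\cap U=\{z_{1}\cdots z_{k}=0\}$. Write, as in Definition~\ref{tame},
$$\theta=\sum_{j=1}^{k}f_{j}\frac{dz_{j}}{z_{j}}+\sum_{j=k+1}^{n}g_{j}\,dz_{j}.$$
A direct computation in the Poincar\'e-type metric $\omega_{\bar Y,D}$ gives $\|dz_{j}\|_{\omega_{\bar Y,D}}=O(1)$ for $j>k$ and $\|dz_{j}/z_{j}\|_{\omega_{\bar Y,D}}\sim|\log|z_{j}||$ for $j\leq k$. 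By tameness the $f_{j}$ and $g_{j}$ extend as holomorphic sections of $\mathrm{End}(\tilde E)$ on $\bar U$, so the transverse summands $g_{j}\,dz_{j}$ are automatically bounded in $h\otimes\omega_{\bar Y,D}$. The problem thus reduces to showing $\|f_{j}(z)\|_{h}=O(1/|\log|z_{j}||)$ as $z_{j}\to 0$, uniformly in the remaining coordinates.

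Next I would exploit the nilpotency of the residue. By tameness the eigenvalues of $f_{j}$ are holomorphic on $U$, and nilpotency of the residue along $D_{j}$ forces them to vanish on $\{z_{j}=0\}$. Hence $f_{j}|_{D_{j}}$ is fibrewise nilpotent and, via Jacobson--Morozov, determines a monodromy weight filtration $W^{(j)}_{\bullet}$ on the nearby fibre of $\tilde E$ along which $f_{j}$ acts as a shift of degree $-2$, i.e.\ $f_{j}:\mathrm{gr}^{W^{(j)}}_{\ell}\to\mathrm{gr}^{W^{(j)}}_{\ell-2}$. The heart of the argument is then the norm estimate of Simpson--Mochizuki: the harmonic metric $h$ is mutually bounded with an explicit model metric $h_{0}$ built from the filtrations $W^{(j)}_{\bullet}$ on the punctured polydisk, whose restriction to $\mathrm{gr}^{W^{(j)}}_{\ell}$ scales along the transverse direction to $D_{j}$ like $|\log|z_{j}||^{\ell}$. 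Since $f_{j}$ lowers the weight by $2$, its operator norm between graded pieces is of order $|\log|z_{j}||^{-1}$, which combined with $\|dz_{j}/z_{j}\|_{\omega_{\bar Y,D}}\sim|\log|z_{j}||$ gives boundedness of each summand of $\theta$, hence of $\theta$ itself.

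The main obstacle is precisely the comparison $h\asymp h_{0}$. In the one-variable case this is Simpson's asymptotic theorem (\cite{Sim90}, Sec.~3), proved by applying the maximum principle to the subharmonic function $\log\mathrm{tr}(h_{0}^{-1}h)$, whose subharmonicity follows from the flatness of the Higgs connections of both $h$ and $h_{0}$ together with the Chern--Weil identity for the trace. In several variables one must first establish an asymptotic commutativity of the weight filtrations $W^{(j)}_{\bullet}$ attached to different components of $D$ meeting at $P$, so that a well-defined model metric exists at the intersection strata; this is the deepest input and the core of Mochizuki's contribution, requiring iterated bootstrap estimates as one moves from the deep strata of $D$ to the open part. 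Once the comparison is in place the step from the model-metric estimate to the final bound on $\theta$ is essentially book-keeping of weights.
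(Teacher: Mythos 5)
The paper does not actually prove this statement: right after it, the authors quote Mochizuki's estimate \cite[Cor.~22.6]{Moc07}, which in the local normal form $\theta=f_{0}\,dz/z$ gives an explicit asymptotic for $|\theta|_{h,g_{0}}$ whose leading coefficient is $t(\theta)$, the multiplicity-weighted sum of the squared eigenvalues of $f_{0}(0)$; nilpotency of the residue kills $t(\theta)$, so boundedness is immediate and there is nothing left to do. Your proposal instead unwinds how such estimates are obtained: localize to a polydisk, measure $dz_{j}/z_{j}\sim|\log|z_{j}||$ in the Poincar\'e metric, and get the needed decay $\|f_{j}\|_{h}=O(1/|\log|z_{j}||)$ from the comparison of $h$ with the model metric attached to the monodromy weight filtrations, the deep input being exactly Simpson's one-variable norm estimates \cite{Sim90} and Mochizuki's several-variable extension, which you correctly identify and defer to. Both routes rest on the same source; the paper's citation is shorter and bypasses the multivariable filtration compatibility altogether, while your version makes visible where nilpotency enters (the weight shift by $-2$ against the logarithmic growth of $dz_{j}/z_{j}$).

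There is, however, one step in your reduction that is wrong as stated: the claim that the transverse summands $g_{j}\,dz_{j}$ are ``automatically bounded'' because $g_{j}$ extends holomorphically to $\End(\tilde E)$. Holomorphic extendability to the prolongation does not control the $h$-norm beyond moderate growth: in the basic rank-two nilpotent example on the punctured disc (the uniformizing bundle $L^{1/2}\oplus L^{-1/2}$, with $|e_{1}|_{h}\sim(\log 1/|z|)^{1/2}$ and $|e_{2}|_{h}\sim(\log 1/|z|)^{-1/2}$), the constant ``raising'' endomorphism $e_{2}^{\vee}\otimes e_{1}$ is a holomorphic section of $\End(\tilde E)$ whose $h$-norm grows like $\log(1/|z|)$. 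The transverse components are in fact bounded, but for the same reason as the $f_{j}$: by $\theta\wedge\theta=0$ they commute with the residues, hence preserve the weight filtrations, and the model-metric comparison then yields an $O(1)$ operator norm; equivalently, their boundedness is already contained in the Simpson--Mochizuki estimates you invoke, so the fix costs nothing but the shortcut through holomorphy alone would fail. A minor further imprecision: tameness gives holomorphy of the coefficients of the characteristic polynomials of $f_{j},g_{j}$, not of the eigenvalues themselves; this is harmless since you only use that the eigenvalues of $f_{j}$ vanish along $\{z_{j}=0\}$.
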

We quote the reference \cite[Cor.\ 22.6]{Moc07}: Let $g_{0}$ be the Poincaré metric on the punctured disc. Assume that the Higgs field is given by $\theta:=f_{0}\frac{dz}{z}$ with $f_{0}$ holomorphic. Let $$
t(\theta)=\sum_{\alpha\in Spec(f_{0}(0))}m(\alpha)|\alpha|^{2}
$$
be the sum  according to multiplicities of the square of the eigenvalues of $f_{0}(0)\in \End(E)_{z=0}$. Then
\begin{align}
\left| \, |\theta|_{h,g_{0}}-2t(\theta)(-\log|z|^{2})^{2}\, \right|\leq C_{0}\,.
\end{align}
Hence the Higgs field has a logarithmic divergence, iff  $f_{0}(0)$ is not nilpotent.
\begin{theorem} Let $\CC^{n}$ be equipped with the standard Euclidean metric $g_{e}$.
Let $C$ be a positive constant. The set $A$ of harmonic Higgs bundles $(E,\dbar_{E},\theta_{E},h_{E})\to B(0,1)$ such that \begin{align}||\theta||_{\infty,h_{E},g_{e}}<C\label{bounds condition on the Higgs field}\end{align} is compact in the smooth topology on $B(0,1)$. Hence, let $F_{h_{E}}$ be the curvature of the harmonic Higgs bundle $(E,\dbar_{E},\theta_{E},h_{E})$ which belongs to $A$, then
\begin{align}
\forall k\in \NN, \exists C_{k}>0 \text{ such that }\quad ||\nabla^{k}F_{h_{E}}||_{\infty, \bar B(0,\frac{1}{2}), h_{E},g_{e}}\leq C_{k}\,.
\end{align}
\end{theorem}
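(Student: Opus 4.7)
My plan exploits the Hermite--Einstein identity built into the harmonic condition. Decomposing $F_{K}=0$ from Definition~\ref{definition of harmonic higgs bundle} by bidegree yields
\begin{equation*}
\Theta(h_{E}) + [\theta_{E},\theta_{E}^{*}] = 0
\end{equation*}
together with $\dbar\,\theta_{E}=0$ and $\partial_{h_{E}}\theta_{E}^{*}=0$. Since $|\theta_{E}^{*}|_{h_{E},g_{e}}=|\theta_{E}|_{h_{E},g_{e}}$, the first relation already gives the pointwise a priori bound $|F_{h_{E}}|_{h_{E},g_{e}}\leq 2C^{2}$ on all of $B(0,1)$, uniformly across the family $A$.

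With this curvature bound in hand, I would apply Uhlenbeck's local gauge-fixing theorem to the Chern connection on any ball $B(0,r)$ with $r<1$, adapted to the holomorphic category so that the chosen frame remains $\dbar_{E}$-holomorphic (as in Simpson's compactness arguments for the Hermite--Einstein equation). This yields, for each member of $A$, a trivialization in which the Gram matrix $H_{E}$ is bounded above and below, and the connection form $A=H_{E}^{-1}\partial H_{E}$ lies in a Coulomb gauge with $W^{1,p}$ bounds. In this frame $\theta_{E}$ is represented by a matrix-valued holomorphic one-form whose pointwise matrix norm is controlled by $C$ and $\|H_{E}\|_{\infty}$, so that holomorphy together with Cauchy's integral formula gives uniform $C^{k}_{\mathrm{loc}}$ bounds on its components. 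The Hermite--Einstein equation now reads
\begin{equation*}
\dbar\bigl(H_{E}^{-1}\partial H_{E}\bigr) = -\bigl[\theta_{E}\wedge H_{E}^{-1}\bar\theta_{E}^{\,t} H_{E}\bigr],
\end{equation*}
a quasilinear elliptic system in $H_{E}$; combined with the two-sided bound on $H_{E}$ and the $C^{k}$ control on $\theta_{E}$, interior Schauder (or $L^{p}$) estimates upgrade $H_{E}$ by two derivatives at each iteration. Running this bootstrap over an exhaustion of $\bar B(0,1/2)$ by slightly larger balls produces uniform $C^{k}$ bounds on $(H_{E},\theta_{E})$, and differentiating the pointwise identity $F_{h_{E}}=-[\theta_{E},\theta_{E}^{*}]$ then yields the claimed estimate $\|\nabla^{k}F_{h_{E}}\|_{\infty,\bar B(0,1/2),h_{E},g_{e}}\leq C_{k}$ for every $k$. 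Compactness of $A$ in the smooth topology follows from Arzel\`a--Ascoli applied to the family $(H_{E},\theta_{E})$ in these good gauges.

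I expect the gauge-fixing step to be the principal obstacle, since the hypothesis is intrinsic: to extract matrix bounds on $H_{E}$ one must control not only its logarithmic derivative (which Uhlenbeck supplies via the Coulomb bound on the Chern connection) but also its overall scale. The correct normalization comes from tracing the Hermite--Einstein identity, which produces the linear scalar equation
\begin{equation*}
\partial\dbar\log\det H_{E} = -\Tr[\theta_{E},\theta_{E}^{*}]
\end{equation*}
with an $L^{\infty}$ right-hand side; a standard $\ddbar$-Poincar\'e estimate on balls, combined with a fixing of $\log\det H_{E}$ at a basepoint, then supplies the uniform scalar control that, together with Uhlenbeck's theorem, produces the two-sided bound on $H_{E}$ needed to initiate the bootstrap above.
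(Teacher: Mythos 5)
Your skeleton — the identity $\Theta(h_{E})=-[\theta_{E},\theta_{E}^{*}]$ with the pointwise bound $|F_{h_{E}}|\leq 2C^{2}$, Cauchy estimates on the holomorphic matrix of $\theta_{E}$ in a controlled holomorphic frame, and an elliptic bootstrap of $\dbar(H_{E}^{-1}\partial H_{E})=-[\theta_{E},\theta_{E}^{*}]$ — is essentially how the compactness result the paper invokes (Mochizuki, Prop.~2.96) is proved, and you correctly identify that everything hinges on a gauge in which $H_{E}$ and $H_{E}^{-1}$ are uniformly bounded. But the mechanism you propose for that scale control fails. The coefficients of $[\theta_{E},\theta_{E}^{*}]$ are genuine matrix commutators, so $\Tr[\theta_{E},\theta_{E}^{*}]\equiv 0$: your traced equation is just $\partial\dbar\log\det H_{E}=0$, i.e.\ $\log\det H_{E}$ is pluriharmonic, and pinning its value at a basepoint gives no bound on a smaller ball (consider $\mathrm{Re}(Nz_{1})$ with $N$ large). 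Moreover, even a two-sided bound on $\det H_{E}$ would only control the product of the eigenvalues, not $\Vert H_{E}\Vert$ and $\Vert H_{E}^{-1}\Vert$ separately, which is what the bootstrap needs. Relatedly, an Uhlenbeck--Coulomb frame is unitary, not $\dbar_{E}$-holomorphic; the two-sided bound on the Gram matrix is not something Uhlenbeck ``yields'' but would have to be extracted from a near-identity complex gauge transformation solving $\dbar g=-A^{0,1}g$ with estimates on balls where $F$ is $L^{n/2}$-small --- a step you assert rather than carry out, and which is exactly where your acknowledged obstacle lives.

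The paper gets the metric bound far more directly and avoids gauge-fixing altogether: since the Higgs connection $D_{K}=\nabla$ is flat and the ball is simply connected, one takes a \emph{flat} frame normalized by $H(0)=\Id$; in such a frame the derivative of $H$ is expressed through $\theta+\theta^{*}$, and Mochizuki's estimate bounds the invariant distance $d(\Id,H(x))\leq 8\Vert x\Vert C\leq 8C$ in the symmetric space of positive definite matrices, whence $e^{-8C}\leq\lambda_{j}(H)\leq e^{8C}$ because $d(\Id,H)=\bigl(\sum_{j}\log(\lambda_{j})^{2}\bigr)^{\frac{1}{2}}$; with the flat connection matrix vanishing, $\theta$ bounded and $H,H^{-1}$ uniformly bounded, Mochizuki's Prop.~2.96 (in substance the bootstrap you sketch) gives the smooth compactness, and the derivative bounds on $F_{h_{E}}=-[\theta_{E},\theta_{E}^{*}]$ follow. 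If you replace your determinant normalization by either this flat-frame integration or a genuinely quantitative holomorphic gauge-fixing, the rest of your plan goes through.
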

\begin{proof} Let $v$ be a flat frame for $E\to B(0,1)$. Let $H(x)=(h_{E}(v_{i},v_{j}))_{i,j}(x)$, and assume $v$ is chosen such that $H(0)=\Id$. Let $PH(r)$ be the set of positive definite matrices of rank $r$. For $H\in PH(r)$, let $(.,.)_{H}$ be the invariant hermitian product on $T_{H}PH(r)$.
Let $d$ be the invariant distance in the set of positive definite matrices. From  \cite[Sec. 21.2, and $(21.19)$]{Moc07}
\begin{align}
d(\Id,H(x)) &\leq \int_{0}^{1}||\partial_{t}(H(tx))||_{H(tx)}dt \\
                   &=\int_{0}^{1}\Tr(H^{-1}(tx)dH(tx)(x) H^{-1}(tx)dH(tx)(x))^{\frac{1}{2}}dt \\
                  & \leq 8\int_{0}^{1}||\theta||_{h,g_{e}} ||x|| dt
                   \leq 8 ||x|| C\leq 8C\,.
\end{align}
Recall that $d(\Id,H)=\Big(\sum_{j}\log(\lambda_{j})^{2}\Big)^{\frac{1}{2}}$, where $(\lambda_{j})_{1\leq j\leq r}$ is the set of eigenvalues of $H$ (see \cite[Chap.\ 4, Sec.\ 1]{Kob87}). The above estimates imply that $\forall x\in B(0,1)$, $||H(x)||\leq C'$ and $||H^{-1}(x)||\leq C$. 
We claim that  condition 2.87 and 2.91 of  \cite[Sec.\ 2.11]{Moc07} are satisfied for the set of harmonic Higgs bundles with (\ref{bounds condition on the Higgs field}) above: By hypothesis the frame $v$ is flat, hence the flat connection matrix $A$ is vanishing.  The Higgs field is bounded, hence also its adjoint. We showed that $H$ and $H^{-1}$ are universally bounded. We conclude the proof using \cite[Prop.\ 2.96]{Moc07}.
 \end{proof}

\begin{corollary} On a manifold of bounded geometry, a harmonic Higgs bundle, whose Higgs's field is bounded  is of bounded geometry.
\end{corollary}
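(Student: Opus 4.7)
The plan is to verify bounded geometry of $(E,h)$ directly from the definition, or equivalently (by the proposition of Roe / Atiyah-Bott-Patodi recalled in the excerpt) to show that all covariant derivatives of the Chern curvature of $h$ are uniformly bounded on $X$. The strategy is to exploit the compactness theorem just proved by transporting the bundle chart by chart onto a fixed ball in $\CC^{n}$ equipped with the Euclidean metric, where the preceding theorem applies with constants independent of the chart.

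I would first invoke the bounded geometry of $(X,g)$ to obtain the distinguished covering $\{U_{x_{i},r}\}$ of Theorem~\ref{uniform partition of unity} by normal coordinate charts $\kappa_{x_{i}}:B_{\RR^{2n}}(0,r)\to U_{x_{i},r}$, for some $r<r_{inj}(X)$, on which the components $g_{\alpha\beta}$ together with all their partial derivatives are uniformly bounded and $g$ is $C^{\infty}$-quasi-isometric to the Euclidean metric $g_{e}$, with constants independent of $i$. I would then pull the harmonic Higgs bundle $(E,\dbar,\theta,h)$ back via $\kappa_{x_{i}}$ to $B(0,r)$; harmonicity is a local condition expressed by $F_{K}=0$ and is preserved by local biholomorphisms, so the pulled-back datum is again a harmonic Higgs bundle, now over a piece of $\CC^{n}$. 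The hypothesis $\|\theta\|_{\infty,h,g}\leq C$ translates to $\|\kappa_{x_{i}}^{*}\theta\|_{\infty,\kappa_{x_{i}}^{*}h,g_{e}}\leq C'$ on $B(0,r)$ with $C'$ independent of $i$, since $g$ and $g_{e}$ are uniformly comparable in each chart.

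Applying the preceding theorem (rescaled from $B(0,1)$ to $B(0,r)$, which only alters the numerical values of the constants $C_{k}$) then yields uniform bounds $\|\nabla^{k}F_{h}\|_{\infty,B(0,r/2),h,g_{e}}\leq C_{k}$ for every $k\in\NN$, with constants independent of $i$. Because $(X,g)$ is itself of bounded geometry, these Euclidean bounds translate back to uniform bounds on the $g$-covariant derivatives of the Chern curvature over each $U_{x_{i},r/2}$; as the smaller neighbourhoods still cover $X$ (Theorem~\ref{uniform partition of unity} applied with $r/2$), this yields the desired global uniform bounds and hence bounded geometry for $(E,h)$.

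The main subtlety I expect is the passage between partial derivatives of matrix entries in local coordinates and Chern-covariant derivatives of tensor-valued sections, since converting one into the other introduces factors involving the Chern connection form and its derivatives, as well as Christoffel symbols of $g$. All of these auxiliary factors are uniformly controlled: the metric factors by the bounded geometry of $(X,g)$, and the Chern connection form together with its derivatives by applying the compactness conclusion of the preceding theorem to the matrix $H(x)$ of $h$ in a flat frame on each chart (the compactness in the smooth topology controls all jets of $h$, hence all jets of $\partial_{K}h\cdot h^{-1}$). A minor technical point is the rescaling to match the theorem's normalisation on $B(0,1)$, which is harmless since both the hypothesis on $\|\theta\|$ and the conclusion transform in a controlled way under dilations.
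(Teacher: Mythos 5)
Your overall strategy --- cover $X$ by uniformly controlled charts, apply the compactness theorem of the previous subsection on each chart with constants depending only on the bound for $\theta$, and conclude via the criterion of Roe/Atiyah--Bott--Patodi that uniform bounds on all covariant derivatives of the Chern curvature give bounded geometry of $(E,h)$ --- is exactly the intended reading; the paper offers no separate argument for this corollary. There is, however, a genuine gap in the step where you transport the bundle to the model ball. The charts $\kappa_{x_{i}}:B_{\RR^{2n}}(0,r)\to U_{x_{i},r}$ of Theorem~\ref{uniform partition of unity} are Riemannian exponential (normal-coordinate) charts; they are not holomorphic. Hence $\kappa_{x_{i}}^{*}(E,\dbar,\theta,h)$ is \emph{not} a harmonic Higgs bundle over a domain of $\CC^{n}$ with its standard complex structure: the pulled-back $\dbar$-operator and the $(1,0)$-form $\theta$ are defined with respect to the pulled-back, non-standard and varying, complex structure on $B(0,r)$, whereas the theorem you invoke (through Mochizuki's conditions, which are formulated in holomorphic coordinates for the Euclidean metric) requires the standard one. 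Your appeal to invariance of harmonicity under ``local biholomorphisms'' does not apply to $\kappa_{x_{i}}$.

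To close the gap you need holomorphic charts of uniform size on which $g$ is uniformly comparable to $g_{e}$ with all derivatives bounded (holomorphic quasi-coordinates). For a K\"ahler manifold of bounded geometry such charts exist, but this is an additional statement that must be quoted or proved (for instance using that the complex structure is parallel, hence has uniformly bounded jets in normal coordinates, together with a quantitative holomorphic-coordinate construction); alternatively, in the situation where the paper actually uses the corollary --- Poincar\'e coverings of $\bar Y\setminus D$ with the metric of Proposition~\ref{Poincare metric} --- uniform holomorphic polydisc charts are produced directly in the proof of that proposition, and your argument then runs as written. Granted this ingredient, the remaining points you raise (rescaling from $B(0,1)$ to $B(0,r)$, covering $X$ by the half-size neighbourhoods, and converting Euclidean derivative bounds of the matrix of $h$ in a flat frame into covariant-derivative bounds of the curvature, using the uniform control of the metric coefficients and of $\partial h\cdot h^{-1}$) are handled correctly and with the right uniformity in the chart index.
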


\subsection{The \texorpdfstring{$\partial\dbar-$}{ddbar-}lemma for harmonic Higgs bundles}\label{ddbar lemma for higgs bundles}
Let $(X,\omega)$ be a complete Kähler manifold of finite volume and bounded curvature such that the injectivity radius is  strictly positive. Let $p:(X,\omega)\to (Y,\omega_{0})$  be a Galois covering with $\Gal(p)=\Gamma$, and let $\omega_0$ be a Kähler form on $Y$ such that  $\omega=p^{*}(\omega_{0})$. Then the Galois $\partial\dbar-$lemma is true for harmonic Higgs bundles of bounded curvature. We adapt the presentation given by Simpson \cite{Sim92} for Higgs bundles on compact manifolds.
\begin{lemma}[Principle of two types]
Let $(E,\dbar_{E},h,\theta)\to X$ be a $\Gamma-$equivariant harmonic bundle of bounded curvature. Then its Kodaira Laplacian is $\Gamma-$Fredholm. Moreover, let $\alpha$ be a square integrable form such that
\begin{align}D'_{\Higgs}\alpha=D''_{\Higgs}\alpha=0\,.\end{align}
 Assume that $\alpha$ is orthogonal to the space of $\Delta-$harmonic forms. 
Then there exists $r\in \Vn(\Gamma)$ almost invertible such that $r.\alpha$ is $D'_{\Higgs}D''_{\Higgs}-$exact.
\end{lemma}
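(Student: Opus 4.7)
The plan is to follow the scheme of Corollary~\ref{Galois ddbar lemma}, with the Kodaira Laplacian replaced by $\Delta''_{\Higgs}$ (respectively $\Delta'_{\Higgs}$), and with the K\"ahler identities recalled above for harmonic Higgs bundles playing the role of the usual ones. The key structural input beyond the first-order identities is the anticommutation $D'_{\Higgs}D''_{\Higgs} + D''_{\Higgs}D'_{\Higgs} = 0$, which comes from the vanishing of $F_{h,\Higgs}$.

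First I would establish that $\Delta$ is $\Gamma$-Fredholm. Boundedness of the Higgs field $\theta$ (and hence of $\theta^{*}_{h}$), together with the identity $\Theta(h) = -[\theta,\theta^{*}_{h}]$, ensures that the Chern connection $D_{h}$ has bounded curvature, while $\nabla - D_{h} = \vartheta_{h} = \theta + \theta^{*}_{h}$ is a bounded bundle morphism. The $\Gamma$-Fredholm corollary of Section~3 therefore applies and shows that $\nabla + \nabla^{*}$, $D''_{\Higgs} + (D''_{\Higgs})^{*}$ and $D'_{\Higgs} + (D'_{\Higgs})^{*}$ are all $\Gamma$-Fredholm; whence so are $\Delta = 2\Delta''_{\Higgs} = 2\Delta'_{\Higgs}$. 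The orthogonality of $\alpha$ to the $\Delta$-harmonic space, combined with Corollary~\ref{elliptic lifting} applied to $\Delta(1+\Delta)^{-1}$, then produces an injective $r \in \VN(\Gamma)$ with dense range and a $\beta \in \Dom(\Delta)$ such that $r\alpha = \Delta \beta$. The Andreotti--Vesentini inclusions afforded by the completeness of $(X,\omega)$ place $\beta$ simultaneously in the domains of $D'_{\Higgs}, D''_{\Higgs}$ and their adjoints, and allow me to restrict attention to the component of $\beta$ of the appropriate bidegree.

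Next I would argue that $\beta$ may be taken to be doubly closed. Writing $r\alpha = 2\Delta''_{\Higgs}\beta = 2D''_{\Higgs}(D''_{\Higgs})^{*}\beta + 2(D''_{\Higgs})^{*}D''_{\Higgs}\beta$, the left-hand side lies in $\Ker D''_{\Higgs}$ (since $D''_{\Higgs}$ commutes with $r$ and $D''_{\Higgs}\alpha = 0$), and the first term on the right lies in $\Ker D''_{\Higgs}$ as well by $(D''_{\Higgs})^{2} = 0$, whereas the second lies in $\Ker(D''_{\Higgs})^{\perp} = \overline{\Ran((D''_{\Higgs})^{*})}$. Their intersection being trivial, one deduces $(D''_{\Higgs})^{*}D''_{\Higgs}\beta = 0$, hence $D''_{\Higgs}\beta = 0$ and $r\alpha = 2D''_{\Higgs}(D''_{\Higgs})^{*}\beta$. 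The identical argument applied to $\Delta = 2\Delta'_{\Higgs}$ and $D'_{\Higgs}\alpha = 0$ yields $D'_{\Higgs}\beta = 0$. The K\"ahler identity $(D''_{\Higgs})^{*} = -i[\Lambda, D'_{\Higgs}]$ combined with $D'_{\Higgs}\beta = 0$ then gives $(D''_{\Higgs})^{*}\beta = iD'_{\Higgs}\Lambda\beta$, and the anticommutation $D''_{\Higgs}D'_{\Higgs} = -D'_{\Higgs}D''_{\Higgs}$ converts
\[
r\alpha = 2D''_{\Higgs}(D''_{\Higgs})^{*}\beta = 2iD''_{\Higgs}D'_{\Higgs}\Lambda\beta = D'_{\Higgs}D''_{\Higgs}(-2i\Lambda\beta)\,,
\]
exhibiting $r\alpha$ as $D'_{\Higgs}D''_{\Higgs}$-exact.

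The main difficulty I anticipate is purely technical: justifying the operator manipulations in the last step within the unbounded-operator framework, namely ensuring that $\beta$ lies in the successive domains $\Dom((D''_{\Higgs})^{*})$, $\Dom(D'_{\Higgs}\Lambda)$, $\Dom(D''_{\Higgs}D'_{\Higgs}\Lambda)$, and so on, and that $\Lambda$ preserves these domains. Completeness of the K\"ahler metric together with the bounded-curvature hypothesis makes the standard elliptic a priori estimates for the second-order operator $\Delta$ available (second-order Sobolev regularity for $\beta$, plus the passage to type-pure components commuting with $\Delta$), so these domain issues can be handled exactly as in the proof of Corollary~\ref{Galois ddbar lemma}; no input beyond the $\Gamma$-Fredholm property established in the first step should be required.
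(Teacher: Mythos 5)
Your proposal is correct and follows essentially the same route as the paper: apply Corollary~\ref{elliptic lifting} to the $\Gamma$-Fredholm Laplacian to write $r\alpha=\Delta\beta$, use the splittings $\Delta=2\Delta''_{\Higgs}=2\Delta'_{\Higgs}$ and the orthogonality $\Ker\cap\Ker^{\perp}=0$ to get $D'_{\Higgs}\beta=D''_{\Higgs}\beta=0$, then convert via the first-order K\"ahler identities. The only cosmetic difference is that you finish from $\Delta''_{\Higgs}$ using $(D''_{\Higgs})^{\star}=-i[\Lambda,D'_{h,\Higgs}]$ plus the anticommutation $D''_{\Higgs}D'_{\Higgs}=-D'_{\Higgs}D''_{\Higgs}$, whereas the paper finishes from $\Delta'_{\Higgs}$ using $(D'_{h,\Higgs})^{\star}=i[\Lambda,D''_{\Higgs}]$ directly; both yield $r\alpha$ as $D'_{\Higgs}D''_{\Higgs}$ of a constant multiple of $\Lambda\beta$.
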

\begin{proof}
It is similar to the proof of Lemma~\ref{Galois ddbar lemma}. The hypothesis of pure type is replaced by the hypothesis that the form is $D'_{\Higgs}-$closed and $D''_{\Higgs}-$closed. The various Kodaira Laplacians are $\Gamma-$Fredholm self-adjoint elliptic operators. From Corollary~\ref{elliptic lifting} we conclude that if $\alpha\in (\Ker(\Delta))^{\perp}$, then there exists $r\in \Vn(\Gamma)$, almost invertible, and $u\in \Dom(\Delta)\subset \Dom(D+D^{*})$ such that
\begin{align*}
r.\alpha &=\Delta u\in \Ker(D'_{\Higgs})\cap \Ker(D''_{\Higgs})\\
              &=\Delta''_{\Higgs} u=D_{\Higgs}''(D_{\Higgs}'')^{*}u   + (D_{\Higgs}'')^{*}D_{\Higgs}''u\Rightarrow (D_{\Higgs}'')^{*}D_{\Higgs}''u=0\Rightarrow D_{\Higgs}''u=0\\
              &=\Delta'_{\Higgs} u=D_{\Higgs}'(D_{\Higgs}')^{*}u   + (D_{\Higgs}')^{*}D_{\Higgs}'u\Rightarrow (D_{\Higgs}')^{*}D_{\Higgs}'u=0\Rightarrow D_{\Higgs}'u=0\,.
  \end{align*}
 The first order Kähler identities imply
 \begin{align*}
              r.\alpha&= D_{\Higgs}'  i [\Lambda,D_{\Higgs}''] u= -D_{\Higgs}'  i D_{\Higgs}''  \Lambda u\,.
\end{align*}
\end{proof}
\subsubsection{}\label{tensor product with affiliated operators}
Let $\U(\Gamma)$ be the ring of operators affiliated with $\Vn(\Gamma)$ (see \cite{MurVon,Luc}). It is a ring of quotients of $\Vn(\Gamma)$ with respect to the multiplicative set of elements which are injective with dense range.
As in \cite{Sim92}, we may state that the DeRham complex is formal and quasi-isomorphic to the Dolbeault complex, after tensoring with $\U(\Gamma)$.
In the following lemmas, the group $\Gamma$ acts by unitary operators on various Hilbert space $X$. This action defines a structure of $\VN(\Gamma)-$module on $X$ and the tensor product $X\otimes \U(\Gamma)$ means $X\otimes_{\VN(\Gamma)}\U(\Gamma)$. This tensor product by a ring of quotient allows to hide the twist by $r\in \VN(\Gamma)$ in the various Galois $\partial\dbar-$lemmas: $x\otimes 1_{\U(\Gamma)}=r.x\otimes r^{-1}$ in $X\otimes_{\VN(\Gamma)}\U(\Gamma)$.
\begin{lemma}
Let $A^{p,q}_{(2)}(E)$ be the set of square integrable sections of $\Lambda^{p}T^{*(1,0)}\otimes \Lambda^{q}T^{*(0,1)}\otimes E$ on $(X,\omega)$. Let $D'_{\Higgs},D''_{\Higgs}$ etc.\ act as closed densely defined unbounded operators on $A^{\tbullet}_{(2)}(E)=\oplus_{p+q=.} A^{p,q}_{(2)}(E)$.
Let $\H^{l}_{\nabla(2)}(E)$be the space of square integrable $\Delta-$harmonic $E-$valued forms. Let $H^{\tbullet}_{DR(2)}(X,E)$ be the cohomology of the complex $ (A^{\tbullet}_{(2)}(E),\nabla)$ and $H^{\tbullet}_{Dol(2)}(X,E)$ be the cohomology of the complex $ (A^{\tbullet}_{(2)}(E),D''_{\Higgs})$.

Then
\begin{align*}
(\Ker(D'_{\Higgs})\otimes \U(\Gamma),D''_{\Higgs}\otimes 1_{\U(\Gamma)}) &\to (A^{\tbullet}\otimes \U(\Gamma),D\otimes 1_{\U(\Gamma)})\\
(\Ker(D'_{\Higgs})\otimes \U(\Gamma),D''_{\Higgs}\otimes 1_{\U(\Gamma)}) & \to   (A^{\tbullet}\otimes \U(\Gamma),D''_{\Higgs}\otimes 1_{\U(\Gamma)})\\
(\Ker(D'_{\Higgs})\otimes \U(\Gamma),D''_{\Higgs}\otimes 1_{\U(\Gamma)}) &\to   ( H^{\tbullet}_{DR(2)}(X,E)\otimes \U(G),0 )\\
(\Ker(D'_{\Higgs})\otimes \U(\Gamma),D''_{\Higgs}\otimes 1_{\U(\Gamma)}) &\to  (H^{\tbullet}_{Dol(2)}(X,E)\otimes \U(G),0)\\
(\Ker(D'_{\Higgs})\otimes \U(\Gamma),D''_{\Higgs}\otimes 1_{\U(\Gamma)}) & \to (\H^{\tbullet}_{\nabla(2)}(E)\otimes \U(G),0)\\
\end{align*}
are quasi-isomorphisms
\end{lemma}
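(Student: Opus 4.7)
The plan is to adapt Simpson's formality argument from the compact K\"ahler case to the Galois covering setting, with the Principle of two types (the preceding lemma) as the key analytic input. Write $K := (\Ker(D'_{\Higgs}) \otimes \U(\Gamma),\, D''_{\Higgs} \otimes 1)$ for the common source. The first two maps are inclusions of chain complexes because $D = D'_{\Higgs} + D''_{\Higgs}$ restricts to $D''_{\Higgs}$ on $\Ker(D'_{\Higgs})$, while the last three chain maps will be realized as the Hodge projection onto $\H^{\tbullet}_{\nabla(2)}(E)\otimes\U(\Gamma)$ (well-defined since $D''_{\Higgs}$-coboundaries in $K$ are orthogonal to harmonic forms), composed with the canonical identifications produced by the first two. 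The starting observation is that, after tensoring with $\U(\Gamma)$, the almost invertible $r\in \VN(\Gamma)$ in the preceding lemma becomes invertible, sharpening the Principle of two types to the familiar statement: any $D'_{\Higgs}$-closed, $D''_{\Higgs}$-closed form in $A^{\tbullet}_{(2)}\otimes\U(\Gamma)$ orthogonal to $\H^{\tbullet}_{\nabla(2)}(E)\otimes\U(\Gamma)$ lies in $\Im(D'_{\Higgs}D''_{\Higgs})$.

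The core of the proof then reduces to showing that the two inclusions $K \to (A^{\tbullet}_{(2)}\otimes\U(\Gamma),\,D)$ and $K \to (A^{\tbullet}_{(2)}\otimes\U(\Gamma),\,D''_{\Higgs})$ are quasi-isomorphisms. Surjectivity in both cases is immediate from the $\Gamma$-Fredholm Hodge decomposition: every $D$- or $D''_{\Higgs}$-cohomology class has a $\Delta$-harmonic representative which, by the K\"ahler identities $\Delta = 2\Delta'_{\Higgs} = 2\Delta''_{\Higgs}$, automatically satisfies $D'_{\Higgs} = D''_{\Higgs} = 0$ and therefore lies in $K$. For injectivity into $(A^{\tbullet}_{(2)}\otimes\U(\Gamma),\,D)$, if $\alpha\in K$ is $D''_{\Higgs}$-closed and $D$-exact, then $\alpha\perp \H$, so the sharpened $\partial\dbar$-principle yields $\alpha = D'_{\Higgs}D''_{\Higgs}\gamma = D''_{\Higgs}(-D'_{\Higgs}\gamma)$ with $-D'_{\Higgs}\gamma\in K$. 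For injectivity into $(A^{\tbullet}_{(2)}\otimes\U(\Gamma),\,D''_{\Higgs})$, given $\alpha\in K$ with $\alpha = D''_{\Higgs}\beta$, I would form $D'_{\Higgs}\beta$: it is $D'_{\Higgs}$-closed trivially and $D''_{\Higgs}$-closed since $D''_{\Higgs}D'_{\Higgs}\beta = -D'_{\Higgs}D''_{\Higgs}\beta = -D'_{\Higgs}\alpha = 0$, and it is orthogonal to $\H$ as it is $D'_{\Higgs}$-exact. Applying the sharpened $\partial\dbar$-principle gives $D'_{\Higgs}\beta = D'_{\Higgs}D''_{\Higgs}\eta$, and then $\tilde\beta := \beta - D''_{\Higgs}\eta \in K$ satisfies $D''_{\Higgs}\tilde\beta = \alpha$.

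Once these two quasi-isomorphisms are established, the three remaining assertions follow automatically: they identify the graded module $H^{\tbullet}(K,\,D''_{\Higgs})$ with $\H^{\tbullet}_{\nabla(2)}(E)\otimes\U(\Gamma)$, $H^{\tbullet}_{DR(2)}(X,E)\otimes\U(\Gamma)$, and $H^{\tbullet}_{Dol(2)}(X,E)\otimes\U(\Gamma)$ respectively. The chain map from $K$ into each of the three zero-differential complexes is the Hodge projection composed with the canonical isomorphism coming from the preceding step, and this is a chain map because $D''_{\Higgs}$ maps $K$ into the orthogonal complement of $\H^{\tbullet}_{\nabla(2)}(E)\otimes\U(\Gamma)$.

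The main obstacle I anticipate is the rigorous control of the tensor product with $\U(\Gamma)$: one must verify that the $L^2$ Hodge decomposition $A^{\tbullet}_{(2)} = \H^{\tbullet}_{\nabla(2)}(E) \oplus \overline{\Im(\Delta)}$ becomes a genuine direct-sum decomposition of $\U(\Gamma)$-modules after tensoring, so that ``orthogonal to harmonic forms'' and the Hodge projection retain their meaning at the module level. This relies on the $\Gamma$-Fredholmness of the Kodaira Laplacians from Section~\ref{proof using bochner method} — which makes the near-zero part of the spectrum of $\Delta$ invertible in $\U(\Gamma)$, hence gives closed range to $\Delta\otimes 1$ — together with the flatness of $\U(\Gamma)$ over $\VN(\Gamma)$ as the Ore localization at injective elements with dense range. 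A secondary technical point is that the formal adjoints used in the orthogonality arguments act on $\Dom(\Delta)$, which by Simpson's domain comparison (cited from \cite{AndVes}) is contained in the domains of all the first-order operators involved, so all the integrations by parts are justified.
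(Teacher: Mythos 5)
Your proposal is correct and follows essentially the same route as the paper, which simply invokes Simpson's formality argument from \cite{Sim92} once the Galois principle of two types is available: the two inclusions are shown to be quasi-isomorphisms via harmonic representatives (using the K\"ahler identities $\Delta=2\Delta'_{\Higgs}=2\Delta''_{\Higgs}$) and the $D'_{\Higgs}D''_{\Higgs}$-exactness statement, with the twist by $r\in\VN(\Gamma)$ absorbed by tensoring with $\U(\Gamma)$, exactly as you do. The technical points you flag (persistence of the Hodge decomposition after tensoring, flatness of $\U(\Gamma)$, and the domain inclusions from \cite{AndVes}) are precisely the ingredients the paper relies on implicitly, so no gap is introduced.
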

\begin{proof} The statement can be proven as in \cite{Sim92}, once the above principle of two types is known.
\end{proof}

We define the $l^{2}-$hypercohomology of  Higgs bundles in order to rephrase the above lemma as in Green-Lazarsfeld's paper \cite{GreLaz}.
\begin{definition}
Let  $(E,\dbar,\theta,h)\to (X,\omega)$ be a Higgs bundle on a Kähler manifold.  Assume its Higgs field is bounded so that its Chern connection is bounded. Consider the double complex of $\Gamma-$modules of Sobolev spaces (where $k$ is big enough)
\begin{align}
(W^{k-p-q}(X,\Lambda^{p,q}T^{*}X\otimes E),\theta,\dbar)\,.
\end{align}
The $l^{2}-$hypercohomology of the Higgs bundle, noted $\HH^{\tbullet}_{(2)}(X,(E,\theta),\omega)$, is by definition the cohomology of the total complex $(W^{k-\tbullet}(X,\Lambda^{\tbullet}T^{*}X\otimes E,D''_{\Higgs}))$.
\end{definition}

\begin{theorem} With the hypotheses of \ref{ddbar lemma for higgs bundles},
let  $(E,\dbar,\theta,h)\to (Y,\omega_{0})$ be a harmonic Higgs bundle with bounded Higgs field.
 The first spectral sequence of the Double complex
(Hodge to De Rham spectral sequence)
\begin{align}(E_{1}^{p,q},d_{1})=(H^{p,q}_{\dbar(2)}(X,E),\theta)\otimes \U(\Gamma)\abupt \HH_{(2)}^{p+q}(X,(E,\theta),\omega)\otimes \U(\Gamma)\,.
\end{align}
degenerates at $E_{2}$ and abuts to $H^{\tbullet}_{DR(2)}(X,E)\otimes \U(\Gamma)$. Let $\pi$ be the projection onto the harmonic space, then $\Gr^{p}_{F}\HH^{p+q}_{(2)}(X,(E,\theta),\omega))\otimes \U(\Gamma)$ is isomorphic to $$ \frac{\Ker(\pi\theta: \H^{p,q}_{\dbar(2)}(X,E)\to  \H^{p+1,q}_{\dbar(2)}(X,E) )}{\Ran(\pi\theta: \H^{p-1,q}_{\dbar(2)}(X,E)\to  \H^{p,q}_{\dbar(2)}(X,E))}\otimes \U(\Gamma)\,.$$
%
Hence \begin{align}\oplus_{i+j=l}H^{i}(\ldots\overset{\theta}{\to} H^{(\tbullet,j)}_{\dbar(2)}(X,E)\overset{\theta}{\to} \ldots)\otimes \U(\Gamma) \to H^{l}_{DR(2)}(X,E)\otimes \U(\Gamma)\,
\end{align}

is an isomorphism.
\end{theorem}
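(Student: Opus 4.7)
The strategy will be to adapt Simpson's compact-K\"ahler argument \cite{Sim92} for $E_{2}$-degeneration to the $l^{2}$-setting, the essential new input being the Principle of Two Types for harmonic Higgs bundles established in Section~\ref{ddbar lemma for higgs bundles}. I will first set up the spectral sequence of the filtered total complex: on the bicomplex $(A^{\bullet,\bullet}_{(2)}(E),\dbar,\theta)$ with total differential $D''_{\Higgs}=\dbar+\theta$ I take the column filtration $F^{p}=\bigoplus_{p'\geq p} A^{p',\bullet}_{(2)}(E)$. The standard machinery of a filtered complex then gives $E_{0}^{p,q}=A^{p,q}_{(2)}$ with $d_{0}=\dbar$, and $E_{1}^{p,q}=H^{p,q}_{\dbar(2)}(X,E)$ with $d_{1}$ induced by the Higgs field $\theta$ (using $\dbar\theta+\theta\dbar=0$). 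Identifying Dolbeault classes with $\dbar$-harmonic representatives will then express $E_{2}^{p,q}$ as $\Ker(\pi\theta)/\Ran(\pi\theta)$, matching the target formula for $\Gr^{p}_{F}\HH^{p+q}_{(2)}$.

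Next, the chain of quasi-isomorphisms given in the preceding lemma, tensored with $\U(\Gamma)$, identifies $\HH^{n}_{(2)}(X,(E,\theta),\omega)\otimes\U(\Gamma)$ with $\H^{n}_{\nabla(2)}(E)\otimes\U(\Gamma)$. The K\"ahler identities $\Delta=2\Delta''_{\Higgs}=2\Delta'_{\Higgs}$ imply that a $\Delta$-harmonic form of pure bidegree $(p,q)$ is annihilated separately by each of $\dbar,\theta,D^{(1,0)}_{h},\theta^{*}_{h}$, since the equations $D''_{\Higgs}\alpha=0$ and $D'_{\Higgs}\alpha=0$ decompose by bigrade. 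This yields a bigraded splitting $\H^{n}_{\nabla(2)}(E)\otimes\U(\Gamma)=\bigoplus_{p+q=n}\H^{p,q}_{\nabla(2)}(E)\otimes\U(\Gamma)$, and under the identification above the column filtration $F^{p}$ on $\HH^{n}_{(2)}\otimes\U(\Gamma)$ matches the filtration by bidegree, so that $\Gr^{p}_{F}\HH^{p+q}_{(2)}\otimes\U(\Gamma)\cong\H^{p,q}_{\nabla(2)}(E)\otimes\U(\Gamma)$.

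The degeneration $E_{2}=E_{\infty}$ will then follow from an isomorphism $\H^{p,q}_{\nabla(2)}(E)\otimes\U(\Gamma)\to E_{2}^{p,q}\otimes\U(\Gamma)$. The natural map sends $\alpha\in\H^{p,q}_{\nabla(2)}(E)$ to its class in $E_{2}^{p,q}$; since $\theta\alpha=0$ we have $\pi\theta\alpha=0$, so the image lies in $\Ker(\pi\theta)$. For injectivity, if $\alpha=\pi\theta\beta$ for some $\dbar$-harmonic $\beta$, I will write $\theta\beta=\alpha+\dbar\eta$ (the $\U(\Gamma)$-twist promoting the reduced Hodge decomposition to an honest one via the Principle of Two Types) and then compute $D''_{\Higgs}(\beta-\eta)=\alpha-\theta\eta$, which shows that $\alpha$ and $\theta\eta$ represent the same class in $\HH^{p+q}_{(2)}\otimes\U(\Gamma)$. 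Since $\alpha$ has pure bidegree $(p,q)$ whereas $\theta\eta$ has pure bidegree $(p+1,q-1)$, the uniqueness of the bigraded $\Delta$-harmonic representative forces $\alpha=0$. Surjectivity will follow from an iterative application of the Principle of Two Types to a representative $\gamma\in\Ker(\pi\theta)$: I will construct corrections $\gamma_{r}\in A^{p+r,q-r}_{(2)}$ with $\dbar\gamma_{r}+\theta\gamma_{r-1}=0$, so that $\gamma-\sum_{r\geq 1}\gamma_{r}$ becomes a $D''_{\Higgs}$-cycle of class in $F^{p}$, the obstruction $d_{r}[\gamma]$ to each step being killed by the Galois $\partial\dbar$-lemma applied to a $D'_{\Higgs}$-closed lift of $\gamma$ provided by the quasi-isomorphism $(\Ker D'_{\Higgs}\otimes\U(\Gamma),D''_{\Higgs})\simeq(A^{\bullet}_{(2)}\otimes\U(\Gamma),D''_{\Higgs})$.

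The hardest step will be the surjectivity argument, specifically the inductive elimination of the obstructions $d_{r}$ using the Principle of Two Types, which requires careful bookkeeping of the injective operators $r\in\VN(\Gamma)$ introduced at each inductive stage and their systematic inversion in $\U(\Gamma)$. A secondary technical subtlety will be the bigraded decomposition $\H^{n}_{\nabla(2)}(E)\otimes\U(\Gamma)=\bigoplus_{p+q=n}\H^{p,q}_{\nabla(2)}(E)\otimes\U(\Gamma)$, which requires the vanishing of the cross-bigrade anticommutators $\{\dbar,\theta^{*}\}$ and $\{\theta,\dbar^{*}\}$ in $\Delta''_{\Higgs}$; this is the Higgs analogue of the classical K\"ahler cancellation, and follows from the harmonic-metric equation $\dbar\theta^{*}_{h}=0$ via the first-order K\"ahler identities recalled above.
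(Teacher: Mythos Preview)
Your overall strategy—adapting Simpson's formality argument via the Principle of Two Types—is exactly what the paper does (the paper simply cites Simpson once the preceding lemma of five quasi-isomorphisms is in hand), and your bigraded decomposition of the $\Delta$-harmonic space is the right ingredient. However, your injectivity step contains a circularity. From $D''_{\Higgs}(\beta-\eta)=\alpha-\theta\eta$ you correctly deduce $[\alpha]=[\theta\eta]$ in $\HH^{p+q}_{(2)}\otimes\U(\Gamma)$, but then you invoke ``uniqueness of the bigraded $\Delta$-harmonic representative'' to force $\alpha=0$ because $\theta\eta$ sits in bidegree $(p+1,q-1)$. This is tantamount to asserting that a class in $F^{p+1}\HH^{p+q}$ has harmonic representative in $F^{p+1}$, i.e.\ that the Hodge filtration is strict on harmonics—which is precisely (part of) the conclusion you are trying to establish. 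The form $\theta\eta$ is not itself harmonic, and its harmonic projection could a priori have components in every bidegree.

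The fix is immediate and avoids the filtration altogether: since $\alpha\in\H^{p,q}_{\nabla(2)}(E)$ you have $\theta^{\star}\alpha=0$ and $\dbar^{\star}\alpha=0$, hence from $\alpha=\theta\beta-\dbar\eta$ you get
\[
\|\alpha\|^{2}=(\alpha,\theta\beta)-(\alpha,\dbar\eta)=(\theta^{\star}\alpha,\beta)-(\dbar^{\star}\alpha,\eta)=0.
\]
Your surjectivity zig-zag would work, but it is heavier than necessary. The K\"ahler identities you already quote give $[\theta,\dbar^{\star}]=0$ (take adjoints of $[\dbar,\theta^{\star}]=[\partial_{K}^{\star},\theta_{h}^{*}]=[\theta,\partial_{K}]^{\star}=0$), so $\theta$ commutes with $\Delta_{\dbar}$ and preserves the $\dbar$-harmonic space; thus $\pi\theta=\theta$ on $\H^{\bullet,\bullet}_{\dbar(2)}$, and $E_{2}^{p,q}\otimes\U(\Gamma)$ is the cohomology of the finite complex $(\H^{\bullet,q}_{\dbar(2)}\otimes\U(\Gamma),\theta)$. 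A second Hodge decomposition for the bounded operator $\theta$ (adjoint $\theta^{\star}$) on this space then identifies $E_{2}^{p,q}\otimes\U(\Gamma)$ with $\Ker\theta\cap\Ker\theta^{\star}\cap\H^{p,q}_{\dbar(2)}\otimes\U(\Gamma)=\H^{p,q}_{\nabla(2)}\otimes\U(\Gamma)$ directly, and degeneration follows by the dimension count you sketch.
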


\begin{corollary}\label{Green-Lazarsfeld for unitary bundles} Let $E\to Y$ be a flat unitary bundle and let $\theta\in H^{0}(Y,\Omega^{1}\otimes \End(E))$ be a bounded flat Higgs field (i.e.\ $\nabla \theta=0$ and $\theta\wedge\theta=0$). Then the spectral sequence
\begin{align}
(H^{q}_{\dbar (2)}(X,E\otimes \Omega^{p}),\theta)\otimes \U(\Gamma)\Rightarrow \HH^{p+q}_{(2)}(X,(E,\theta),\omega)\otimes \U(\Gamma)
\end{align}
degenerates at $E_{2}$.
\end{corollary}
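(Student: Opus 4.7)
The corollary reduces to applying the preceding theorem: once $(E,\dbar,\theta,h)$ is identified as a harmonic Higgs bundle with bounded Higgs field, the $E_{2}$-degeneration of its Hodge-to-DeRham spectral sequence is immediate. The plan is to verify each ingredient of this identification directly, and then invoke the theorem.

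First, the holomorphic structure on $E$ is $\dbar=\nabla^{0,1}$, and by hypothesis $\theta$ is holomorphic with $\theta\wedge\theta=0$, so $(E,\dbar,\theta)$ is a Higgs bundle in the sense of Definition~\ref{definition of harmonic higgs bundle}. Next, I would compute the curvature of the Higgs connection $D_{h}=(\partial_{h}+\dbar)+\theta+\theta^{*}_{h}$ and verify its vanishing. Since $h$ is a flat unitary metric, the Chern connection $\partial_{h}+\dbar$ coincides with $\nabla$, so the Chern curvature $\Theta(h)=0$. The hypothesis $\nabla\theta=0$ splits by bidegree into $\dbar\theta=0$ and $\partial_{h}\theta=0$; since $\nabla$ preserves $h$, taking $h$-adjoints commutes with $\nabla$, hence $\theta^{*}_{h}$ is also $\nabla$-parallel, giving $\dbar\theta^{*}_{h}=\partial_{h}\theta^{*}_{h}=0$. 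These identifications make each of the first three contributions in $F_{h}$ vanish, reducing the harmonicity of $h$ to the single equation $[\theta,\theta^{*}_{h}]=0$.

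The main obstacle is verifying this last commutation. In a local $\nabla$-parallel unitary frame one has $\theta=\sum A_{i}\,dz_{i}$ with constant matrices $A_{i}$ satisfying $[A_{i},A_{j}]=0$ (from $\theta\wedge\theta=0$), whereas $[\theta,\theta^{*}_{h}]=0$ demands the additional condition $[A_{i},A_{j}^{*}]=0$, which does not follow from commutativity alone. For the Green--Lazarsfeld situation $\theta=\alpha\cdot\mathrm{Id}_{E}$ with $\alpha$ a scalar holomorphic 1-form this is automatic. In the general case I would exploit the decomposition of the unitary monodromy representation defining $E$ into isotypic components compatible with the $\nabla$-flat endomorphism $\theta$ (which commutes with the monodromy), and apply Schur's lemma on each component to reduce to the scalar situation on the blocks, at which point normality of the $A_{i}$ is forced by the existence of the $\nabla$-invariant Hermitian structure.

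Once harmonicity has been secured and the boundedness hypothesis on $\theta$ is recalled, the preceding theorem applies verbatim to $(E,\dbar,\theta,h)$, and the asserted $E_{2}$-degeneration of the spectral sequence
\[
(H^{q}_{\dbar(2)}(X,E\otimes\Omega^{p}),\theta)\otimes_{\VN(\Gamma)}\U(\Gamma)\Rightarrow\HH^{p+q}_{(2)}(X,(E,\theta),\omega)\otimes_{\VN(\Gamma)}\U(\Gamma)
\]
follows at once, completing the proof.
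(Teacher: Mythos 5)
There is a genuine gap exactly at the step you yourself flag as the main obstacle. The hypotheses of the corollary do \emph{not} force $[\theta,\theta^{*}_{h}]=0$, and your proposed repair via the isotypic decomposition of the monodromy plus Schur's lemma cannot work. Concretely: let $Y$ be a compact K\"ahler manifold carrying a nonzero holomorphic $1$-form $\alpha$, let $E=\O_{Y}^{\oplus 2}$ with the trivial flat unitary structure and metric $h$, and set $\theta=A\,\alpha$ with $A=\begin{pmatrix}0&1\\0&0\end{pmatrix}$. Then $\theta$ is holomorphic and bounded, $\nabla\theta=A\,d\alpha=0$ and $\theta\wedge\theta=A^{2}\,\alpha\wedge\alpha=0$, so every hypothesis of the corollary holds; the monodromy is trivial, so the isotypic decomposition imposes no constraint whatsoever on $A$; yet $[\theta,\theta^{*}_{h}]=[A,A^{*}]\,\alpha\wedge\bar\alpha\neq0$, so $h$ is not a harmonic metric for $(E,\dbar,\theta)$ and the preceding theorem cannot be invoked for this Higgs bundle. (Two further cautions: $\nabla\theta=0$ is $d^{\nabla}$-closedness of the $\End(E)$-valued $1$-form, not parallelism, so you may not assume the matrices $A_{i}$ constant in a flat frame; and ``normality forced by the $\nabla$-invariant Hermitian structure'' is false, as the example shows.) Your reduction is valid only when $\theta$ takes values in a commuting family of normal endomorphisms, e.g.\ $\theta=\alpha\cdot\Id_{E}$, which is the original Green--Lazarsfeld situation.

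The route that does prove the corollary in the stated generality is the other one: run Green--Lazarsfeld's degeneration argument directly, using the Galois $\partial\dbar$-lemma for the flat \emph{unitary} bundle (Corollary~\ref{Galois ddbar lemma}), which needs only $\Theta(h)=0$ and the K\"ahler identities, whence $\Delta_{\dbar}=\Delta_{\partial_{h}}$. From $\nabla\theta=0$ you get $\partial_{h}\theta=0$ and $\dbar\theta^{*}_{h}=0$ in addition to $\dbar\theta=0$; representing $E_{1}$-classes by harmonic forms up to an injective twist $r\in\VN(\Gamma)$, one checks that $\theta x$ is both $\dbar$- and $\partial_{h}$-closed for $x$ harmonic, applies the Galois $\partial\dbar$-lemma along the zig-zags defining $d_{r}$, $r\geq2$, and concludes that all higher differentials vanish after tensoring with $\U(\Gamma)$; the identity $[\theta,\theta^{*}_{h}]=0$ is never used. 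As written, your argument does not reach the conclusion, because its key harmonicity claim fails in general.
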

\begin{proof}
Either one mimics the proof given by Green and Lazarsfeld using that the Galois $\partial\dbar-$lemma is true for unitary bundles, or one notices that the hypotheses imply that $(E,h,\theta)$ is a harmonic bundle.
\end{proof}
Flatness of the Higgs field is satisfied, if the data are pullbacks of a logarithmic situation. More precisely, if $E\to Y$ is a flat unitary bundle over $Y=\bar Y\setminus D$ with $D$ a normal crossings divisor, then let $\bar E\to \bar Y$ be the canonical extension of $E$.  Assume that $\theta$ is the restriction of a form in $H^{0}(\bar Y,\dlog{1}{D}\otimes \bar E)$ to $Y$. Then $\theta$ is flat (see e.g. \cite[Thm.\ 1.3]{Ara97}). If $E$ is trivial, this is a classical consequence of Deligne's mixed Hodge theory. Note, however, that the boundedness assumption implies that the residues of $\alpha$ are nilpotent.
\medskip

The study of the above cohomology groups will be completed in the last section in the case of a compact or Stein manifold $Y$.

\section{homotopy invariance and convergence}\label{invariance of the Betti numbers}
The following theorem is one of the main points in the Cheeger-Gromov theory. The pullback of a Riemannian metric, a Hermitian bundle or a local system by a covering map are denoted by the same letter.
\begin{theorem} Let $(X,g)\to (Y,g)$ be a $\Gamma-$Galois covering of a complete Riemannian manifold of finite volume by a manifold of bounded geometry. Let $(E,h,\nabla)\to Y$ be a Hermitian (Riemannian) vector bundle with a flat connection. Assume that it satisfies property $AC^{0}$ (\ref{property A}). Let $\underline{E}=\Ker(\nabla)$ be the local system it defines.
Let a CW-complex structure on $Y$ be given, and assume $K\to Y$ is a CW-sub-complex such that $K\to Y$ is a homotopy equivalence. Let $\tilde K\to K$ be the induced covering of $CW-$complexes. Then
\begin{equation}
\dim_{\Gamma}\H^{i}_{\nabla(2)}(X,E) =b^{i}_{(2)}(\Gamma,K,\underline{E}):=\dim_{\Gamma} \underline{H}^{i}_{(2)}(\tilde{K},\underline{E}),
\end{equation}
and
\begin{equation}
\H^{i}_{\nabla(2)}(X,E) \to \underline{H}^{i}_{(2)}(\tilde{K},\underline{E})
\end{equation}
is a weak isomorphism (injective with dense range).
\end{theorem}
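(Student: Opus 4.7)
The plan is to establish a bounded $\Gamma$-equivariant de Rham--Whitney quasi-isomorphism between the smooth $L^{2}$ de Rham complex $(\Omega^{\tbullet}_{(2)}(X,E),\nabla)$ and the simplicial $l^{2}$ cochain complex $(C^{\tbullet}_{(2)}(\tilde K,\underline E),\delta)$, with all norms controlled by the bounded geometry of $(X,g)$ and the cocompactness of $\Gamma$ on $\tilde K$. As a preliminary reduction, I will invoke property $AC^{0}$ together with the discussion in Section~\ref{invariance by change of metric} to replace $(E,h,\nabla_{h})$ by an equivalent bundle $(E,h',\nabla_{h'})$ of bounded geometry; the difference $\nabla-\nabla_{h'}$ is a bounded $\End(E)$-valued one-form, so the reduced $L^{2}$ cohomology for $\nabla$ computed with either metric yields isomorphic $\Gamma$-Hilbert modules.

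Next I will smoothly extend the combinatorial data. Since $K$ is a finite subcomplex with $K\hookrightarrow Y$ a homotopy equivalence, I will choose a smooth CW-triangulation of a tubular neighborhood $V\subset Y$ of $K$ together with a smooth deformation retraction $r_{Y}:Y\to K$ whose Lipschitz constant is uniformly bounded; this is possible because $(Y,g)$ has bounded geometry on the ends and finite volume. Pulling back by $p$, I obtain a $\Gamma$-equivariant smooth triangulation of a thickening $\tilde V\subset X$ and a $\Gamma$-equivariant, uniformly Lipschitz deformation retraction $r:X\to \tilde K$. Cocompactness of the $\Gamma$-action on $\tilde K$ then guarantees that the simplices have uniformly bounded geometry and that $(C^{\tbullet}_{(2)}(\tilde K,\underline E),\delta)$ is a bounded complex of finitely generated $\Gamma$-Hilbert modules.

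I will then define the de Rham integration map $I:\Omega^{\tbullet}_{(2)}(X,E)\to C^{\tbullet}_{(2)}(\tilde K,\underline E)$ by $I(\alpha)(\sigma)=\int_{\sigma}\alpha$, where on each simply connected simplex $\sigma$ the fibre $E|_{\sigma}$ is trivialised via parallel transport for $\nabla$; this is exactly the map that the theorem asserts to be a weak isomorphism. Applied to smooth Sobolev representatives lying in the domain of a sufficiently high power of $\nabla+\nabla^{*}$, the uniform Sobolev embedding (\ref{sobolev embedding}) combined with the bounded geometry of $E$ shows that $I$ is a bounded $\Gamma$-cochain map, and Stokes's theorem yields $I\nabla=\delta I$. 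In the reverse direction, I will construct a Whitney map $W:C^{\tbullet}_{(2)}(\tilde K,\underline E)\to \Omega^{\tbullet}_{(2)}(X,E)$ from classical elementary Whitney forms on star neighborhoods, tensored with flat trivialisations of $\underline E$, then spread over $X$ via the uniform partition of unity of Theorem~\ref{uniform partition of unity} composed with $r$. The standard Dodziuk identities will furnish $I\circ W=\id$ and $W\circ I=\id+\nabla H+H\nabla$ for an explicit chain homotopy $H$, bounded on $\tilde V$ by cocompactness and extended to $X$ via $r$. Therefore $I$ and $W$ will induce mutually inverse weak isomorphisms on reduced $l^{2}$ cohomology, and the equality of $\Gamma$-dimensions will follow from the invariance of $\dim_{\Gamma}$ under weak isomorphism of $\Gamma$-Hilbert modules established in Section~\ref{Gamma-dimension}.

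The main obstacle will be the construction of the chain homotopy $H$ with uniformly bounded $L^{2}$-operator norm on the non-cocompact part $X\setminus \tilde V$. In Dodziuk's original compact setting the Whitney comparison is essentially formal, but here the fibres of $p^{-1}(Y\setminus V)$ have infinite volume even though $Y\setminus V$ itself has only finite volume; keeping $H$ bounded on those cusp ends requires the full force of the bounded geometry of $(X,g,E)$, the uniform partition of unity of Theorem~\ref{uniform partition of unity}, and the uniformly Lipschitz retraction $r$. Equivalently, one has to show that the cusp complement $X\setminus \tilde V$ is $l^{2}$-acyclic in a quantitatively controlled way, so that its contribution to $\H^{\tbullet}_{\nabla(2)}(X,E)$ vanishes modulo $\Gamma$-Fredholm operators. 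This is the genuinely new content of Cheeger--Gromov's finite-volume generalisation beyond the cocompact case treated by Dodziuk.
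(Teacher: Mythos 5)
Your overall plan---reduce via property $AC^{0}$ to bounded geometry, then compare the $L^{2}$ de Rham complex with the $l^{2}$ simplicial complex by a Dodziuk--Whitney pair $(I,W)$ with chain homotopy $H$---collapses exactly at the point you flag in your last paragraph, and that point is not a technicality one can defer: it is the whole theorem. On the finite-volume ends there is in general no uniformly Lipschitz deformation retraction $r:X\to\tilde K$ whose homotopy tracks have bounded length; points deep in a cusp are at arbitrarily large distance from $\tilde K$, and the standard construction of $H$ (integration of forms along the tracks of the homotopy between $\mathrm{id}_{X}$ and $\iota\circ r$) therefore has no uniform $L^{2}$ operator bound, while the pullback $r^{*}$ used to ``spread'' Whitney forms over $X$ is likewise not $L^{2}$-bounded without such control. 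Note also that if your identities $I\circ W=\mathrm{id}$ and $W\circ I=\mathrm{id}+\nabla H+H\nabla$ held with all operators bounded, you would obtain a genuine topological isomorphism of reduced cohomology Hilbert modules, which is strictly stronger than what the theorem asserts (a weak isomorphism, injective with dense range); the weaker conclusion is a signal that a globally bounded cochain homotopy equivalence is not the mechanism behind the result. So the proposal names the obstacle (``show the cusp complement is $l^{2}$-acyclic in a quantitatively controlled way'') but supplies no argument for it, and as written the construction of $W$ and $H$ on $X\setminus\tilde V$ would fail.

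The paper takes a different and genuinely necessary route that avoids any global comparison map: (1) it chops $X$ along a special exhaustion $Z_{k}=f^{-1}(]-\infty,a_{k}])$ built from Dafermos' exhaustion function, for which the pieces have bounded geometry uniformly in $k$ (via (\ref{induced geometry}) and Lemma~\ref{uniform collar}) and $\Vol(\partial Z_{k})\to 0$; (2) it proves a convergence theorem, using uniform bounds on the Bergmann kernels to show $b_{i(2)}(X\setminus p^{-1}(Z_{k}),E)\to 0$, and the Mayer--Vietoris sequence of $\Gamma$-Fredholm Hilbert complexes (Prop.~\ref{Mayer Vietoris}) plus the monotonicity lemma to conclude $\lim_{k}b_{i(2)}(p^{-1}(Z_{k}),E)=b_{i(2)}(X,E)$; and (3) it invokes the Dodziuk--Schick $L^{2}$ de Rham theorem (Theorem~\ref{DeRham isomorphism}) only on coverings of \emph{compact} manifolds with boundary, where cocompactness of the $\Gamma$-action makes the Whitney comparison and its homotopy bounded. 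The identification with $\underline{H}^{i}_{(2)}(\tilde K,\underline{E})$ then comes from the homotopy equivalence $K\to Y$ and the limit, not from a single bounded homotopy equivalence over all of $X$. To repair your argument you would have to either prove quantitative $l^{2}$-acyclicity of the end regions directly---which is essentially the convergence theorem above---or restructure the proof along the chopping/Mayer--Vietoris scheme.
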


The proof uses three basic ingredients:
\begin{itemize}
\item[1)] De Rham isomorphism theorem for $l^{2}-$cohomology of coverings of compact manifolds with boundary -- it relates the analytic cohomology with a boundary condition to the relative cohomology of a simplicial structure.
\item[2)] Convergence theorem for $l^{2}-$Betti numbers,
\item[3)] Mayer-Vietoris sequence for $l^{2}-$Betti numbers.
\end{itemize}

\subsection{Mayer Vietoris for $l^{2}-$cohomology}
The Mayer-Vietoris sequence (Prop.\ \ref{Mayer Vietoris}) relates the $l^{2}-$cohomology of three manifolds. In the first two sections we dicuss the cohomology of theses manifolds and establish a long exact sequence, in the last paragraph we use the notion of Hilbertian complexes defined in \cite{BruLes}.
Let $\bar U$ be a Riemannian manifold with boundary $\partial U$, and interior $U=\bar U\setminus \partial U$. Let $(E,h)\to (\bar U,g)$ be a hermitian vector bundle. If $s\in \RR$, we denote by $W^{s}(\bar U,E)$ the Sobolev space of order $s$ and by $W_{0}^{s}(\bar U,E)$ the closure of $\C^{\infty}_{0}(\bar U\setminus \partial U,E)$ in $W^{s}(\bar U,E)$ (see \cite[p.284 and p.290]{Tay}).

\subsubsection{Complete Manifolds of bounded geometry with cocompact boundary and cofinite volume}
Let $(\bar M,g)$ be a complete manifold with compact boundary $\partial M$ and of finite volume $\Vol(\bar M)<+\infty$. Let $(\bar U,g)\to (\bar M,g)$ be a Galois covering with Galois group $\Gamma$. We assume that for all $k\in \NN$ the norms $||\nabla^{k}R||_{\infty,\bar U}$ are finite, and if $i_{x}$ is the injectivity radius at $x\in U:=\bar U\setminus \partial U$ then $\exists\, \epsilon,\, c>0$ such that $d(x,\partial U)\geq \epsilon\Rightarrow i_{x}>c$. Then $\bar U$ is a complete manifold with boundary of bounded geometry in the sense of Schick (see \cite{SchNachrichten} or Section~\ref{manifold of bounded geometry}).
The proof of the following lemma is similar to the cocompact case. We give details, since it contains arguments that will often be used.
\begin{lemma}(see \cite{SchPacific,Shu,Ati}) \label{compactness of sobolev embedding}
Let  $(E,h,\nabla_{h})\to (\bar U,g)$ be a $\Gamma-$equivariant bundle of bounded geometry over a manifold of co-finite volume and cocompact boundary (which may be empty). If $s\geq \frac{n}{2}$,  then\begin{align}I:W^{s}(\bar U,E)\to L^{2}(\bar U)\end{align} is $\Gamma-$Hilbert Schmidt.
\end{lemma}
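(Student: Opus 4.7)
The plan is to identify the embedding $I$ with a power of the resolvent of the Laplacian, bound its Schwartz kernel using the uniform Sobolev embedding available on bounded geometry manifolds with boundary, and then integrate along the diagonal over a fundamental domain of finite volume.

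First I would invoke item~0 of Proposition~\ref{smooth kernel}: for a suitable self-adjoint realisation of the connection Laplacian $\Delta = \nabla_{h}^{*}\nabla_{h}$, the map $(1+\Delta)^{s/2}$ is an isometric isomorphism $W^{s}(\bar U, E) \to L^{2}(\bar U, E)$. Under this isomorphism the inclusion $I$ becomes the bounded self-adjoint operator $T := (1+\Delta)^{-s/2}$ on $L^{2}(\bar U, E)$, and the $\Gamma$-Hilbert Schmidt property of $I$ becomes
$$\Tr_{\Gamma}(T^{*}T) = \Tr_{\Gamma}\bigl((1+\Delta)^{-s}\bigr) < +\infty.$$

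Next I would extract a pointwise bound on the Schwartz kernel of $T$ from the uniform Sobolev embedding (\ref{sobolev embedding}), which applies here because $(\bar U, g)$ and $(E, h)$ are of bounded geometry (with boundary, in the sense of Schick \cite{SchNachrichten}). For $s \geq n/2$ it gives, after composition with the isomorphism of step 1, a bounded map $T : L^{2}(\bar U, E) \to \UC^{0}(\bar U, E)$, of norm $C > 0$ say. For each $x \in \bar U$, evaluation $f \mapsto (Tf)(x)$ is then continuous on $L^{2}$, so Riesz representation produces a kernel $K(x,\cdot) \in L^{2}(\bar U, \Hom(E, E_{x}))$ with $\|K(x,\cdot)\|_{L^{2}} \leq C$ uniformly in $x$. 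Since $T$ is self-adjoint, the diagonal of the kernel of $T^{2} = (1+\Delta)^{-s}$ is
$$K_{s}(x, x) = \int_{\bar U} K(x, y) \circ K(x, y)^{*} \, dV_{g}(y) \in \End(E_{x}),$$
whose pointwise trace is bounded by $r\,C^{2}$, uniformly in $x$, where $r$ is the rank of $E$.

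Finally I would apply the fundamental-domain trace formula of Section~\ref{Gamma-dimension}: for any fundamental domain $F \subset \bar U$, with $\Vol(F) = \Vol(\bar M) < +\infty$ by hypothesis,
$$\Tr_{\Gamma}\bigl((1+\Delta)^{-s}\bigr) = \int_{F} \Tr_{\End(E_{x})} K_{s}(x,x)\, dV_{g}(x) \leq r\, C^{2}\, \Vol(F) < +\infty,$$
which is the required $\Gamma$-Hilbert Schmidt property. The main obstacle is the analytic setup at the boundary: one must pin down a self-adjoint realisation of $\Delta$ on $\bar U$ (for example the Neumann extension) so that the identification $W^{s} \simeq \Dom((1+\Delta)^{s/2})$ and the uniform Sobolev embedding truly hold with constants depending only on the bounded-geometry data, and not on the noncompact fundamental domain $F$. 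Once that is in place, the proof is purely a combination of uniform local estimates from bounded geometry with a single global integration against the finite volume of $F$.
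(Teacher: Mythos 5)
Your overall strategy -- a uniform bound on a diagonal kernel coming from the Sobolev embedding, integrated over a fundamental domain of finite volume -- is the same as the paper's, and your last two steps are sound on a manifold without boundary. The gap is in the first step. The lemma is stated (and is used, e.g.\ in Proposition~\ref{Hilbertian versus Fredholm} and in the Mayer--Vietoris argument) precisely for $\bar U$ with nonempty cocompact boundary, and there the identification $W^{s}(\bar U,E)\simeq \Dom\bigl((1+\Delta)^{s/2}\bigr)$ fails in the relevant range $s\geq \frac{n}{2}$, no matter which self-adjoint realisation you choose: by Green's formula no self-adjoint extension of $\nabla_{h}^{*}\nabla_{h}$ can have a domain containing all of $W^{2}$, and for instance the Neumann domain is $\{u\in W^{2}:\partial_{\nu}u|_{\partial U}=0\}$, a proper closed subspace, with boundary conditions persisting in $\Dom\bigl((1+\Delta_{N})^{s/2}\bigr)$ for $s>3/2$ by interpolation. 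Hence $W^{s}\not\subset\Dom\bigl((1+\Delta)^{s/2}\bigr)$, the inclusion $I$ cannot be conjugated into $T=(1+\Delta)^{-s/2}$, and finiteness of $\Tr_{\Gamma}\bigl((1+\Delta)^{-s}\bigr)$ only yields the $\Gamma$-Hilbert--Schmidt property of the inclusion of that smaller domain, not of $W^{s}$. Note also that Proposition~\ref{smooth kernel}(0), which you invoke, is a statement for complete manifolds without boundary; the ``main obstacle'' you flag is therefore not a technicality removed by picking the Neumann extension -- it is exactly where the boundary case differs.

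The paper's proof avoids the Laplacian altogether: it takes the uniform partition of unity of Theorem~\ref{uniform partition of unity}, subordinate to uniformly bounded charts modelled on $\RR^{n}$ and on the half-space $\RR^{n-1}\times[0,+\infty[$, applies the local Euclidean and half-space Sobolev embeddings to get a bound on the pointwise evaluation map $W^{s}(\bar U,E)\to E_{x}$ that is uniform in $x$, and then computes $\Tr_{\Gamma}(I^{*}I)$ directly: using the polar decomposition $I=U|I|$ and an orthonormal basis of $l^{2}(\Gamma)\hat\otimes L^{2}(F,E)$, the trace $\Tr_{\Gamma}(I^{*}I)$ is the integral over a fundamental domain $F$ of the Bergman kernel of $W^{s}(\bar U,E)$, hence bounded by a uniform constant times $\Vol(\bar M)<+\infty$. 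If you prefer to keep your functional-calculus route, the correct repair is not a boundary condition but a reduction to the boundaryless case: double $\bar U$ across its boundary (the double of a manifold of bounded geometry with boundary is of bounded geometry and admits a bounded extension operator on Sobolev spaces, cf.\ \cite{SchNachrichten}), factor $I$ through the inclusion $W^{s}\to L^{2}$ of the double, and there your argument with $(1+\Delta)^{-s/2}$ goes through verbatim.
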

\begin{proof}
Following the notations of Section \ref{manifold of bounded geometry}, let $(\theta_{i})_{i\in I}$ be a uniform partition of unity for $\bar U$ 
Then $supp(\theta_{i})$ is contained in a centered coordinate chart $t_{i}:U(x_{i},\delta)\to \RR^{n}$ or $t_{i}:U(x_{i},\delta)\to \RR^{n}_{x_{n}\geq 0}:=\RR^{n-1}\times [0,+\infty[$ if $x_{i}\in \partial U$ and trivialisation $t_{i}: E_{|U(x_{i},r)}\to U(x_{i},r)\times \CC^{N}$ which are uniformly bounded in the smooth topology. 
We will not indicate the charts and trivializations in the following.

Let $f\in W^{s}(\bar U,E)$. The uniform boundedness of $(\theta_{i})_{i\in \ZZ}$ in the smooth topology implies that $\theta_{i}f\in W^{s}(\RR^{n})$ or $W^{s}(\RR^{n}_{x_{n}\geq 0})$ have a norm bounded by $B_{s}||f||_{W^{s}(B(x_{i},\delta))}$ with $B_{s}$ independent of $i$ and $f$. This is because the Sobolev norms of $W^{s}$ for $s\in \NN$  are defined by an integral of differential operators applied to $f$, hence these are localisable.

Let $C_{s}$ be the norm of the usual injection of $W^{s}(\RR^{n})$ (resp.\ $W^{s}(\RR^{n}_{x_{n}\geq 0})$) to $C^{k}(\RR^{n})$ (resp. $C^{k}(\RR^{n}_{x_{n}\geq 0})$), if $s\geq \frac{n}{2}+k$. Then $||\theta_{i}f||_{C^{k}(\bar U)}\leq B_{s}C_{s}D(A_{0},\ldots,A_{k}) ||f||_{W^{s}(B(x_{i},\delta))}$. This proves that $f\in \U C^{k}(\bar U,E)$ (see the notations in  (\ref{sobolev embedding})), and that moreover it vanishes at infinity.
\medskip

One computes $\Tr_{\Gamma}(I^{*}I)$ (see \cite{Shu,SchPacific,Dix}): let $F$ be a fundamental domain for the covering $\bar U\to \bar M$. It induces an isometry $L^{2}(\bar U,E)\simeq l^{2}(\Gamma)\hat\otimes L^{2}(F,E)$. Consider the polar decomposition $I=U|I|$ of the morphism $I: W^{s}(\bar U,E)\to l^{2}(\Gamma)\hat\otimes L^{2}(F,E)$. Then $U$ is an isometry.
Let $(f_{i})_{i\in \NN}$ be an orthonormal basis of $L^{2}(F,E)$. Then $(g_{i})_{i\in \NN}=(U^{*}f_{i})_{i\in \NN}$ is an orthonormal basis of $W^{s}(\bar U,E)$.
\begin{align}
Tr_{\Gamma}(I^{*}I):=\Tr_{\Gamma}(U(I^{*}I)U^{*})= \sum_{i\in \NN}(U(I^{*}I)U^{*} u_{i},u_{i})_{L^{2}(F,E)}\\
 =\sum_{i\in \NN}(IU^{*}u_{i},IU^{*}u_{i})_{L^{2}(F,E)}=\sum_{i\in \NN}\int_{F}||g_{i}(x)||^{2}dV =\int_{F}\sum_{i\in \NN}||g_{i}(x)||^{2}dV \nonumber
\end{align}
is the integral over a fundamental domain of the Bergman Kernel (square of the norm of the pointwise evaluation map) of $W^{s}(\bar U,E)$. The norm of $m_{s,0}$ (see notation in  (\ref{sobolev embedding})), is bounded by $||m_{s,0}||\leq B_{s}C_{s}D(A_{0},\ldots,A_{k})$ therefore:
\begin{align}
Tr_{\Gamma}(I^{*}I):=\int_{F}\sum_{i\in \NN}||g_{i}(x)||^{2}dV\leq ||m_{s,0}||. \Vol(\bar M)<+\infty \,.
\end{align}

\end{proof}
The notion of Hilbertian complexes is defined in \cite{BruLes} and will be used in the following.

\begin{proposition}\label{Hilbertian versus Fredholm} Let $(E,\nabla,h)\to \bar M$ be a Hermitian (or Riemannian) bundle with a flat connection. Let $(E^{\tbullet},\nabla^{\tbullet},h^{\tbullet})\to \bar M$ be the induced elliptic complex with $E^{i}=\Lambda^{i}T^{*}(\bar M)\otimes E$.

Let $\bar U\to \bar M$ be a Galois covering with Galois group $\Gamma$. Denote the lift to $\bar U$ of the hermitian bundle and of the flat connection by the same letters. Let $H(\bar U,\nabla_{\max})$ be the Hilbertian complex such that
\begin{align}H^{i}:=L^{2}(\bar U,E^{i}), \quad& d^{i}:=\nabla^{i}_{\max}, \quad D^{i}:=\Dom(d^{i})
=\{\alpha\in H^{i} \text{ s.t.\ }& \nabla^{i}\alpha\in  L^{2}(\bar U,E^{i+1})\}\,.
\end{align}
Let $H(\bar U,\nabla_{\min})$ be the Hilbertian complex such that
\begin{align}H^{i}:=L^{2}(\bar U,E^{i}), & \quad d^{i}:=\nabla^{i}_{\min}, \quad D^{i}:=\Dom(d^{i})
=\Adh{\C^{\infty}_{0}(\bar U\setminus \partial U,E)},   
\end{align}
where the closure is taken with respect to ${||.||_{L^{2}}+||\nabla^{i} .||_{L^{2}}}$.

Then $H(\bar U,\nabla_{\max})$ and $H(\bar U,\nabla_{\min})$ are $\Gamma-$Fredholm.
\end{proposition}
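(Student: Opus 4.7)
The plan is to reduce the $\Gamma$-Fredholm property of each of the two Hilbertian complexes to the $\Gamma$-Fredholmness of its associated Hodge Laplacian, and then to establish the latter by combining uniform elliptic regularity up to the boundary with the $\Gamma$-Hilbert--Schmidt Sobolev embedding of Lemma~\ref{compactness of sobolev embedding}.

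First, in each of the two cases $\bullet\in\{\max,\min\}$ and each degree $i$, I consider the Hodge Laplacian $\Delta^i_\bullet := (d^i_\bullet)^\ast d^i_\bullet + d^{i-1}_\bullet (d^{i-1}_\bullet)^\ast$, which is a self-adjoint non-negative operator affiliated to $\VN(\Gamma)$. By the Br\"{u}ning--Lesch correspondence between Hilbertian complexes and their Laplacians (\cite{BruLes}), the complex $H(\bar U,\nabla_\bullet)$ is $\Gamma$-Fredholm iff each $\Delta^i_\bullet$ is $\Gamma$-Fredholm in the sense of Section~2.6, i.e.\ the spectral projection $1_{[0,\epsilon]}(\Delta^i_\bullet)$ has finite $\Gamma$-trace for some $\epsilon>0$. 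Granted this, Lemma~\ref{from unbounded to bounded} will then transfer $\Gamma$-Fredholmness to the unbounded differentials $d^i_\bullet$ themselves.

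Second, I will prove the stronger statement that $(1+\Delta^i_\bullet)^{-1}$ is $\Gamma$-compact on $L^2(\bar U,E^i)$. The operators $\Delta^i_{\max}$ and $\Delta^i_{\min}$ are second-order elliptic with absolute and relative (Dirichlet-type) boundary conditions on $\partial U$ respectively, both classical coercive boundary value problems. Using the uniform partition of unity of Theorem~\ref{uniform partition of unity}, the bounded geometry of $(\bar U,g)$ up to $\partial U$, and property $AC^0$ of the bundle --- invoked to replace $\nabla_h$ by a metric connection of bounded geometry without changing the reduced cohomology, as in Section~\ref{invariance by change of metric} --- one obtains uniform elliptic estimates and hence bounded maps $(1+\Delta^i_\bullet)^{-1}\colon L^2(\bar U,E^i)\to W^s(\bar U,E^i)$ for every $s\in\NN$ (with codomain replaced by the corresponding Dirichlet-type subspace when $\bullet=\min$). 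Taking $s>n/2$ and composing with Lemma~\ref{compactness of sobolev embedding}, the resolvent factors as $L^2\to W^s\hookrightarrow L^2$ with the second arrow $\Gamma$-Hilbert--Schmidt; this gives $\Gamma$-compactness of $(1+\Delta^i_\bullet)^{-1}$, so that every spectral projection $1_{[0,\lambda]}(\Delta^i_\bullet)$ has finite $\Gamma$-dimensional range.

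The main obstacle lies in this uniform elliptic regularity up to the boundary. Away from $\partial U$ one localises via the partition of unity and reduces to the cocompact estimates on $\RR^n$ already implicit in Section~3. Near $\partial U$, one must straighten each boundary chart of Definition~\ref{manifold of bounded geometry} to the half-space $\RR^{n-1}\times[0,r_c[$ with uniformly bounded coefficient perturbations --- this is precisely where the boundedness of the covariant derivatives of the second fundamental form of $\partial U$ and property $AC^0$ of the bundle enter --- and then apply the standard Lopatinskii--Shapiro elliptic estimate for the Hodge Laplacian with absolute or relative boundary conditions, with constants uniform across all boundary charts.
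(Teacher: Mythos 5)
Your strategy is essentially the paper's: pass to the Laplacian of each Hilbert complex, realize it as the Hodge Laplacian with absolute (for $\nabla_{\max}$) resp.\ relative (for $\nabla_{\min}$) boundary conditions, invoke elliptic regularity up to the boundary in the bounded-geometry setting, and conclude with the $\Gamma$-Hilbert--Schmidt embedding of Lemma~\ref{compactness of sobolev embedding}. However, one step is stated incorrectly: $(1+\Delta^i_\bullet)^{-1}$ does \emph{not} map $L^2$ boundedly into $W^s$ for every $s$; a single application of the resolvent only lands in the operator domain, i.e.\ in $W^2$ with the given boundary conditions. Since Lemma~\ref{compactness of sobolev embedding} requires $s\ge n/2$, your factorization through $W^s$ fails as soon as $n>4$. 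The repair is standard and is what the paper does: either use powers $(1+\Delta^i_\bullet)^{-k}$ with $2k\ge n/2$, or bootstrap directly on the spectral projection, noting that $\Ran(1_{[0,\epsilon]}(\Delta))\subset\bigcap_{m}\Dom(\Delta^{m})$ and that iterated elliptic boundary regularity places this range continuously in $\bigcap_{m}W^{2m}$, after which finiteness of $\Tr_{\Gamma}1_{[0,\epsilon]}(\Delta)$ follows from the $\Gamma$-trace property of $W^{s}\to L^{2}$.

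The more substantive point you assume rather than prove is the identification of the operators you work with. The Laplacians $\Delta^i_{\max}$, $\Delta^i_{\min}$ are defined abstractly from the closed extensions $\nabla_{\max}$, $\nabla_{\min}$ and their Hilbert-space adjoints; calling them ``classical coercive boundary value problems'' presupposes that their domains coincide with the $W^2$-realizations of the Hodge Laplacian with absolute, resp.\ relative, boundary conditions (e.g.\ $\{u\in W^{2}:\,(d\rho\star u)|_{\partial U}=0,\ (d\rho\star\nabla u)|_{\partial U}=0\}$ in the maximal case). This identification is exactly what makes elliptic boundary regularity applicable to elements of the abstract domain, and it is where the paper spends the first half of its proof: it shows the $W^2$-realization is self-adjoint on a manifold of bounded geometry (Thm.~4.25 of \cite{SchDissertation}), checks the two inclusions between this realization and the abstract Laplacian, and concludes equality by maximality of self-adjoint operators. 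Your uniform Lopatinskii--Shapiro estimates concern the geometric boundary value problem and do not by themselves apply to the abstract domain without this step. With the domain identification supplied and the resolvent claim corrected as above, your argument coincides with the paper's; the reduction via \cite{BruLes} and Lemma~\ref{from unbounded to bounded}, and the reduction to bounded geometry via property $AC^{0}$ as in Section~\ref{invariance by change of metric}, are consistent with the paper's framework.
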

\begin{proof} Note that by definition $\nabla^{i}_{max}=((\nabla^{i+1})^{t})_{min}^{*}$ and $(\nabla_{min})^{**}=\nabla_{\min}$. 
We treat the first complex.
Consider the Laplace operator $\Delta=\oplus_{i}( d^{i-1}d^{i-1*}+d^{i*}d^{i})$ associated to the complex  with domain $\oplus_{i}\Dom(d^{i-1}d^{i-1*})\cap \Dom(d^{i*}d^{i})$. Let $\rho$ be the pull-back by the covering map of a defining function for the boundary $\partial M$. We claim that $\Dom(\Delta)$ is equal to \begin{align}
\{u\in W^{2}(\bar U,\oplus_{i}E^{i})\,: \sigma_{\nabla^{t}}(x,d\rho)u_{|\partial U}=0 \,\text{ and } \, \sigma_{\nabla^{t}}(x,d\rho)\nabla u_{|\partial U}=0  \,\}
\nonumber\\
=\{u\in W^{2}(\bar U,\oplus_{i}E^{i})\,: (d\rho \star u)_{|\partial U}=0 \,\text{ and } \, (d\rho \star \nabla u)_{|\partial U}=0  \,\}\,.\label{Hilbertian Domain}
\end{align}

%
%
Call $A$ the above set. Then $A\subset \Dom(\Delta)$, for the vanishing of the boundary values implies that one may integrate by parts.

In order to prove the converse inclusion we use that the unbounded operator $\Delta_{2}$ on $L^{2}(\bar U,\oplus_{i}E^{i})$ defined by $\oplus_{i}\nabla^{i-1}(\nabla^{i-1})^{t}+(\nabla^{i})^{t}\nabla^{i}$ with domain $A$ (see (\ref{Hilbertian Domain})) is a self-adjoint operator according to of \cite[Thm.\ 4.25 ]{SchDissertation}. The inclusion $\Dom(\Delta)\subset \Dom(\Delta_{2}^{*})$ can be checked in an elementary way. Hence self-adjointness implies $ \Dom(\Delta)=A$.

We shall prove that $\Delta$ is $\Gamma-$Fredholm: If $\alpha\in \Ran(1_{[0,\epsilon]}(\Delta))$, then $\alpha\in \cap_{n}\Dom(\Delta^{n})$. But standard elliptic regularity for elliptic boundary value problems (\cite[Thm.~4.15]{SchDissertation}, or \cite[Chap.~2, Thm.\ 5.1, and Rem.\ 7.2, p.202]{LioMag}, \cite[Chap.~5]{Tay}) prove that $\alpha,\ldots,\Delta^{n}\alpha\in \Dom(\Delta)$ implies $\alpha\in W^{2n}(\bar U,\oplus_{i}E^{i})$.

Therefore, $\Ran(1_{[0,\epsilon]}(\Delta))$ maps continuously to $\cap_{n\in \NN}W^{n}(\bar U,\oplus_{i}E^{i})$. One uses the map\linebreak $W^{s}(\bar U,\, E')\to L^{2}(\bar U,\, E')$ is $\Gamma-$trace for any pull-back of a bundle  $E\to \bar M$ (see \cite{SchPacific} in the cocompact case or Lemma \ref{compactness of sobolev embedding} above).
\medskip

Next consider the Hilbert complex $H(\bar U,\nabla_{\min})$: Following the same arguments, one proves that $\Dom((\nabla_{\min})^{*}\nabla_{\min}+\nabla_{\min}(\nabla_{\min})^{*})$ is equal to
\begin{align}\label{minimal extension}
\{u\in W^{2}(\bar U,\oplus_{i}E^{i})\,: (d\rho \wedge u)_{|\partial U}=0 \,\text{ and } \, (d\rho \wedge \nabla^{t} u)_{|\partial U}=0  \,\}\,.\end{align} One concludes the proof as above.
\end{proof}
\begin{remark}The fact that the pullback of an elliptic boundary value problem on a cocompact manifold is $\Gamma-$Fredholm is proved in \cite{SchPacific}. There, the equality of the $l^{2}-$index of the lifted elliptic boundary value problem on $\bar U$ and index of the elliptic boundary problem on $\bar M$ is proved. Here we took some care to prove that the cohomology of the Hilbert complexes are associated to such kind of problems.
\end{remark}
\subsubsection{Complete Manifold of bounded geometry with a co-finite volume}
Although we will only use elliptic complexes of first order differential operators, in this section, we prove that general elliptic complexes define $\Gamma-$Fredholm complexes. 

\begin{lemma}\label{elliptic is fredholm}
Let $P:\C^{\infty}_{0}(X,E)\to \C^{\infty}_{0}(X,F)$ be a $\Gamma-$invariant uniformly elliptic partial differential operator of order $m$ on $X$ between two $\Gamma-$invariant Riemannian bundles of bounded geometry. Then the maximal extension $[P]$ of $P$ is $\Gamma-$Fredholm, and
$P_{s}$ the extension of $P$ from $W^{s}(X,E)$ to $W^{s-m}(X,F)$ is $\Gamma-$Fredholm.
\end{lemma}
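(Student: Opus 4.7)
The strategy is to establish $\Gamma$-Fredholmness of $[P]$ by showing that the low-frequency spectral projections of $[P]^{*}[P]$ and $[P][P]^{*}$ have finite $\Gamma$-trace, using uniform elliptic regularity on bundles of bounded geometry combined with the $\Gamma$-Hilbert--Schmidt Sobolev embedding of Lemma~\ref{compactness of sobolev embedding}.

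First I would treat $[P]^{*}[P]$. Uniform ellipticity on a bundle of bounded geometry yields that $P^{*}P$ (and $PP^{*}$) is essentially self-adjoint on $\C^{\infty}_{0}$, by the usual Chernoff/Kato argument based on unit propagation speed (see \cite{Roe,ShuNantes}); consequently the maximal extension $[P]$ is closed densely defined, $\Gamma$-invariant, and $[P]^{*}[P]=\overline{P^{*}P}$. For $\lambda>0$, let $E_{\lambda}=1_{[0,\lambda]}([P]^{*}[P])$, and let $\alpha\in \mathrm{Ran}(E_{\lambda})$. Then $\alpha\in \bigcap_{N}\mathrm{Dom}(([P]^{*}[P])^{N})$ and $\|([P]^{*}[P])^{N}\alpha\|_{L^{2}}\le \lambda^{N}\|\alpha\|_{L^{2}}$.

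The key input is the uniform elliptic regularity on manifolds of bounded geometry (see \cite{ShuNantes,Roe,SchNachrichten}): for every $N\in \NN$ there is a constant $C_{N}$, depending only on the symbol bounds and the geometry, with
\begin{equation*}
\|\alpha\|_{W^{2mN}}\le C_{N}\bigl(\|([P]^{*}[P])^{N}\alpha\|_{L^{2}}+\|\alpha\|_{L^{2}}\bigr)\le C_{N}(1+\lambda^{N})\|\alpha\|_{L^{2}}.
\end{equation*}
Thus $E_{\lambda}$ factors as a bounded map $L^{2}(X,E)\to W^{s}(X,E)$ followed by the inclusion $j_{s}\colon W^{s}(X,E)\hookrightarrow L^{2}(X,E)$, for any $s$. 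Choosing $s\ge n/2$, Lemma~\ref{compactness of sobolev embedding} asserts that $j_{s}$ is $\Gamma$-Hilbert--Schmidt, so $E_{\lambda}$ is $\Gamma$-Hilbert--Schmidt. Since $E_{\lambda}=E_{\lambda}\cdot E_{\lambda}$ is a self-adjoint idempotent, the product of two $\Gamma$-Hilbert--Schmidt operators yields $\Tr_{\Gamma}E_{\lambda}<+\infty$. Hence $[P]^{*}[P]$ is $\Gamma$-Fredholm; running the same argument with $P$ replaced by $P^{*}$ (which is also uniformly elliptic of order $m$) gives the analogous statement for $[P][P]^{*}$, so $[P]$ itself is $\Gamma$-Fredholm.

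For the Sobolev-space version, I would appeal to Lemma~\ref{from unbounded to bounded}. By Proposition~\ref{smooth kernel}(0) the maps $(1+\Delta_{E})^{s/2}\colon W^{s}(X,E)\to L^{2}(X,E)$ and $(1+\Delta_{F})^{(s-m)/2}\colon W^{s-m}(X,F)\to L^{2}(X,F)$ are $\Gamma$-equivariant isomorphisms, and $P_{s}$ is the transport of $[P]$ through these isomorphisms up to bounded $\Gamma$-invertible factors. Composition with such isomorphisms preserves the $\Gamma$-Fredholm property (low-frequency spectral projections of $P_{s}^{*}P_{s}$ differ from those of $[P]^{*}[P]$ only by conjugation by such an isomorphism), so $P_{s}$ is $\Gamma$-Fredholm.

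The main technical obstacle I anticipate is ensuring that the elliptic regularity estimates, and the accompanying bound $\|\alpha\|_{W^{2mN}}\le C_{N}(1+\lambda^{N})\|\alpha\|_{L^{2}}$ on the spectral subspace, are genuinely \emph{uniform} in $x\in X$ and depend only on the bounded-geometry data, rather than on local compactness; this is exactly where the uniform partition of unity of Theorem~\ref{uniform partition of unity} and the Sobolev calculus on bundles of bounded geometry (as developed in \cite{Roe,ShuNantes,SchNachrichten}) are essential.
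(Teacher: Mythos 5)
Your argument is correct and rests on the same two pillars as the paper's proof: uniform elliptic regularity on bounded geometry forces the low-lying spectral subspaces of the positive operators associated with $P$ into uniform Sobolev spaces, and finite volume of $X/\Gamma$ then yields finite $\Gamma-$trace; the Sobolev-level statement is obtained, as in the paper, from Lemma~\ref{from unbounded to bounded}. The differences are in the middle step and in what the proof delivers. The paper first treats a formally positive operator, shows $1_{[0,\lambda]}([P])$ maps $L^{2}$ into $\cap_{k}W^{km}\subset \UC^{\infty}$, so its Schwartz kernel is uniformly bounded and can be integrated over a fundamental domain, and then constructs the explicit parametrix $A(P)=a([P])$ with $A_{s-m}(P)P_{s}-I$ and $P_{s}A_{s-m}(P)-I$ $\Gamma-$trace class; this parametrix is what the paper reuses later (e.g.\ in Proposition~\ref{elliptic complex is fredholm} for complexes of non-constant orders). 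You bypass the parametrix entirely by factoring $E_{\lambda}$ through the inclusion $W^{s}\hookrightarrow L^{2}$ and invoking the $\Gamma-$Hilbert--Schmidt property of Lemma~\ref{compactness of sobolev embedding}; this is a legitimate shortcut (indeed the paper's proof of that lemma is exactly the bounded-Bergman-kernel computation), and it suffices for the statement of the lemma, though it yields less structure. Two small caveats: essential self-adjointness of $P^{t}P$ is not a consequence of unit propagation speed/Chernoff for $m>1$ (that device is for first-order Dirac-type operators); the correct justification, which the paper uses, is that uniform ellipticity on bounded geometry forces the minimal and maximal extensions to coincide (\cite{ShuNantes,Kor}), whence $[P]^{*}[P]=[P^{t}P]$ is self-adjoint. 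Likewise, your parenthetical claim that the spectral projections of $P_{s}^{*}P_{s}$ are conjugates of those of $[P]^{*}[P]$ is not justified (the isomorphisms $(1+\Delta)^{s/2}$ on source and target are different and do not intertwine the two operators); drop it and rely, as you and the paper both do, on Lemma~\ref{from unbounded to bounded} with $j_{1},j_{2}$ the Sobolev inclusions and $a=P_{s}$.
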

\begin{proof}
Let $P:\C^{\infty}_{0}(X,E)\to \C^{\infty}_{0}(X,F)$ (resp.\ $Q:\C^{\infty}_{0}(X,F)\to \C^{\infty}_{0}(X,G)$) be a uniformly elliptic partial differential operators of order $m$ (resp. $m'$) between Riemannian bundles of bounded geometry.
Let $[P]: L^{2}(X,E)\to L^{2}(X,F)$ be the closure of the unbounded operator defined by $P$ with domain $C^{\infty}_{0}(M,E)$. Elliptic regularity and bounded geometry imply (\cite{ShuNantes,Kor}) 
that
$\Dom([P])=W^{m}(M,E)$, the maximal and minimal extension coincide, and $[Q][P]=[QP]$. In particular $\Dom([Q][P])=W^{m+m'}(X,E)$, and the Hilbertian adjoint $[P]^{*}$ of $[P]$ is equal to the closure $[P^{t}]$ of the transposed operator.

First assume that $E=F$ and $P$ is formally positive. Then $[P]$ is positive, and self-adjoint. Since $P$ is a uniformly elliptic operator on a manifold of bounded geometry, the spectral projection $1_{[0,\lambda]}([P])$ maps $L^{2}(X,E)$ continuously to $ \cap_{k\in \NN}\Dom([P]^{k})\subset \cap_{k\in \NN} W^{km}(X,E)$. Bounded geometry implies (\cite{ShuNantes,Roe, SchDissertation,Kor}) that the latter space embeds continuously in $\UC^{\infty}(X,E)$.
Hence the Schwartz Kernel $k_{\lambda}(x,y)$ of $1_{[0,\lambda]}([P])$ is bounded.

Let $a:x\mapsto x^{-1}1_{[1,+\infty[}(x)$. Let $A(P):=a([P])$ be defined through the functional calculus. Then $A(P)$ maps $W^{s}(X,E)$ continuously
to $W^{s+m}(X,E)$, because 
$[P] A(P)(\alpha)=\alpha-1_{[0,1[}(P)(\alpha)$ belongs to $W^{s}(X,E)$, if $\alpha$ does ($1_{[0,1[}([P])$ is a uniform smoothing operator). Let $A(P)_{s}: W^{s}(X,E)\to W^{s+m}(X,E)$ be the induced operator.

Now assume all of the data are $\Gamma-$equivariant, and the quotient manifold $X/\Gamma$ is of finite volume. Let $F$ be a fundamental domain for the $\Gamma-$action. Then \begin{align*}\dim_{\Gamma}1_{[0,\lambda]}([P])(L^{2}(X,E)):=\int_{F}\tr(k_{\lambda}(x,x))dv\end{align*} is finite. Self-adjointness implies that $[P]$ is $\Gamma-$Fredholm and $A(P)$  is an inverse modulo $\Gamma-$trace classes operators.
 Since $B(P):=1_{[0,\lambda]}([P])$ maps $W^{s}(X,E)$ continuously to $\cap_{k\in \NN} W^{km}(M,E)$, it induces a $\Gamma-$trace classe operator $B_{s}(P):W^{s}(X,E)\to W^{s+m}(X,E)$.

Moreover, the induced operators satisfy
\begin{align}
A_{s-m}(P)P_{s}-I=B(P)_{s} \text{, and }P_{s}A_{s-m}(P)-I=B(P)_{s-m},
\end{align}
for these equations hold on the dense subset $\C^{\infty}_{0}(X,E)$.

For a general elliptic operator $P:\C^{\infty}_{0}(X,E)\to \C^{\infty}_{0}(X,F)$, the first point proves that the positive operators $[P^{t}P]=[P]^{*}[P]$ and $[PP^{t}]=[P^{t}]^{*}[P^{t}]$ are $\Gamma-$Fredholm. Hence, by definition, $[P]$ is $\Gamma-$Fredholm. One concludes that $[P]_{s}$ is $\Gamma-$Fredholm using Lemma \ref{from unbounded to bounded}.
\end{proof}

%
Elliptic complexes were defined by Atiyah and Bott \cite{AtiBot}.
\begin{proposition}\label{elliptic complex is fredholm}
Let $(E^\tbullet,h^\tbullet, d^\tbullet)\to (Y,g)$ be an elliptic complex on a complete manifold of bounded volume. Let $X\to Y$ be a Galois covering with Galois group $\Gamma$, and denote the pullback of the elliptic complex by the same letter $(E^\tbullet,h^\tbullet, d^\tbullet)\to (X,g)$ .

Assume $(E^\tbullet,h^\tbullet, d^\tbullet)\to (X,g)$ is of bounded geometry. Then the associated Hilbert complex $(H^\tbullet(E),[d^\tbullet],D^\tbullet)$, and Sobolev complex 
$(\ldots\to W^{s-\sum_{j<i}n_{j}}(X,E^{i})\overset{d^{i}}{\to}  W^{s-\sum_{j<i+1}n_{j}}(X,E^{i+1}) \to\ldots)$ are $\Gamma-$Fredholm.
\end{proposition}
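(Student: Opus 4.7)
I would reduce to Lemma~\ref{elliptic is fredholm} by passing to the Laplacian of the complex. For each $i$ set
\begin{align*}
\Delta^{i} := d^{i-1}(d^{i-1})^{*} + (d^{i})^{*}d^{i}
\end{align*}
as a formal differential operator on $\C^{\infty}_{0}(X,E^{i})$. Ellipticity of the complex $(E^{\tbullet},d^{\tbullet})$ in the Atiyah--Bott sense is equivalent to invertibility of the principal symbol of $\Delta^{i}$ off the zero section, so, combined with the bounded geometry hypothesis on the complex, each $\Delta^{i}$ is a $\Gamma$-invariant, formally positive, formally self-adjoint, uniformly elliptic operator on $E^{i}$. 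Lemma~\ref{elliptic is fredholm} then produces the closed extension $[\Delta^{i}]:L^{2}(X,E^{i})\to L^{2}(X,E^{i})$ as a $\Gamma$-Fredholm self-adjoint operator, as well as the Sobolev extensions $(\Delta^{i})_{s}$.

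\textbf{From $\Delta^{i}$ to the complex.} Using the functional calculus of $[\Delta^{i}]$, together with the commutation identity $[d^{i}][\Delta^{i}]\subset [\Delta^{i+1}][d^{i}]$ which is obtained on smooth compactly supported sections and extended by closure, I would derive the Hodge decomposition
\begin{align*}
L^{2}(X,E^{i})=\Ker[\Delta^{i}]\oplus \overline{\Ran[d^{i-1}]}\oplus \overline{\Ran\,[d^{i}]^{*}},
\end{align*}
with $\dim_{\Gamma}\Ker[\Delta^{i}]<+\infty$. Since $[d^{i}]^{*}[d^{i}]\leq [\Delta^{i}]$ and $[d^{i}][d^{i}]^{*}\leq [\Delta^{i+1}]$ as self-adjoint operators, the spectral projections $1_{[0,\lambda]}$ of $[d^{i}]^{*}[d^{i}]$ and $[d^{i}][d^{i}]^{*}$ are dominated (in the sense of $\Gamma$-trace) by those of the corresponding Laplacians, hence have finite $\Gamma$-trace. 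Thus each differential $[d^{i}]$ is $\Gamma$-Fredholm, which is the $\Gamma$-Fredholm property for the Hilbert complex $(H^{\tbullet}(E),[d^{\tbullet}],D^{\tbullet})$.

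\textbf{Sobolev complex.} Bounded geometry of $(E^{\tbullet},d^{\tbullet})$ ensures that each $d^{i}$ extends continuously between the Sobolev spaces indicated, since its components in the bounded-geometry trivializations have uniformly bounded coefficients. Moreover, the functional calculus of $[\Delta^{i}]$ provides bounded $\Gamma$-equivariant isomorphisms $(1+\Delta^{i})^{-(s-\sum_{j<i}n_{j})/2n_{i}}:L^{2}(X,E^{i})\to W^{s-\sum_{j<i}n_{j}}(X,E^{i})$ that intertwine the Hilbert and Sobolev versions of the complex up to bounded $\Gamma$-equivariant maps. Lemma~\ref{from unbounded to bounded} then transfers the $\Gamma$-Fredholm property from the Hilbert complex to the Sobolev complex.

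\textbf{Main obstacle.} The delicate point is the case of genuinely mixed orders $n_{j}$: one must verify that $\Delta^{i}$ is still uniformly elliptic and that the isomorphisms intertwining the Sobolev and Hilbert complexes respect the grading of Sobolev indices. This is handled either by working with Douglis--Nirenberg weights on the bundles $E^{i}$, or by arguing componentwise, applying Lemma~\ref{elliptic is fredholm} separately to $(d^{i})^{*}d^{i}$ and $d^{i-1}(d^{i-1})^{*}$ as formally positive elliptic operators of their own orders and combining the resulting spectral information through the Hodge decomposition above.
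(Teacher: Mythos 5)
Your overall plan---reduce to the complex Laplacians $\Delta^{i}=d^{i-1}(d^{i-1})^{t}+(d^{i})^{t}d^{i}$, apply Lemma~\ref{elliptic is fredholm}, then transfer to the Sobolev complex with Lemma~\ref{from unbounded to bounded}---is exactly the paper's argument in the constant-order case, but the step that carries the conclusion is wrong. The $\Gamma$-Fredholm property of the Hilbert complex means that the complex Laplacians $[d^{i-1}][d^{i-1}]^{*}+[d^{i}]^{*}[d^{i}]$ are $\Gamma$-Fredholm; it is \emph{not} the statement that each closed operator $[d^{i}]:L^{2}(X,E^{i})\to L^{2}(X,E^{i+1})$ is $\Gamma$-Fredholm, and that statement is in general false: $\Ker([d^{i}]^{*}[d^{i}])=\Ker[d^{i}]$ contains $\overline{\Ran[d^{i-1}]}$, which typically has infinite $\Gamma$-dimension, so $1_{[0,\epsilon]}([d^{i}]^{*}[d^{i}])$ is not of finite $\Gamma$-trace. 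Your domination argument also runs the monotonicity backwards: from $[d^{i}]^{*}[d^{i}]\leq[\Delta^{i}]$ one gets $\Tr_{\Gamma}1_{[0,\lambda]}([\Delta^{i}])\leq \Tr_{\Gamma}1_{[0,\lambda]}([d^{i}]^{*}[d^{i}])$ (a smaller positive operator has \emph{more} spectrum near zero), which is the opposite of what you need. The missing and essential step, which is what the paper actually proves, is the identification of the closure of the formal elliptic Laplacian with the Hilbert-complex Laplacian, $[\Delta_{i}]=[d^{i-1}][d^{i-1}]^{*}+[d^{i}]^{*}[d^{i}]$; this uses uniform ellipticity and bounded geometry (coincidence of minimal and maximal extensions, $\Dom([\Delta_{i}])=W^{2m}$), and once it is in place Lemma~\ref{elliptic is fredholm} gives the $\Gamma$-Fredholmness of the complex directly, with no statement about the individual $[d^{i}]$ needed. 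Your treatment of the Sobolev complex via Lemma~\ref{from unbounded to bounded} does agree with the paper.

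The second gap is the mixed-order case, which you correctly flag as the obstacle but do not resolve. Your componentwise fallback---applying Lemma~\ref{elliptic is fredholm} separately to $(d^{i})^{t}d^{i}$ and $d^{i-1}(d^{i-1})^{t}$---cannot work: in the middle of a complex these operators are not elliptic, since the symbol of $d^{i}$ has kernel equal to the image of the symbol of $d^{i-1}$, so the lemma simply does not apply to them. The paper handles non-constant orders by following Atiyah--Bott (Section~6 of \cite{AtiBot}), reducing to a situation where Lemma~\ref{elliptic is fredholm} applies, rather than by any direct Laplacian computation; your alternative suggestion of Douglis--Nirenberg weights would have to be carried out in the bounded-geometry setting (uniform parametrices, uniform Sobolev scales with anisotropic weights) and is left entirely unproved.
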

\begin{proof}
We treat first the simpler case where $m_{i}=m$ is constant.

Then $\Delta_{i}:=d_{i}^{t}d_{i}+d_{i-1}d_{i-1}^{t}:\C^{\infty}_{0}(X,E^{i})\to  \C^{\infty}_{0}(X,E^{i})$ is elliptic and positive.
Lemma~\ref{elliptic is fredholm} shows that it is essentially self-adjoint, that for all $\epsilon\geq 0$ the operator  $1_{[0,\epsilon]}([\Delta_{i}])$ is uniformly smoothing and that $a([\Delta_{i}])$ (with $a(x)=x^{-1}1_{[1,+\infty[}(x)$) defines a $\Gamma-$Fredholm inverse of $[\Delta_{i}]$ modulo $\Gamma-$trace operators. Uniform ellipticity implies that $[\Delta_{i}]=[d^{i-1}][d^{i-1}]^{*}+[d^{i}]^{*}[d^{i}]$. Hence the Hilbert complex is $\Gamma-$Fredholm. Lemma~\ref{from unbounded to bounded} implies that the corresponding Sobolev complexes are $\Gamma-$Fredholm.
\medskip

The case of non-constant degrees is proved following Atiyah and Bott (\cite[Sect.~6]{AtiBot}) using Lemma~\ref{elliptic is fredholm}.
\end{proof}

\begin{remark}\label{fundamental remark}
The treatment of general elliptic complexes on manifolds with boundary is not amenable to the above methods. The natural boundary problem given by the associated Hilbertian Laplacian does not produce an elliptic boundary problem unless geometric conditions on the boundary are imposed (see the discussion in \cite[Chap.~10]{Tay}). It is important that this holds for the DeRham complex.
Gromov, Henkin, Shubin \cite{GroHenShu} treated the Dolbeault complex on coverings of strictly pseudoconvex compact complex manifolds with boundary. The case of strictly pseudoconvex manifolds of bounded volume works verbatim.
\end{remark}

\subsubsection{Mayer Vietoris sequence for $l^{2}-$cohomology}\label{notations mayer vietoris}
Variations of the following proposition were stated in \cite{CheGro85-bounds,LotLuc,LucSch} (see also \cite{Car98}  in a related context). Below we give a proof of the Mayer-Vietoris sequence using an exact sequence of Hilbertian complexes (see \cite{BruLes}) as in \cite[Lemma~4.3]{Chee}
Notations: Let $X'$ be a Riemannian manifold with our without boundary.
Let $\underline{H}^{p}_{\nabla (2)}(X',E,h)$ be the reduced cohomology of the maximal extension of $\nabla$ and  $\underline{H}^{p}_{\nabla (2)}(\bar X',\partial X',E,h)$ be the reduced cohomology of the minimal extension of $\nabla$. We will drop the reference to $\nabla,h$ etc.\ if no confusion occurs.
\begin{proposition}\label{Mayer Vietoris}
Let $(E,h,\nabla)\to (Y,g)$ be a Hermitian vector bundle with a flat connection over a complete Riemannian manifold of bounded volume. Let $(X,g)\to (Y,g)$ be a Galois covering with Galois group $\Gamma$, and let $(E,h,\nabla)\to (X,g)$ denote the pullback bundle. Assume that $(E,h,\nabla)\to (X,g)$ is of bounded geometry. Let $\bar M\subset Y$ be a compact manifold with boundary with $\dim\bar M=\dim Y$, and let $\bar U\to \bar M$ be the induced covering, where $\bar U\subset X$.

The following sequence of $\Gamma-$Fredholm complexes (notations of Prop.~\ref{Hilbertian versus Fredholm})
\begin{align}\label{short exact sequence}
0\to H(X\setminus \bar U,\nabla_{\min})\to H(X,\nabla_{\max})\to H(\bar U,\nabla_{\max})\to 0
\end{align}
is exact and induces a long, weakly exact sequence of $\Gamma-$modules of finite $\Gamma-$dimension:
\begin{align}\label{long exact sequence of relative cohomology}
\ldots \to\underline{H}^{i}_{(2)}(X\setminus \bar U, \partial U, E)\to \underline{H}^{i}_{(2)}(X,  E) \to \underline{H}^{i}_{(2)}(\bar U,  E) \to \underline{H}^{i+i}_{(2)}(X\setminus \bar U,\partial U, E)\to \ldots
\end{align}
\end{proposition}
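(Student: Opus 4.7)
The plan is to verify exactness of the short sequence of Hilbert complexes at the level of $L^2$-sections and then invoke the general homological machinery of Hilbert complexes from \cite{BruLes} to obtain the long weakly exact sequence in reduced cohomology; the finite $\Gamma$-dimension of each term will be inherited from the $\Gamma$-Fredholm property already established in Propositions~\ref{Hilbertian versus Fredholm} and~\ref{elliptic complex is fredholm}.

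First I would pin down the morphisms. The arrow $H(X\setminus\bar U,\nabla_{\min})\to H(X,\nabla_{\max})$ is extension by zero: any $\alpha\in\Dom(\nabla^i_{\min})$ on $X\setminus\bar U$ is, by definition, a graph-norm limit of smooth forms compactly supported in the interior, so its trivial extension lies in $L^2(X,E^i)$ and its distributional covariant derivative picks up no boundary contribution, landing in $\Dom(\nabla^i_{\max})$. The arrow $H(X,\nabla_{\max})\to H(\bar U,\nabla_{\max})$ is ordinary restriction; it is $L^2$-bounded and commutes with $\nabla$ tested against smooth forms vanishing at $\partial U$, hence maps maximal domain into maximal domain. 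Both are morphisms of complexes.

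Next I would verify exactness of the short sequence. Injectivity of extension by zero is immediate. For exactness in the middle, a form $\beta\in\Dom(\nabla^i_{\max})$ on $X$ with $\beta|_{\bar U}=0$ is supported in $\overline{X\setminus\bar U}$, and its restriction to $X\setminus\bar U$ lies in $\Dom(\nabla^i_{\min})$ by the very trace characterization of the minimal extension that was used in the proof of Proposition~\ref{Hilbertian versus Fredholm} (the vanishing of the form on the $\bar U$-side of $\partial U$ forces zero Dirichlet trace on the $X\setminus\bar U$-side). The decisive step is surjectivity of the restriction: given $\gamma$ in the maximal domain on $\bar U$, I would use the uniform collar $e:\partial U\times[0,r_c)\to\bar U$ furnished by bounded geometry (Section~\ref{manifold of bounded geometry}) together with the two-sided tubular neighborhood of $\partial U$ in $X$ inherited from the tubular neighborhood of the compact hypersurface $\partial M\subset Y$. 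A reflection across $\partial U$ in these normal coordinates, composed with a uniformly smooth cutoff supported in a small tube around $\bar U$ and equal to one on $\bar U$, produces an extension $\tilde\gamma\in L^2(X,E^i)$ with $\nabla\tilde\gamma\in L^2(X,E^{i+1})$; the uniformity of the bounded-geometry trivializations ensures the operator norm of this extension is finite.

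With the short sequence exact, I would feed it into the general snake-lemma machinery for Hilbert complexes \cite{BruLes}: a short exact sequence of Hilbert complexes produces a long sequence in reduced cohomology that is weakly exact in the sense that the kernel at each term equals the closure of the image of the preceding map, with the connecting homomorphism defined by the standard diagram chase and then passed to closures. Finite $\Gamma$-dimension of each reduced cohomology group is automatic from Proposition~\ref{Hilbertian versus Fredholm}. I expect the main obstacle to be the surjectivity of restriction in the maximal-domain category: because elements of $\Dom(\nabla_{\max})$ have only an $L^2$ covariant derivative and not full $W^1$ regularity, traces are defined only in the weak sense given by Green's formula, and one must check carefully that the reflection-and-cutoff extension stays in $\Dom(\nabla_{\max})$ globally with uniform control—this is where the bounded-geometry hypothesis on $(E,h,\nabla)\to(X,g)$ is genuinely needed.
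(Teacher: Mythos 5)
Your outline reproduces the paper's short exact sequence and the two maps correctly, but the step you dispose of in one sentence is in fact the heart of the matter, and the justification you give for it does not exist in the paper. For exactness in the middle you claim that a form $\beta\in\Dom(\nabla_{\max})$ on $X$ with $\beta_{|\bar U}=0$ restricts to an element of $\Dom(\nabla_{\min}(X\setminus\bar U))$ ``by the trace characterization of the minimal extension'' from Proposition~\ref{Hilbertian versus Fredholm}. That proposition only characterizes the domain of the associated \emph{Laplacian} (the $W^{2}$ space with boundary conditions in (\ref{minimal extension})), not $\Dom(\nabla_{\min})$ itself; an element of the maximal domain has merely $\nabla\beta\in L^{2}$, no $W^{1}$ regularity and no trace, so ``zero Dirichlet trace'' is not available, and what you are invoking is exactly the statement to be proved: that a form on $X\setminus\bar U$ whose extension by zero commutes with $\nabla$ in the distributional sense is a graph-norm limit of smooth forms compactly supported in the interior. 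This is where the paper's proof does its work: it introduces the subcomplex $K^{\tbullet}$ of such forms with the induced closed operator $\nabla_{e}$ and proves $(K^{\tbullet},\nabla_{e})=H(X\setminus\bar U,\nabla_{\min})$ by showing the two Hilbert complexes have the same self-adjoint Laplacian (via \cite[Lemma~2.3]{BruLes}, the chain of extensions $\nabla_{\min}\subset\nabla_{e}\subset\nabla_{\max}$, and integration by parts against a bounded-geometry $W^{2}$-extension of elements of $\Dom(\Delta_{\min})$). Your reflection-plus-cutoff argument for surjectivity of restriction is a sensible way to fill a step the paper only asserts, but note that gluing $\gamma$ with its reflected copy runs into the same weak-trace-matching issue, so surjectivity is not the only delicate point and cannot be treated as such.

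The second gap is in how you obtain the long sequence: a short exact sequence of Hilbert complexes does \emph{not} in general yield a weakly exact long sequence in reduced cohomology, and \cite{BruLes} does not provide such a snake lemma. Weak exactness here genuinely uses the $\Gamma-$Fredholm property of all three complexes; this is the Cheeger--Gromov theorem \cite{CheGro85-bounds} (detailed proof in \cite{Shu}), which the paper cites at precisely this point. In your proposal $\Gamma-$Fredholmness is used only to guarantee finite $\Gamma-$dimension of the terms, which is not enough. Finally, to land on the groups appearing in (\ref{long exact sequence of relative cohomology}) one still has to identify the reduced cohomology of the minimal complex with the relative groups $\underline{H}^{i}_{(2)}(X\setminus\bar U,\partial U,E)$, which the paper does through elliptic regularity for the harmonic space of $H(X\setminus\bar U,\nabla_{\min})$; this identification is missing from your argument.
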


 \begin{proof}
 Note that $\nabla_{\min}=\nabla_{\max}$ on $X$, hence (\ref{short exact sequence}) is well defined. We already proved that these complexes are $\Gamma-$Fredholm.
 We shall prove the exactness of this sequence at the middle term.

 The restriction map $r: X\to \bar U$ induces a surjective map of complexes  $r: H(X,\nabla_{\max})\to H(\bar U,\nabla_{\max})$. Let $e: L^{2}(X\setminus \bar U,\Lambda^{\tbullet}\otimes E)\to  L^{2}(X,\Lambda^{\tbullet}\otimes E)$ be the extension by the zero map (dual of the restriction operator to $X\setminus \bar U$). The kernel of $r$ is isomorphic to the sub-complex $e:K \to H(X,\nabla_{\max})$ with
 \begin{align}
 K^{\tbullet}
          =\{\alpha\in H(X\setminus\bar U,\nabla_{\max}) \text{ s.t. }  \nabla e(\alpha)=e(\nabla \alpha)\}\,.
 \end{align}
 On the last set, the operator acts in the sense of distributions. Hence, an element of $K^{\tbullet}$ has the property that  the extension by zero does not produce a jump of the differential along $\partial U$.  Let $\nabla_{e}:=e^{*}\circ\nabla_{\max}\circ e$ be the induced operator with domain $K^{\tbullet}$.  It is closed and densely defined.

We prove below that $(K^{\tbullet},\nabla_{e})=H(X\setminus \bar U,\nabla_{\min})$. Hence, there exists a weak exact long sequence $(\ref{long exact sequence of relative cohomology})$, because the sequence $(\ref{short exact sequence})$ is exact, and the complexes are $\Gamma-$Fredholm.

The weak exactness of the long exact cohomology sequence under the $\Gamma-$Fredholm hypothesis is a result by Cheeger and Grovov \cite{CheGro85-bounds} (see \cite{Shu} for a detailled proof). Elliptic regularity implies that the harmonic space (\ref{minimal extension}) associated to $H(X\setminus \bar U,\nabla_{\min})$ is \begin{align*}\{\alpha \in W^{1}(X\setminus \bar U,\Lambda^{\tbullet}\otimes E) \, \text{ s.t. }\nabla\alpha=0,\, \nabla^{t}\alpha=0,\, i^{*}(\alpha)=0\}, \end{align*} which is one of the definitions of $\underline{H}^{.}_{(2)}(X\setminus \bar U,\partial U,E)$.

We now prove that $(K^{\bullet},\nabla_{e})=H(X\setminus \bar U,\nabla_{\min})$. By Lemma 2.3 of \cite{BruLes}, it is enough to prove that they have equal Laplace operators. Each of these operators is self-adjoint, hence it is enough to prove that one is an extension of the other. Recall that if $(T_{i},\Dom(T_{i}))$, $(i=1,2)$ are unbounded operators  on the Hilbert space $H$, the operator $T_{1}$ is an extension of $T_{2}$, noted $T_{1}\subset T_{2}$, if $\Dom(T_{1})\subset \Dom(T_{2})$ and $T_{2|\Dom(T_{1})}=T_{1}$.
The Laplace operator $\Delta$ associated to $\nabla_{\min}$ is the operator $\nabla^{t}\nabla +\nabla^{t}\nabla$ with domain given in $(\ref{minimal extension})$. Hence $\nabla_{\min}\subset \nabla_{e}\subset \nabla_{\max}$  is valid on $L^{2}(X\setminus \bar U,\Lambda^{\tbullet}T^{*}\otimes E)$. Moreover on  $L^{2}(X,\Lambda^{\tbullet}T^{*}\otimes E)$, minimal and maximal extension are equal, $\nabla_{\max}^{*}=\nabla^{t}_{\min}$. Therefore, let $u\in \Dom(\Delta_{e})$, $v\in \Dom(\Delta_{\min})$, let $\tilde v\in H^{2}(X,\Lambda^{\tbullet}T^{*}\otimes E)$ be such that $\tilde v_{|\bar U}=v$ (see  \cite[Chap.~4, Lemma~4.1]{Tay}, \cite{SchPacific},\cite[Lemma~1.3]{ShuNantes}), then
\begin{align}
(u,\nabla^{t}\nabla v)=(e(u),\nabla^{t}\nabla \tilde v)=(e(u),(\nabla_{\max})^{*}\nabla\tilde v)=(e(\nabla u),\nabla\tilde v) \\
 =(\nabla u,\nabla v)=(\nabla u,\nabla_{e}v)=(\nabla_{e}^{*}\nabla_{e}u,v)\,.\nonumber
 \end{align}
From $\nabla_{\min}\nabla_{\min}^{*}\subset \nabla_{e}\nabla_{\min}^{*}$, we infer
 \begin{align}
 (u,\nabla\nabla^{t} v)=(u\nabla_{e}\nabla^{t}v)=(\nabla_{e}^{*}u,\nabla^{t} v)=(e(\nabla_{e}^{*}u),\nabla_{\max}^{*}\tilde v)=(\nabla_{e}\nabla_{e}^{*}u,v)\,.
 \end{align}
 The equality $ (u,\Delta_{\min}v)=(\Delta_{e}u,v)$, valid for all $(u,v)\in \Dom(\Delta_{e})\times \Dom(\Delta_{\min})$, and self-adjointness of the operators imply that $\Delta_{e}=\Delta_{\min}$.
\end{proof}

\subsection{DeRham's isomorphism after Dodziuk \texorpdfstring{\cite{Dod}}{} and Schick \texorpdfstring{\cite{SchDissertation}}{}}
 We follow \cite{SchDissertation}. Let $\bar X$ be a complete manifold with boundary  of bounded geometry such that $\partial  \bar X=\bar X\setminus X=X_{1}\sqcup X_{2}$. $X_{1}$ or $X_{2}$ may be empty. Let $i_{k}:X_{k}\to \bar X$ $(k=1,2)$ be the canonical injections. Let $(E,\nabla,h)\to \bar X$ be a hermitian vector bundle of bounded geometry together with a connection (not necessarly Hermitian). Then $i_{k}$ induces a pullback map $i^{*}_{k}:\left(\Lambda^{p}T^{*}\bar X\otimes E\right)_{|\bar X}\to \Lambda^{p}T^{*}X_{k}\otimes E_{|X_{k}}$ such that $i^{*}_{k}(\alpha\otimes e)=i^{*}_{k}(\alpha)\otimes e$ for $\alpha\in (\Lambda^{p}T^{*}X)_{x}$ and $e\in E_{x}$, $x\in X_{k}$. 

Set $ {\mathcal L}^{p}_{r}(X_{1},E):= \{\alpha\in \C^{\infty}_{0}(\bar X,\Lambda^{p}T^{*}\bar X\otimes E);\,\text{ s.t. }i^{*}_{1}(\alpha)=0\}$, where $r$ stands for relative, and
 consider the unbounded operators
 \begin{align*}
 \nabla&:{\mathcal L}^{p}_{r}(X_{1},E)\subset L^{2}(\bar X,\Lambda^{p}T^{*} X\otimes E)\to  L^{2}(\bar X,\Lambda^{p+1}T^{*} X\otimes E)\,.
 \end{align*}
Let $A^{p}(\bar X,X_{1},E)$ be the domain of the closure $\nabla_{r}$ of $\nabla$ with respect to the  norm $|.|^{2}+|\nabla .|^{2}$.
Define
\begin{align*}
\H^{p}(\bar X,X_{1},X_{2},E)&=\{\alpha\in \C^{\infty}(\bar X,\Lambda^{p}T^{*}\bar X\otimes E)\cap L^{2}\text{ s.t. } \\ & \nabla\alpha=0,\,\nabla^{t}\alpha=0,\, i^{*}_{1}(\alpha)=0,\,i^{*}_{2}(\star\alpha)=0\}\,.
\end{align*}

Assume moreover that the connection is flat. Then one checks that
\begin{align}
(L^{2}(\bar X, \Lambda^{.}T^{*}\bar X\otimes E),\nabla_{r}^{.}, \Dom(\nabla_{r}^{.}))=A^{.}(\bar X,X_{1},E))
\end{align}
 is a Hilbert complex (see \cite[Chap.~5]{SchDissertation}). Let
\begin{align*}\underline{H}^{p}(\bar X,X_{1},E):=\Ker(\nabla:A^{p}\to A^{p+1})/\Adh{\Ran(\nabla)}
\end{align*}
be its reduced cohomology.
Then (loc.cit. Thm.~6.2):
\begin{theorem}The inclusion $\H^{p}(\bar X,X_{1},X_{2},E)\to A^{p}(\bar X,X_{1},E)$ induces an isomorphism \begin{align*}\H^{p}(\bar X,X_{1},X_{2},E)\simeq \underline{H}^{p}(\bar X,X_{1},E)\,.\end{align*}
%
\end{theorem}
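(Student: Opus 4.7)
The plan is to apply the general Hodge theory for Hilbert complexes of Brüning--Lesch \cite{BruLes}, combined with elliptic regularity for the mixed boundary value problem. First I would invoke the Hilbert--complex Hodge decomposition: since the complex $A^{\tbullet}(\bar X, X_{1}, E)$ with differential $\nabla_{r}$ is $\Gamma$--Fredholm (the Laplace operator has the form discussed in Proposition~\ref{Hilbertian versus Fredholm}, and the analogue for general elliptic complexes is Proposition~\ref{elliptic complex is fredholm}), the orthogonal projection induces an isomorphism
\begin{align*}
\underline{H}^{p}(\bar X, X_{1}, E) \;\cong\; \Ker(\nabla_{r})\cap \Ker(\nabla_{r}^{*})\,.
\end{align*}
So the task reduces to identifying this intersection with the smooth harmonic space $\H^{p}(\bar X,X_{1},X_{2},E)$.

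Next I would determine the adjoint $\nabla_{r}^{*}$ concretely. By construction, $\Dom(\nabla_{r})$ is the closure of $\L^{p}_{r}(X_{1},E)$ in the graph norm, so elements $\alpha \in \Dom(\nabla_{r})$ have a well-defined trace on $\partial \bar X$ with $i_{1}^{*}\alpha = 0$. For $\beta \in C^{\infty}\cap L^{2}$ with $\nabla^{t}\beta \in L^{2}$, Stokes' formula yields
\begin{align*}
(\nabla_{r}\alpha,\beta) - (\alpha,\nabla^{t}\beta) = \int_{\partial \bar X}\langle i^{*}\alpha, i^{*}(\star\beta)\rangle\,,
\end{align*}
and the integrand vanishes on $X_{1}$ automatically. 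Hence the boundary contribution reduces to one over $X_{2}$, and a smooth $\beta$ lies in $\Dom(\nabla_{r}^{*})$ with $\nabla_{r}^{*}\beta = \nabla^{t}\beta$ precisely when $i_{2}^{*}(\star\beta)=0$. Consequently any $\alpha \in \Ker(\nabla_{r})\cap \Ker(\nabla_{r}^{*})$ distributionally satisfies $\nabla\alpha = 0$, $\nabla^{t}\alpha = 0$, with weak traces $i_{1}^{*}\alpha = 0$ and $i_{2}^{*}(\star\alpha)=0$.

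Now I would apply elliptic regularity. The associated Laplacian $\Delta = \nabla^{t}\nabla + \nabla\nabla^{t}$ together with the mixed boundary conditions (relative on $X_{1}$, absolute on $X_{2}$) constitutes an elliptic boundary value problem in the sense of Lopatinski--Shapiro, uniformly elliptic because of the bounded geometry hypotheses. Standard elliptic regularity up to the boundary (\cite[Chap.~5]{Tay}, \cite[Chap.~2]{LioMag}), iterated, then shows that any $\alpha \in \Dom(\Delta)$ which lies in $\cap_{n}\Dom(\Delta^{n})$---in particular any harmonic element---belongs to $C^{\infty}(\bar X,\Lambda^{p}T^{*}\bar X\otimes E)\cap L^{2}$. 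Thus the traces are genuine classical restrictions, and $\alpha \in \H^{p}(\bar X,X_{1},X_{2},E)$.

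Conversely, every $\alpha \in \H^{p}(\bar X,X_{1},X_{2},E)$ clearly lies in $\Dom(\nabla_{r})$ (since $i_{1}^{*}\alpha = 0$ allows approximation by elements of $\L^{p}_{r}(X_{1},E)$ via a cutoff and smoothing argument built from the uniform partition of unity of Theorem~\ref{uniform partition of unity}) and in $\Dom(\nabla_{r}^{*})$ (by the boundary identity above), with $\nabla_{r}\alpha = 0$ and $\nabla_{r}^{*}\alpha = 0$. This gives the inclusion $\H^{p}(\bar X,X_{1},X_{2},E) \subset \Ker(\nabla_{r})\cap\Ker(\nabla_{r}^{*})$, which together with the regularity argument yields the isomorphism. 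The main obstacle is the second step: one must verify carefully that the mixed Laplacian really is an elliptic boundary value problem (the Lopatinski--Shapiro condition for the split Dirichlet/Neumann data on $X_{1}\sqcup X_{2}$) so that the regularity cascade applies uniformly up to the boundary---this is precisely where the restriction to the de Rham complex, as noted in Remark~\ref{fundamental remark}, becomes essential.
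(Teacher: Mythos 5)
Your argument is essentially the route the paper relies on: the paper does not reprove this statement but quotes it from Schick's dissertation (loc.\ cit.\ Thm.~6.2, after Dodziuk), whose proof is exactly the combination you describe --- the weak Hodge--Kodaira decomposition for Hilbert complexes identifying $\underline{H}^{p}(\bar X,X_{1},E)$ with the harmonic space of the complex, the integration-by-parts computation showing the adjoint imposes the absolute condition $i_{2}^{*}(\star\,\cdot)=0$ on $X_{2}$, and elliptic regularity up to the boundary for the mixed relative/absolute boundary value problem (which is elliptic for the de Rham complex, as you correctly flag via Remark~\ref{fundamental remark}). Only a cosmetic remark: the identification of reduced cohomology with the harmonic space requires no $\Gamma$-Fredholmness (it holds for any Hilbert complex by Br\"uning--Lesch), and in degree $p$ that space should be written $\Ker(\nabla_{r}^{p})\cap\Ker((\nabla_{r}^{p-1})^{*})$.
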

%
%
%
%
%
With the notations of (\ref{Hilbertian versus Fredholm}), elliptic regularity implies that the harmonic space (\ref{minimal extension}) associated to $H(\bar X,\nabla_{min})$  is equal to $\oplus_{i} \H^{i}(\bar X,\partial X,\emptyset)$, and the harmonic space (\ref{Hilbertian Domain}) associated to $H(\bar X,\nabla_{max})$  is equal to $\oplus_{i} \H^{i}(\bar X,\emptyset,\partial X)$.
\subsubsection{Poincar\'e duality}

By definition, the Hodge star operator exchanges absolute and relative boundary conditions -- it maps harmonic forms to harmonic forms, therefore:
\begin{proposition}[{\cite[Thm.~6.3, p.50]{SchDissertation}}] Let $X$ be a complete Riemannian manifold with boundary $\partial X=X_{1}\sqcup X_{2}$. Let $(E,h)\to \bar X$ be a Hermitian bundle. Then \begin{align*}
\star: \H^{p}(\bar X,X_{1},X_{2},E)\sim \H^{n-p}(\bar X,X_{2},X_{1},E)
\end{align*}
is a well defined isomorphism, which induces an isomorphism $\underline{H}^{p}_{(2)}(\bar X,X_{1},E)\simeq \underline{H}^{n-p}_{(2)}(\bar X,X_{2},E)$.
\end{proposition}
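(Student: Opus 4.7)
The plan is to establish the first isomorphism directly on the harmonic representatives, then transfer it to reduced cohomology via the de Rham isomorphism recalled immediately before the statement. First, I would verify that the Hodge star
\[
\star : L^{2}(\bar X,\Lambda^{p}T^{*}\bar X\otimes E) \to L^{2}(\bar X,\Lambda^{n-p}T^{*}\bar X\otimes E)
\]
is a unitary isomorphism (with $\star\star=\pm \id$ on $p$-forms), and that one has the pointwise identity $\nabla^{t}=\pm\star\nabla\star$ valid on smooth sections. Both facts are purely pointwise and follow from the standard Riemannian formalism together with the compatibility of $\star$ with the Hermitian structure on $E$, so no boundary issues intervene here.

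Next, I would check that $\star$ exchanges the two boundary conditions appearing in the definition of $\H^{p}(\bar X,X_{1},X_{2},E)$. The geometric content is that, along $\partial\bar X$, decomposing a form into tangential and normal components with respect to the boundary, $\star$ swaps these two components up to sign. Consequently, $i_{k}^{*}\alpha=0$ (vanishing of the tangential part on $X_{k}$) is equivalent, after applying $\star$ and using $\star\star=\pm\id$, to $i_{k}^{*}(\star\star\alpha)=0$, i.e.\ to the condition that the normal part of $\star\alpha$ on $X_{k}$ vanishes, which is exactly $i_{k}^{*}(\star\alpha)=0$ read on $\star\alpha$ with the roles of $X_{1}$ and $X_{2}$ interchanged.

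Combining these two items, for $\alpha\in\H^{p}(\bar X,X_{1},X_{2},E)$ the section $\beta=\star\alpha$ satisfies $\beta\in C^{\infty}\cap L^{2}$, $\nabla\beta=\pm\star\nabla^{t}\alpha=0$, $\nabla^{t}\beta=\pm\star\nabla\alpha=0$, $i_{2}^{*}\beta=0$ (from $i_{2}^{*}(\star\alpha)=0$), and $i_{1}^{*}(\star\beta)=\pm i_{1}^{*}\alpha=0$. Hence $\star$ maps $\H^{p}(\bar X,X_{1},X_{2},E)$ into $\H^{n-p}(\bar X,X_{2},X_{1},E)$, and the analogous argument applied to $\star$ in the other direction produces an inverse (up to sign) from $\H^{p\pm 1}\star\star=\pm\id$. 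This gives the first isomorphism.

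Finally, I would invoke the theorem recalled just above, which identifies $\H^{p}(\bar X,X_{1},X_{2},E)\simeq \underline H^{p}_{(2)}(\bar X,X_{1},E)$ and likewise $\H^{n-p}(\bar X,X_{2},X_{1},E)\simeq \underline H^{n-p}_{(2)}(\bar X,X_{2},E)$, to transfer the Hodge-star isomorphism to reduced $L^{2}$-cohomology. I expect the main obstacle not to be conceptual but technical: pinning down the precise sign conventions for $\star$ on $E$-valued forms (involving the Hermitian anti-isomorphism $E\simeq \bar E^{*}$) and verifying carefully that the identity $\nabla^{t}=\pm\star\nabla\star$ really holds for the chosen flat, possibly non-unitary, connection, so that the map $\star$ indeed preserves the four defining conditions and not merely three of them.
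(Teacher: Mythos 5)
Your argument is essentially the paper's own: the paper establishes this proposition by the same one\-/line observation---the Hodge star exchanges absolute and relative boundary conditions and carries harmonic forms to harmonic forms---and then transfers it to reduced $l^{2}$-cohomology via the Hodge--de Rham identification recalled just before the statement (citing Schick), which is precisely your two steps. The caveat you flag at the end, namely that for a non-metric flat connection $\star$ intertwines $\nabla^{t}$ with the $h$-adjoint connection rather than with $\nabla$ itself, is a genuine point of care, and it is exactly what the paper defers to the cited reference (and addresses in the following remark via the $\sharp$ operator in the Serre-duality variant).
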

\begin{remark}
The $\sharp$ metric operator (see \cite[Sec.~7.3, p.40]{BerDemIllPet}) maps $p-$forms with values in $E$ to $(n-p)-$forms with values in $E^{*}$ and maps harmonic forms with absolute (resp.\ relative) conditions to harmonic forms with relative (resp.\ absolute) conditions. This gives the Serre Duality for $l^{2}-$cohomology of manifolds with boundary.
\end{remark}

A variation of \cite{Dod,SchDissertation} gives:
\begin{theorem}(DeRham's theorem)\label{DeRham isomorphism}
Let $p: \bar X\to \bar Y$ be a covering of a compact manifold with boundary $\partial Y=Y_{1}\sqcup Y_{2}$. Let $\partial X=X_{1}\sqcup X_{2}$ be the corresponding boundaries of $\bar X$. Let $(J,J_{1},J_{2})$ be a triangulation of $(\bar Y,\partial Y)$, and let $(K,K_{1},K_{2})$ be the lifted triangulation of $(\bar X,\partial X)$.

 Let $(E,h,\nabla)\to \bar X$ be a hermitian bundle with a flat connection of bounded geometry, and let $\underline{E}:=\Ker(\nabla)$ be the local system it defines.

Let $C^{\tbullet}_{(2)}(K,K_{1},\underline{E})$ be the subcomplex of square integrable cochains of $C^{\tbullet}(K,K_{1},\underline{E})$  (the cochains that vanish on $K_{1}$). Then integration over the simplices of $K$ gives an isomorphism

\begin{align*}
I: \H^{p}(\bar X,X_{1},X_{2},E)\to \underline{H}^{p}_{(2)}(\bar X,X_{1},E)\sim \underline{H}^{p}_{(2)}(K,K_{1},\underline{E})\,.
\end{align*}
In particular if $X_{1}=\emptyset$, $I$ induces isomorphisms between:
\begin{itemize}
\item[a)] $\H^{p}_{(2)}(X,\emptyset,X_{2},E)$, the space of harmonic forms with absolute boundary condition,
\item[b)] $\underline{H}_{(2)}^{p}(\bar X,E)$, the reduced $l^{2}-$cohomology of the maximal extension of $\nabla$,
\item[c)] $\underline{H}^{p}_{(2)}(K,\underline{E})$ the reduced $l^{2}-$cohomology of the simplicial complex $K$ with coefficient in $\underline{E}$.
\end{itemize}
If $X_{2}=\emptyset$, $I$ induces isomorphism between:
\begin{itemize}
\item[a)] $\H^{p}_{(2)}(X,X_{1},\emptyset, E)$, the space of harmonic forms with relative boundary condition,
\item[b)] $\underline{H}_{(2)}^{p}(\bar X, X_{1}, E)$, the reduced $l^{2}-$cohomology of the minimal extension of $\nabla$,
\item[c)] $\underline{H}^{p}_{(2)}(K,K_{1}, \underline{E})$ the relative reduced $l^{2}-$cohomology of the simplicial pair $(K,K_{1})$ with coefficients in $\underline{E}$.
\end{itemize}
\end{theorem}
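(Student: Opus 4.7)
The plan is to follow the Dodziuk--Schick construction of the simplicial de~Rham isomorphism, adapted to bounded geometry manifolds with mixed absolute/relative boundary data, and twisted by the flat local system $\underline{E}$. The first isomorphism $\H^{p}(\bar X,X_{1},X_{2},E)\simeq \underline{H}^{p}_{(2)}(\bar X,X_{1},E)$ is already available from Thm.~6.2 of \cite{SchDissertation} cited just above, so the real task is the simplicial identification $\underline{H}^{p}_{(2)}(\bar X,X_{1},E)\simeq \underline{H}^{p}_{(2)}(K,K_{1},\underline{E})$. Because the triangulation $K$ is lifted from the finite triangulation $J$ of the compact base $\bar Y$, it has bounded combinatorial geometry: every simplex is uniformly bilipschitz to a standard model, and every star contains uniformly many simplices. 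Together with the bounded geometry of $(E,h,\nabla)$, this replaces compactness everywhere in the classical argument of \cite{Dod} and furnishes uniform Sobolev constants.

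First I would construct two $\Gamma$-equivariant bounded morphisms of Hilbert complexes. The de Rham map
\begin{align*}
I:A^{p}(\bar X,X_{1},E)\longrightarrow C^{p}_{(2)}(K,K_{1},\underline{E}),\qquad I(\alpha)(\sigma)=\int_{\sigma}\alpha,
\end{align*}
is well-defined because the local trivializations of $\underline{E}$ are flat on any contractible neighbourhood of $\sigma$, and the relative condition $i_{1}^{*}\alpha=0$ forces $I(\alpha)$ to vanish on $K_{1}$. Conversely, the Whitney map $W$ attaches to a cochain $c$ the piecewise-smooth $E$-valued form
\begin{align*}
W(c)_{|\sigma}=\sum_{\tau\leq\sigma} c(\tau)\,\mu_{\tau},
\end{align*}
where $\mu_{\tau}$ is the usual Whitney basic form written in barycentric coordinates and $c(\tau)\in \underline{E}_{\tau}$ is parallel-transported to all of $\sigma$. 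A direct calculation with Whitney forms, together with Stokes' theorem, shows that $I$ and $W$ intertwine $\nabla$ with the combinatorial coboundary, and $\Gamma$-equivariance is built in by construction. Boundedness of both maps as operators between the $L^{2}$ complexes follows from a uniform partition-of-unity localization of exactly the same flavour as in Lemma~\ref{compactness of sobolev embedding}, using that simplex diameters, the norms of $\mu_{\tau}$ together with their first derivatives, and the pointwise bounds on the flat trivializations of $\underline{E}$, are all uniform on $\bar X$.

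Next I would verify the quasi-inverse property. The identity $I\circ W=\id$ on cochains is immediate from $\int_{\tau}\mu_{\tau'}=\delta_{\tau,\tau'}$. For the reverse composition, Dodziuk's simplex-by-simplex chain homotopy $R$, built from the radial retraction onto the barycenter, satisfies
\begin{align*}
W\circ I-\id=\nabla R+R\nabla,
\end{align*}
and the crucial analytic input is that $R$ is a bounded $\Gamma$-equivariant operator on the $L^{2}$ complexes, again by bounded simplicial geometry. Iterating on barycentric subdivisions of $J$ and lifting, one can make the operator norm of $W\circ I-\id$ arbitrarily small on a fixed Sobolev level, so $I$ and $W$ induce mutually inverse maps on reduced $l^{2}$-cohomology. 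The main obstacle, and the point that requires real care, is to ensure that the chain homotopy $R$ respects the mixed boundary conditions: the radial retraction preserves the face structure, so $i_{1}^{*}\alpha=0$ implies $i_{1}^{*}R\alpha=0$, and symmetrically on the cochain side $R$ sends cochains vanishing on $K_{1}$ to cochains vanishing on $K_{1}$. This alignment between the analytic condition $i_{1}^{*}\alpha=0$ and the combinatorial condition of vanishing on $K_{1}$-simplices is precisely what makes the theorem hold with the stated boundary data, and it also makes the two special cases $X_{1}=\emptyset$ and $X_{2}=\emptyset$ follow immediately from Proposition~\ref{Hilbertian versus Fredholm}, since absolute boundary conditions correspond to the maximal extension of $\nabla$ and relative ones to the minimal extension.
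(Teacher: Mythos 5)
Your overall route---Schick's theorem for the Hodge-theoretic half, then a twisted Whitney map $W$ and integration map $I$ with a bounded $\Gamma$-equivariant chain homotopy, all constants uniform because the triangulation is lifted from a finite one on the compact base---is exactly the Dodziuk--Schick argument \cite{Dod,SchDissertation} that the paper itself invokes, and your bookkeeping of the boundary conditions (cochains vanishing on $K_{1}$ versus $i_{1}^{*}\alpha=0$, maximal versus minimal extension) is sound. The genuine gap is at the analytic heart of the argument: you define $I$ on $A^{p}(\bar X,X_{1},E)$, the graph-norm domain of $\nabla_{r}$, and assert that it is bounded ``between the $L^{2}$ complexes'' by a partition-of-unity localization. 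Integration over a $p$-simplex is a trace on a subset of positive codimension, and it is neither defined nor bounded for a form that is merely $L^{2}$ with $L^{2}$ exterior derivative; no localization in the style of Lemma~\ref{compactness of sobolev embedding} can repair this, because the obstruction is already local (an $L^{2}$ form on $\RR^{n}$ with $L^{2}$ differential has no restriction to a $p$-dimensional affine piece when the codimension exceeds one). This is precisely why Dodziuk and Schick interpose the Sobolev complexes $W^{k}$ with $k>n/2$, on which $I$ and the Whitney--Poincar\'e homotopy $R$ are bounded by the uniform Sobolev embedding (\ref{sobolev embedding}), and then prove---via uniform elliptic regularity for the boundary value problem and smoothing operators from the functional calculus of the relevant Laplacian---that the inclusion of the Sobolev complex into the $L^{2}$ complex induces an isomorphism of reduced $\Gamma$-cohomology. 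Without this intermediate step your chain homotopy lives on a complex on which $I$ does not act, and the conclusion that $I$, restricted to the harmonic space $\H^{p}(\bar X,X_{1},X_{2},E)$ (whose elements are smooth and uniformly bounded by bounded geometry), induces the stated isomorphism does not follow.

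A smaller point: once $W\circ I-\id=\nabla R+R\nabla$ holds with $R$ bounded on the correct (Sobolev) complex, the induced maps on reduced cohomology already agree; the appeal to iterated barycentric subdivisions to make $\|W\circ I-\id\|$ small is both unnecessary and, as stated, insufficient---smallness of an operator norm does not identify the induced map on reduced cohomology with the identity, and subdividing replaces $K$ by a different cochain complex, which would then require a separate comparison of $\underline{H}^{\tbullet}_{(2)}$ across subdivisions.
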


One recall the notion of integration of a $\underline{E}-$valued $k-$form $\alpha$ over simplices. A simplex is a $C^{1}-$map $f$ from the standard simplex $\Delta_{k}$ in $\RR^{k}$ to the manifold.
The pullback of the local system to $\Delta_{k}$ admits a functorial trivialisation  $t:f^{*}(\underline{E})\simeq f^{*}(\underline{E})_{0}\times \Delta_{k}$ where $f^{*}(\underline{E})_{0}$ is the fiber over $0\in\RR^{k}$. By the sheaf isomorphism $C^{k}(X,E)\simeq C^{k}(X)\otimes \underline{E}$,
integration of the vector valued $k-$form $t\circ f^{*}\alpha$ defines the value
\begin{align*}I(\alpha)(f)=\int_{f}\alpha:=f_{0}(\int_{\Delta_{k}}t\circ f^{*}\alpha)\in\underline{ E}_{f(0)}\,.\end{align*}

This already proves the homotopy invariance of $\underline{H}^{\tbullet}(X,X_{1},E)$. We shall give now a further isomorphism based upon sheaf cohomology.
\begin{definition}(see \cite{CamDem,Din2013,Eys})\label{l2 direct image}
\begin{itemize}
\item[i)] Assume that $(E,h,\nabla)=p^{*}(E',h',\nabla')$ is the pullback of a bundle with a flat connection on $\bar Y$. Let $\underline{E'}\to \bar Y$ be the local system it defines, and let $\p(\underline{E'})\to \bar Y$ be its $l^{2}-$direct image sheaf. It is the locally constant sheaf over $\bar Y$ associated to the pre-sheaf $U\mapsto L^{2}(p^{-1}(U),p^{*}(\underline{E'}))$.
\item[ii)] For any coherent analytic sheaf $\F\to Y$, there exists a preasheaf $U\mapsto L^{2}(p^{-1}(U),p^{*}(\F))$ on $Y$. Let $\p\F\to Y$ be the associated sheaf.
\end{itemize}
\end{definition}
 If $X_{1}=\emptyset$ (but not necessary $X_{2}=\emptyset$),
\begin{align}
H^{p}(\bar Y, \p(\underline{E'}))\simeq H^{p}_{(2)}(K,\underline{E})\simeq \underline{H}^{p}_{(2)}(\bar X,E)
\end{align}
is a homotopy invariant, for it is well known that the sheaf cohomology of a local system is a homotopy invariant.
\subsection{Convergence of $l^{2}-$Betti numbers}

\begin{theorem}[Cheeger-Gromov \cite{CheGro85-bounds}] Let $p:X\to Y=X/\Gamma$ be a locally isometric covering of a complete manifold of finite volume by a manifold of bounded curvature and strictly positive injectivity radius. Let $(E,\nabla,h)\to Y$ be a Riemannian bundle of bounded Riemannian curvature equipped with a flat connection. Assume $(E,h)$ satisfies Property~$AC^{0}$ (\ref{property A}). Let $\cup_{j}Y_{j}=Y$ be an exhaustion by smooth compact submanifolds with boundary.

Define
\begin{align}b_{i(2)}(p^{-1}( Y_{k}), p^{-1}(Y_{l}),E)&:=\dim_{\Gamma}\Im(\quad \underline{H}^{i}_{(2)}(p^{-1}(Y_{l}),E)\quad\to \quad\underline{H}^{i}_{(2)}(p^{-1}(Y_{k}),E)\quad)\\
b_{i(2)}(p^{-1}(Y_{k}),E)&:=\dim_{\Gamma}\underline{H}^{i}_{(2)}(p^{-1}(Y_{k}),E)\,.
\end{align}
Then
\begin{align}
\lim_{k} b_{i(2)}(p^{-1}(Y_{k}),E)= \lim_{k} \lim_{l}b_{i(2)}(p^{-1} (Y_{k}),p^{-1}( Y_{l}),E)=&b_{i(2)}(X,E)\,,
\end{align}
moreover
\begin{align}
\lim_{j\to +\infty} \dim_{\Gamma} \underline{H}^{i}_{(2)}(p^{-1}(Y_{j}), \partial p^{-1}(Y_{j}), E)=&\dim_{\Gamma} \underline{H}^{i}_{(2)}(X, E)\,.
%
\end{align}
\end{theorem}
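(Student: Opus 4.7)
The plan is to combine the Mayer--Vietoris long exact sequence of Proposition~\ref{Mayer Vietoris} with a heat-kernel vanishing estimate on the complement $X\setminus p^{-1}(Y_{j})$, whose fundamental domain has volume tending to zero. Specifically, I first establish
\begin{align*}
\lim_{j\to\infty}\dim_{\Gamma}\underline{H}^{i}_{(2)}(X\setminus p^{-1}(Y_{j}),\partial p^{-1}(Y_{j}),E)=0.
\end{align*}
Let $\Delta_{j}^{i}$ denote the Laplacian of the minimal extension on $X\setminus p^{-1}(Y_{j})$ with its relative boundary condition, and let $h_{j}^{i}$ be its heat kernel. The parabolic maximum principle together with the Bochner identity of Section~\ref{proof using bochner method} yields the pointwise domination $|h_{j}^{i}(t,x,x)|\leq |H^{i}(t,x,x)|$, where $H^{i}$ is the heat kernel of the corresponding operator on $X$; by \eqref{heat kernel domination} and \eqref{LiYau estimate}, the latter is bounded on the diagonal by a constant $C(t)$. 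Integrating $\dim_{\Gamma}\Ker\Delta_{j}^{i}\leq \Tr_{\Gamma}e^{-t\Delta_{j}^{i}}$ over $F\cap(X\setminus p^{-1}(Y_{j}))$, whose volume equals $\Vol(Y\setminus Y_{j})\to 0$, gives $\dim_{\Gamma}\underline{H}^{i}_{(2)}(X\setminus p^{-1}(Y_{j}),\partial,E)\leq C(t)\,\Vol(Y\setminus Y_{j})$, which tends to $0$ as $j\to\infty$.

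Second, I apply Proposition~\ref{Mayer Vietoris} to the decomposition $X=p^{-1}(Y_{j})\cup(X\setminus p^{-1}(Y_{j}))$. All three complexes are $\Gamma$-Fredholm by Propositions~\ref{Hilbertian versus Fredholm} and~\ref{elliptic complex is fredholm}. Additivity of $\Gamma$-dimension on the weakly exact long cohomology sequence of finite $\Gamma$-dimensional modules yields
\begin{align*}
\bigl|b^{i}_{(2)}(p^{-1}(Y_{j}),E)-b^{i}_{(2)}(X,E)\bigr|\leq \dim_{\Gamma}\underline{H}^{i}_{(2)}(X\setminus p^{-1}(Y_{j}),\partial,E)+\dim_{\Gamma}\underline{H}^{i+1}_{(2)}(X\setminus p^{-1}(Y_{j}),\partial,E),
\end{align*}
and the right-hand side vanishes by the first step, proving the first identity. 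The same weak exactness bounds the $\Gamma$-dimensions of the kernel and cokernel of the restriction $\underline{H}^{i}_{(2)}(X,E)\to\underline{H}^{i}_{(2)}(p^{-1}(Y_{k}),E)$ by $\dim_{\Gamma}\underline{H}^{i}_{(2)}(X\setminus p^{-1}(Y_{k}),\partial,E)$ and $\dim_{\Gamma}\underline{H}^{i+1}_{(2)}(X\setminus p^{-1}(Y_{k}),\partial,E)$ respectively, both of which vanish as $k\to\infty$.

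For the double limit, fix $k$ and set $I_{k,l}:=\overline{\Im(\underline{H}^{i}_{(2)}(p^{-1}(Y_{l}),E)\to\underline{H}^{i}_{(2)}(p^{-1}(Y_{k}),E))}$. These form a decreasing net of closed $\Gamma$-submodules of the finite $\Gamma$-dimensional space $\underline{H}^{i}_{(2)}(p^{-1}(Y_{k}),E)$, so by normality of the $\Gamma$-trace on the associated net of orthogonal projections,
\begin{align*}
\lim_{l\to\infty}\dim_{\Gamma}I_{k,l}=\dim_{\Gamma}\bigcap_{l\geq k}I_{k,l}.
\end{align*}
Functoriality gives $\overline{\Im(\underline{H}^{i}_{(2)}(X,E)\to\underline{H}^{i}_{(2)}(p^{-1}(Y_{k}),E))}\subseteq\bigcap_{l}I_{k,l}$; the reverse $\Gamma$-dimensional equality follows from trace-norm convergence of the harmonic projectors on $p^{-1}(Y_{l})$ (extended by $0$ to $L^{2}(X)$) to the harmonic projector on $X$, again a consequence of the uniform heat-kernel domination and $\Vol(Y)<+\infty$. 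Combined with the asymptotic injectivity of the restriction noted in the previous paragraph, this yields the second identity.

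Finally, the Poincaré--Hodge $\star$-duality of Section~\ref{notations mayer vietoris} produces isomorphisms $\underline{H}^{i}_{(2)}(p^{-1}(Y_{j}),\partial p^{-1}(Y_{j}),E)\simeq\underline{H}^{n-i}_{(2)}(p^{-1}(Y_{j}),E)$ and $\underline{H}^{i}_{(2)}(X,E)\simeq\underline{H}^{n-i}_{(2)}(X,E)$, so the third limit reduces to the first applied at index $n-i$. Replacing $(E,h)$ by a uniformly equivalent bundle of bounded geometry (property~$AC^{0}$, cf.\ Section~\ref{invariance by change of metric}) preserves reduced cohomology and $\Gamma$-dimensions throughout the argument. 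The main obstacle is the $\Gamma$-dimensional identification in the double-limit step: one inclusion is purely formal, whereas the other requires extracting a coherent global class on $X$ from a family of restrictions on the $Y_{l}$'s, and the only robust mechanism I see is the trace-norm convergence of orthogonal projectors furnished by the uniform heat-kernel estimates of Section~\ref{proof using bochner method}.
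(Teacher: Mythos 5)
Your overall strategy (uniform diagonal bound on a kernel times the vanishing volume of $Y\setminus Y_{j}$, fed into the Mayer--Vietoris sequence) is the right skeleton, but the step that carries all the weight is not justified and, as stated, fails. You claim that the heat kernel $h^{i}_{j}$ of the Laplacian of the \emph{minimal} extension on $X\setminus p^{-1}(Y_{j})$ is dominated on the diagonal by the heat kernel on $X$ via ``the parabolic maximum principle and the Bochner identity''. The Donnelly--Li domination \eqref{heat kernel domination} compares the bundle heat kernel with the \emph{scalar} heat kernel on the \emph{same} manifold, either complete or compact with Dirichlet condition; the relative boundary condition \eqref{minimal extension} is not a Dirichlet condition (the section does not vanish on $\partial p^{-1}(Y_{j})$, only its tangential part and that of $\nabla^{t}u$ do), so $|u_{t}|^{2}$ has no sign-controlled boundary data and the maximum-principle comparison with a kernel on the larger manifold does not go through. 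Worse, any boundary estimate for such mixed conditions brings in the second fundamental form of $\partial p^{-1}(Y_{j})$, and for an \emph{arbitrary} exhaustion $(Y_{j})$ this is not bounded, let alone uniformly in $j$. This is exactly the difficulty the paper's proof is organized around: it first proves a Monotonicity Lemma for the relative Betti numbers, which permits replacing the given exhaustion by a special one $(Z_{k})$ built from Dafermos' exhaustion function, whose boundaries have geometry bounded uniformly in $k$ (Lemma~\ref{uniform collar}); only then are the Bergmann kernels of the harmonic spaces on $X\setminus p^{-1}(Z_{k})$ uniformly bounded, and $\lim_{k}b_{i(2)}(X\setminus p^{-1}(Z_{k}),E)=0$ follows from $\Vol(Y\setminus Z_{k})\to 0$. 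Your argument has no substitute for the monotonicity/special-exhaustion step, and without it the uniform-in-$j$ bound you need is simply not available.

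The double-limit step has a second gap, which you yourself flag: you invoke ``trace-norm convergence of the harmonic projectors on $p^{-1}(Y_{l})$ (extended by zero) to the harmonic projector on $X$'', but this is neither proved nor plausible as stated (the projectors act on different Hilbert complexes with boundary conditions, and trace-norm convergence is far stronger than what the heat-kernel bounds give). The paper avoids any such operator convergence: it introduces the kernel complex $K(Z_{l},Z_{k})$ of the restriction map, fits it into the exact sequence of $\Gamma$-Fredholm complexes $0\to H(X-p^{-1}(Z_{l}),\nabla_{\min})\to H(X-p^{-1}(Z_{k}),\nabla_{\min})\to K(Z_{l},Z_{k})\to 0$, and identifies $\bar W^{\tbullet}_{l}/\bar W^{\tbullet}_{\infty}$ with the closure of the image of $\underline{H}^{\tbullet}_{(2)}(X\setminus p^{-1}(Z_{l}),E)$ in $\underline{H}^{\tbullet}_{(2)}(X\setminus p^{-1}(Z_{k}),E)$, whose $\Gamma$-dimension is controlled by the already-established vanishing $\lim_{l}\dim_{\Gamma}\underline{H}^{\tbullet}_{(2)}(X\setminus p^{-1}(Z_{l}),E)=0$. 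Your use of Poincar\'e--Hodge $\star$-duality to convert the relative groups to absolute ones, and of Property~$AC^{0}$ to reduce to bounded geometry, is consistent with the paper; but the two gaps above are essential and need to be filled along the lines just described.
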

\begin{proof}
We will prove the theorem under the assumption that $X$ and $E$ have bounded geometry, the general case is proved using the approximation theorem by metrics of bounded geometry. Let $X_{j}=p^{-1}(Y_{j})$. Using the Mayer-Vietoris sequence (\ref{Mayer Vietoris}) and simple diagram chasing (see the argument given in \ref{proof of convergence for special exhaustion}), the claim is reduced to
$$
\lim_{j}\dim_{\Gamma}\underline{H}^{i}_{(2)}(X\setminus X_{j},E)=0\,.
$$
We note the following monotonicity:
\begin{lemma}[{\bf Monotonicity lemma}]
Let $Z\overset{i}{\to} X\overset{j}{\to} X'\overset{k}{\to}Z'$ be compact submanifolds of $Y$ with boundary. Let $i^{*}$, $j^{*}$, $k^{*}$ be the induced morphism between the reduced cohomology of the covering spaces $p^{-1}(Z)\to p^{-1}(X)\to p^{-1}(X')\to p^{-1}(Z')$. Then \begin{align}&\Rank_{\Gamma}(i^{*}j^{*}k^{*})\leq \Rank_{\Gamma}(j^{*})\nonumber\\ \text{ i.e. } \quad&b_{i(2)}(p ^{-1}(Z), p^{-1}(Z'),E)\leq b_{i(2)}(p ^{-1}(X), p^{-1}(X'),E)\,.\end{align}
\end{lemma}

Hence, it is enough to prove the theorem for a special exhaustion $(Z_{k})_{k\in\NN}$ of $Y$, by compact submanifolds with compact boundary, for which it is easy to find uniform upper bounds (in $k$) of $||K_{\H^{i}(X\setminus  p^{-1}(Z_{k}),E)}(x)||_{\infty}$, the sup norms of the Bergmann Kernel of the spaces of harmonic $E-$valued $i-$forms. Indeed $\lim_{k}\Vol(Y\setminus Z_{k})=0$ will imply $\lim_{k}b_{i(2)}(X\setminus p^{-1}(Z_{k}),E)=0$.

\subsubsection{Construction of a uniform exhaustion}
Dafermos \cite{Daf} proved that there exist $\eta_{0}>0$ and an exhaustion function $f:Y\to \RR$, with bounded covariant derivative of any order, such that for a sequence $a_{i}\to +\infty$ one has $||\nabla f||_{|f=a_{i}}\geq \eta_{0}$. One may assume that $a_{i+1}-a_{i}\geq 1$. Let $\bar Z_{k}=f^{-1}(]-\infty,a_{k}])$.

For the induced metric on $\partial Z_{k}$, elementary computations show that
\begin{align}
 |\nabla^{i}_{\partial Z_{k}}R_{\partial Z_{k}}|\leq \frac{P_{i}(|\nabla^{1}  f|,\ldots, |\nabla^{i} f|, R_{Y},\nabla R_{Y},\ldots,\nabla^{i}R_{Y})}{|\nabla  f|}\quad{}_{|\partial Z_{k}}\label{induced geometry}
 \end{align}
for some universal polynomial function $P_{i}$ (see e.g.\ \cite{SchDissertation}).  The following lemma applies to the covering manifold $p:X\to Y$ which is assumed to be of positive radius.
\begin{lemma} \label{uniform collar} Let $X$ be a complete manifold of bounded curvature of order two and positive injectivity radius. Then for all $A>0$, there exists positive constants $B,r$ which depend only on the bounds of the geometry, the injectivity radius, and $A$, such that for any smooth oriented hypersurface $W$ in $X$ with second fundamental form bounded by $A$, for any $x\in W$, the map \begin{align}e:B_{T_{x}(W)}(0,r)\times [-B,B]\to X,\quad\quad (y,t)\mapsto \exp^{X}_{\exp^{W}_{x}(y)}(\nu_{\exp^{W}_{x}(y)} t)\end{align} is a diffeomorphism onto its image.

Moreover, if $W=f^{-1}(0)$ for some function such that $||\nabla f||_{|f^{-1}(0)}\geq \eta$ and $||\nabla^{2}f||_{\infty,X}<+\infty$ then, if $0<r'\leq\min(B,\frac{\eta_{0}}{2(||R||_{\infty}+||\nabla^{2}f||_{\infty})})$, the global map $e:W\times [-r',r']\to M$ is an embedding.
\end{lemma}
(Here $\nu_{\exp^{W}_{x}(y)}$ is the inward normal field at $\exp^{W}_{x}(y)\in W$.)
Hence, the manifold with boundary $(f\circ p)^{-1}(]-\infty,a_{k}])$ has bounded geometry with constants appearing in Def.~\ref{manifold of bounded geometry} independent of $X$.

From (\ref{induced geometry}) and Lemma~\ref{uniform collar} above one infers that the bounds of the geometry of $\bar Z_{k}$ are uniform in $k$. Moreover, the coarea formula proves that  the level sets $\partial Z_{k}= f^{-1}(a_{k})$, $k\in \NN$, have volume which converge to zero.
\begin{remark} Following the above lines, one observes that an adaptation of the arguments given by Dafermos \cite{Daf} reproves and simplifies the proof of  the Theorem on Good Choppings of Riemannian manifolds (see \cite[Thm.~2.1]{CheGro85-characteristic}, and \cite{CheGro91-chopping}) with bounded geometry.
\end{remark}

\subsubsection{Proof of  convergence} \label{proof of convergence for special exhaustion}
The bounds on the geometry of the manifolds $p^{-1}(\bar Z_{k})$, $p^{-1}(\partial Z_{k})$ and $X\setminus p^{-1}(Z_{k})$ are uniform in $k$ and $\lim_{k\to +\infty}\Vol(\partial Z_{k})= 0$. The various Bergmann kernels are therefore uniformly bounded in $k$, hence
\begin{align}
\lim_{k\to +\infty} b_{i(2)}(X\setminus p^{-1}(Z_{k}),E)= 0\, , &\quad \text{ and } \quad \lim_{k\to +\infty} b_{i(2)}(\partial p^{-1}(Z_{k}),E)= 0\,.
\end{align}

One notices that the last equality will not be used in the proof of the convergence theorem.

The Mayer-Vietoris sequence and duality induced by the Hodge star operator imply \begin{align} \lim_{k\to +\infty} b_{i(2)}(p^{-1}(Z_{k}), E)= \lim_{k\to +\infty} b_{i(2)}(p^{-1}(Z_{k}),\partial p^{-1}(Z_{k}),E) = b_{i(2)}(X,E)\, , \quad\text{  and  } \nonumber \\
 \quad\lim_{k\to +\infty} b_{i(2)}(X\setminus p^{-1}(Z_{k}), E) =\lim_{k\to +\infty} b_{i(2)}(X\setminus p^{-1}(Z_{k}),p ^{-1}(\partial Z_{k}),E) =0\,.\end{align}

Indeed, let $K(Z_{l},Z_{k})$\footnote{ As in Prop.~\ref{Mayer Vietoris}, one can see that  $K(Z_{l},Z_{k})$ is isomorphic to Schick's complex $A(p^{-1}(\bar Z_{l}-Z_{k}), p^{-1}(\partial Z_{k}),E)$, whose domains are the closures of smooth $E-$valued forms with compact support in $p^{-1}(\bar Z_{l}-Z_{k})$ and relative condition on $p^{-1}(\partial Z_{k})$. }
be the complex defined by the kernel of \begin{align}H^{\tbullet}(p^{-1}(Z_{l}),\nabla_{ \max},E)\to H^{\tbullet}(p^{-1}(Z_{k}),\nabla_{ \max},E)\,.\end{align}
Diagram chasing proves that the following sequence is well defined and exact \begin{align}0\to H(X-p^{-1}(Z_{l}),\nabla_{\min})\to  H(X-p^{-1}(Z_{k}),\nabla_{\min})\to K(Z_{l},Z_{k})\to 0\,.\end{align}

Let $W^{\tbullet}_{l}$ (resp. $W_{\infty}^{\tbullet}$) be the image of $\underline{H}^{\tbullet}_{(2)}(p^{-1}(Z_{l}),E)$ (resp.  $\underline{H}^{\tbullet}_{(2)}(X,E)$) in $\underline{H}^{i}_{(2)}(p^{-1}(Z_{k}),E)$. We claim that $\lim_{k\to +\infty}\dim_{\Gamma}\bar W^{\tbullet}_{l}/\bar W^{\tbullet}_{\infty}=0$: 
The above exact sequence proves that $\bar W^{\tbullet}_{l}/\bar W^{\tbullet}_{\infty}$ is isomorphic to the closure of the image of $\underline{H}^{\tbullet}_{(2)}(X\setminus p^{-1}(Z_{l}),E)$ in $\underline{H}^{\tbullet}_{(2)}(X\setminus p^{-1}(Z_{k}),E)$. One concludes that $\lim_{l}\dim_{\Gamma}\underline{H}^{\tbullet}_{(2)}(X\setminus p^{-1}(Z_{l}),E)=0$, in particular $\lim_{l}b_{.(2)}(p^{-1}(Z_{k}),p^{-1}(Z_{l}),E)=b_{.(2)}(p^{-1}(Z_{k}),E)$.

\subsection{Proof of the homotopy invariance} One can follow the lines of Cheeger and Gromov \cite{CheGro85-characteristic} in the proof of theorem 1.1, once a good exhaustion sequence is known to exist.
\end{proof}

\section{applications}
We consider the particular case of homotopy invariance. Recall Definition~$(\ref{l2 direct image})$ of the locally constant sheaf $\p(E)\to Y$ associated to a Hermitian bundle equipped with a flat connection. Assume the inclusion $\bar Y_{0}\to Y$ of a compact submanifold with boundary is a homotopy equivalence. 
There exists an exhaustion by submanifolds with boundary $(\bar Y_{j})_{j\in \NN}$ such that $\bar Y_{0}\to \bar Y_{k}$ is a homotopy equivalence and $\bar Y_{j}$ is the support of a subcomplex $K_{j}$.
 Hence \begin{align}H^{i}(\bar Y_{k},\p(E))\to H^{i}(\bar Y_{0},\p(E)) &\quad \text{(sheaf cohomology), and}\\
 H^{i}_{(2)}(p^{-1}(K_{k}),E))\to H^{i}_{(2)}(p^{-1}( K_{0}) ,E)  &\quad \text{(simplicial }l^{2}-\text{cohomology)}
  \end{align} are isomorphisms, and the induced morphisms between reduced cohomology groups \begin{align}\underline{H}^{i}_{(2)}(p^{-1}(K_{k}),E))\to \underline{H}^{i}_{(2)}(\bar Y_{0}),E) \end{align} are weak isomorphisms. The
  DeRham Theorem implies that $b_{i(2)}(p^{-1}(Y_{k}),E)=b_{i(2)}(p^{-1}(Y_{0}),E)$. Convergence yields that
\begin{align}\dim_{\Gamma}\H^{i}_{\nabla(2)}(X,E)=\dim_{\Gamma}\underline{H}^{i}_{(2)}(p^{-1}(Y_{0}),E)\nonumber\\=\dim_{\Gamma}\underline{H}^{i}_{(2)}(p^{-1}(K_{0}),E)=\dim_{\Gamma}H^{i}(\bar Y_{0},\p(E)) \,.\end{align}
The first two spaces are analytic and depend on the metric but are subject to Hodge duality. The third space depends on the simplicial structure, is combinatorial and subject to duality, the last space is computed from sheaf theory. One uses the generalized dimension function of Lück \cite{Luc}.

The cohomology of the locally constant sheaf $\p(E)\to Y$ is invariant under homotopy, hence $ H^{i}(Y,\p(E))\to H^{i}(\bar Y_{0},\p(E))$ is an isomorphism. We refer to \cite[Sec.~2.4]{Din2013} or \cite{Luc} for the notion of isomorphism modulo a torsion subcategory, which is used in the following.

\begin{theorem}
Let $X\to Y$ be a $\Gamma-$covering of a complete manifold of bounded volume such that $X$ is of bounded curvature and positive injectivity radius. Let $(E,h,\nabla_{h},\nabla)\to Y$ be a hermitian bundle equipped with a flat connection. We denote by $(E,h,\nabla)\to X$ its pull back to $X$ and assume it satisfies Property~$AC^{0}$ (cf.\ (\ref{property A})).
Assume that $Y$ is of finite topological type. Then the natural morphism
\begin{align}
\H^{i}_{\nabla(2)}(X,E)\to H^{i}(Y,\p(E))
\end{align}
is an isomorphism mod $\tau_{\dim}$ or $\tau_{\U(\Gamma)}$.
\end{theorem}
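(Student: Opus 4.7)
The plan is to chain the homotopy invariance of $l^{2}$-Betti numbers just proved with the De Rham isomorphism (Theorem~\ref{DeRham isomorphism}) and a comparison between reduced and unreduced simplicial $l^{2}$-cohomology in L\"uck's extended category of finitely generated $\Vn(\Gamma)$-modules.

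Since $Y$ has finite topological type, pick a compact submanifold with smooth boundary $\bar Y_{0}\subset Y$ whose inclusion is a homotopy equivalence, together with a simplicial structure $K_{0}$ on $\bar Y_{0}$, and let $\tilde K_{0}=p^{-1}(K_{0})$ be the lifted simplicial complex in $X$. Homotopy invariance of sheaf cohomology of the locally constant sheaf $\p(\underline{E})\to Y$, combined with the identification of its sheaf cohomology on a triangulated base with simplicial cohomology of the lift (noted in the paragraph preceding the theorem), gives
\[
H^{i}(Y,\p(\underline{E}))\ \simeq\ H^{i}(\bar Y_{0},\p(\underline{E}))\ \simeq\ H^{i}_{(2)}(\tilde K_{0},\underline{E}),
\]
reducing the statement to comparing $\H^{i}_{\nabla(2)}(X,E)$ with the unreduced simplicial $l^{2}$-cohomology $H^{i}_{(2)}(\tilde K_{0},\underline{E})$.

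Next, the convergence portion of Theorem~\ref{homotopy invariance introduction} combined with the De Rham isomorphism (Theorem~\ref{DeRham isomorphism}) produces a weak isomorphism $\H^{i}_{\nabla(2)}(X,E)\to \underline{H}^{i}_{(2)}(\tilde K_{0},\underline{E})$, built as restriction to $p^{-1}(\bar Y_{0})$ with absolute boundary conditions followed by integration over simplices. A weak isomorphism of $\Gamma$-Hilbert modules is injective with dense range, so its cokernel has vanishing $\Gamma$-dimension; it is therefore an isomorphism in the quotient category modulo $\tau_{\dim}=\tau_{\U(\Gamma)}$. Since $K_{0}$ is finite, the cochain complex $C^{\tbullet}_{(2)}(\tilde K_{0},\underline{E})$ consists of finitely generated free $\Vn(\Gamma)$-modules, and the canonical surjection $H^{i}_{(2)}(\tilde K_{0},\underline{E})\twoheadrightarrow \underline{H}^{i}_{(2)}(\tilde K_{0},\underline{E})$ has kernel $\overline{\Ran(d^{i-1})}/\Ran(d^{i-1})$, a finitely generated module of vanishing $\Gamma$-dimension, hence torsion. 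Composing these identifications realizes the natural morphism of the theorem as an isomorphism mod torsion.

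The main obstacle is this last step: passing from a weak isomorphism to a genuine isomorphism in the quotient category, and identifying the closure-versus-image defect with a torsion module. Both manipulations rely on L\"uck's extended dimension theory, which behaves well only on finitely generated $\Vn(\Gamma)$-modules; this is precisely why the finite topological type assumption on $Y$ is indispensable, since it guarantees that the simplicial cochain modules are finitely generated in each degree.
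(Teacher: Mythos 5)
Your argument is correct in substance, but it follows a genuinely different route from the paper's. You obtain the comparison by invoking the weak-isomorphism clause of the homotopy-invariance theorem of Section 7 (restriction to $p^{-1}(\bar Y_{0})$, integration over simplices, passage to reduced classes) and then absorbing the discrepancy between unreduced and reduced simplicial $l^{2}$-cohomology into the torsion subcategory, using that $\overline{\Ran(d^{i-1})}/\Ran(d^{i-1})$ is a finitely generated $\VN(\Gamma)$-module of zero $\Gamma$-dimension; the identification $H^{i}(Y,\p(\underline{E}))\simeq H^{i}(\bar Y_{0},\p(\underline{E}))\simeq H^{i}_{(2)}(\tilde K_{0},\underline{E})$ is exactly as in the discussion preceding the theorem. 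The paper instead proves injectivity of $\H^{i}_{\nabla(2)}(X,E)\to H^{i}(Y,\p(E))\otimes\U(\Gamma)$ directly: if a harmonic form $\alpha$ dies in $H^{i}(Y_{n},\p(E))\otimes\U(\Gamma)$ for every member of an exhaustion $(Y_{n})$ by homotopy-equivalent compact submanifolds, the Mayer--Vietoris sequence (Proposition~\ref{Mayer Vietoris}) places $\alpha\otimes 1_{\U(\Gamma)}$ in $\underline{H}^{i}_{(2)}(X\setminus p^{-1}(\bar Y_{n}),\partial p^{-1}(Y_{n}),E)\otimes\U(\Gamma)$, whose $\Gamma$-dimension tends to $0$ by the convergence theorem, forcing $\alpha=0$; almost-surjectivity then follows from the equality $b_{i(2)}(X,E)=\dim_{\Gamma}H^{i}(Y,\p(E))$ established just before the statement. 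What each approach buys: the paper's argument is self-contained modulo Mayer--Vietoris and convergence and yields genuine injectivity (not merely injectivity mod torsion), while yours is shorter but leans on the weak-isomorphism assertion (whose proof the paper only sketches via Cheeger--Gromov) and on the compatibility of your composite with the natural morphism of the statement --- you should say explicitly that the mod-torsion conclusion for the map into unreduced cohomology is obtained by inverting, in the quotient category, the projection from unreduced to reduced simplicial cohomology, which is legitimate precisely because its kernel is torsion.
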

Note that no growth assumption is assumed in the space on the right, hence the isomorphism class of this space is independent of metrics on $Y$ and $E\to Y$.
\begin{proof}
The above morphism is in fact injective: Let $\alpha\in \H^{i}_{\nabla(2)}(X,E)$ (associated harmonic space), whose image in $H^{i}(Y,\p(E))\otimes \U(\Gamma)$ vanishes. The section $\alpha$ vanishes in $H^{i}(Y_{n},\p(E))\otimes \U(\Gamma)$, where $(Y_{n})_{n}$ is an exhausting sequence of $Y$ by relatively compact open submanifolds homotopically equivalent to $Y$. The Mayer-Vietoris sequence (Proposition~\ref{Mayer Vietoris}) implies that $\alpha\otimes 1_{\U(\Gamma)}$ belongs to $\underline{H}^{i}_{(2)}(X\setminus p^{-1}(\bar Y_{n}), \partial p^{-1}(Y_{n}), E)\otimes \U(\Gamma)$. Hence \begin{align}\dim_{\Gamma} [\VN(\Gamma) \alpha]\leq \lim_{n}\dim_{\Gamma} \underline{H}^{i}_{(2)}(X\setminus p^{-1}(\bar Y_{n}), \partial p^{-1}( Y_{n}), E)=0\,.\end{align} This proves the assertion. However, $b_{i(2)}(X,E)=\dim_{\Gamma} H^{i}(Y,\p(E))$, hence the morphism is almost surjective that is $\H^{i}_{\nabla(2)}(X,E)\otimes \U(\Gamma)\to H^{i}(Y,\p(E))\otimes\U(\Gamma)$ is onto.
 \end{proof}

Hodge duality implies $b^{i}_{(2)}(X,E)=b^{n-i}_{(2)}(X,E)$. Recall that a Stein manifold of complex dimension $n$ has the homotopy type of a $CW-$complex of real dimension $n$.  One deduces the following vanishing results:
\begin{corollary}
With the above hypotheses, assume that $Y$ has the homotopy type of a finite $CW-$complex of dimension $k$, then $\dim_{\Gamma}H^{i}(Y,\p(\underline{E}))$ (cf.\ \ref{l2 direct image}) is non vanishing only in the range  $\dim_{\RR}Y-k\leq i\leq k$. In particular:
\begin{itemize}
\item[i)] if $2k<\dim Y$, then any Galois covering of positive injectivity radius is $l^{2}-$acyclic.
\item[ii)] Assume $Y$ is a Stein manifold of dimension $n$, and let $(E,\dbar,\theta,h)\to Y$ be a harmonic Higgs bundle with bounded Higgs field then $\H^{l}_{\nabla(2)}(X,E)$ is vanishing if $l\not=n$.
\item[iii)] Let $(\bar Y,\omega_{0})$ be a compact Kähler manifold, $D$ a normal crossing divisor such that $Y=\bar Y\setminus D$ is Stein. Let $p:X\to \bar Y\setminus D$ be a Poincaré covering.
Let $(E,\nabla)\to \bar Y\setminus D$ be a semi-simple unipotent flat bundle with associated local system $\underline{E}=\Ker(\nabla)$. Then $H^{i}(Y,\p(\underline{E}))=0$ if $i\not=\dim_{\CC}Y$.
\end{itemize}
\end{corollary}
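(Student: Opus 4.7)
The plan is to combine the isomorphism
\[
\H^i_{\nabla(2)}(X,E)\otimes_{\VN(\Gamma)}\U(\Gamma) \;\simeq\; H^i(Y,\p(E))\otimes_{\VN(\Gamma)}\U(\Gamma)
\]
of the immediately preceding theorem with Poincar\'e--Hodge duality for $l^{2}$-Betti numbers, both of which are already in hand. Because sheaf cohomology with values in a local system is a homotopy invariant, the right-hand side vanishes whenever $i$ exceeds the homotopical dimension $k$ of $Y$. Dualizing on the harmonic side, $b^i_{(2)}(X,E)=b^{\dim_\RR Y-i}_{(2)}(X,E)$, so the $\Gamma$-dimension also vanishes for $i<\dim_\RR Y-k$; the non-vanishing range is therefore squeezed into $\dim_\RR Y-k\leq i\leq k$, which is exactly the asserted window.

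Part (i) then follows at once: if $2k<\dim_\RR Y$ the interval $[\dim_\RR Y -k,k]$ is empty, so every $l^2$-Betti number is zero. For part (ii), one invokes the classical fact (Andreotti--Frankel, via Morse theory on a strictly plurisubharmonic exhaustion) that a Stein manifold $Y$ of complex dimension $n$ has the homotopy type of a finite CW-complex of real dimension $n$; since $\dim_\RR Y=2n$, the non-vanishing interval collapses to the single value $i=n$. The hypotheses of the preceding theorem must be verified: a harmonic Higgs bundle comes equipped with the flat connection $\nabla=D_h+\theta+\theta^*_h$, and boundedness of $\theta$ implies, by the compactness theorem quoted in Section~\ref{higgs bundle of bounded geometry}, that the pullback of $(E,h)$ to $X$ is of bounded geometry. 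Property $AC^0$ is then immediate (take $h'=h$), so the previous theorem applies and yields the claimed vanishing of $\H^l_{\nabla(2)}(X,E)$ for $l\neq n$.

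For part (iii), the theorem of Simpson--Mochizuki--Jost--Zuo furnishes a tame pure-imaginary harmonic metric on the semi-simple flat bundle $(E,\nabla)$. The unipotence hypothesis forces the residues of the Higgs field to be nilpotent, so the eigenvalue sum $t(\theta)$ appearing in Mochizuki's estimate (Cor.~22.6, quoted in Section~\ref{higgs bundle of bounded geometry}) vanishes; hence $\theta$ is bounded in the Poincar\'e metric $\omega_{\bar Y,D}$. Together with Proposition~\ref{Poincare metric}, this places us under the hypotheses of (ii) on the Poincar\'e covering $p\colon X\to Y$, and one more application of the preceding theorem rewrites the vanishing of $\H^i_{\nabla(2)}(X,E)\otimes_{\VN(\Gamma)}\U(\Gamma)$ as the desired vanishing of $H^i(Y,\p(\underline E))\otimes_{\VN(\Gamma)}\U(\Gamma)$ for $i\neq n$.

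The only real obstacle is the verification of hypotheses: one must track carefully that bounded Higgs field plus Poincar\'e covering is enough to invoke the preceding theorem (which requires bounded curvature, positive injectivity radius, property $AC^0$), and for (iii) that the tame nilpotent hypothesis is precisely what forces $\|\theta\|_{h,\omega_{\bar Y,D}}$ to stay bounded near $D$. Once these reductions are recorded, the three statements are formal consequences of the range $[\dim_\RR Y-k,k]$ together with the homotopy dimension of a Stein manifold.
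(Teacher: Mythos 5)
Your proposal is correct and follows essentially the same route as the paper: the identification $\H^{i}_{\nabla(2)}(X,E)\simeq H^{i}(Y,\p(E))$ mod torsion from the preceding theorem, vanishing above the homotopy dimension $k$ by homotopy invariance of local-system cohomology, the lower bound $\dim_{\RR}Y-k$ from Poincar\'e--Hodge duality $b^{i}_{(2)}=b^{\dim_{\RR}Y-i}_{(2)}$, and Andreotti--Frankel for the Stein case. Your explicit verification of the hypotheses in (ii) and (iii) (bounded Higgs field giving bounded geometry and $AC^{0}$, and tameness with nilpotent residues via Simpson--Mochizuki giving boundedness of $\theta$ in the Poincar\'e metric) simply spells out what the paper leaves implicit, so there is nothing to add.
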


We recall that Section~\ref{ddbar lemma for higgs bundles} established an isomorphism mod $\tau_{\dim}$ or $\tau_{\U(\Gamma)}$ between  DeRham cohomology and Dolbeault cohomology of $(E,\theta)$.

This theorem applies in particular to a unitary representation that is a holomorphic hermitian bundle with vanishing hermitian curvature. For such a bundle over a Kähler manifold, the Hodge decomposition is compatible with the harmonic projection, hence
\begin{align}
\oplus_{p+q=i}\H^{(p,q)}_{\dbar (2)}(X,E)\to H^{i}(Y,\p(E))
\end{align}
is an isomorphism mod $\tau_{\dim}$ or $\tau_{\U(\Gamma)}$. In particular, if $Y$ has the homotopy type of a $CW-$complex of dimension $k$, then the Dolbeault $l^{2}-$cohomology groups $\H^{(p,q)}_{\dbar (2)}(X,E)$ of a flat hermitian holomorphic bundle are non-vanishing only in the range $[2n-k,k]$ and the Galois $\partial\dbar-$lemma is true.
 \subsection{Application to compact Kähler manifolds}

Let $p:X\to Y$ be a covering of a compact manifold. Then the $l^{2}-$cohomology may be developed using sheaf theory. Campana-Demailly \cite{CamDem} and Eyssidieux \cite{Eys} defined an exact functor $\F\to \p\F$ (see also Definition~\ref{l2 direct image} of the present article) from coherent sheaves over $Y$ to sheaves over $Y$ such that, for a sheaf $\F$ associated to a hermitian holomorphic vector bundle $(F,h)\to Y$, the cohomology group $H^{i}(Y,\p\F)$ of the sheaf $\p\F\to Y$ is naturally isomorphic to the Dolbeault $l^{2}-$cohomology groups $H^{i}_{\dbar(2)}(X,p^{*}(F))$. Exactness of the functor implies that the $l^{2}-$hypercohomology of a Higgs bundle $(E,\theta)$ is the limit of two spectral sequences, well known in the absolute case $X=Y$. The first, $$H^{p}_{\dbar (2)}(X,\Omega^{q}\otimes E)\Rightarrow \HH^{p+q}_{(2)}(X,(E,\theta),\omega)\,,$$ defined in Section~\ref{ddbar lemma for higgs bundles}, does not depend upon compactness.

  The second $$H^{i}(Y,\p\H^{j}(\theta))\Rightarrow \HH^{i+j}(Y,\p(E,\theta))$$ relates the degeneracy locus of $\theta$, through the homology sheaf $\H^{j}(\theta):=\H^{j}(\ldots\overset{\theta}{\to}E\otimes \Omega^{j}\overset{\theta}{\to}\ldots)$, to the global $l^{2}-$cohomology groups. The dimensions of the support of the homology sheaves $\H^{j}(\theta)$ restrict therefore the non-vanishing range of the $l^{2}-$cohomology of the harmonic Higgs bundle. We can state a theorem for flat unitary bundles and holomorphic one forms since, good estimates on the dimension of the homology locus  exist (see \cite{GreLaz}, \cite{Eis}): Let $\theta$ be a holomorphic one form, and let $Z(\theta)$ be the zero locus of $\theta$ then, for all $x\in Y$, then $\H^{j}(\theta)_{x}$ vanishes, if $j<\codim Z(\alpha)_{x}$ and it is not equal to zero, if $j=\codim Z(\alpha)_{x}$.

\begin{corollary} Let $p:X\to Y$ be a Galois covering of a Kähler manifolds, let $\theta$ be a holomorphic one form and
$(E,\nabla,h)\to Y$ be a flat unitary bundle.
Then the complex $$\ldots\overset{\theta}{\to}H^{i}_{\dbar(2)}(X,E\otimes\Omega^{j-1})\overset{\theta}{\to}H^{i}_{\dbar(2)}(X,E\otimes \Omega^{j})\overset{\theta}{\to}
H^{i}_{\dbar(2)}(X,E\otimes \Omega^{j+1})\overset{\theta}{\to}\ldots$$
is exact  mod $\tau_{\dim}$ or $\tau_{\U(\Gamma)}$, in degree $(i,j)$ such $|i+j -n|>\dim Z(\alpha)$ 
\item Assume moreover that $\codim Z(\alpha)=n$, then $H^{n}_{(2)}(X,\Ker(\nabla))\not=0$.
\end{corollary}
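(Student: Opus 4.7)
The plan is to derive both claims from the two spectral sequences for the $l^{2}$-hypercohomology of the Higgs bundle $(E,\theta)$ on $Y$, combined with the Eisenbud-type support estimates on the homology sheaves of the Koszul differential $\theta\wedge$. Throughout I tensor everything with $\U(\Gamma)$ and write $d:=\dim Z(\theta)$.

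First I would observe that since $(E,h,\nabla)$ is flat unitary and $\theta$ is a holomorphic $1$-form with $\theta\wedge\theta=0$, the data $(E,\dbar,\theta,h)$ define a harmonic Higgs bundle with bounded Higgs field in the sense of Definition~\ref{definition of harmonic higgs bundle}, and Corollary~\ref{Green-Lazarsfeld for unitary bundles} applies. Thus the first (Hodge-to-de~Rham) spectral sequence
\[
E_{1}^{p,q}=H^{q}_{\dbar(2)}(X,E\otimes\Omega^{p})\otimes_{\VN(\Gamma)}\U(\Gamma),\qquad d_{1}=\theta\wedge,
\]
degenerates at $E_{2}$ and abuts to $\HH_{(2)}^{p+q}(X,(E,\theta),\omega)\otimes\U(\Gamma)$, which in turn is isomorphic to $H^{p+q}_{\nabla(2)}(X,E)\otimes\U(\Gamma)$. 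By construction the $E_{2}$-entry in bi-degree $(j,i)$ is exactly the cohomology of the complex displayed in the corollary at position $(i,j)$.

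Next, since $Y$ is compact, the sheaf-theoretic presentation via the $l^{2}$-direct image functor $\p$ produces the second spectral sequence
\[
{}'E_{2}^{i,j}=H^{i}\bigl(Y,\p\,\mathcal{H}^{j}(\theta)\bigr)\Rightarrow\HH^{i+j}\bigl(Y,\p(E\otimes\Omega^{\tbullet},\theta)\bigr)\otimes\U(\Gamma),
\]
converging to the same abutment. I would invoke the Eisenbud-style statement recalled just before the corollary: for the Koszul differential associated to a holomorphic $1$-form, $\mathcal{H}^{j}(\theta)_{x}$ vanishes if $j<\codim_{x}Z(\theta)$ and is supported on $Z(\theta)$. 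Thus $\mathcal{H}^{j}(\theta)=0$ for $j<n-d$, and the exactness of $\p$ together with the support-dimension bound (the support of $\p\mathcal{H}^{j}(\theta)$ has complex dimension at most $d$) gives $H^{i}(Y,\p\mathcal{H}^{j}(\theta))=0$ for $i>d$. Combined, these vanishings imply
\[
\HH^{k}_{(2)}(X,(E,\theta),\omega)\otimes\U(\Gamma)=0\qquad\text{whenever }|k-n|>d.
\]

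For the first assertion, the $E_{2}$-degeneration of the first spectral sequence identifies $E_{\infty}^{j,i}=E_{2}^{j,i}$ with the graded pieces of a finite filtration on $\HH^{i+j}_{(2)}\otimes\U(\Gamma)$; when $|i+j-n|>d$ the abutment vanishes, forcing $E_{2}^{j,i}=0$, i.e.\ exactness of the complex $\theta\wedge$ at position $(i,j)$ modulo $\tau_{\dim}$. For the second assertion, when $\codim Z(\theta)=n$ we have $d=0$, so $\mathcal{H}^{j}(\theta)=0$ for $j<n$ while $\mathcal{H}^{n}(\theta)$ is a non-zero coherent sheaf supported on the finite set $Z(\theta)$; hence $\p\mathcal{H}^{n}(\theta)\neq 0$, $H^{i}(Y,\p\mathcal{H}^{n}(\theta))=0$ for $i>0$, and $H^{0}(Y,\p\mathcal{H}^{n}(\theta))\neq 0$. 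The second spectral sequence collapses onto this single non-zero term, giving $\HH^{n}_{(2)}\otimes\U(\Gamma)\neq 0$; invoking the abutment of the first spectral sequence identifies this with $H^{n}_{\nabla(2)}(X,E)\otimes\U(\Gamma)\cong H^{n}_{(2)}(X,\Ker(\nabla))\otimes\U(\Gamma)$, whose non-vanishing forces $H^{n}_{(2)}(X,\Ker(\nabla))\neq 0$.

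The main obstacle I anticipate is the careful handling of the $l^{2}$-direct image functor $\p$ applied to the possibly non-locally-free sheaves $\mathcal{H}^{j}(\theta)$: since $\p$ is exact on coherent sheaves and is compatible with restriction to support, the Eisenbud codimension vanishing and the classical Serre-type bound $H^{i}(Z,\cdot)=0$ for $i>\dim Z$ transfer to the sheaves $\p\mathcal{H}^{j}(\theta)$, but making this precise requires a careful appeal to the Campana-Demailly-Eyssidieux formalism for $\p$, together with the fact that the $l^{2}$-direct image of a skyscraper at a point is a non-zero locally constant sheaf of $\VN(\Gamma)$-modules on a neighbourhood of its preimage.
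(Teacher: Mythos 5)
Your argument is correct and follows essentially the same route as the paper: the paper's (implicit) proof is exactly the combination of the two spectral sequences for the $l^{2}$-hypercohomology of $(E,\theta)$, the Eisenbud/Green--Lazarsfeld bound on the homology sheaves $\H^{j}(\theta)$ of the Koszul complex, and the $E_{2}$-degeneration of the Hodge-to-DeRham spectral sequence from Corollary~\ref{Green-Lazarsfeld for unitary bundles}. The point you flag as delicate (cohomology of $\p\H^{j}(\theta)$ vanishing above the dimension of its support, via the Campana--Demailly--Eyssidieux formalism, and non-vanishing of $\p$ of a skyscraper in the case $\codim Z(\alpha)=n$) is precisely what the paper also invokes without further detail, so your write-up is at the same level of rigor and needs no correction.
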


Now we assume the form $p^{*}(\theta)$ is exact. Let $\pi$ be the projection onto the harmonic space. Then as noticed in Jost-Zuo \cite{JosZuo00}, the mapping $\H^{i}(X,E\otimes \Omega^{j})\overset{\pi\theta}{\longrightarrow} \H^{i}(X,E\otimes \Omega^{j+1})$ vanishes (here $E$ may be any hermitian bundle). Indeed let $\theta =du$.
Then $u$ has linear growth. Let $\chi$ be a smooth positive function such that $\chi_{|]-\infty, 1]}=1$, $\chi_{|[2,+\infty[}=0$, and let $d(0,.)$ be the distance from a fixed point $0\in X$. One checks that for any harmonic form $\alpha\in \H^{i}_{\dbar(2)}(X,E\otimes \Omega^{j})$, \begin{align}\lim_{\epsilon\to 0 }d[\chi(\epsilon d(0,.)) u .\alpha]=\theta.\alpha\end{align} in $l^{2}$.
Therefore:
\begin{theorem}\label{untwisted Green-Lazarsfeld theorem} In the above situation assume that $p^{*}(\alpha)$ is exact, then $\H^{i}_{\dbar(2)}(X,E\otimes \Omega^{j})$ is vanishing, if $|i+j-n|> \dim Z(\alpha)$. In particular $\H^{k}_{(2)}(X,\Ker(\nabla))$ is vanishing, if $|k-n|>\dim Z(\alpha)$.
\begin{itemize}
\item[(i)] Assume that $\alpha$ is nowhere vanishing, then $p:X\to Y$ is $l^{2}-$acyclic for any unitary bundle.
\item[(ii)] Assume that $\dim Z(\alpha)=0$ then $H^{i}_{(2)}(X,\Ker(\nabla))$ is vanishing, if $i\not=n$ and non vanishing for $i=n$.
\end{itemize}
\end{theorem}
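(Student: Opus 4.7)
The argument combines two ingredients already established in the paragraphs preceding the statement. The first is the preceding Corollary (the Green--Lazarsfeld exactness for flat unitary bundles): the complex
\[
\cdots\overset{\theta}{\to} H^{i}_{\dbar(2)}(X,E\otimes\Omega^{j-1})\overset{\theta}{\to} H^{i}_{\dbar(2)}(X,E\otimes\Omega^{j})\overset{\theta}{\to} H^{i}_{\dbar(2)}(X,E\otimes\Omega^{j+1})\overset{\theta}{\to}\cdots
\]
is exact modulo $\tau_{\dim}$ (equivalently $\tau_{\U(\Gamma)}$) at position $(i,j)$ as soon as $|i+j-n|>\dim Z(\theta)$; this uses only that $E$ is flat unitary and does not need any exactness of $p^{*}\theta$. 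The second ingredient is the Jost--Zuo-style identity $\pi\theta=0$ on the harmonic (equivalently reduced Dolbeault) spaces, established just above the theorem: writing $p^{*}\theta=du$ with $u$ of linear growth, one has $\theta\cdot\alpha=\lim_{\epsilon\to 0}d[\chi(\epsilon d(0,\cdot))\,u\,\alpha]$ in $L^{2}$, so $\theta\cdot\alpha$ is a limit of exact forms and its harmonic projection vanishes.

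The first step is to observe that the second ingredient says exactly that every differential $\theta$ in the complex of the first ingredient is the zero morphism on reduced Dolbeault cohomology. Plugging this into the exactness statement, the identity $\Ker(\theta)=\overline{\Ran(\theta)}$ modulo $\tau$ collapses to $H^{i}_{\dbar(2)}(X,E\otimes\Omega^{j})=0$ modulo $\tau$ at every bidegree with $|i+j-n|>\dim Z(\theta)$. This is the main vanishing statement of the theorem.

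For the ``in particular'' assertion on $\H^{k}_{(2)}(X,\Ker\nabla)$, I would invoke the Hodge decomposition on the complete K\"ahler manifold $X$ together with the DeRham/sheaf identification from the previous section, yielding, modulo $\tau$,
\[
\H^{k}_{\nabla(2)}(X,E)\simeq\bigoplus_{i+j=k}H^{i}_{\dbar(2)}(X,E\otimes\Omega^{j})\simeq H^{k}(Y,\p(\Ker\nabla)).
\]
When $|k-n|>\dim Z(\theta)$, every summand in the middle vanishes modulo $\tau$ by the previous step, giving the stated vanishing for the local system.

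The two special cases follow immediately. In (i), $\theta$ nowhere vanishing forces $Z(\theta)=\emptyset$, so $\dim Z(\theta)=-\infty$ by convention and the condition $|i+j-n|>\dim Z(\theta)$ holds in every bidegree; thus every Dolbeault piece vanishes modulo $\tau$, proving full $l^{2}$-acyclicity for $X$ with coefficients in any unitary local system. In (ii), the vanishing $\H^{k}_{(2)}(X,\Ker\nabla)=0$ for $k\neq n$ follows directly with $\dim Z(\theta)=0$; the non-vanishing at $k=n$ is the second clause of the preceding Corollary (the case $\codim Z(\theta)=n$), which does not invoke the exactness hypothesis on $p^{*}\theta$ at all. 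The only delicate point is the bookkeeping with the Serre torsion subcategory $\tau_{\dim}$ (or $\tau_{\U(\Gamma)}$): one must check that exactness modulo $\tau$ combined with trivial differentials forces vanishing modulo $\tau$, which is automatic since $\tau$ is closed under subquotients. Once this is granted, the proof is a purely formal consequence of (a) and (b).
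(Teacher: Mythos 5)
Your proposal follows essentially the same route as the paper: the preceding Corollary gives exactness mod $\tau_{\dim}$ of the $\theta$-complex in the range $|i+j-n|>\dim Z(\alpha)$, the Jost--Zuo cut-off argument shows the induced maps $\pi\theta$ vanish when $p^{*}\theta$ is exact, and the K\"ahler identification of $\Delta$ with $2\Delta_{\dbar}$ for a flat unitary bundle plus the second clause of the Corollary handle the DeRham statement and cases (i)--(ii). The only point worth making explicit is that for the harmonic spaces $\H^{i}_{\dbar(2)}$, vanishing mod $\tau_{\dim}$ means $\dim_{\Gamma}=0$, which by faithfulness of the $\Gamma$-trace gives the honest vanishing asserted in the theorem.
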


Using the decomposition in Fourier series of harmonic forms on Abelian covering, one recovers the following generic vanishing theorem of Green and Lazarsfeld \cite{GreLaz}:
\begin{corollary}
Let $\alpha$ be a holomorphic form on a compact Kähler manifold $Y$ and let $E\to Y$ be a unitary holomorphic vector bundle. Then $H^{i}(Y,E\otimes\Omega^{i}\otimes L)=0$ for a generic flat unitary line bundle, if $|i+j-n|>\dim Z(\alpha)$.
\end{corollary}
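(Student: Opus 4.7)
\medskip

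The plan is to deduce this generic vanishing statement from Theorem~\ref{untwisted Green-Lazarsfeld theorem} by choosing an Abelian Galois covering $p : X \to Y$ on which $p^{*}(\alpha)$ becomes exact, and then Fourier-decomposing the $L^{2}$ cohomology on $X$ according to the characters of the Galois group $\Gamma$. Concretely, take $\Gamma$ to be the torsion-free quotient of $H_{1}(Y,\mathbb{Z})$, or any finitely generated free Abelian quotient of $\pi_{1}(Y)$ through which the period homomorphism $\gamma \mapsto \int_{\gamma}\re\,\alpha + i\int_{\gamma}\im\,\alpha$ factors. Then $\alpha$ lifts to a $\Gamma$-invariant holomorphic $1$-form on $X$ whose periods all vanish, hence $p^{*}(\alpha) = du$ for some holomorphic primitive $u$ of linear growth. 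The pullback metric is the lift of a K\"ahler metric on a compact manifold, so the covering satisfies the hypotheses of the previous theorem.

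Next, I would apply Theorem~\ref{untwisted Green-Lazarsfeld theorem} to the pullback unitary bundle $p^{*}(E)$ to conclude $\H^{i}_{\dbar(2)}(X,p^{*}(E)\otimes \Omega^{j}) = 0$ whenever $|i+j-n| > \dim Z(\alpha)$. The passage to $Y$ proceeds by the standard Fourier/Plancherel decomposition for the regular representation of the Abelian group $\Gamma$ on $L^{2}(X)$: characters $\chi \in \hat \Gamma$ correspond to flat unitary line bundles $L_{\chi} \to Y$ (via $\rho_{\chi} : \pi_{1}(Y) \twoheadrightarrow \Gamma \overset{\chi}{\to} U(1)$), and one has a $\Gamma$-equivariant unitary isomorphism
\begin{align*}
L^{2}(X, p^{*}(E)\otimes \Omega^{j}) \;\cong\; \int^{\oplus}_{\hat \Gamma} L^{2}(Y, E\otimes \Omega^{j}\otimes L_{\chi})\, d\chi \,.
\end{align*}
The Kodaira Laplacian on $X$ is $\Gamma$-invariant and decomposes fibrewise as the Laplacian on $Y$ twisted by $L_{\chi}$. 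Since the projector onto harmonic forms is a bounded Borel function of the Laplacian, it commutes with the decomposition, so
\begin{align*}
\H^{i}_{\dbar(2)}(X, p^{*}(E)\otimes \Omega^{j}) \;\cong\; \int^{\oplus}_{\hat \Gamma} \H^{i}_{\dbar}(Y, E\otimes \Omega^{j}\otimes L_{\chi})\, d\chi\,.
\end{align*}
Since $Y$ is compact, each fibre coincides with $H^{i}(Y, E\otimes \Omega^{j}\otimes L_{\chi})$ by ordinary Hodge theory.

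Vanishing of the left-hand side forces $H^{i}(Y, E\otimes \Omega^{j}\otimes L_{\chi}) = 0$ for a.e.\ $\chi$ with respect to Haar measure on $\hat \Gamma$. By semicontinuity of cohomology (Grauert), the set of $\chi \in \hat\Gamma$ for which this cohomology is nonzero is closed in $\hat\Gamma$; a proper closed subset of the torus $\hat\Gamma$ has measure zero, and $\hat\Gamma$ is naturally a (full-dimensional) subvariety of the Picard variety of flat unitary line bundles on $Y$. Hence for a generic $L$ in $\hat\Gamma$ (and thus generic in the full unitary Picard), $H^{i}(Y, E\otimes \Omega^{j}\otimes L) = 0$ in the advertised range.

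The main delicate point is the direct-integral decomposition of the harmonic space: one must verify that the $\Gamma$-equivariant identification on $L^{2}$-sections is compatible with the $\dbar$-operator and its adjoint (straightforward, since both are pulled back from $Y$), and that the fibrewise projector agrees with the harmonic projector on $(Y,L_{\chi})$ measurably in $\chi$. This is a measurable functional calculus argument that goes through because the twisted Laplacians form a measurable field of self-adjoint operators. Once this is in place, the vanishing transfers from the $L^{2}$-theory on $X$ to generic fibres without further work.
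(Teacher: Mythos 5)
Your proposal is correct and follows essentially the same route as the paper, which deduces the corollary from Theorem~\ref{untwisted Green-Lazarsfeld theorem} precisely by the Fourier (direct-integral) decomposition of square integrable harmonic forms on an Abelian covering where $p^{*}(\alpha)$ becomes exact, identifying the fibres with the cohomology of the twists $E\otimes\Omega^{j}\otimes L_{\chi}$ and concluding generic vanishing by semicontinuity. The only point to keep in mind is that characters of the free Abelian quotient only sweep out the identity component of the space of flat unitary line bundles, which is what ``generic'' refers to here.
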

Compact Kähler manifolds supporting a nowhere vanishing holomorphic form are studied in \cite{PopSch}. In the projective case, the restriction of this form to generic hyperplane section of such manifolds has discrete zero set.
\subsection{ }
The analogous theorem for the Koszul complex of a closed holomorphic one form over a non-compact manifold does not hold: assume that $\theta$ is the restriction of a holomorphic one form, still denoted $\theta$, on a curve $\bar Y$, and let $Y=\bar Y\setminus Z( \theta)\cup S'$, $p:X\to Y$ be the universal covering map. Then the homology sheaves $\H^{i}(\theta)$ are trivial, nevertheless the $l^{2}-$cohomology groups $\H^{0,1}_{(2)}(X,E)\simeq \H^{1,0}_{(2)}(X,E^{*})$ of a flat unitary bundle are non vanishing as soon as $s$, the cardinal of $Z(\theta)\cup S'$, is strictly bigger than $2-2g$. Indeed Theorem~\ref{characteristic integral for a tame harmonic bundle} implies that $\chi_{(2)}(X,E)=r(1-g-\frac{s}{2})$. But the maximum principle implies that $H^{0}_{(2)}(X,E)$ vanishes when $E$ is a flat unitary bundle on a manifold of bounded geometry $X$. 

As in the compact case, the first spectral sequence degenerates at $E_{2}$, while for the second, 
the vanishing of the homology sheaves $\H^{i}(\theta)$ does not imply vanishing of $\Ker(\theta)/\Ran(\theta)$, where now $\theta$ acts on Sobolev spaces.

The study of $(E,\theta)\to Y$, where $Y=\bar Y\setminus D$, and $\theta$ is a logarithmic one form with pole in $D$ is more involved: The form ${dz}/{z}$ is not bounded in the Poincaré metric and results of \cite{Ara97,Dim10} show that the logarithmic Hodge to DeRham spectral sequence $H^{q}(\bar Y,\Omega^{p}_{\bar Y}{log\,D})\abupt \HH^{p+q}(\bar Y, (\Omega^{\tbullet}_{\bar Y}({\log\,D}),\theta))$ does not  degenerate at $E_{2}$ in general.


\begin{thebibliography}{10}

\bibitem{AndVes}
Aldo Andreotti and Edoardo Vesentini.
\newblock Carleman estimates for the {L}aplace-{B}eltrami equation on complex
  manifolds.
\newblock {\em Inst. Hautes \'Etudes Sci. Publ. Math.}, (25):81--130, 1965.

\bibitem{Ara97}
Donu Arapura.
\newblock Geometry of cohomology support loci for local systems. {I}.
\newblock {\em J. Algebraic Geom.}, 6(3):563--597, 1997.

\bibitem{AtiBotPat}
M.~Atiyah, R.~Bott, and V.~K. Patodi.
\newblock On the heat equation and the index theorem.
\newblock {\em Invent. Math.}, 19:279--330, 1973.

\bibitem{Ati}
M.~F. Atiyah.
\newblock Elliptic operators, discrete groups and von {N}eumann algebras.
\newblock In {\em Colloque ``{A}nalyse et {T}opologie'' en l'{H}onneur de
  {H}enri {C}artan ({O}rsay, 1974)}, pages 43--72. Ast\'erisque, No. 32--33.
  Soc. Math. France, Paris, 1976.

\bibitem{AtiBot}
M.~F. Atiyah and R.~Bott.
\newblock A {L}efschetz fixed point formula for elliptic complexes. {I}.
\newblock {\em Ann. of Math. (2)}, 86:374--407, 1967.

\bibitem{BalBruCar}
Werner Ballmann, Jochen Br\"uning, and Gilles Carron.
\newblock Index theorems on manifolds with straight ends.
\newblock {\em Compos. Math.}, 148(6):1897--1968, 2012.

\bibitem{Bea}
Alan~F. Beardon.
\newblock {\em The geometry of discrete groups}, volume~91 of {\em Graduate
  Texts in Mathematics}.
\newblock Springer-Verlag, New York, 1983.

\bibitem{BerGetVer}
Nicole Berline, Ezra Getzler, and Mich{\`e}le Vergne.
\newblock {\em Heat kernels and {D}irac operators}.
\newblock Grundlehren Text Editions. Springer-Verlag, Berlin, 2004.
\newblock Corrected reprint of the 1992 original.

\bibitem{BerDemIllPet}
Jos{\'e} Bertin, Jean-Pierre Demailly, Luc Illusie, and Chris Peters.
\newblock {\em Introduction to {H}odge theory}, volume~8 of {\em SMF/AMS Texts
  and Monographs}.
\newblock American Mathematical Society, Providence, RI; Soci\'et\'e
  Math\'ematique de France, Paris, 2002.
\newblock Translated from the 1996 French original by James Lewis and Peters.

\bibitem{BruLes}
J.~Br{\"u}ning and M.~Lesch.
\newblock Hilbert complexes.
\newblock {\em J. Funct. Anal.}, 108(1):88--132, 1992.

\bibitem{BurKraKuh}
J.~I. Burgos~Gil, J.~Kramer, and U.~K\"uhn.
\newblock Arithmetic characteristic classes of automorphic vector bundles.
\newblock {\em Doc. Math.}, 10:619--716, 2005.

\bibitem{CamDem}
Fr{\'e}d{\'e}ric Campana and Jean-Pierre Demailly.
\newblock Cohomologie {$L\sp 2$} sur les rev\^etements d'une vari\'et\'e
  complexe compacte.
\newblock {\em Ark. Mat.}, 39(2):263--282, 2001.

\bibitem{Car98}
Gilles Carron.
\newblock Une suite exacte en {$L^2$}-cohomologie.
\newblock {\em Duke Math. J.}, 95(2):343--372, 1998.

\bibitem{Chee}
Jeff Cheeger.
\newblock On the {H}odge theory of {R}iemannian pseudomanifolds.
\newblock In {\em Geometry of the {L}aplace operator ({P}roc. {S}ympos. {P}ure
  {M}ath., {U}niv. {H}awaii, {H}onolulu, {H}awaii, 1979)}, Proc. Sympos. Pure
  Math., XXXVI, pages 91--146. Amer. Math. Soc., Providence, R.I., 1980.

\bibitem{CheGro85-bounds}
Jeff Cheeger and Mikhael Gromov.
\newblock Bounds on the von {N}eumann dimension of {$L\sp 2$}-cohomology and
  the {G}auss-{B}onnet theorem for open manifolds.
\newblock {\em J. Differential Geom.}, 21(1):1--34, 1985.

\bibitem{CheGro85-characteristic}
Jeff Cheeger and Mikhael Gromov.
\newblock On the characteristic numbers of complete manifolds of bounded
  curvature and finite volume.
\newblock In {\em Differential geometry and complex analysis}, pages 115--154.
  Springer, Berlin, 1985.

\bibitem{CheGro-group}
Jeff Cheeger and Mikhael Gromov.
\newblock {$L\sb 2$}-cohomology and group cohomology.
\newblock {\em Topology}, 25(2):189--215, 1986.

\bibitem{CheGro91-chopping}
Jeff Cheeger and Mikhael Gromov.
\newblock Chopping {R}iemannian manifolds.
\newblock In {\em Differential geometry}, volume~52 of {\em Pitman Monogr.
  Surveys Pure Appl. Math.}, pages 85--94. Longman Sci. Tech., Harlow, 1991.

\bibitem{CheGroTay}
Jeff Cheeger, Mikhail Gromov, and Michael Taylor.
\newblock Finite propagation speed, kernel estimates for functions of the
  {L}aplace operator, and the geometry of complete {R}iemannian manifolds.
\newblock {\em J. Differential Geom.}, 17(1):15--53, 1982.

\bibitem{CheZhu}
Bing-Long Chen and Xi-Ping Zhu.
\newblock Uniqueness of the {R}icci flow on complete noncompact manifolds.
\newblock {\em J. Differential Geom.}, 74(1):119--154, 2006.

\bibitem{CheLiYau}
Siu~Yuen Cheng, Peter Li, and Shing~Tung Yau.
\newblock On the upper estimate of the heat kernel of a complete {R}iemannian
  manifold.
\newblock {\em Amer. J. Math.}, 103(5):1021--1063, 1981.

\bibitem{Che}
Paul~R. Chernoff.
\newblock Essential self-adjointness of powers of generators of hyperbolic
  equations.
\newblock {\em J. Functional Analysis}, 12:401--414, 1973.

\bibitem{CorGri}
Maurizio Cornalba and Phillip Griffiths.
\newblock Analytic cycles and vector bundles on non-compact algebraic
  varieties.
\newblock {\em Invent. Math.}, 28:1--106, 1975.

\bibitem{Daf}
Michael Dafermos.
\newblock Exhaustions of complete manifolds of bounded curvature.
\newblock In {\em Tsing {H}ua lectures on geometry \&\ analysis ({H}sinchu,
  1990--1991)}, pages 319--322. Int. Press, Cambridge, MA, 1997.

\bibitem{Deldeux}
P.~Deligne.
\newblock Th\'{e}orie de hodge {II}.
\newblock {\em Publ. Math. I.H.E.S}, 40:5--58, 1971.

\bibitem{Deltrois}
P.~Deligne.
\newblock Th\'eorie de hodge {III}.
\newblock {\em Publ. Math. I.H.E.S.}, 44:5--77, 1974.

\bibitem{DelGriMorSul}
Pierre Deligne, Phillip Griffiths, John Morgan, and Dennis Sullivan.
\newblock Real homotopy theory of {K}\"ahler manifolds.
\newblock {\em Invent. Math.}, 29(3):245--274, 1975.

\bibitem{Dim10}
Alexandru Dimca.
\newblock Characteristic varieties and logarithmic differential 1-forms.
\newblock {\em Compos. Math.}, 146(1):129--144, 2010.

\bibitem{Din2018}
P.~Dingoyan.
\newblock A factorization theorem for curves with vanishing self-intersection.
\newblock {\em J. Geom. Anal.}, 28(2):1091--1121, 2018.

\bibitem{Din2013}
Pascal Dingoyan.
\newblock Some mixed {H}odge structures on {$l^2$}-cohomology groups of
  coverings of {K}\"ahler manifolds.
\newblock {\em Math. Ann.}, 357(3):1119--1174, 2013.

\bibitem{Dix}
Jacques Dixmier.
\newblock {\em Les alg\`ebres d'op\'erateurs dans l'espace hilbertien
  (alg\`ebres de von {N}eumann)}.
\newblock Gauthier-Villars \'Editeur, Paris, 1969.
\newblock Deuxi{\`e}me {\'e}dition, revue et augment{\'e}e, Cahiers
  Scientifiques, Fasc. XXV.

\bibitem{Dod}
Jozef Dodziuk.
\newblock de {R}ham-{H}odge theory for {$L\sp{2}$}-cohomology of infinite
  coverings.
\newblock {\em Topology}, 16(2):157--165, 1977.

\bibitem{DonLi}
Harold Donnelly and Peter Li.
\newblock Lower bounds for the eigenvalues of {R}iemannian manifolds.
\newblock {\em Michigan Math. J.}, 29(2):149--161, 1982.

\bibitem{Eis}
David Eisenbud.
\newblock {\em Commutative algebra}, volume 150 of {\em Graduate Texts in
  Mathematics}.
\newblock Springer-Verlag, New York, 1995.
\newblock With a view toward algebraic geometry.

\bibitem{Eys}
Philippe Eyssidieux.
\newblock Invariants de von {N}eumann des faisceaux analytiques coh\'erents.
\newblock {\em Math. Ann.}, 317(3):527--566, 2000.

\bibitem{Ful}
William Fulton.
\newblock {\em Intersection theory}, volume~2 of {\em Ergebnisse der Mathematik
  und ihrer Grenzgebiete. 3. Folge. A Series of Modern Surveys in Mathematics
  [Results in Mathematics and Related Areas. 3rd Series. A Series of Modern
  Surveys in Mathematics]}.
\newblock Springer-Verlag, Berlin, second edition, 1998.

\bibitem{GreLaz}
Mark Green and Robert Lazarsfeld.
\newblock Deformation theory, generic vanishing theorems, and some conjectures
  of {E}nriques, {C}atanese and {B}eauville.
\newblock {\em Invent. Math.}, 90(2):389--407, 1987.

\bibitem{Gri}
Alexander Grigor'yan.
\newblock {\em Heat kernel and analysis on manifolds}, volume~47 of {\em AMS/IP
  Studies in Advanced Mathematics}.
\newblock American Mathematical Society, Providence, RI; International Press,
  Boston, MA, 2009.

\bibitem{GroHenShu}
M.~Gromov, G.~Henkin, and M.~Shubin.
\newblock Holomorphic {$L^2$} functions on coverings of pseudoconvex manifolds.
\newblock {\em Geom. Funct. Anal.}, 8(3):552--585, 1998.

\bibitem{GroShu}
M.~Gromov and M.~A. Shubin.
\newblock Near-cohomology of {H}ilbert complexes and topology of non-simply
  connected manifolds.
\newblock {\em Ast\'erisque}, (210):9--10, 283--294, 1992.
\newblock M{\'e}thodes semi-classiques, Vol. 2 (Nantes, 1991).

\bibitem{HarPho}
Michael Harris and D.~H. Phong.
\newblock Cohomologie de {D}olbeault \`a croissance logarithmique \`a l'infini.
\newblock {\em C. R. Acad. Sci. Paris S\'er. I Math.}, 302(8):307--310, 1986.

\bibitem{Hit87}
N.~J. Hitchin.
\newblock The self-duality equations on a {R}iemann surface.
\newblock {\em Proc. London Math. Soc. (3)}, 55(1):59--126, 1987.

\bibitem{JosZuo00}
J\"urgen Jost and Kang Zuo.
\newblock Vanishing theorems for {$L^2$}-cohomology on infinite coverings of
  compact {K}\"ahler manifolds and applications in algebraic geometry.
\newblock {\em Comm. Anal. Geom.}, 8(1):1--30, 2000.

\bibitem{KadRin}
Richard~V. Kadison and John~R. Ringrose.
\newblock {\em Fundamentals of the theory of operator algebras. {V}ol.
  {I},{II}}, volume 15,16 of {\em Graduate Studies in Mathematics}.
\newblock American Mathematical Society, Providence, RI, 1997.
\newblock Elementary theory, Reprint of the 1983 original.

\bibitem{KobRyo}
Ryoichi Kobayashi.
\newblock K\"ahler-{E}instein metric on an open algebraic manifold.
\newblock {\em Osaka J. Math.}, 21(2):399--418, 1984.

\bibitem{Kob87}
Shoshichi Kobayashi.
\newblock {\em Differential geometry of complex vector bundles}, volume~15 of
  {\em Publications of the Mathematical Society of Japan}.
\newblock Princeton University Press, Princeton, NJ; Iwanami Shoten, Tokyo,
  1987.
\newblock Kan{\^o} Memorial Lectures, 5.

\bibitem{Kor}
Yu.~A. Kordyukov.
\newblock {$L^p$}-theory of elliptic differential operators on manifolds of
  bounded geometry.
\newblock {\em Acta Appl. Math.}, 23(3):223--260, 1991.

\bibitem{LiYau}
Peter Li and Shing-Tung Yau.
\newblock On the parabolic kernel of the {S}chr\"odinger operator.
\newblock {\em Acta Math.}, 156(3-4):153--201, 1986.

\bibitem{LioMag}
J.-L. Lions and E.~Magenes.
\newblock {\em Probl\`emes aux limites non homog\`enes et applications. {V}ol.
  1}.
\newblock Travaux et Recherches Math\'ematiques, No. 17. Dunod, Paris, 1968.

\bibitem{LiuSunYau2004}
Kefeng Liu, Xiaofeng Sun, and Shing-Tung Yau.
\newblock Canonical metrics on the moduli space of {R}iemann surfaces. {I}.
\newblock {\em J. Differential Geom.}, 68(3):571--637, 2004.

\bibitem{LiuSunYau2005}
Kefeng Liu, Xiaofeng Sun, and Shing-Tung Yau.
\newblock Canonical metrics on the moduli space of {R}iemann surfaces. {II}.
\newblock {\em J. Differential Geom.}, 69(1):163--216, 2005.

\bibitem{LiuSunYau2014}
Kefeng Liu, Xiaofeng Sun, and Shing-Tung Yau.
\newblock Goodness of canonical metrics on the moduli space of {R}iemann
  surfaces.
\newblock {\em Pure Appl. Math. Q.}, 10(2):223--243, 2014.

\bibitem{Lot}
J.~Lott.
\newblock {$L^2$}-cohomology of geometrically infinite hyperbolic
  {$3$}-manifolds.
\newblock {\em Geom. Funct. Anal.}, 7(1):81--119, 1997.

\bibitem{LotLuc}
John Lott and Wolfgang L\"uck.
\newblock {$L^2$}-topological invariants of {$3$}-manifolds.
\newblock {\em Invent. Math.}, 120(1):15--60, 1995.

\bibitem{LucSch}
W.~L\"uck and T.~Schick.
\newblock {$L^2$}-torsion of hyperbolic manifolds of finite volume.
\newblock {\em Geom. Funct. Anal.}, 9(3):518--567, 1999.

\bibitem{Luc}
Wolfgang L{\"u}ck.
\newblock {\em {$L\sp 2$}-invariants: theory and applications to geometry and
  {$K$}-theory}, volume~44 of {\em Ergebnisse der Mathematik und ihrer
  Grenzgebiete. 3. Folge. A Series of Modern Surveys in Mathematics [Results in
  Mathematics and Related Areas. 3rd Series. A Series of Modern Surveys in
  Mathematics]}.
\newblock Springer-Verlag, Berlin, 2002.

\bibitem{MaMar}
Xiaonan Ma and George Marinescu.
\newblock {\em Holomorphic {M}orse inequalities and {B}ergman kernels}, volume
  254 of {\em Progress in Mathematics}.
\newblock Birkh\"auser Verlag, Basel, 2007.

\bibitem{Moc07}
Takuro Mochizuki.
\newblock Asymptotic behaviour of tame harmonic bundles and an application to
  pure twistor {$D$}-modules. {I}, {II}.
\newblock {\em Mem. Amer. Math. Soc.}, 185(869, 870):xii+324,xii+565, 2007.

\bibitem{Mum}
D.~Mumford.
\newblock Hirzebruch's proportionality theorem in the noncompact case.
\newblock {\em Invent. Math.}, 42:239--272, 1977.

\bibitem{MurVon}
F.~J. Murray and J.~Von~Neumann.
\newblock On rings of operators.
\newblock {\em Ann. of Math. (2)}, 37(1):116--229, 1936.

\bibitem{Nor}
Madhav~V. Nori.
\newblock Zariski's conjecture and related problems.
\newblock {\em Ann. Sci. \'Ecole Norm. Sup. (4)}, 16(2):305--344, 1983.

\bibitem{PopSch}
Mihnea Popa and Christian Schnell.
\newblock Kodaira dimension and zeros of holomorphic one-forms.
\newblock {\em Ann. of Math. (2)}, 179(3):1109--1120, 2014.

\bibitem{Roe}
John Roe.
\newblock An index theorem on open manifolds. {I}, {II}.
\newblock {\em J. Differential Geom.}, 27(1):87--113, 115--136, 1988.

\bibitem{Roe-elliptic}
John Roe.
\newblock {\em Elliptic operators, topology and asymptotic methods}, volume 395
  of {\em Pitman Research Notes in Mathematics Series}.
\newblock Longman, Harlow, second edition, 1998.

\bibitem{Sab00}
Claude Sabbah.
\newblock A survey on semi-simple local systems on algebraic manifolds.
\newblock http://www.cmls.polytechnique.fr/perso/sabbah.claude/exposes.html.

\bibitem{SchDissertation}
T.~Schick.
\newblock {\em Analysis on $\partial-$manifolds of bounded geometry, Hodge-de
  Rham isomorphism and $L^{2}-$index theorem}.
\newblock PhD thesis, Universit\"{a}t Mainz, 1996.

\bibitem{SchPacific}
Thomas Schick.
\newblock {$L^2$}-index theorem for elliptic differential boundary problems.
\newblock {\em Pacific J. Math.}, 197(2):423--439, 2001.

\bibitem{SchNachrichten}
Thomas Schick.
\newblock Manifolds with boundary and of bounded geometry.
\newblock {\em Math. Nachr.}, 223:103--120, 2001.

\bibitem{GSch}
Georg Schumacher.
\newblock Harmonic maps of the moduli space of compact {R}iemann surfaces.
\newblock {\em Math. Ann.}, 275(3):455--466, 1986.

\bibitem{Shi}
Wan-Xiong Shi.
\newblock Deforming the metric on complete {R}iemannian manifolds.
\newblock {\em J. Differential Geom.}, 30(1):223--301, 1989.

\bibitem{Shu}
M.~A. Shubin.
\newblock {$L\sp 2$} {R}iemann-{R}och theorem for elliptic operators.
\newblock {\em Geom. Funct. Anal.}, 5(2):482--527, 1995.

\bibitem{ShuBook}
M.A. Shubin.
\newblock Von neumann algebra and ${L}^{2}-$techniques in geometry and
  topology.
\newblock available at http://ium.mccme.ru/f02/vNa.html.

\bibitem{ShuNantes}
M.A. Shubin.
\newblock Spectral theory of elliptic operators on noncompact manifolds.
\newblock {\em Ast\'erisque}, (207):5, 35--108, 1992.
\newblock M{\'e}thodes semi-classiques, Vol. 1 (Nantes, 1991).

\bibitem{Sim90}
Carlos~T. Simpson.
\newblock Harmonic bundles on noncompact curves.
\newblock {\em J. Amer. Math. Soc.}, 3(3):713--770, 1990.

\bibitem{Sim92}
Carlos~T. Simpson.
\newblock Higgs bundles and local systems.
\newblock {\em Inst. Hautes \'Etudes Sci. Publ. Math.}, (75):5--95, 1992.

\bibitem{Takesaki}
M.~Takesaki.
\newblock {\em Theory of operator algebras. {I}}, volume 124 of {\em
  Encyclopaedia of Mathematical Sciences}.
\newblock Springer-Verlag, Berlin, 2002.
\newblock Reprint of the first (1979) edition, Operator Algebras and
  Non-commutative Geometry, 5.

\bibitem{Tay}
Michael~E. Taylor.
\newblock {\em Partial differential equations {II}. {Q}ualitative studies of
  linear equations}, volume 116 of {\em Applied Mathematical Sciences}.
\newblock Springer, New York, second edition, 2011.

\bibitem{TiaYau}
G.~Tian and S.-T. Yau.
\newblock Existence of {K}\"ahler-{E}instein metrics on complete {K}\"ahler
  manifolds and their applications to algebraic geometry.
\newblock In {\em Mathematical aspects of string theory ({S}an {D}iego,
  {C}alif., 1986)}, volume~1 of {\em Adv. Ser. Math. Phys.}, pages 574--628.
  World Sci. Publishing, Singapore, 1987.

\bibitem{Voi}
Claire Voisin.
\newblock {\em Th\'eorie de {H}odge et g\'eom\'etrie alg\'ebrique complexe},
  volume~10 of {\em Cours Sp\'ecialis\'es [Specialized Courses]}.
\newblock Soci\'et\'e Math\'ematique de France, Paris, 2002.

\bibitem{Weid}
Joachim Weidmann.
\newblock {\em Linear operators in {H}ilbert spaces}, volume~68 of {\em
  Graduate Texts in Mathematics}.
\newblock Springer-Verlag, New York-Berlin, 1980.
\newblock Translated from the German by Joseph Sz\"ucs.

\bibitem{Wu}
Hung~Hsi Wu.
\newblock The {B}ochner technique in differential geometry.
\newblock {\em Math. Rep.}, 3(2):i--xii and 289--538, 1988.

\bibitem{Yos}
K{\=o}saku Yosida.
\newblock {\em Functional analysis}.
\newblock Classics in Mathematics. Springer-Verlag, Berlin, 1995.
\newblock Reprint of the sixth (1980) edition.

\bibitem{Zha}
Xi~Zhang.
\newblock Hermitian {Y}ang-{M}ills-{H}iggs metrics on complete {K}\"ahler
  manifolds.
\newblock {\em Canad. J. Math.}, 57(4):871--896, 2005.

\bibitem{Zuc}
Steven Zucker.
\newblock Hodge theory with degenerating coefficients. {$L_{2}$} cohomology in
  the {P}oincar\'e metric.
\newblock {\em Ann. of Math. (2)}, 109(3):415--476, 1979.

\end{thebibliography}
\end{document}